\newtheorem{theorem}{Theorem}[section]
\newtheorem{lemma}[theorem]{Lemma}
\newtheorem{proposition}[theorem]{Proposition}
\newtheorem{corollary}[theorem]{Corollary}
\theoremstyle{definition}
\newtheorem{defn}[theorem]{Definition}
\theoremstyle{remark}
\newtheorem{remark}[theorem]{Remark}
\newtheorem{example}[theorem]{Example}
\newtheorem*{claim}{Claim}
\newtheorem{fact}{Fact}
\newtheorem{question}{Question}
\def\Z{\mathbb Z}
\def\R{\mathbb R}
\def\F{\mathbb F}
\def\N{\mathbb N}
\def\H{\mathcal H}
\def\G{\mathcal G}
\def\C{\mathcal C}
\def\Isom{\mathrm{Isom}}
\def\rk{\mathrm{rk}}
\def\out{\mathrm{Out}}
\def\d{\mathrm{dist}}
\def\pmcg{\mathrm{PMCG}}
\def\sl{\mathrm{SL}}
\title{Quasi-retracts of groups}
\author{Renxing Wan}
\address{School of Mathematical Sciences,  Key Laboratory of MEA (Ministry of Education) \& Shanghai Key Laboratory of PMMP,  East China Normal University, Shanghai 200241, China P. R.}
\email{rxwan@math.ecnu.edu.cn}
\keywords{Quasi-homomorphisms, quasi-retracts, quasi-isomorphisms, quasi-actions, hyperbolic structures}
\begin{document}

\begin{abstract}
    In this paper, we study a special class of quasi-homomorphisms, i.e. quasi-retractions from a group to its subgroups. We first give some algebraic and geometric properties of quasi-retracts and then propose a theory of quasi-split short exact sequences of groups. Later, we establish a connection between quasi-homomorphisms and induced quasi-actions. Finally, we give some geometric applications of quasi-homomorphisms, including normal quasi-retracts inherit cobounded actions on hyperbolic spaces, properties (QFA), (QT') and (PH') are all stable under left quasi-split group extensions, quasi-isomorphic groups have isomorphic hyperbolic structures, and so on.
\end{abstract}
\maketitle
%\tableofcontents

\section{Introduction}

\subsection{Quasi-homomorphisms and quasi-retracts}

Let $G$ be a group and $H$ be a group equipped with a proper left-invariant metric $d$ (e.g., a finitely generated group, equipped with a word metric). A map $\phi : G \to H$ is called a \textit{quasi-homomorphism} if there exists a constant $C>0$ such that $$d(\phi(xy), \phi(x)\phi(y)) \le C$$ for all $x, y \in G$. For two maps $\phi,\phi': G\to H$, we define $$\d(\phi,\phi'):=\sup_{x\in G}d(\phi(x),\phi'(x)).$$ Two quasi-homomorphisms $\phi$ and $\phi'$ are called \textit{equivalent} if $\d(\phi,\phi')<\infty$. In the case when $H$ is discrete (and in this paper we limit ourselves only to this class of groups), $\phi$ is a quasi-homomorphism if and only if the set of defects of $\phi$ $$D(\phi)=\{\phi(y)^{-1}\phi(x)^{-1}\phi(xy): x,y\in G\}$$ is finite. And two quasi-homomorphisms $\phi, \phi'$ are equivalent if and only if $\{\phi(x)^{-1}\phi'(x): x\in G\}$ is finite. A quasi-homomorphism with values in $\Z$ (or $\R$, equipped with the standard metric) is called a \textit{quasimorphism}. 

The concept of quasi-homomorphisms goes back to Ulam \cite[Chapter 6]{Ula60}, who asked if they are close to group homomorphisms. Later, Fujiwara-Kapovich \cite{FK16} studied this concept to figure out why it is so ``hard to construct'' quasi-homomorphisms to noncommutative groups which are neither homomorphisms, nor come  from quasi-homomorphisms with commutative targets, provided that $H$ is a discrete group. They showed that every quasi-homomorphism is constructible \cite[Theorem 1.2]{FK16}. 

In this paper, we study a special class of quasi-homomorphisms, i.e. quasi-homomorphisms from a group to its subgroups.

For convenience, we say $(G,H)$ is a \textit{group pair} if $G$ is a discrete group and $H\le G$ is a subgroup. Let $(G,H)$ be a group pair. Classically, $H$ is called a \textit{retract} of $G$ if there exists a homomorphism $r: G\to H$ such that $r$ is left-inverse to the inclusion map $\iota: H\to G$. This homomorphism $r$ is called a \textit{retraction} from $G$ to $H$. The study of retractions have been lasting for decades in Geometric Group Theory and relates to many other topics. For example, \cite{STZ22, MV03, Tur96} studied retracts of free groups to characterize fixed subgroups of automorphisms of free groups and \cite{Min21, LR08} introduced the notion of virtual retractions to study the geometry and topology of hyperbolic manifolds and discrete groups.

In this paper, we define a subgroup $H$ of $G$ to be a \textit{quasi-retract}\footnote{This word ``quasi-retraction'' was first used by Alonso in \cite{Alo94} to denote a certain subordinate relationship between metric spaces.} if there exists a quasi-homomorphism $r: G\to H$ such that $r\circ \iota$ is equivalent to $Id_H$ where $\iota: H\to G$ is the inclusion map. This quasi-homomorphism $r$ is called a \textit{quasi-retraction} from $G$ to $H$. By definition, retracts are quasi-retracts.

A relationship between quasimorphisms and quasi-retracts is that a group admits an unbounded quasimorphism if and only if it contains an infinite cyclic quasi-retract (cf. Lemma \ref{Lem: UnbddQM}).

Another thing worth mentioning is that in \cite{HS16}, Hartnick-Schiweitzer defined a map $\phi: G\to H$ to be a quasi-homomorphism if and only if any quasimorphism on $H$ can be pull-backed to a quasimorphism on $G$ by composing $\phi$. To make a distinction, we shall call such a quasi-homomorphism an \textit{HS-quasi-homomorphism}. Quasi-homomorphisms and HS-quasi-homomorphisms are not equivalent in general; see \cite[Subsection 9.3]{FK16} for a detailed discussion about different quasi-homomorphisms. For a group pair $(G,H)$, Hartnick-Schiweitzer also defined an HS-quasi-homomorphism $r: G\to H$ to be an \textit{HS-quasi-retraction} if the composition $r\circ \iota$ is equivalent to $Id_H$ where $\iota: H\to G$ is the inclusion map.

\subsection{Extension problem}

In \cite{HO13}, Hull-Osin considered the following ``extension problem'': 
\begin{question}
    Under what conditions can a 1-quasi-cocycle (eg. a quasimorphism) on a subgroup be extended to the whole group?
\end{question}
They solved the problem by assuming the subgroup is hyperbolically embedded. Roughly speaking, hyperbolically embedded subgroups generalize the concept of parabolic subgroups in relatively hyperbolic groups; see \cite{DGO17} for details about hyperbolically embedded subgroups. Moreover, under the same assumption, Frigerio-Pozzetti-Sisto \cite{FPS15} solved a ``higher-dimensional extension problem''.

From the definition, any quasimorphism on a quasi-retract can be extended via this quasi-retraction to a quasimorphism on the whole group.  So quasi-retracts naturally solve the ``extension problem'' about quasimorphisms. 

The first result of this paper is to explore the relationship between hyperbolically embedded subgroups and quasi-retracts. We denote by $\F_n$ the free group of rank $n$. A quasi-retract is called \textit{nontrivial} if it is neither a finite subgroup nor the whole group. 
\begin{theorem}\label{IntroThm2}
    \begin{enumerate}
        \item\label{ite1} A finite-by-$\Z^m$ hyperbolically embedded subgroup is always a quasi-retract. 
        \item\label{ite2} A subgroup of $\F_2$ is a nontrivial quasi-retract if and only if it is a cyclic group. Moreover, there exists a hyperbolically embedded subgroup of $\F_2$ which is not a quasi-retract.
    \end{enumerate}
\end{theorem}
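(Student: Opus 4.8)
The plan is to prove the two items by rather different mechanisms, the first by an explicit construction of a quasi-retraction, the second by combining a structural rigidity statement for subgroups of free groups with a concrete counterexample.

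\emph{Item (1): finite-by-$\Z^m$ hyperbolically embedded subgroups are quasi-retracts.} Let $H \hookrightarrow_h (G, X)$ be hyperbolically embedded with $H$ fitting in $1 \to F \to H \to \Z^m \to 1$ with $F$ finite. The first reduction is to pass from $H$ to an undistorted abelian model: since $F$ is finite, $H$ is quasi-isometric to $\Z^m$, and one checks (using earlier material on quasi-retracts being transitive-like and compatible with passing to finite-index / finite-kernel situations) that it suffices to build a quasi-retraction onto a finite-index copy of $\Z^m$ sitting inside $H$, equivalently to produce $m$ quasimorphisms $q_1, \dots, q_m : G \to \Z$ whose restrictions to $H$ (composed with $H \to \Z^m$) form a finite-index subgroup of the character lattice. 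Now I invoke the Hull--Osin extension theorem, available in the literature on hyperbolically embedded subgroups: because $H \hookrightarrow_h G$, every homogeneous quasimorphism on $H$ (in particular every one pulled back from a character $\Z^m \to \R$) extends to a quasimorphism on $G$ whose restriction to $H$ stays within bounded distance of the original, and one can arrange $\Z$-valued (or $\R$-valued then rounded) representatives. Taking the $m$ coordinate characters of $\Z^m$, extending each, and assembling them into a single map $r = (q_1, \dots, q_m) : G \to \Z^m \le H$ (after identifying $\Z^m$ with a finite-index subgroup of $H$ via a set-theoretic section with bounded defect, using finiteness of $F$ again) gives a quasi-homomorphism with $r \circ \iota$ equivalent to a finite-index "multiplication" endomorphism of $H$, which is then post-composed with a bounded correction to land within bounded distance of $Id_H$. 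The routine checks are: that a tuple of quasimorphisms is a quasi-homomorphism into $\Z^m$ (clear, defects add), that the finite normal subgroup $F$ only contributes bounded error (clear), and that "equivalent to a finite-index endomorphism" can be upgraded to "equivalent to $Id_H$" using Lemma~\ref{Lem: UnbddQM}-type bookkeeping on $\Z^m$.

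\emph{Item (2), the characterization.} For a subgroup $H \le \F_2$, "cyclic (and infinite) $\Rightarrow$ nontrivial quasi-retract" is immediate: $\F_2$ has unbounded quasimorphisms restricting nontrivially to any chosen cyclic subgroup, so by Lemma~\ref{Lem: UnbddQM} every infinite cyclic subgroup of $\F_2$ is a quasi-retract, and it is nontrivial since it is neither finite nor all of $\F_2$. The content is the converse: a nontrivial quasi-retract of $\F_2$ must be cyclic. Here the key point is a distortion/growth obstruction. A quasi-retraction $r : \F_2 \to H$ is coarsely Lipschitz for any word metric on $\F_2$ and the proper metric on $H$, and $r \circ \iota$ being equivalent to $Id_H$ forces $\iota : H \to \F_2$ to be a quasi-isometric embedding, i.e. $H$ is undistorted in $\F_2$. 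Subgroups of free groups are free, so $H \cong \F_k$; an undistorted $\F_k$ with $k \ge 2$ has exponential growth matching $\F_2$'s, so I must rule that out by a finer argument than growth alone. The decisive input is a "coarse median" or "small-cancellation" rigidity: $\F_2$ acts on its Cayley tree, which is hyperbolic of "rank one" in a strong sense, so an undistorted free subgroup of rank $\ge 2$ would, via the quasi-retraction, allow one to coarsely collapse the tree onto a quasiconvex subtree on which a rank-$\ge 2$ free group acts coboundedly — fine, but one then derives a contradiction by producing, on $H = \F_k$ with $k \ge 2$, a bounded-cohomology / quasimorphism configuration that cannot be pulled back through $r$, or alternatively by a direct Stallings-graph computation showing a retraction-up-to-bounded-error of $\F_2$ onto $\F_k$ ($k \ge 2$) would make the relative Stallings core of $H$ have bounded complement, forcing finite index hence $k \le 2$ and then ruling out $k = 2$ by an Euler-characteristic / rank count (a finite-index subgroup of $\F_2$ of rank $2$ has index $1$). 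I expect the paper's actual argument to be the Stallings-graph one, which I would organize as: $H$ undistorted $\Rightarrow$ $H$ quasiconvex $\Rightarrow$ the core graph $\Gamma_H$ is finite and immerses with the complement being a forest of bounded-depth trees, and then a quasimorphism-counting argument (the space of homogeneous quasimorphisms on $\F_k$ has dimension growing with $k$, while those factoring through a quasi-retraction to $H$ are constrained by $r$) pins $k \le 1$.

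\emph{Item (2), the non-example.} Finally I exhibit a hyperbolically embedded subgroup of $\F_2$ that is not a quasi-retract. Take $H \le \F_2$ a finitely generated subgroup of infinite index and of rank $\ge 2$ that is a free factor of a malnormal, quasiconvex subgroup — concretely, $\F_2$ is hyperbolic, $H = \langle a^2, b^2 \rangle$ (or any rank-$2$ malnormal quasiconvex subgroup, e.g. generated by a sufficiently long small-cancellation pair) is hyperbolically embedded in $\F_2$ by the standard criterion (Dahmani--Guirardel--Osin: malnormal quasiconvex subgroups of hyperbolic groups are hyperbolically embedded). By the characterization just proved, since $H$ is free of rank $2$ it is not a cyclic subgroup, and it is not all of $\F_2$ and not finite, hence not a nontrivial quasi-retract; being infinite and proper it is not a trivial quasi-retract either, so $H$ is not a quasi-retract at all.

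\emph{Main obstacle.} The genuinely hard step is the converse in item (2): turning "$H$ undistorted free of rank $\ge 2$ in $\F_2$" into a contradiction. Growth and quasi-isometry invariants alone do not suffice (rank-$\ge 2$ free groups are all quasi-isometric to $\F_2$), so one must use the algebraic rigidity of free groups — either via Stallings folds to control the inclusion combinatorially, or via a quasimorphism-dimension-counting argument showing a quasi-retraction cannot exist for non-cyclic $H$. I expect to spend most of the proof here; item (1) is a packaging of the known extension theorem for hyperbolically embedded subgroups, and the non-example is a direct application of the characterization together with the standard malnormal-quasiconvex criterion for hyperbolic embeddings.
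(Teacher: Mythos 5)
The essential gap is in the converse half of item (2): you never actually prove that a nontrivial quasi-retract $H\le \F_2$ must be cyclic. Your text offers several candidate strategies (a ``coarse median/small-cancellation rigidity'' collapse, a Stallings-core argument whose conclusion ``bounded complement, hence finite index'' is unsupported, and a quasimorphism-dimension count), but none is carried out, and the dimension count cannot work as stated: the space of homogeneous quasimorphisms of $\F_k$ is infinite-dimensional for every $k\ge 2$, and a quasi-retraction only gives a pull-back map $Q(H)\to Q(\F_2)$ split by restriction, which produces no contradiction for $k\ge 2$. The paper's mechanism is quite different and is the idea you are missing: a quasi-retraction $r:\F_2\to H$ is surjective (since $r|_H=Id_H$), so by Lemma \ref{Lem: SujQHImpEle} its defect subgroup $\Delta_r=\langle D(r)\rangle$ is a virtually abelian \emph{normal} subgroup of $H$; moreover $H$ is finitely generated, undistorted and hyperbolic (Lemma \ref{Lem: QRProperties}, Corollary \ref{Cor: HypGp}). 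In a torsion-free hyperbolic group a nontrivial virtually abelian normal subgroup forces the ambient group to be cyclic, so either $H$ is cyclic or $\Delta_r=\{1\}$, i.e.\ $r$ is an honest retraction; the latter case is then eliminated for rank $\ge 2$ by Hopficity of free groups and a rank count (Lemma \ref{Lem: RetractOfFreeGps}). Without some argument of this kind (or a worked-out substitute), the heart of item (2) is missing; note also that your ``undistorted'' reduction is correct but, as you concede, does not by itself exclude rank $\ge 2$ quasiconvex subgroups.

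Two smaller points. In item (1) your overall route (Hull--Osin extension of the $\Z^m$-coordinate homomorphisms, with the finite kernel $F$ absorbed because any set-theoretic section of $H\to\Z^m$ has defects in the finite group $F$) is essentially the paper's proof of Proposition \ref{Prop: HypEmbSubpgQR}; however, the specific packaging via a finite-index copy of $\Z^m$ inside $H$ is flawed as written: the restriction of your assembled map to that copy is a finite-index endomorphism of $\Z^m$, which is \emph{not} within bounded distance of the identity, and no ``bounded correction'' repairs this. The fix is exactly what the paper does: extend the coordinate homomorphisms of $H/F\cong\Z^m$ themselves (they are genuine homomorphisms on $H$), and map back into $H$ through a bounded-defect section, so that $r\circ\iota$ differs from $Id_H$ only by elements of $F$. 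Finally, in the non-example, $\langle a^2,b^2\rangle$ is not malnormal in $\F_2$ (it meets its conjugate by $a$ in $\langle a^2\rangle$), so your ``concrete'' witness fails; you need an actual malnormal quasiconvex subgroup of rank $\ge 2$, such as the Baumslag--Miasnikov--Remeslennikov example the paper cites, after which the deduction via the characterization and the malnormal-quasiconvex criterion for hyperbolic embeddings matches the paper.
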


\begin{remark}
    \begin{enumerate}
        \item It is well known and easy to prove that every infinite virtually cyclic group is either finite-by-(infinite cyclic) or finite-by-(infinite dihedral). Hence, for a group without involutions, any virtually cyclic subgroup is finite-by-cyclic.
        \item There are several typical examples of groups containing virtually cyclic hyperbolically embedded subgroups (cf. \cite{HO13}):
        \begin{itemize}
            \item $G$ is the mapping class group of a punctured closed orientable surface and $H\le G$ is the maximal virtually cyclic subgroup containing a pseudo-Anosov element;
            \item $G=\out(\F_n)$ and $H\le G$ is the maximal virtually cyclic subgroup containing a fully irreducible automorphism;
            \item $G$ is a group acting properly on a proper CAT(0) space and $H\le G$ is the maximal virtually cyclic subgroup containing a rank-1 element.
        \end{itemize}
    \end{enumerate}
\end{remark}

\subsection{Quasi-split short exact sequence}

Let 
\begin{equation}\label{Equ: SES}
    1\to H\xrightarrow{\iota} G\xrightarrow{\pi} Q\to 1 \tag{*}
\end{equation}
be a short exact sequence of groups. Classically, the short exact sequence (\ref{Equ: SES}) is \textit{left split} if there exists a retraction from $G$ to $H$. It is \textit{right split} if there exists a section $s: Q\to G$ such that $s$ is a homomorphism. Similarly, we say the short exact sequence (\ref{Equ: SES}) is \textit{left quasi-split} if there exists a quasi-retraction from $G$ to $H$. It is \textit{right quasi-split} if there exists a section $s: Q\to G$ such that $s$ is a quasi-homomorphism. We remark that the notion of right quasi-split here is the same as the notion of \textit{quasisplit} in \cite[Subsection 2.4.1]{FK16}.

\begin{theorem}[Classical version]\label{IntroThm: SSES}
    Let $1\to H\xrightarrow{\iota} G\xrightarrow{\pi} Q\to 1$ be a short exact sequence of groups. The following are equivalent: 
    \begin{enumerate}
        \item The short exact sequence is left split. Or equivalently, $H$ is a retract of $G$.
        \item There exists a section $s: Q\to G$ such that $s$ is a homomorphism and $H$ commutes with $s(Q)$.
        \item There is an isomorphism $\phi: G\to H\times Q$ such that the diagram commutes: 
        $$\xymatrix{
  1  \ar[r]^{} & H \ar[d]_{Id} \ar[r]^{\iota} & G \ar[d]_{\phi} \ar[r]^{\pi} & Q \ar[d]_{Id} \ar[r]^{} & 1  \\
  1 \ar[r]^{} & H \ar[r]^{} & H\times Q \ar[r]^{} & Q \ar[r]^{} & 1   }$$
  where the bottom row is the short exact sequence for a direct product.
    \end{enumerate}
\end{theorem}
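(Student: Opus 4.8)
The plan is to prove the cycle of implications $(1)\Rightarrow(3)\Rightarrow(2)\Rightarrow(1)$, since each step is a short explicit construction. For $(1)\Rightarrow(3)$: given a retraction $r\colon G\to H$, define $\phi\colon G\to H\times Q$ by $\phi(g)=(r(g),\pi(g))$. This is a homomorphism because both coordinate maps are, the left square commutes since $r\circ\iota=\mathrm{Id}_H$, and the right square commutes by construction. Injectivity is a one-line check: if $\phi(g)=(1,1)$ then $g\in\ker\pi=\iota(H)$, say $g=\iota(h)$, and then $h=r(\iota(h))=1$. Surjectivity is then automatic from the short five lemma, the two outer vertical maps being identities; hence $\phi$ is an isomorphism making the diagram commute.

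For $(3)\Rightarrow(2)$: set $s(q):=\phi^{-1}(1_H,q)$, which is a homomorphism. It is a section because commutativity of the right square gives $\pi=\mathrm{pr}_Q\circ\phi$, so $\pi\circ s=\mathrm{Id}_Q$. Commutativity of the left square identifies $\phi(\iota(H))$ with the subgroup $H\times\{1_Q\}$, while by definition $s(Q)=\phi^{-1}(\{1_H\}\times Q)$; since $H\times\{1_Q\}$ and $\{1_H\}\times Q$ commute elementwise in $H\times Q$ and $\phi$ is an isomorphism, $\iota(H)$ and $s(Q)$ commute in $G$.

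For $(2)\Rightarrow(1)$: using exactness together with the homomorphic section $s$, every $g\in G$ admits a unique expression $g=\iota(h)\,s(\pi(g))$ with $h\in H$ — existence because $g\,s(\pi(g))^{-1}\in\ker\pi=\iota(H)$, uniqueness by applying $\pi$ and then using injectivity of $\iota$. Define $r(g):=h$; then $r\circ\iota=\mathrm{Id}_H$ is immediate. Writing $g_i=\iota(h_i)s(q_i)$ and commuting $s(q_1)$ past $\iota(h_2)$ gives $g_1g_2=\iota(h_1h_2)s(q_1q_2)$, whence $r(g_1g_2)=r(g_1)r(g_2)$, so $r$ is a retraction. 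I do not expect a genuine obstacle here: this is the classical dichotomy between semidirect and direct products, and the only point that needs care is the homomorphism property of $r$ in this last implication, which is exactly where the commutation of $\iota(H)$ with $s(Q)$ in $(2)$ is used in an essential way.
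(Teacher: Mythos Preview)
Your proof is correct and complete; each of the three implications is handled cleanly, and the use of the short five lemma for surjectivity in $(1)\Rightarrow(3)$ is entirely appropriate. Note that the paper does not actually supply its own proof of this statement: it is stated (both in the introduction and again as Theorem~\ref{Lem: ClaVer2}) as a classical fact from group theory, serving only as motivation and template for the quasi-version in Theorem~\ref{Thm: LQS}.
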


A section $s$ is \textit{normalized} if $s(1)=1$. For two subsets $K,T$ of a group $G$, we define $\C(K,T)=\{[k,t]=ktk^{-1}t^{-1}: k\in K, t\in T\}$. In this case, $K$ and $T$ are commutative (resp. \textit{almost commutative}) if and only if $\C(K,T)=\{1\}$ (resp. $\C(K,T)$ is finite). 

For two discrete groups $G,G'$, a quasi-homomorphism $\phi: G\to G'$ is a \textit{quasi-isomorphism} if it admits a \textit{quasi-inverse}, i.e., a quasi-homomorphism $\phi': G'\to G$ such that $\d(\phi'\circ \phi, Id_G)<\infty$ and $\d(\phi\circ \phi', Id_{G'})<\infty$. A quasi-isomorphism is \textit{strict} if $\phi'=\phi^{-1}$. The second result of this paper is an analogue of Theorem \ref{IntroThm: SSES}.

\begin{theorem}[Theorem \ref{Thm: LQS}]\label{IntroThm: QSSES}
    Let $1\to H\xrightarrow{\iota} G\xrightarrow{\pi} Q\to 1$ be a short exact sequence of groups.  The following are equivalent: 
    \begin{enumerate}
        \item The short exact sequence is left quasi-split. Or equivalently, $H$ is a quasi-retract of $G$.
        \item There exists a normalized section $s: Q\to G$ such that $s$ is a quasi-homomorphism and $H$ almost commutes with $s(Q)$.
        \item There is a strict quasi-isomorphism $\phi: G\to H\times Q$ such that the diagram commutes: 
        $$\xymatrix{
  1  \ar[r]^{} & H \ar[d]_{Id} \ar[r]^{\iota} & G \ar[d]_{\phi} \ar[r]^{\pi} & Q \ar[d]_{Id} \ar[r]^{} & 1  \\
  1 \ar[r]^{} & H \ar[r]^{} & H\times Q \ar[r]^{} & Q \ar[r]^{} & 1   }$$
  where the bottom row is the short exact sequence for a direct product.
    \end{enumerate}
\end{theorem}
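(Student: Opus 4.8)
The plan is to prove the cycle of implications $(1)\Rightarrow(2)\Rightarrow(3)\Rightarrow(1)$; the equivalence of ``left quasi-split'' with ``$H$ is a quasi-retract of $G$'' in (1) is just the definition. Throughout I work in the discrete setting, where a map is a quasi-homomorphism iff its defect set is finite and two such maps are equivalent iff $\{\phi(x)^{-1}\phi'(x)\}$ is finite, so every estimate below is really the assertion that some explicitly described set is finite, and no metric is used.

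The implication $(3)\Rightarrow(1)$ is the easy one: since $\mathrm{pr}_H\colon H\times Q\to H$ is a homomorphism and a composition of quasi-homomorphisms is again a quasi-homomorphism, $r:=\mathrm{pr}_H\circ\phi\colon G\to H$ is a quasi-homomorphism; commutativity of the diagram gives $\phi(h)=(h,1)$ for $h\in H$, hence $r\circ\iota=Id_H$, so $H$ is a quasi-retract. (Only the quasi-homomorphism property of $\phi$ enters, not that it is strict or bijective.)

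For $(1)\Rightarrow(2)$, fix a quasi-retraction $r$, which --- after replacing it by an equivalent map --- we may take to satisfy $r(1)=1$. For each $q\in Q$ choose a lift $\tilde q\in\pi^{-1}(q)$ with $\tilde1=1$ and put $s(q):=\tilde q\,r(\tilde q)^{-1}$; then $\pi\circ s=\mathrm{id}_Q$, and resetting $s(1)$ to $1$ (a bounded change) makes $s$ normalized without altering its equivalence class. The technical core is a \emph{conjugation-control} statement, which I would prove by direct cancellation --- exploiting that the ``error terms'' $h^{-1}r(h)$ are compatible with multiplication under $r$ --- or cite from the preceding algebraic study of quasi-retracts: every element of $D(r)$ and every element of $F_0:=\{h^{-1}r(h):h\in H\}$ has a finite $H$-conjugacy class, and hence $\{hfh^{-1}:h\in H,\,f\in F\}$ is finite for every finite set $F$ obtained from $D(r)^{\pm1}$ and $F_0^{\pm1}$ by finitely many multiplications. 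Granting this: (a) applying $r$ to $s(q)=\tilde q\,r(\tilde q)^{-1}$ and cancelling gives $r(s(q))\in F_0\cdot D(r)$, a finite set, and likewise $r(s(q)^{-1})$ is finite-valued; (b) $s$ is a quasi-homomorphism, because each defect $s(q_2)^{-1}s(q_1)^{-1}s(q_1q_2)$ lies in $H$, and applying $r$, splitting into single factors, and using $r\circ\iota\sim Id_H$ together with (a) shows its $r$-image --- hence the defect itself --- lies in a finite set; (c) $\C(H,s(Q))$ is finite, since $[h,s(q)]\in H$ and splitting $r([h,s(q)])$ produces an expression $(h\,\alpha_h\,h^{-1})\,(h\,r(s(q))\,h^{-1})\cdot(\text{element of a fixed finite subset of }H)$ with $\alpha_h=h^{-1}r(h)\in F_0$, where the first factor is controlled by the identity $h\,\alpha_h\,h^{-1}=\bigl(r(h)r(h^{-1})\bigr)\bigl(h\,r(h^{-1})\bigr)^{-1}$ and the second by the conjugation-control statement applied to the finite set $F_0\cdot D(r)\ni r(s(q))$. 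I expect this implication to be the main obstacle: the naive estimates fail precisely when one must conjugate a finite subset of $H$ by an element of $H$, and some a priori control of the conjugation action --- the conjugation-control statement --- is exactly what is needed. (This mirrors the classical argument of Theorem \ref{IntroThm: SSES}, where the complement $s(Q)$ is $\ker r$ and $[H,\ker r]\subseteq H\cap\ker r=\{1\}$.)

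For $(2)\Rightarrow(3)$, define $\phi\colon G\to H\times Q$ by $\phi(g):=(g\,s(\pi(g))^{-1},\,\pi(g))$; its set-theoretic inverse is $(h,q)\mapsto h\,s(q)$, so $\phi$ is a bijection, and $s(1)=1$ yields $\phi\circ\iota(h)=(h,1)$ and $\mathrm{pr}_Q\circ\phi=\pi$, so the diagram commutes. It remains to verify that $\phi$ and $\phi^{-1}$ are quasi-homomorphisms. Writing $g_i=h_i\,s(q_i)$ and substituting $s(q_1q_2)=s(q_1)s(q_2)d$ with $d\in D(s)\subseteq H$ (the defect of $s$), the $Q$-coordinate of a defect of $\phi$ is trivial and its $H$-coordinate simplifies to $[h_2^{-1},s(q_1)]\cdot s(q_1)\,e\,s(q_1)^{-1}$ where $e$ ranges over a fixed finite subset of $H$; the commutator lies in $\C(H,s(Q))$, and the remaining term is finite by the elementary observation that conjugating a fixed finite $F\subseteq H$ by an element of $s(Q)^{\pm1}$ keeps it finite --- indeed $s(q)\,f\,s(q)^{-1}=[s(q),f]\,f$ with $[s(q),f]\in\C(H,s(Q))^{-1}$, and solving $s(q)\,k\,s(q)^{-1}=f$ shows $s(q)^{-1}f\,s(q)$ differs from $f$ by an element of $\C(H,s(Q))$. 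An entirely parallel computation, using $\phi^{-1}(h,q)=h\,s(q)$, shows $\phi^{-1}$ is a quasi-homomorphism. Hence $\phi$ is a strict quasi-isomorphism, which closes the cycle.
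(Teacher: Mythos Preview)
Your argument is correct and follows essentially the same route as the paper: the cycle $(1)\Rightarrow(2)\Rightarrow(3)\Rightarrow(1)$, with the same maps $\phi(g)=(g\,s(\pi(g))^{-1},\pi(g))$ and $\phi^{-1}(h,q)=hs(q)$ in $(2)\Rightarrow(3)$, and the same projection in $(3)\Rightarrow(1)$. The one noteworthy organizational difference is in $(1)\Rightarrow(2)$: the paper first replaces $r$ by an equivalent quasi-retraction with $r|_H=Id_H$ (Lemma~\ref{Lem: EquiDef}) and then builds the transversal in two left-multiplication steps $t\mapsto r(t)^{-1}t\mapsto r(r(t)^{-1}t)^{-1}r(t)^{-1}t$ (Lemma~\ref{Thm: IfQR}), whereas you keep the general $r$, do a single right-multiplication correction $s(q)=\tilde q\,r(\tilde q)^{-1}$, and absorb the resulting $F_0=\{h^{-1}r(h)\}$ terms via your conjugation-control statement. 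That statement is exactly what the paper records as Lemma~\ref{Lem: QHProperties}\,(\ref{conjugate}) for $D(r)$ together with the Claim inside Lemma~\ref{Lem: EquiDef} for $F_0$, and your ``hence'' (closure under finite products/inverses) is valid because $h(ab)h^{-1}=(hah^{-1})(hbh^{-1})$. Both organizations lead to the same finite-set bookkeeping; the paper's normalization makes $F_0=\{1\}$ and shortens the formulas, while yours avoids invoking Lemma~\ref{Lem: EquiDef} at the cost of tracking one extra finite set.
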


\begin{remark}
    The direction ``$(2)\Rightarrow (3)$'' in Theorem \ref{IntroThm: QSSES} is also an analogue of \cite[Proposition A.2]{Alo94} which states that by adding some finiteness conditions to the group extension (\ref{Equ: SES}), $G$ is quasi-isometric to $H\times Q$. We will see later that quasi-isomorphisms are quasi-isometries for finitely generated groups.
\end{remark}

\paragraph{\textbf{Central extensions}} An important class of short exact sequences comes from central extensions. Let 
\begin{equation}\label{Equ: CenExt}
    1\to Z\xrightarrow{\iota}G\xrightarrow{\pi}Q\to 1 \tag{**}
\end{equation}
be a central extension of groups, where $Q$ and $Z$ (hence, $G$) are finitely generated. Any such extension defines a cohomology class $\omega\in H^2(Q,Z)$ which will be called the \textit{Euler class} of the extension. It is well known that the Euler class completely determines the isomorphism class of a central extension, and it is natural to investigate which geometric features it encodes.

We say that a class $\omega\in H^2(Q,Z)$ is \textit{bounded} if it lies in the image of the comparison map $H^2_b(Q,Z)\to H^2(Q,Z)$, i.e. if it can be described by a bounded cocycle.
In \cite{FK16}, Fujiwara-Kapovich proved that the central extension (\ref{Equ: CenExt}) has a bounded Euler class if and only if it is right quasi-split. By Theorem \ref{IntroThm: QSSES}, this is also equivalent to saying that $Z$ is a quasi-retract of $G$. For more equivalent conditions for (\ref{Equ: CenExt}) to have a bounded Euler class, we refer the readers to \cite[Lemma 5.6]{TW25}.

Following \cite{Ger92}, we say that the extension of finitely generated groups
$$1\to Z\xrightarrow{\iota}G\xrightarrow{\pi}Q\to 1$$
is \textit{quasi-isometrically trivial} if there exists a quasi-isometry $\phi : G \to Z \times Q$
such that the following diagram commutes, up to bounded error:
$$\xymatrix{
  G \ar[d]_{\phi} \ar[r]^{\pi} & Q \ar[d]_{Id} \\
  Z\times Q \ar[r]^{\pi_2} & Q }$$
Here $\pi_2 : Z \times Q \to Q$ is the projection on the second factor, and, for clarity, the diagram commuting up to bounded error means that given a word metric $d_Q$ on $Q$ we have $\sup_{g\in G} d_Q(\pi(g), \pi_2(\phi(g))) < +\infty$.

Gersten proved in \cite[\textsection 3]{Ger92} that if the Euler class of the central extension is bounded, then the extension is quasi-isometrically trivial. As for the converse direction, Frigerio-Sisto investigated in \cite{FrS23} when a quasi-isometrically trivial central extension has a bounded Euler class. They proved that a central extension is quasi-isometrically trivial if and only if it has a weakly bounded Euler class. As an analogue, Theorem \ref{IntroThm: QSSES} implies that 
\begin{corollary}
    The central extension (\ref{Equ: CenExt}) has a bounded Euler class if and only if it is it is ``strictly quasi-isomorphically trivial'', i.e. there is a strict quasi-isomorphism $\phi: G\to Z\times Q$ such that the diagram commutes: 
        $$\xymatrix{
  1  \ar[r]^{} & Z \ar[d]_{Id} \ar[r]^{\iota} & G \ar[d]_{\phi} \ar[r]^{\pi} & Q \ar[d]_{Id} \ar[r]^{} & 1  \\
  1 \ar[r]^{} & Z \ar[r]^{} & Z\times Q \ar[r]^{} & Q \ar[r]^{} & 1   }$$
  where the bottom row is the short exact sequence for a direct product.
\end{corollary}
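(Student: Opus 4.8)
The plan is to read the corollary off Theorem~\ref{IntroThm: QSSES}, applied to the given central extension with $H = Z$, once the Fujiwara--Kapovich criterion has been put into the right form; the only substantive new observation is that centrality of $Z$ makes the ``almost commutes'' hypothesis automatic.

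\emph{Step 1: recasting the Fujiwara--Kapovich criterion.} By \cite{FK16}, the central extension (\ref{Equ: CenExt}) has bounded Euler class if and only if it is right quasi-split, i.e.\ there is a section $s : Q \to G$ that is a quasi-homomorphism. I would first normalize such an $s$: since $\pi(s(1)) = 1$ we have $s(1) \in Z$, so $s' := s(1)^{-1} s$ is again a section and it is normalized. Because $s(1)$ is central, $D(s')$ is obtained from $D(s)$ by left translation by $s(1)^{-1}$, hence finite, so $s'$ is still a quasi-homomorphism. Again because $Z$ is central in $G$, we get $\C(Z, s'(Q)) = \{1\}$, so $Z$ almost commutes with $s'(Q)$ trivially. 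Thus the extension is right quasi-split precisely when condition~(2) of Theorem~\ref{IntroThm: QSSES} holds for $1 \to Z \to G \to Q \to 1$.

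\emph{Step 2: invoking Theorem~\ref{IntroThm: QSSES}.} Now apply the equivalence $(2)\Leftrightarrow(3)$ of Theorem~\ref{IntroThm: QSSES} with $H = Z$. Condition~(3) is exactly the statement that there is a strict quasi-isomorphism $\phi : G \to Z \times Q$ fitting into the displayed five-term commuting diagram, which is the notion of ``strictly quasi-isomorphically trivial'' in the corollary. Chaining the equivalences gives: bounded Euler class $\Leftrightarrow$ right quasi-split $\Leftrightarrow$ Theorem~\ref{IntroThm: QSSES}(2) $\Leftrightarrow$ Theorem~\ref{IntroThm: QSSES}(3) $\Leftrightarrow$ strictly quasi-isomorphically trivial. (Equivalently, one may cite the equivalence ``bounded Euler class $\Leftrightarrow$ $Z$ is a quasi-retract of $G$'' already noted in the text and then apply $(1)\Leftrightarrow(3)$ of Theorem~\ref{IntroThm: QSSES}.)

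I do not expect a genuine obstacle here: the mathematical content is contained in Theorem~\ref{IntroThm: QSSES}, whose proof is given elsewhere in the paper. The only points requiring a line of care are the normalization of the section in Step~1 --- where centrality of $Z$ is used both to keep $s'$ a quasi-homomorphism and to render the commutator condition vacuous --- and the observation that the diagram displayed in the corollary is literally the diagram appearing in Theorem~\ref{IntroThm: QSSES}(3), so no additional verification of commutativity is needed. It may also be worth remarking that exact commutativity of the right-hand square in that diagram is strictly stronger than Gersten's ``commutes up to bounded error,'' so the corollary refines the implication ``bounded Euler class $\Rightarrow$ quasi-isometrically trivial'' of \cite{Ger92} in the finitely generated setting; but this remark is not needed for the proof.
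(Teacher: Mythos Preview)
Your proof is correct and follows exactly the route the paper intends: the corollary is stated without a separate proof because the text immediately preceding it already observes that bounded Euler class $\Leftrightarrow$ right quasi-split (Fujiwara--Kapovich) $\Leftrightarrow$ $Z$ is a quasi-retract of $G$ (Theorem~\ref{IntroThm: QSSES}), and the corollary is then the equivalence $(1)\Leftrightarrow(3)$ of that theorem with $H=Z$. One inconsequential slip: in Step~1 the defect set of $s'=s(1)^{-1}s$ is $s(1)\cdot D(s)$, not $s(1)^{-1}\cdot D(s)$, but of course this does not affect finiteness.
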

Since bounded Euler classes imply weakly bounded Euler classes, we will see in Lemma \ref{Lem: QIToQI} that quasi-isomorphisms imply quasi-isometries for finitely generated groups.

%we refer the readers to \cite{AM24, FrS23} for counterexamples.

The following example presents an important class of normal quasi-retracts. For the definition of amalgamated direct products, we refer the readers to Definition \ref{Def: AmalDirPro}.

\begin{example}
    Let $\Sigma$ be a closed surface with finitely many marked points. Let $C$ be a separating essential simple closed curves in $\Sigma$. Let $\pmcg(\Sigma)$ be the pure mapping class group of $\Sigma$ and $\pmcg(\Sigma,C)$ denote the stabilizer in $\pmcg(\Sigma)$ of $C$. Since $C$ separates $\Sigma$, $C$ divides $\Sigma$ into two subsurfaces $\Sigma_-, \Sigma_+$ with a common boundary component $C$. It is well-known and easy to prove that $\pmcg(\Sigma,C)$ is an amalgamated direct product of $\pmcg(\Sigma_-)$ and $\pmcg(\Sigma_+)$. Suppose that $\Sigma_+'$ is the surface obtained from $\Sigma_+$ by capping the boundary component $C$ with a once-marked disk. By \cite[Proposition 6.4]{FK16}, the following central extension $$1\to \langle T_C\rangle \to \pmcg(\Sigma_+)\xrightarrow{Cap} \pmcg(\Sigma_+') \to 1$$  has a bounded Euler class where $T_C$ is the Dehn twist around $C$. As a result of Lemma \ref{Lem: ExaOfQR} (\ref{5.4.3}), $\pmcg(\Sigma_-)$ is a normal quasi-retract of $\pmcg(\Sigma, C)$.
\end{example}

\subsection{Induced quasi-actions}

In \cite{Man06}, Manning defined group quasi-actions as a natural coarse generalization of isometric group actions. Specifically, a \textit{$(\lambda, \epsilon)$-quasi-action} of a group $G$ on a metric space $X$ is a map $\rho: G \times X \to X$, denoted $\rho(g, x) \mapsto gx$, so that the following hold:
    \begin{itemize}
        \item[(i)] for each $g$, $\rho(g, -): X \to X$ is a $(\lambda, \epsilon)$-quasi-isometry;
        \item[(ii)] for each $x \in X$ and $g, h \in G$, we have $$d(g(hx),(gh)x) = d(\rho(g, \rho(h, x)), \rho(gh, x)) \le \epsilon.$$
    \end{itemize}
(Note that $\lambda$ and $\epsilon$ must be independent of $g$ and $h$.) We call a quasi-action
\textit{cobounded} if, for every $x \in X$, the map $\rho(-, x): G \to X$ is $\epsilon'$-coarsely surjective for some $\epsilon'\ge 0$.

An important geometric application of quasi-homomorphisms is that they can induce new quasi-actions on metric spaces. A key lemma is as follows.

\iffalse
Let $\phi: G\to H$ be a quasi-homomorphism with the defect set $D=D(\phi)$. Suppose $H$ admits a $(\lambda,\epsilon)$-quasi-action on a metric space $X$ and there exists a constant $M>0$ such that $$\sup_{x\in X, a\in D}d(x,ax)\le M.$$ Define a map $\rho: G\times X\to X$ by $\rho(g,x):=\phi(g)x$. For any $g,h\in G$, there exists $a\in D$ such that $\phi(gh)=\phi(g)\phi(h)a$. Thus, for each $x\in X$ and $g,h\in G$, we have 
\begin{align*}
    d(\rho(g,\rho(h,x)),\rho(gh,x))&=d(\phi(g)(\phi(h)x), \phi(gh)x)\le d((\phi(g)\phi(h))x,(\phi(g)\phi(h)a)x)+\epsilon\\ &\le d((\phi(g)\phi(h))x,(\phi(g)\phi(h))(ax))+2\epsilon\le \lambda d(x,ax)+3\epsilon\le \lambda M+3\epsilon.
\end{align*}
The above inequality shows that $\rho$ is a $(\lambda,\lambda M+3\epsilon)$-quasi-action of $G$ on $X$. This quasi-action is referred to as \textit{an induced quasi-action by $\phi$}. Furthermore, if $\phi$ is a quasi-retraction, then the induced quasi-action of $G$ on $X$ is an extension of the original quasi-action of $H$ on $X$. 
\fi

\begin{lemma}[Lemma \ref{Lem: QHToQA}]\label{IntroLem: QHToQA}
    Let $\phi: G\to H$ be a coarsely surjective quasi-homomorphism. Suppose $H$ admits a cobounded quasi-action on a metric space $X$. Then the map $\rho: G\times X\to X$ defined by $\rho(g,x):=\phi(g)x$ is an induced quasi-action of $G$ on $X$ by $\phi$.

    Moreover, suppose $H$ admits another cobounded quasi-action on a metric space $Y$. Then the quasi-action of $H$ on $X$ is quasi-conjugate to the quasi-action of $H$ on $Y$ if and only if the induced quasi-action of $G$ on $X$ by $\phi$ is quasi-conjugate to the induced quasi-action of $G$ on $Y$ by $\phi$.
\end{lemma}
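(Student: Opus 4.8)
The plan is to reduce part 1 to the uniform-displacement hypothesis under which the construction recalled just before the lemma already produces an induced quasi-action, and to prove part 2 by hand, the forward direction being formal and the converse carrying the content.

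\emph{Part 1.} By that preceding discussion, the map $\rho(g,x)=\phi(g)x$ is an induced quasi-action as soon as $M:=\sup_{x\in X,\,a\in D}d(x,ax)<\infty$, where $D=D(\phi)$; so the task is to establish this bound. The first step is algebraic: for every $a\in D$ and every $g\in G$ we have $\phi(g)^{-1}a\phi(g)\in D\cdot D\cdot D^{-1}$. Indeed, writing $a=\phi(v)^{-1}\phi(u)^{-1}\phi(uv)$ with $u,v\in G$, a short telescoping computation — inserting $\phi(uvg)$ and $\phi(vg)$ and applying the relation $\phi(pq)\in\phi(p)\phi(q)D$ to the pairs $(uv,g)$, $(u,vg)$, $(v,g)$ — makes both copies of $\phi(g)$ cancel and leaves a product of three elements of $D^{\pm1}$. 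Since $D$ is finite, $\{\phi(g)^{-1}a\phi(g):g\in G,\ a\in D\}$ is finite. The second step uses coarse surjectivity together with properness of $d$: writing every $h\in H$ as $h=\phi(g)e$ with $e$ in the finite ball $B:=B_d(1,C')$ (here $C'$ is a coarse-surjectivity constant of $\phi$), the set $\{h^{-1}ah:h\in H,\ a\in D\}$ lies in the finite set $B^{-1}(D\cdot D\cdot D^{-1})B$. Finally fix $x_0\in X$; given $x\in X$ choose, by coboundedness, $h\in H$ with $d(x,hx_0)\le\epsilon'$, and estimate $d(x,ax)$ via the quasi-action axioms applied to the factorization $ah=h(h^{-1}ah)$: this bounds $d(x,ax)$ by $\lambda\,d(x_0,(h^{-1}ah)x_0)$ plus a constant depending only on $\lambda,\epsilon,\epsilon'$, and since $h^{-1}ah$ ranges over a finite subset of $H$ the first term is also bounded, uniformly in $x$ and $a\in D$. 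This gives $M<\infty$, so the construction of the preceding discussion applies verbatim — including the fact that when $\phi$ is a quasi-retraction the induced quasi-action extends the one of $H$.

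\emph{Part 2.} If $f:X\to Y$ is a quasi-isometry with $\sup_{h\in H,\,x\in X}d_Y(f(hx),hf(x))<\infty$ then, because $\phi(G)\subseteq H$, the very same $f$ exhibits quasi-conjugacy of the two induced $G$-actions; this is the forward implication and needs nothing more. For the converse, assume $f:X\to Y$ is a quasi-isometry with $K:=\sup_{g\in G,\,x\in X}d_Y(f(\phi(g)x),\phi(g)f(x))<\infty$ and show $f$ is coarsely $H$-equivariant. Writing $h=\phi(g)e$ with $e\in B=B_d(1,C')$ and invoking the quasi-action axioms and the $G$-equivariance of $f$, one bounds $d_Y(f(hx),hf(x))$ in terms of $d_Y(f(ex),ef(x))$; as $B$ is finite it is enough to bound $\sup_{x\in X}d_Y(f(ex),ef(x))$ for each fixed $e\in B$. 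Since the $G$-orbit $\{\phi(g)x_0:g\in G\}$ is coarsely dense in $X$ (coboundedness of the $H$-action plus coarse surjectivity of $\phi$) and $f$ and the maps $\rho(e,-)$ on $X$ and $Y$ are coarsely Lipschitz, it is in turn enough to bound $\sup_{g\in G}d_Y(f(e\phi(g)x_0),ef(\phi(g)x_0))$. For this, given $e\in B$ and $g\in G$, coarse surjectivity provides $g'\in G$ with $e\phi(g)=\phi(g')e'$ for some $e'\in B$; chasing the quasi-action axioms, the $G$-equivariance of $f$, and the finiteness (hence boundedness) of $\{f(bx_0):b\in B\}$ and $\{bf(x_0):b\in B\}$, one checks that $f(e\phi(g)x_0)$ and $ef(\phi(g)x_0)$ each lie within a uniform distance of the single point $\phi(g')f(x_0)$, hence within a uniform distance of one another. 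This proves the converse and the lemma.

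\emph{Where the difficulty is.} The only genuinely delicate point is the converse in part 2. One is tempted to approximate $hx$ by $\phi(g)x$ when $d(\phi(g),h)$ is small and thereby read off $H$-equivariance of $f$ from its $G$-equivariance pointwise; but this fails, because an element confined to a bounded ball of $H$ may still move points of a cobounded quasi-action arbitrarily far — e.g.\ a free generator $a$ of $\F_2$ acting on its own Cayley graph has $d(b^n,ab^n)=2n+1$ — so the $H$-action is not recoverable pointwise from the induced $G$-action. The argument sidesteps this by never comparing the two actions at a point and instead comparing the equivariant quasi-isometry $f$ along the coarsely dense $G$-orbit, where both sides are anchored to the common points $\phi(g')f(x_0)$. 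The same $\F_2$ phenomenon explains why coarse surjectivity of $\phi$ cannot be dropped in part 1: without it the telescoping identity still holds, but the conjugacy class of a defect element need not be finite, and then $\rho$ can fail the quasi-action axioms outright.
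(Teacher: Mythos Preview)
Your proof is correct and tracks the paper's approach. Part~1 is essentially identical---your telescoping computation reproves the paper's conjugation lemma $\phi(g)^{-1}D\phi(g)\subset D^2D^{-1}$, and the remaining steps (coarse surjectivity to pass from $\phi(G)$ to all of $H$, coboundedness plus the quasi-action axioms to bound $d(x,ax)$) match line by line---while for Part~2 the paper's entire converse argument is the single phrase ``and vice versa since $\phi$ is coarsely surjective,'' so your detailed argument there is a careful fleshing-out of what the paper leaves implicit rather than a different route.
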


\paragraph{\textbf{Property (QFA), (QT) and (PH)}} To apply Lemma \ref{IntroLem: QHToQA}, we first introduce some geometric properties of groups acting isometrically on hyperbolic spaces.

In \cite{Man06}, Manning defined a group $G$ to have \textit{property (QFA)} if for every quasi-action of $G$ on any tree $X$, there is some $x\in X$ so that the orbit $Gx$ has finite diameter. This property is analogous to Serre's property (FA). A typical class of groups with property (QFA) are $\sl_n(\Z)$ for $n\ge 3$ \cite{Man06}.

In \cite{BBF21}, Bestvina-Bromberg-Fujiwara defined a finitely generated group $G$ to have \textit{property (QT)} if $G$ acts isometrically on a finite product of quasi-trees equipped with $\ell^1$-metric such that the orbit map is a quasi-isometric embedding. They also showed that residually finite hyperbolic groups and mapping class groups have property (QT). Following \cite{Tao24}, a finitely generated group $G$ has \textit{property (QT')} if it has property (QT) and the action on a finite product of quasi-trees is induced by a diagonal action. By a result of Button \cite{But22}, a finitely generated group $G$ with property (QT) virtually has property (QT'), i.e. $G$ contains a finite-index subgroup $G_0$ which has property (QT'). 

In \cite{TW25}, Tao and the author defined a group $G$ to have \textit{property (PH')} if there exist finitely many hyperbolic spaces $X_1,\ldots, X_n$ on which $G$ acts coboundedly and the induced diagonal action of $G$ on the product $\prod_{i=1}^nX_i$ with $\ell^1$-metric is proper. A group $G$ has \textit{property (PH)} if $G$ virtually has property (PH'). Some typical examples of groups with property (PH) include finitely generated abelian groups, Coxeter groups, virtually colorable hierarchically hyperbolic groups and most 3-manifold groups. As shown in \cite[Lemma 2.2]{TW25}, if a finitely generated group has property (QT) or (QT'), then it has property (PH) or (PH'), respectively. We refer the readers to \cite[Figure 1]{TW25} for an illustration of relations between these properties.

By combining Lemma \ref{IntroLem: QHToQA} and a result of Manning (cf. Proposition \ref{Prop: QAToA}), we can obtain some quasi-isomorphism invariants for discrete groups. Since some of them are not quasi-isometry invariants, we know that quasi-isomorphisms are  strictly stronger equivalence relations than  quasi-isometries for finitely generated groups.

\begin{proposition}\label{IntroProp: QIInv}
    \begin{enumerate}
        \item Almost commutativity is a quasi-isomorphism invariant for discrete groups.
        \item Property (QFA) is a quasi-isomorphism invariant for finitely generated groups.
        \item Property (PH’) is a quasi-isomorphism invariant for discrete groups.
        \item Property (QT’) is a quasi-isomorphism invariant for finitely generated groups.
    \end{enumerate}
\end{proposition}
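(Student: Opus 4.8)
The plan is to transfer the relevant structure across the quasi-isomorphism $\phi\colon G\to G'$ and its quasi-inverse $\phi'\colon G'\to G$; both are coarsely surjective and satisfy $\d(\phi'\circ\phi,Id_G)<\infty$ and $\d(\phi\circ\phi',Id_{G'})<\infty$, and since $\phi'$ is again a quasi-isomorphism it suffices in each clause to prove one implication, the converse following by symmetry. For clause (1) I would first note that a discrete group $K$ is almost commutative if and only if it is quasi-isomorphic to an abelian group. If $\C(K,K)$ is finite then $[K,K]$ is finite (B.~H.~Neumann), $A:=K/[K,K]$ is abelian, and any normalized set-theoretic section $s\colon A\to K$ of $\pi\colon K\to A$ has $D(s)\subseteq[K,K]$, so $s$ is a quasi-homomorphism and $\pi$ is a quasi-isomorphism with quasi-inverse $s$. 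Conversely, if $\psi\colon A\to K$ is a quasi-isomorphism with $A$ abelian and quasi-inverse $\psi'\colon K\to A$, then, because the defect of the quasi-homomorphism $\psi'$ into the abelian group $A$ accumulates additively and $\psi'(g^{-1})$ is uniformly close to $-\psi'(g)$, the element $\psi'([g,h])$ lies in a fixed bounded, hence finite, subset of $A$ for all $g,h\in K$; applying $\psi$ and using $\d(\psi\circ\psi',Id_K)<\infty$ together with properness of the metric on $K$ shows $\C(K,K)$ lies in a bounded neighbourhood of a finite set, so $K$ is almost commutative. Clause (1) then follows: if $G$ is almost commutative then $\pi\circ\phi'\colon G'\to G/[G,G]$ is a quasi-isomorphism onto an abelian group, so $G'$ is almost commutative.

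For clause (2), let $G$ have property (QFA) and let $\rho$ be a quasi-action of $G'$ on a tree $T$. By Lemma \ref{IntroLem: QHToQA} the coarsely surjective quasi-homomorphism $\phi\colon G\to G'$ induces a quasi-action of $G$ on $T$; by (QFA) it has a bounded orbit, and for a $(\lambda,\epsilon)$-quasi-action the boundedness of one orbit forces $d(gz,z)$ to be bounded independently of $g$ for every $z\in T$, so every $G$-orbit is bounded. Writing each $g'\in G'$ as $\phi(\phi'(g'))e$ with $e$ in a fixed finite set and using that $\rho(\phi(\phi'(g')),-)$ is a $(\lambda,\epsilon)$-quasi-isometry, one bounds $d(\rho(g',x_0),x_0)$ by a constant independent of $g'$; hence the $G'$-orbit of $x_0$ has finite diameter and $G'$ has property (QFA). (For this clause one does not need Proposition \ref{Prop: QAToA}.)

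Clauses (3) and (4) share a template. Start from the genuine isometric actions of $G$ on hyperbolic spaces $X_1,\dots,X_n$ (resp.\ quasi-trees $T_1,\dots,T_n$) witnessing (PH') (resp.\ (QT')); these are already cobounded on each factor in the (PH') case, while for (QT') one first replaces each $T_i$ by the quasi-convex hull of a $G$-orbit, which is again a quasi-tree on which $G$ acts coboundedly (the orbit of a finitely generated group is coarsely connected, hence coarsely equal to its hull), the diagonal orbit map into the $\ell^1$-product remaining a quasi-isometric embedding. Feeding these cobounded actions factor by factor into Lemma \ref{IntroLem: QHToQA} through the coarsely surjective quasi-homomorphism $\phi'\colon G'\to G$ produces quasi-actions of $G'$ on each factor — cobounded, since the $G$-actions are and $\phi'$ is coarsely surjective — and hence a diagonal quasi-action of $G'$ on the $\ell^1$-product. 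This diagonal quasi-action is proper in case (3), since the set of elements moving a point at most $R$ is $\phi'^{-1}$ of the finite set of elements of $G$ doing so, which is finite because $\d(\phi\circ\phi',Id_{G'})<\infty$ makes $\phi'$ coarsely injective and the metric on $G'$ is proper; and its orbit map is a quasi-isometric embedding in case (4), being the $G$-orbit map precomposed with $\phi'$, a genuine quasi-isometry by Lemma \ref{Lem: QIToQI}. Finally Proposition \ref{Prop: QAToA}, applied to each factor, quasi-conjugates these cobounded quasi-actions of $G'$ to genuine cobounded isometric actions on hyperbolic spaces (resp.\ quasi-trees); the product of the conjugating quasi-isometries quasi-conjugates the diagonal quasi-action to a diagonal isometric action, under which properness (resp.\ the quasi-isometric embedding property of the orbit map) persists. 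Thus $G'$ has (PH') (resp.\ (QT')), and by symmetry these are quasi-isomorphism invariants.

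The crux is clause (1): since $G'$ carries only a left-invariant, not bi-invariant, proper metric, a quasi-homomorphism need not send commutators near commutators, so there is no direct handle on $\C(G',G')$; the detour through an abelian target, where defects add, and the pull-back along a coarsely surjective quasi-inverse are what make it work. In clauses (3)--(4) the only delicate steps are the reduction to cobounded quasi-actions for (QT') — resting on the quasi-convex hull of a finitely generated group's orbit being coarsely the orbit itself — and checking that properness and the quasi-isometric-embedding condition survive both the passage through $\phi'$ and Manning's quasi-conjugacy; each of these is a short coarse-geometric verification once Lemma \ref{IntroLem: QHToQA} and Proposition \ref{Prop: QAToA} are in hand.
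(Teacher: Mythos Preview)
Your approaches to clauses (3) and (4) are essentially the paper's: induce quasi-actions factorwise via Lemma~\ref{IntroLem: QHToQA}, then straighten each factor with Proposition~\ref{Prop: QAToA}. Your coboundedness reduction for (QT') via the quasi-convex hull of an orbit is correct and is morally what the paper's citation of \cite[Remark 3.2]{Man06} does.

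Two points deserve correction. First, your claim at the end that ``a quasi-homomorphism need not send commutators near commutators'' is false, and your detour in clause~(1) through Neumann's theorem and an abelian quotient is unnecessary. The paper proves directly (Lemma~\ref{Lem: ComIsPre}) that $\phi(\C(G,G))\subset\mathcal N_C(\C(H,H))$: the key is that $\phi(G)$ \emph{almost normalizes} the defect set (Lemma~\ref{Lem: QHProperties}(\ref{conjugate}): $h^{-1}Dh\subset D^2D^{-1}$ for $h\in\phi(G)$), which lets you commute a defect element past $\phi(y)$ at bounded cost even in a merely left-invariant metric. Clause~(1) then follows immediately by applying this to $\phi$ and to $\phi'$. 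Your argument is correct, but it imports an external theorem to avoid a difficulty that isn't there.

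Second, there is a genuine gap in clause~(2). You invoke Lemma~\ref{IntroLem: QHToQA} to induce a quasi-action of $G$ on the tree $T$, but that lemma assumes the $G'$-quasi-action on $T$ is \emph{cobounded}; an arbitrary quasi-action on a tree need not be, and without coboundedness the map $(g,x)\mapsto\phi(g)x$ is not guaranteed to be a quasi-action (you need the defect elements to have uniformly bounded displacement, and the proof of Lemma~\ref{Lem: QHToQA} obtains this precisely from coboundedness plus Lemma~\ref{Lem: QHProperties}(\ref{conjugate})). The paper patches this by citing \cite[Remark 3.2]{Man06} to reduce to the cobounded case; you could equally pass to the convex hull of a $G'$-orbit in $T$, as you do for (QT'). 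Without one of these reductions, your appeal to the lemma is unjustified.
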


In \cite{TW25}, Tao and the author proved the stability of properties (QT), (QT'), (PH) and (PH') under left quasi-split central extensions. Here, we give a partial generalization of \cite[Theorems 1.7, 1.8]{TW25}. A group is called \textit{FZ} if its center is of finite index.

\begin{theorem}\label{IntroThm: PH&QTStability}
    Let $1\to H\to G\to Q\to 1$ be a short exact sequence of finitely generated groups which is left quasi-split. Then 
    \begin{enumerate}
        \item $G$ has property (QFA) if and only if  both $H$ and $Q$ have property (QFA).
        \item $G$ has property (PH') or (QT') if and only if  both $H$ and $Q$ have property (PH') or (QT'), respectively.
        \item suppose that $H$ is FZ. Then $G$ has property (PH) or (QT) if and only if  $Q$ has property (PH) or (QT), respectively.
    \end{enumerate}
\end{theorem}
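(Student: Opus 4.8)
The plan is to reduce parts~(1) and~(2) to a statement about direct products, and then to deduce part~(3) from part~(2). Since the sequence is left quasi-split, Theorem~\ref{IntroThm: QSSES} supplies a quasi-retraction $r\colon G\to H$ (which is coarsely surjective, because $r\circ\iota$ is equivalent to $Id_H$ and $H\subseteq G$) together with a strict quasi-isomorphism $\phi\colon G\to H\times Q$. As $G,H,Q$, hence $H\times Q$, are finitely generated, applying Proposition~\ref{IntroProp: QIInv} to $\phi$ shows that for each $P\in\{\text{(QFA)},\text{(PH')},\text{(QT')}\}$ one has: $G$ has $P$ $\iff$ $H\times Q$ has $P$. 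So parts~(1) and~(2) reduce to the splitting
\[
(\star)\qquad H\times Q\ \text{has}\ P\ \iff\ H\ \text{has}\ P\ \text{and}\ Q\ \text{has}\ P ,
\]
which I would prove using that $H$ and $Q$ are normal retracts of $H\times Q$.

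For ``$\Leftarrow$'' in $(\star)$: given cobounded actions of $H$ on hyperbolic spaces (resp.\ quasi-trees) $X_1,\dots,X_n$ and of $Q$ on $Y_1,\dots,Y_m$ with proper $\ell^1$-diagonals, make $H\times Q$ act on each $X_i$ through the projection onto $H$ and on each $Y_j$ through the projection onto $Q$; these actions are cobounded, and the $\ell^1$-diagonal of $H\times Q$ on $(\prod_iX_i)\times(\prod_jY_j)$ is proper, since bounding all displacements forces the $H$-component of the group element into a finite set (read off the $X$-coordinates) and the $Q$-component into a finite set (read off the $Y$-coordinates). For (QFA) this is the assertion that (QFA) is stable under extensions, proved as the quasi-action analogue of Serre's argument for property (FA): a quasi-action of $H\times Q$ on a tree restricts on $H$ to one with a bounded orbit, so $H$ coarsely fixes a point; as $Q$ commutes with $H$ it quasi-acts on a subtree on which $H$ has uniformly bounded displacement, and (QFA) of $Q$ produces a bounded $Q$-orbit there, which is $(H\times Q)$-bounded. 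For ``$\Rightarrow$'' in $(\star)$: the (QFA) case is immediate, since precomposing a quasi-action of $H$ (resp.\ $Q$) on a tree with the honest retraction $H\times Q\to H$ (resp.\ $\to Q$) yields one of $H\times Q$ with the same orbits. For (PH') and (QT'): if $H\times Q$ acts coboundedly on hyperbolic spaces (resp.\ quasi-trees) $Z_1,\dots,Z_k$ with proper $\ell^1$-diagonal, then since $H$ is a normal quasi-retract of $H\times Q$, the earlier result that normal quasi-retracts inherit cobounded actions on hyperbolic spaces (resp.\ quasi-trees) produces, for each $l$, a hyperbolic space (resp.\ quasi-tree) $Z_l^H$ with a cobounded $H$-action; taking $Z_l^H$ to be a quasiconvex hull of an $H$-orbit in $Z_l$ (or a point when that orbit is bounded) keeps the displacement comparable, $d_{Z_l^H}(\cdot,h\cdot)\asymp d_{Z_l}(\cdot,h\cdot)$, so properness of the $H$-diagonal on $\prod_lZ_l^H$ follows from that of the $(H\times Q)$-diagonal on $\prod_lZ_l$ restricted to $H$. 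The same argument for $Q$ finishes $(\star)$, hence parts~(1) and~(2); the passage from quasi-actions to honest isometric actions, where needed, is Proposition~\ref{Prop: QAToA}.

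For part~(3), note first that a finitely generated \emph{FZ} group $H$ already has (PH') and (QT'): $Z(H)$ is a finite-index, finitely generated abelian subgroup, so $H$ contains a finite-index \emph{central} subgroup $N\cong\Z^d$; writing $h=\sigma(\bar h)\beta(h)$ with $\beta(h)\in N$ for a normalized set-section $\sigma$ of $H\to H/N$, the map $f:=\beta\colon H\to N$ is a quasi-retraction (its defect set is the finite set of values of the $2$-cocycle of $1\to N\to H\to H/N\to1$, and $f|_N=Id_N$), so $1\to N\to H\to H/N\to1$ is left quasi-split, and since $\Z^d$ and the finite group $H/N$ have (PH') and (QT'), part~(2) passes these to $H$. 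Now let $P\in\{\text{(PH')},\text{(QT')}\}$ with unprimed (``virtual'') version $P'$. If $Q$ has $P'$, choose $Q_0\le Q$ of finite index with property $P$ and set $G_0:=\pi^{-1}(Q_0)$: this is finite-index in $G$ and contains $H$, so $r|_{G_0}$ is still a quasi-retraction onto $H$, $1\to H\to G_0\to Q_0\to1$ is left quasi-split with both $H$ and $Q_0$ having $P$, and part~(2) gives that $G_0$ has $P$, hence $G$ has $P'$. Conversely, if $G$ has $P'$, choose $G_0\le G$ of finite index with $P$ and consider $1\to H\cap G_0\to G_0\to\pi(G_0)\to1$; using that $H$ is FZ one checks that $H\cap G_0$ is a quasi-retract of $H$ (the $f$ above differs from $Id_H$ by a bounded amount, and composing it with a ``rounding'' quasi-retraction onto the finite-index subgroup $N\cap G_0$ lands inside $H\cap G_0$), so $r|_{G_0}$ followed by this is a quasi-retraction $G_0\to H\cap G_0$, the sequence is left quasi-split, and part~(2) gives that $\pi(G_0)$ has $P$, hence $Q$ has $P'$.

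The step I expect to be the main obstacle is the ``$\Rightarrow$'' half of $(\star)$ for (PH') and (QT'): transporting the hyperbolic-space (resp.\ quasi-tree) models from $H\times Q$ down to its retracts $H$ and $Q$ while keeping displacement functions comparable, so that properness of the diagonal action is preserved. The other point needing care is the finite-index bookkeeping in part~(3) that keeps the short exact sequence left quasi-split after passing to finite-index subgroups — which is precisely where the FZ hypothesis on $H$ is used.
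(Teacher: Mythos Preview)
Your proposal is correct and follows essentially the same architecture as the paper: reduce to $H\times Q$ via the strict quasi-isomorphism from Theorem~\ref{IntroThm: QSSES} and the quasi-isomorphism invariance of (QFA), (PH'), (QT') (Proposition~\ref{IntroProp: QIInv}), then handle part~(3) by passing to finite-index subgroups and showing the resulting short exact sequence is again left quasi-split using the FZ hypothesis. The only real difference is that the paper outsources the direct-product splitting $(\star)$ to a cited lemma (\cite[Lemma~7.1]{TW25}), whereas you sketch it directly; your sketch is fine, though the ``quasiconvex hull'' phrasing is unnecessary --- by \cite[Lemma~4.20]{ABO19} (Fact~\ref{Fact: HypGeo}(\ref{fac4})) the $H$-action on each $Z_l$ is already either elliptic or cobounded on $Z_l$ itself, so one takes $Z_l^H\in\{Z_l,\mathrm{pt}\}$ and the displacement comparison is immediate. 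One small slip: for (QT') you should track that the orbit map is a quasi-isometric embedding, not merely that the diagonal action is proper; the same restriction argument goes through.
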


\paragraph{\textbf{Hyperbolic structures}} For every group $G$, Abbott-Balasubramanya-Osin \cite{ABO19} defined the set of \textit{hyperbolic structures} on $G$, denoted by $\H(G)$, which consists of equivalence classes of (usually infinite) generating sets of $G$ such that the corresponding Cayley graph is hyperbolic; two generating sets of $G$ are \textit{equivalent} if the corresponding word metrics on $G$ are bi-Lipschitz equivalent. See \textsection \ref{subsec: HypSpace} or \cite{ABO19} for more details about hyperbolic structures on groups.

Finally, we obtain a relationship on hyperbolic structures between a group and its quasi-retracts.

\begin{proposition}\label{IntroProp: HypStr}
    \begin{enumerate}
        \item Let $\phi: G\to H$ be a coarsely surjective quasi-homomorphism. Then $\H(H)$ embeds in $\H(G)$ as a sub-poset.
        \item Let $G$ and $G'$ be two discrete quasi-isomorphic groups. Then $\H(G)\cong \H(G')$.
    \end{enumerate}
\end{proposition}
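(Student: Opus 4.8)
The plan is to run everything through the dictionary of \cite{ABO19} recalled in \textsection\ref{subsec: HypSpace}: an element of $\H(G)$ is represented by a cobounded action of $G$ on a hyperbolic space $X$, equivalently by the left-invariant \emph{orbit pseudometric} $d^{o}_{X}(g_1,g_2)=d_{X}(g_1x_0,g_2x_0)$ on $G$; two such represent the same structure iff the pseudometrics are equivalent in the relevant sense, and $[X]\preceq[Y]$ iff $d^{o}_{X}\le\lambda d^{o}_{Y}+c$ for some constants. Throughout I will use only the elementary fact that for any left-invariant pseudometric $d$ on a group, writing $|h|_{d}:=d(1,h)$, one has the subadditive estimate $|b_1^{-1}wb_2|_{d}\le|w|_{d}+|b_1|_{d}+|b_2|_{d}$, and that a \emph{finite} subset of the group has bounded $|\cdot|_{d}$.

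For part (1): given a cobounded action of $H$ on a hyperbolic space $X$, Lemma \ref{IntroLem: QHToQA} produces a cobounded quasi-action of $G$ on $X$ via $\phi$, and Proposition \ref{Prop: QAToA} upgrades it to a genuine cobounded action of $G$ on a hyperbolic space, whose orbit pseudometric is equivalent to the pullback $\phi^{*}d^{o}_{X}$ given by $(\phi^{*}d^{o}_{X})(g_1,g_2)=d^{o}_{X}(\phi(g_1),\phi(g_2))$; since $D(\phi)$ is finite, the subadditive estimate shows $\phi^{*}d^{o}_{X}$ is left-invariant up to a bounded additive error, so this defines a map $\Phi_{*}:\H(H)\to\H(G)$. (The upgrade step is genuinely used: $\phi(G)$ is merely a subset of $X$, and a subspace of a hyperbolic space need not be hyperbolic.) Well-definedness of $\Phi_{*}$ on equivalence classes and its being order-preserving are direct --- for the former one can invoke the ``moreover'' clause of Lemma \ref{IntroLem: QHToQA} (quasi-conjugate cobounded quasi-actions have equivalent orbit pseudometrics), for the latter one simply restricts a domination inequality to $\phi(G)$. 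The substantive point is that $\Phi_{*}$ is order-\emph{reflecting}.

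That reduces to the following claim: \emph{if $\phi$ is coarsely surjective and $d,d'$ are left-invariant pseudometrics on $H$ arising from hyperbolic structures, then $\phi^{*}d\le\lambda\,\phi^{*}d'+c$ on $G$ forces $d\le\lambda'd'+c'$ on $H$.} Indeed, given $h_1,h_2\in H$, choose $g_1,g_2\in G$ with $d_{H}(\phi(g_i),h_i)\le C$ by coarse surjectivity with respect to the fixed proper metric $d_{H}$; then $\phi(g_i)=h_ib_i$ with $b_1,b_2$ in the \emph{finite} ball $B=\{b\in H:|b|_{d_{H}}\le C\}$, and the subadditive estimate gives $\bigl|\,|h_1^{-1}h_2|_{d}-|\phi(g_1)^{-1}\phi(g_2)|_{d}\,\bigr|\le 2\max_{b\in B}|b|_{d}<\infty$, and similarly for $d'$. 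Substituting into the hypothesis yields the conclusion. Hence $\Phi_{*}$ is an order-embedding, so in particular injective, and $\H(H)$ embeds into $\H(G)$ as a sub-poset.

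For part (2): if $\phi:G\to G'$ is a quasi-isomorphism with quasi-inverse $\phi'$, then $\phi$ and $\phi'$ are both coarsely surjective (since $\phi\circ\phi'\simeq Id_{G'}$ and $\phi'\circ\phi\simeq Id_{G}$), so part (1) gives poset embeddings $\Phi_{*}:\H(G')\to\H(G)$ and $\Phi'_{*}:\H(G)\to\H(G')$; it remains to see they are mutually inverse. Inducing along $\phi$ and then along $\phi'$ coincides with inducing along $\phi\circ\phi'$, and boundedly-close quasi-homomorphisms induce quasi-conjugate quasi-actions; since $\phi\circ\phi'$ is within bounded $d_{G'}$-distance of $Id_{G'}$, the same finite-ball estimate gives $(\phi\circ\phi')^{*}d^{o}\sim d^{o}$ for every orbit pseudometric on $G'$. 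Thus $\Phi'_{*}\circ\Phi_{*}=Id_{\H(G')}$ and symmetrically $\Phi_{*}\circ\Phi'_{*}=Id_{\H(G)}$, so $\H(G)\cong\H(G')$ as posets. I expect the only real work to be bookkeeping: fixing the precise equivalence and order on $\H$ from \cite{ABO19}, checking that Manning's quasi-action-to-action upgrade respects them, and verifying the functoriality-up-to-quasi-conjugacy of the inducing operation; the one conceptual ingredient is that finite subsets have bounded length in any hyperbolic structure, which is exactly what lets coarse surjectivity of $\phi$ transport comparisons of pseudometrics in both directions.
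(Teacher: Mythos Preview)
Your proposal is correct and follows essentially the same route as the paper: build the map $\H(H)\to\H(G)$ by inducing cobounded quasi-actions via Lemma~\ref{IntroLem: QHToQA} and upgrading them to genuine actions through Manning's Proposition~\ref{Prop: QAToA}, and for part (2) check that the maps induced by $\phi$ and its quasi-inverse $\phi'$ are mutual inverses. The one notable addition is that you phrase everything through orbit pseudometrics and explicitly verify order-\emph{reflecting} in part (1) via the coarse-surjectivity estimate, whereas the paper's Proposition~\ref{Prop: EmbHypStr} only records that the map is an order-preserving injection.
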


\subsection*{Structure of the paper}
The paper is organized as follows. Section \ref{sec: Preliminary} is devoted to defining some notations and studying some elementary properties of quasi-homomorphisms and quasi-isomorphisms. In Section \ref{sec: quasi-retraction}, we first study some basic properties of quasi-retractions and then characterize the quasi-retracts of torsion-free hyperbolic groups and some more general groups. In Section \ref{sec: QSSES}, we introduce the classical theory of split short exact sequences and prove Theorem \ref{IntroThm: QSSES}. By utilizing Theorem \ref{IntroThm: QSSES}, we complete the proof of Theorem \ref{IntroThm2}. In Section \ref{sec: IndQA}, we recall some basic material about group actions on hyperbolic spaces, and then prove Proposition \ref{IntroProp: QIInv}, Theorem \ref{IntroThm: PH&QTStability} and Proposition \ref{IntroProp: HypStr} in turn.

\subsection*{Acknowledgments}
We are grateful to Zhenguo Huangfu and Bingxue Tao for many helpful suggestions on the first draft. R. W. is supported by NSFC No.12471065 \& 12326601 and in part by Science and Technology Commission of Shanghai Municipality (No. 22DZ2229014).

\section{Preliminaries}\label{sec: Preliminary}

\subsection{Definitions and notations}
Let $H$ be a group with a proper left-invariant metric $d$. For a subset $A\subset H$ and a constant $C>0$, we denote $\mathcal N_C(A):=\{h\in H: d(h,A)\le C\}$.

Let $S\subset H$ be a finite subset. For convenience, we will always write  $$h\sim_{S}h'$$ if $h = h's$ with $h, h' \in H, s \in S$. For example, for a quasi-homomorphism $\phi : G \to H$ with a defect set $D = D(\phi)$, by the definition, $$\phi(ab) \sim_D \phi(a)\phi(b) \text{ and } \phi(abc)\sim_D\phi(a)\phi(bc)\sim_D\phi(a)\phi(b)\phi(c) $$ for $a, b,c \in G$. 

\iffalse
\begin{defn}\cite[Definition 2.1]{FK16}\label{Def: AlmostHomo}
    Suppose that a map $\phi : G \to H$ between groups has the property that $\phi(G)$ is contained in a subgroup $J<H$, $J$ contains a finite normal subgroup $K\lhd J$, such that the projection $\bar \phi: G\to \bar J=J/K$ is a homomorphism. We then will refer to $\phi$ as an \textit{almost homomorphism}, it is a homomorphism modulo a finite normal subgroup (in the range of $\phi$).
\end{defn}

Clearly, every almost homomorphism is a quasi-homomorphism.
\fi

For a subset $D$ of a group $H$ and $n \ge 2$ we will use the notation $D^n$ to denote
the subset of $H$ consisting of products of at most $n$ elements of $D$. More generally,
for two subsets $A, B \subset H$ we let
$$A \cdot B = \{ab : a \in A, b \in B\}.$$
We will use the notation $D^{-1}$ for the set of inverses of elements of $D$. Then $$h\sim_D h'\Longleftrightarrow h'\sim_{D^{-1}}h.$$

Given a group $H$ and its subgroup $A$ we let $N_H(A)$ and $Z_H(A)$ denote the normalizer and the centralizer of $A$ in $H$ respectively. %We also denote $\langle\langle A\rangle\rangle$ as the normal closure of $A$, i.e., the smallest normal subgroup containing $A$  in $H$.

Given a group $G$ with a finite generating set $S$, we denote by $d_S$ the word metric on $G$.

Given a quasi-homomorphism $\phi: G\to H$, we denote by $\Delta_{\phi}$ the \textit{defect subgroup} of $H$ which is finitely generated by $D(\phi)$.

\subsection{Elementary properties of quasi-homomorphisms}

We collect some basic facts about quasi-homomorphisms. See \cite[Subsections 2.2, 2.3, 3.1]{FK16} for their proofs.
\begin{lemma}\label{Lem: QHProperties}
    Let $\phi:G\to H$ be a quasi-homomorphism with $D=D(\phi)$. 
    \begin{enumerate}
        \item\label{def} For any $x_1,\cdots,x_n\in G$, $\phi(x_1\cdots x_n)\sim_{D^{n-1}}\phi(x_1)\cdots \phi(x_n)$.
        \item\label{inverse} For any $x\in G$, $\phi(x)^{-1}\sim_{D^2}\phi(x^{-1})$.
        \item\label{conjugate} For any $h\in \phi(G)$, $h^{-1}Dh\subset D^2D^{-1}$.
        \item\label{normalizer} $\phi(G)\cup \phi(G)^{-1}\subset N_H(\Delta_{\phi})$ and the induced map $\bar \phi: G\to N_H(\Delta_{\phi})/\Delta_{\phi}$ is a homomorphism.
        \item\label{centralizer} There exists a constant $C$ such that $\phi(G)\subset \mathcal N_C(Z_H(\Delta_{\phi}))$.
    \end{enumerate}
\end{lemma}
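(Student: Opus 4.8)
The plan is to prove Lemma~\ref{Lem: QHProperties} piece by piece, since these are all elementary consequences of the defining inequality $\phi(xy)\sim_D\phi(x)\phi(y)$; I will freely use the $\sim_S$ notation introduced above and the fact that $h\sim_Dh'\iff h'\sim_{D^{-1}}h$.

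For part~(\ref{def}), I would induct on $n$, the base case $n=2$ being the definition. Writing $x_1\cdots x_n=(x_1\cdots x_{n-1})x_n$ and applying the definition once gives $\phi(x_1\cdots x_n)\sim_D\phi(x_1\cdots x_{n-1})\phi(x_n)$, and by induction $\phi(x_1\cdots x_{n-1})\sim_{D^{n-2}}\phi(x_1)\cdots\phi(x_{n-1})$; right-multiplying the latter relation by $\phi(x_n)$ and concatenating gives $\phi(x_1\cdots x_n)\sim_{D^{n-1}}\phi(x_1)\cdots\phi(x_n)$. For part~(\ref{inverse}), apply the definition to the pair $(x,x^{-1})$: $\phi(1)\sim_D\phi(x)\phi(x^{-1})$, and separately $\phi(1)\sim_D\phi(1)\phi(1)$ forces $\phi(1)\in D^{-1}$ (taking $x=y=1$). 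Hence $\phi(x)\phi(x^{-1})=\phi(1)d$ for some $d\in D$, so $\phi(x^{-1})=\phi(x)^{-1}\phi(1)d$, i.e. $\phi(x)^{-1}\sim_{(D^{-1})^{-1}\cdot\ \cdots}\phi(x^{-1})$; keeping careful track, $\phi(x^{-1})\in\phi(x)^{-1}\cdot D^{-1}\cdot D$, which is contained in $\phi(x)^{-1}\cdot D^2$ after noting $D^{-1}\subset D^2$ is \emph{not} automatic---instead I will simply record the bound as $\phi(x)^{-1}\sim_{D^2}\phi(x^{-1})$ by absorbing $\phi(1)\in D^{-1}$ and the trailing $d$ into a product of two defect elements, which is exactly the content of \cite[Subsection 2.2]{FK16}.

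For part~(\ref{conjugate}), take $h=\phi(g)$ and $d\in D$, so $d=\phi(b)^{-1}\phi(a)^{-1}\phi(ab)$ for some $a,b$; then $h^{-1}dh=\phi(g)^{-1}\phi(b)^{-1}\phi(a)^{-1}\phi(ab)\phi(g)$, and I would repeatedly use the definition to reassociate: $\phi(ab)\phi(g)\sim_D\phi(abg)$ and $\phi(a)\phi(bg)\sim_D\phi(abg)$ while $\phi(b)\phi(g)\sim_D\phi(bg)$, so that after substitution $h^{-1}dh$ becomes a product of the form $\phi(bg)^{-1}\phi(abg)$ times correction terms from $D$; grouping shows $h^{-1}dh\in D^2D^{-1}$. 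Part~(\ref{normalizer}) then follows formally: (\ref{conjugate}) and the analogous computation for $h^{-1}$ show $\phi(G)\cup\phi(G)^{-1}$ conjugates $D$ into the subgroup $\Delta_\phi=\langle D\rangle$, hence normalizes $\Delta_\phi$; and since $\phi(xy)$ and $\phi(x)\phi(y)$ differ by an element of $D\subset\Delta_\phi$, the composite $\bar\phi\colon G\to N_H(\Delta_\phi)/\Delta_\phi$ is a genuine homomorphism.

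The part that genuinely requires an idea, and which I expect to be the main obstacle, is part~(\ref{centralizer}): producing a uniform constant $C$ with $\phi(G)\subset\mathcal N_C(Z_H(\Delta_\phi))$. Here I would argue as follows. By (\ref{normalizer}), conjugation by any $h\in\phi(G)$ induces an automorphism of the finitely generated group $\Delta_\phi$, and by (\ref{conjugate}) this automorphism sends each generator $d\in D$ into the fixed finite set $D^2D^{-1}\subset\Delta_\phi$; consequently conjugation by $\phi(G)$ induces only finitely many automorphisms of $\Delta_\phi$ (an automorphism is determined by where it sends the finite generating set $D$, and there are only finitely many targets). Partition $\phi(G)$ into the finitely many fibers according to the induced automorphism; within each fiber, any two elements $h,h'$ satisfy $h^{-1}h'\in Z_H(\Delta_\phi)$, so each fiber lies in a single coset $h_0Z_H(\Delta_\phi)$ of the centralizer. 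Picking one representative $h_i$ per fiber and letting $C=\max_i d(1,h_i)$ (finite, using properness of $d$ and finiteness of the index set), every $\phi(g)$ lies within $C$ of $Z_H(\Delta_\phi)$ after the appropriate left translation---more precisely $d(\phi(g), Z_H(\Delta_\phi))\le d(\phi(g), h_i Z_H(\Delta_\phi))$ is controlled once one checks that $h_i^{-1}\phi(g)\in Z_H(\Delta_\phi)$ and uses left-invariance to bound $d(\phi(g),\phi(g)h_i^{-1}\cdot(\text{something}))$; the cleanest statement is that $\phi(g)\in h_i Z_H(\Delta_\phi)$ so $d(\phi(g),Z_H(\Delta_\phi))\le d(1,h_i)\le C$. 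All of this is spelled out in \cite[Subsection 3.1]{FK16}, so in the paper I would simply cite it; the sketch above indicates why the statement is true.
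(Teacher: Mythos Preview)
Your proposal is correct and follows precisely the arguments of \cite[Subsections 2.2, 2.3, 3.1]{FK16}, which is exactly what the paper does: it gives no proof of its own and simply cites that reference. Your sketch of part~(\ref{inverse}) has minor bookkeeping slips (the correction from $\phi(1)\sim_D\phi(x)\phi(x^{-1})$ yields $\phi(x)\phi(x^{-1})=\phi(1)d^{-1}$ with $d\in D$, so $\phi(x^{-1})\in\phi(x)^{-1}D^{-1}D^{-1}$, not $D^{-1}D$), but you arrive at the right statement $\phi(x)^{-1}\sim_{D^2}\phi(x^{-1})$; the remaining parts, including the finiteness-of-induced-automorphisms argument for~(\ref{centralizer}), are exactly the Fujiwara--Kapovich proofs.
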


As a corollary, we have
\begin{lemma}\label{Lem: SujQHImpEle}
    Let $\phi: G\to H$ be a surjective quasi-homomorphism. Then the defect subgroup $\Delta_{\phi}$ is a virtually abelian normal subgroup of $H$. %In particular, if $r: G\to H$ is a quasi-retraction, then $\Delta_{\phi}$ is a virtually abelian normal subgroup of $H$.
\end{lemma}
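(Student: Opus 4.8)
The plan is to extract everything from Lemma \ref{Lem: QHProperties}, applied with $\phi(G) = H$ since $\phi$ is surjective. First I would verify that $\Delta_\phi$ is normal in $H$. By part \eqref{normalizer} of Lemma \ref{Lem: QHProperties}, $\phi(G) \cup \phi(G)^{-1} \subset N_H(\Delta_\phi)$; surjectivity gives $H = \phi(G) \subset N_H(\Delta_\phi)$, so $N_H(\Delta_\phi) = H$, i.e.\ $\Delta_\phi \lhd H$. (Alternatively one can see normality directly from part \eqref{conjugate}: $h^{-1} D h \subset D^2 D^{-1}$ for every $h \in H$, and since the right-hand side is a finite subset of $\Delta_\phi$, conjugation by any $h$ sends the generating set $D$ into $\Delta_\phi$, hence sends $\Delta_\phi$ into $\Delta_\phi$; applying this to $h^{-1}$ as well gives equality.)

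Next I would show $\Delta_\phi$ is virtually abelian. By part \eqref{centralizer}, there is a constant $C$ with $\phi(G) \subset \mathcal N_C(Z_H(\Delta_\phi))$; again using surjectivity, $H \subset \mathcal N_C(Z_H(\Delta_\phi))$, so every element of $H$ lies within distance $C$ of the centralizer $Z:=Z_H(\Delta_\phi)$. Because the metric $d$ on $H$ is proper and left-invariant, the ball $B = \{h \in H : d(h,1) \le C\}$ is finite; and $H = \mathcal N_C(Z)$ means $H = Z \cdot B$, so $Z$ has finite index in $H$. In particular $Z \cap \Delta_\phi$ has finite index in $\Delta_\phi$. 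But $Z \cap \Delta_\phi = Z_H(\Delta_\phi) \cap \Delta_\phi = Z(\Delta_\phi)$ is the center of $\Delta_\phi$ (an element of $\Delta_\phi$ commuting with all of $\Delta_\phi$). Thus $\Delta_\phi$ has a finite-index central — hence abelian — subgroup, so $\Delta_\phi$ is virtually abelian.

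I do not anticipate a genuine obstacle here: the statement is essentially a packaging of parts \eqref{conjugate}, \eqref{normalizer}, \eqref{centralizer} of Lemma \ref{Lem: QHProperties} under the hypothesis of surjectivity. The only point requiring a little care is the passage ``$\mathcal N_C(Z)$ finite-index coset argument,'' where one must invoke properness of $d$ to know that a $C$-ball is finite, so that $H = Z\cdot B$ forces $[H:Z] < \infty$; this finiteness of balls is exactly the standing assumption on $H$ recorded at the start of \textsection\ref{sec: Preliminary} (and in the introduction). Everything else is immediate.
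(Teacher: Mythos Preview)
Your proof is correct and follows essentially the same approach as the paper's: both use Lemma \ref{Lem: QHProperties} \eqref{normalizer} to get normality and \eqref{centralizer} to get $[H:Z_H(\Delta_\phi)]<\infty$, then intersect with $\Delta_\phi$ to obtain a finite-index abelian subgroup. Your version is slightly more explicit (spelling out the $H=Z\cdot B$ coset argument from properness of $d$, and noting that $Z_H(\Delta_\phi)\cap\Delta_\phi$ is in fact $Z(\Delta_\phi)$), but there is no substantive difference.
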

\begin{proof}
    Since $\phi$ is surjective, Lemma \ref{Lem: QHProperties} (\ref{normalizer}) shows that  $H=\phi(G)=N_H(\Delta_{\phi})$. Thus $\Delta_{\phi}\lhd H$. By Lemma \ref{Lem: QHProperties} (\ref{centralizer}), there exists a constant $C$ such that $\phi(G)=H\subset \mathcal N_C(Z_H(\Delta_{\phi}))$. This implies that $[H: Z_H(\Delta_{\phi})]<\infty$. Denote $\Delta_{\phi}'=\Delta_{\phi}\cap Z_H(\Delta_{\phi})$. Note that $\Delta_{\phi}'$ is an abelian group. According to the isomorphism theorem of groups, $[\Delta_{\phi}: \Delta_{\phi}']\le [H: Z_H(\Delta_{\phi})]<\infty$, which implies that $\Delta_{\phi}$ is virtually abelian.
\end{proof}

\begin{lemma}\label{Lem: inverse}
    Let $\phi:G\to H$ be a quasi-homomorphism with $D=D(\phi)$. For any $x,y\in G$,  $\phi(x^{-1}y)\sim_D\phi(x^{-1})\phi(y)\sim_{(D^2D^{-1})^{-2}}\phi(x)^{-1}\phi(y)$.
\end{lemma}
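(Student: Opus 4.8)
The plan is to chain together the definition of the defect set with parts (\ref{inverse}) and (\ref{conjugate}) of Lemma \ref{Lem: QHProperties}. The first relation $\phi(x^{-1}y)\sim_D\phi(x^{-1})\phi(y)$ is immediate: applying the definition of $D=D(\phi)$ to the pair $(x^{-1},y)$ shows $\phi(y)^{-1}\phi(x^{-1})^{-1}\phi(x^{-1}y)\in D$, which is precisely this assertion with our convention for $\sim$.

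For the second relation I would first invoke Lemma \ref{Lem: QHProperties}(\ref{inverse}) to write $\phi(x)^{-1}=\phi(x^{-1})s$ for some $s\in D^2$, equivalently $\phi(x^{-1})=\phi(x)^{-1}s^{-1}$ with $s^{-1}\in D^{-2}$. Multiplying on the right by $\phi(y)$ and inserting $\phi(y)\phi(y)^{-1}$ gives
$$\phi(x^{-1})\phi(y)=\phi(x)^{-1}\phi(y)\cdot\bigl(\phi(y)^{-1}s^{-1}\phi(y)\bigr),$$
so the whole problem reduces to checking that $u:=\phi(y)^{-1}s^{-1}\phi(y)$ lies in $(D^2D^{-1})^{-2}$. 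To this end, write $s^{-1}=d_1^{-1}d_2^{-1}$ with $d_1,d_2\in D$, so that $u=(\phi(y)^{-1}d_1^{-1}\phi(y))(\phi(y)^{-1}d_2^{-1}\phi(y))$. Since $\phi(y)\in\phi(G)$, Lemma \ref{Lem: QHProperties}(\ref{conjugate}) gives $\phi(y)^{-1}d_i\phi(y)\in D^2D^{-1}$, hence $\phi(y)^{-1}d_i^{-1}\phi(y)=(\phi(y)^{-1}d_i\phi(y))^{-1}\in(D^2D^{-1})^{-1}$ for $i=1,2$, and therefore $u\in(D^2D^{-1})^{-1}\cdot(D^2D^{-1})^{-1}=(D^2D^{-1})^{-2}$, as required.

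I do not expect a genuine obstacle here: the argument is a short bookkeeping of the three cited facts. The only points needing care are keeping track of the direction of $\sim$ (so that $(D^2D^{-1})^{-2}$, and not its inverse, is the set that appears), noting that the ``at most $n$ elements'' convention makes the single cases ($s$ trivial, etc.) automatic, and recording that Lemma \ref{Lem: QHProperties}(\ref{conjugate}) applies because $\phi(y)\in\phi(G)$.
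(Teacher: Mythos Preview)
Your proof is correct and follows essentially the same route as the paper: both use Lemma \ref{Lem: QHProperties}(\ref{inverse}) to write $\phi(x^{-1})=\phi(x)^{-1}a_1a_2$ with $a_1,a_2\in D^{-1}$, insert $\phi(y)\phi(y)^{-1}$, and then apply Lemma \ref{Lem: QHProperties}(\ref{conjugate}) to each conjugate $\phi(y)^{-1}a_i\phi(y)$ to land in $(D^2D^{-1})^{-2}$. The only difference is notational (you write $d_i^{-1}$ where the paper writes $a_i$), and your closing remarks on the direction of $\sim$ and the ``at most $n$'' convention are well taken.
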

\begin{proof}
    By Lemma \ref{Lem: QHProperties} (\ref{inverse}), there exist $a_1,a_2\in D^{-1}$ such that $\phi(x^{-1})=\phi(x)^{-1}a_1a_2$. Then by Lemma \ref{Lem: QHProperties} (\ref{conjugate}), one has $$\phi(x^{-1})\phi(y)=\phi(x)^{-1}a_1a_2\phi(y)=\phi(x)^{-1}\phi(y)\cdot \phi(y)^{-1}a_1\phi(y)\cdot \phi(y)^{-1}a_2\phi(y)\sim_{(D^2D^{-1})^{-2}}\phi(x)^{-1}\phi(y).$$
\end{proof}

Recall that the commutator set of a group $G$ is defined as: $\C(G,G)=\{[x,y]:x,y\in G\}$.
\begin{lemma}\label{Lem: ComIsPre}
    Let $\phi:G\to H$ be a quasi-homomorphism with $D=D(\phi)$. Then there exists a constant $C$ such that $\phi(\C(G,G))\subset \mathcal N_C(\C(H,H))$.
\end{lemma}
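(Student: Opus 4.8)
The natural candidate for a commutator in $H$ close to $\phi([x,y])$ is $[\phi(x),\phi(y)]$, so the plan is to compare the two. First I would use Lemma \ref{Lem: QHProperties} (\ref{def}) with $n=4$ to get $s_1\in D^3$ with $\phi([x,y])=\phi(xyx^{-1}y^{-1})=\phi(x)\phi(y)\phi(x^{-1})\phi(y^{-1})\,s_1$, and then Lemma \ref{Lem: QHProperties} (\ref{inverse}) to write $\phi(x^{-1})=\phi(x)^{-1}a$ and $\phi(y^{-1})=\phi(y)^{-1}b$ with $a,b\in D^{-2}$. Inserting $\phi(y)\phi(y)^{-1}$ in the middle gives the algebraic identity
$$\phi(x)\phi(y)\phi(x^{-1})\phi(y^{-1})=\phi(x)\phi(y)\phi(x)^{-1}\phi(y)^{-1}\cdot\bigl(\phi(y)a\phi(y)^{-1}\bigr)\cdot b=[\phi(x),\phi(y)]\cdot\bigl(\phi(y)a\phi(y)^{-1}\bigr)\cdot b .$$
So everything reduces to showing that the conjugate $\phi(y)a\phi(y)^{-1}$ ranges over a fixed \emph{finite} subset of $H$ as $y$ and $a\in D^{-2}$ vary; then $\phi([x,y])$ differs from the commutator $[\phi(x),\phi(y)]\in\C(H,H)$ by an element of a fixed finite set $F$, and taking $C:=\max_{f\in F}d(1,f)$ (finite, as $d$ is proper) finishes the proof via left-invariance of $d$.

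To control $\phi(y)a\phi(y)^{-1}$ I would use the last two parts of Lemma \ref{Lem: QHProperties}. By part (\ref{normalizer}), $\phi(y)\in\phi(G)\subset N_H(\Delta_\phi)$ and $a\in D^{-2}\subset\Delta_\phi$, so $\phi(y)a\phi(y)^{-1}\in\Delta_\phi$. By part (\ref{centralizer}) there is a constant $C_0$ with $\phi(G)\subset\mathcal N_{C_0}(Z_H(\Delta_\phi))$; let $E:=\{h\in H:d(1,h)\le C_0\}$, a finite set. For each $g$ pick $z_g\in Z_H(\Delta_\phi)$ with $d(\phi(g),z_g)\le C_0$, so that $e_g:=z_g^{-1}\phi(g)\in E$ and $\phi(g)=z_ge_g$. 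Then $\phi(y)a\phi(y)^{-1}=z_y\,e_yae_y^{-1}\,z_y^{-1}$; since the left-hand side lies in $\Delta_\phi$ and $z_y\in Z_H(\Delta_\phi)\subset N_H(\Delta_\phi)$, conjugating back shows $e_yae_y^{-1}\in\Delta_\phi$, whence $z_y$ centralizes it and $\phi(y)a\phi(y)^{-1}=e_yae_y^{-1}\in E\cdot D^{-2}\cdot E^{-1}$, a fixed finite set.

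Assembling the pieces: with $F:=E\cdot D^{-2}\cdot E^{-1}\cdot D^{-2}\cdot D^{3}$ (a finite set, being a product of finite sets) we get $\phi([x,y])\in[\phi(x),\phi(y)]\cdot F$ for all $x,y\in G$, hence $d\bigl(\phi([x,y]),[\phi(x),\phi(y)]\bigr)\le C$ with $C=\max_{f\in F}d(1,f)$, and since $[\phi(x),\phi(y)]\in\C(H,H)$ this yields $\phi(\C(G,G))\subset\mathcal N_C(\C(H,H))$. The only genuinely non-formal step is the control of $\phi(y)a\phi(y)^{-1}$: it is not enough that $\phi(G)$ normalizes $\Delta_\phi$ (that only keeps the conjugate inside the infinite group $\Delta_\phi$); one really needs the "up-to-bounded-error centralization" of Lemma \ref{Lem: QHProperties} (\ref{centralizer}) so that the unbounded central part $z_y$ cancels and what remains is conjugation by an element of a fixed finite ball.
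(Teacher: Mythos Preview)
Your proof is correct, and the overall strategy---compare $\phi([x,y])$ with $[\phi(x),\phi(y)]$ and show the difference lies in a fixed finite set---is exactly the paper's. The divergence is in how the conjugate term is controlled. The paper never writes the troublesome conjugate $\phi(y)a\phi(y)^{-1}$; instead it applies Lemma~\ref{Lem: inverse} to pass from $\phi(x^{-1})\phi(y^{-1})$ to $\phi(x)^{-1}\phi(y^{-1})$ (this lemma internally uses Lemma~\ref{Lem: QHProperties}(\ref{conjugate}), which bounds $h^{-1}Dh$ for $h\in\phi(G)$, so the error gets pushed past $\phi(y^{-1})\in\phi(G)$ rather than $\phi(y)^{-1}$), and then a final use of Lemma~\ref{Lem: QHProperties}(\ref{inverse}) handles $\phi(y^{-1})\to\phi(y)^{-1}$. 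Your arrangement instead produces a conjugate $\phi(y)a\phi(y)^{-1}$, to which (\ref{conjugate}) does not literally apply, and you compensate by invoking the stronger (\ref{centralizer}) together with (\ref{normalizer}) to peel off the centralizing part $z_y$ and reduce to conjugation by an element of the finite ball $E$. That argument is sound (your observation that $z_y$ centralizes $\phi(y)a\phi(y)^{-1}\in\Delta_\phi$, hence $\phi(y)a\phi(y)^{-1}=e_yae_y^{-1}$, is the key point and it is correct). The paper's route is shorter and stays within the ``conjugate'' part of Lemma~\ref{Lem: QHProperties}; yours is a bit heavier but has the merit of making explicit why the unbounded part of $\phi(y)$ is harmless.
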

\begin{proof}
    For any $x,y\in G$, Lemma \ref{Lem: inverse} and Lemma \ref{Lem: QHProperties} (\ref{def}), (\ref{inverse}) show that 
    \begin{align*}
    \phi(xyx^{-1}y^{-1})& \sim_{D^3}\phi(x)\phi(y)\phi(x^{-1})\phi(y^{-1})\\
    &\sim_{(D^2D^{-1})^{-2}}\phi(x)\phi(y)\phi(x)^{-1}\phi(y^{-1})\\
    &\sim_{D^{-2}}\phi(x)\phi(y)\phi(x)^{-1}\phi(y)^{-1}\in \C(H,H).
    \end{align*}
    Then the conclusion follows since $D$ is a finite set in $H$.
\end{proof}

%\begin{lemma}\label{Lem: SujQHImpEle}
%    Let $\phi: G\to H$ be a surjective quasi-homomorphism. Then $\Delta_{\phi}$ is a virtually abelian normal subgroup of $H$.
%\end{lemma}
%\begin{proof}
%    Since $\phi$ is surjective, Lemma \ref{Lem: QHProperties} (\ref{normalizer}) shows that  $H=\phi(G)=N_H(\Delta_{\phi})$. Thus $\Delta_{\phi}\lhd H$. By Lemma \ref{Lem: QHProperties} (\ref{centralizer}), there exists a constant $C$ such that $\phi(G)=H\subset \mathcal N_C(Z_H(\Delta_{\phi}))$. This implies that $[H: Z_H(\Delta_{\phi})]<\infty$. Denote $\Delta_{\phi}'=\Delta_{\phi}\cap Z_H(\Delta_{\phi})$. Note that $\Delta_{\phi}'$ is an abelian group. According to the isomorphism theorem of groups, $[\Delta_{\phi}: \Delta_{\phi}']\le [H: Z_H(\Delta_{\phi})]<\infty$, which implies that $\Delta_{\phi}$ is virtually abelian.
%\end{proof}

For a map $\phi: G\to H$, one can define another kind of defect set of $\phi$ as $$\tilde D(\phi)=\{\phi(x)\phi(y)\phi(xy)^{-1}: x,y\in G\}.$$

\begin{lemma}\label{Lem: DualDef}\cite[Proposition 2.3]{Heu20}
    A map $\phi: G\to H$ is a quasi-homomorphism if and only if $\tilde D(\phi)$ is finite.
\end{lemma}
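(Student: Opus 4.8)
The plan is to prove the equivalent assertion that, for an arbitrary map $\phi:G\to H$, the set $\tilde D(\phi)$ is finite if and only if $D(\phi)$ is finite; since $H$ is discrete, $\phi$ is a quasi-homomorphism exactly when $D(\phi)$ is finite, so this is what we want.

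First I would reduce the two implications to each other via the involution $\phi\mapsto\phi'$ on maps $G\to H$, where $\phi'(g):=\phi(g^{-1})^{-1}$; note $(\phi')'=\phi$. Substituting $x=b^{-1}$ and $y=a^{-1}$ in the definitions, and using $\phi'(b)^{-1}=\phi(b^{-1})$ and $\phi'(ab)=\phi(b^{-1}a^{-1})^{-1}$, one obtains the purely algebraic identity $D(\phi')=\tilde D(\phi)$; applying it to $\phi'$ in place of $\phi$ gives also $\tilde D(\phi')=D(\phi)$. Hence it is enough to prove the single implication: \emph{if $D(\phi)$ is finite, then $\tilde D(\phi)$ is finite}. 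Indeed, feeding $\phi'$ into this implication and translating through the two identities yields the reverse implication ``$\tilde D(\phi)$ finite $\Rightarrow D(\phi)$ finite''.

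So suppose $\phi$ is a quasi-homomorphism, write $D=D(\phi)$, and let $\Delta=\Delta_\phi=\langle D\rangle$ be the defect subgroup. The observation driving the proof is that each element of $\tilde D(\phi)$ is a conjugate of an element of $D^{-1}\subseteq\Delta$ by a single value of $\phi$: for $x,y\in G$,
\[
\phi(x)\phi(y)\phi(xy)^{-1}=\phi(xy)\bigl(\phi(xy)^{-1}\phi(x)\phi(y)\bigr)\phi(xy)^{-1},
\qquad \phi(xy)^{-1}\phi(x)\phi(y)\in D^{-1}.
\]
It then remains to check that conjugating the finite set $D^{-1}$ by values of $\phi$ stays within a fixed finite set. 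For this I would invoke Lemma~\ref{Lem: QHProperties}(4) and (5): $\phi(G)\subseteq N_H(\Delta)$, and there is a constant $C$ with $\phi(G)\subseteq\mathcal N_C(Z_H(\Delta))$. Given $x,y$, pick $z\in Z_H(\Delta)$ with $d(\phi(xy),z)\le C$ and set $m:=z^{-1}\phi(xy)$, so $\phi(xy)=zm$; left-invariance of $d$ gives $d(1,m)=d(z,\phi(xy))\le C$, so $m$ lies in the finite set $F_0:=\{h\in H:d(1,h)\le C\}$ (finite by properness of $d$). Moreover $m\in N_H(\Delta)$, hence $mem^{-1}\in\Delta$ for every $e\in D^{-1}$, and $z\in Z_H(\Delta)$ therefore centralizes $mem^{-1}$. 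Consequently
\[
\phi(xy)\,e\,\phi(xy)^{-1}=z\,mem^{-1}\,z^{-1}=mem^{-1}\in F_0\,D^{-1}\,F_0^{-1},
\]
so $\tilde D(\phi)\subseteq F_0D^{-1}F_0^{-1}$ is finite.

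The main obstacle is precisely this last step: the conjugating elements $\phi(xy)$ are unbounded in $H$, so a priori their conjugates of $D^{-1}$ could fill out an infinite subset of $\Delta$, and what rules this out is the ``coarse centralization'' statement Lemma~\ref{Lem: QHProperties}(5) (the weaker normalization statement (4) alone would not suffice). Everything else—the algebraic identities relating $D(\phi'),\tilde D(\phi')$ to $\tilde D(\phi),D(\phi)$, the rewriting of a $\tilde D$-element as a conjugate, and the manipulations with $F_0$—is routine bookkeeping.
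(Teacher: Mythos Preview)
The paper does not supply its own proof of this lemma; it is stated with a citation to \cite[Proposition~2.3]{Heu20} and no argument is given. So there is nothing in the paper to compare your proof against directly.

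Your argument is correct. The involution $\phi\mapsto\phi'$, $\phi'(g)=\phi(g^{-1})^{-1}$, really does satisfy $D(\phi')=\tilde D(\phi)$ and $\tilde D(\phi')=D(\phi)$ (your substitution checks out), so the two implications reduce to one. In the remaining direction your rewriting
\[
\phi(x)\phi(y)\phi(xy)^{-1}=\phi(xy)\bigl(\phi(xy)^{-1}\phi(x)\phi(y)\bigr)\phi(xy)^{-1}
\]
with $\phi(xy)^{-1}\phi(x)\phi(y)\in D^{-1}$ is correct, and your use of Lemma~\ref{Lem: QHProperties}(\ref{normalizer}) and (\ref{centralizer}) to control the conjugate is valid: $m=z^{-1}\phi(xy)$ lies in $N_H(\Delta_\phi)$ (as a product of elements of $Z_H(\Delta_\phi)\subset N_H(\Delta_\phi)$ and $\phi(G)\subset N_H(\Delta_\phi)$), so $mem^{-1}\in\Delta_\phi$ is centralized by $z$, giving $\tilde D(\phi)\subset F_0D^{-1}F_0^{-1}$.

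One remark: you do not actually need the full strength of item~(\ref{centralizer}). Item~(\ref{conjugate}) combined with item~(\ref{inverse}) already suffices to bound $\phi(xy)D^{-1}\phi(xy)^{-1}$: writing $\phi(xy)=s^{-1}\phi((xy)^{-1})^{-1}$ with $s\in D^2$ from (\ref{inverse}) and then applying (\ref{conjugate}) to $h=\phi((xy)^{-1})$ yields $\phi(xy)D^{-1}\phi(xy)^{-1}\subset D^{-2}(D^2D^{-1})^{-1}D^2$, a fixed finite set. This is a slightly more elementary route (closer in spirit to the toolkit actually used elsewhere in the paper), though your appeal to (\ref{centralizer}) is perfectly legitimate and arguably cleaner.
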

%\begin{proof}
    %By definition of quasi-homomorphisms, it is equivalent to show that $D(\phi)$ is finite if and only if $\tilde D(\phi)$ is finite. By symmetry, we only need to show that $\tilde D(\phi)$ is finite when $D(\phi)$ is finite.

    %Denote $D=D(\phi)$. Lemma \ref{Lem: QHProperties}  (\ref{conjugate}) shows that $h^{-1}Dh\subset D^2D^{-1}$ for any  $h\in \phi(G)$. Thus, $h^{-1}D^{-1}h\subset DD^{-2}$. Moreover, any $h\in \phi(G)$ can be written as $h=\phi(x^{-1})$ for some $x^{-1}\in G$. By Lemma \ref{Lem: QHProperties} (\ref{inverse}), $\phi(x^{-1})\sim_{D^{-2}}\phi(x)^{-1}$. Together with the above discussions, one has that $$hD^{-1}h^{-1}=\phi(x^{-1})D^{-1}\phi(x^{-1})^{-1}\subset \phi(x)^{-1}D^{-3}D^2\phi(x)\subset D'$$ where $D'=(DD^{-2})^3(D^2D^{-1})^2$.  
    
    %For any $x,y\in G$, there exists $s\in D$ such that $\phi(xy)=\phi(x)\phi(y)s$. Therefore, $$\phi(x)\phi(y)\phi(xy)^{-1}=\phi(x)\phi(y)s^{-1}\phi(y)^{-1}\phi(x)^{-1}\in \phi(x)D'\phi(x)^{-1}\subset D''$$ where $D''=(D'^{-1}D'^{2})^3(D'^{-2}D')^2$ is a finite set.Since $x,y\in G$ are arbitrary, the above formula shows that $\tilde D(\phi)\subset D''$, which completes the proof.
%\end{proof}

In general, a bounded perturbation of a quasi-homomorphism is not necessarily a quasi-homomorphism. However, if the perturbation is achieved through finitely many central elements, then it remains a quasi-homomorphism.
\begin{lemma}\label{Lem: BddPer}
    Let $\phi: G\to H$ be a quasi-homomorphism and $\phi': G\to H$ be a map. If there exists a finite subset $A\subset Z(H)$ such that $\phi'(g)\sim_A\phi(g)$ for any $g\in G$, then $\phi'$ is a quasi-homomorphism equivalent to $\phi$.
\end{lemma}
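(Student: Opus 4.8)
The plan is to verify directly that $\phi'$ has finite defect set and is a bounded perturbation of $\phi$, using only the fact that the perturbing elements are central and finitely many. First I would record the basic identities. For each $g\in G$ write $\phi'(g)=\phi(g)a_g$ with $a_g\in A$; since $A\subseteq Z(H)$, these $a_g$ commute with everything in $H$. Hence $\{\phi(x)^{-1}\phi'(x):x\in G\}=\{a_x:x\in G\}\subseteq A$ is finite, which is exactly the statement that $\phi'$ is equivalent to $\phi$ in the sense of the finiteness criterion recalled in the introduction (two quasi-homomorphisms are equivalent iff $\{\phi(x)^{-1}\phi'(x):x\in G\}$ is finite). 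That gives half the conclusion as soon as we know $\phi'$ is a quasi-homomorphism at all.

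For the quasi-homomorphism property, I would compute the defect directly. For $x,y\in G$,
\[
\phi'(y)^{-1}\phi'(x)^{-1}\phi'(xy)=a_y^{-1}\phi(y)^{-1}a_x^{-1}\phi(x)^{-1}\phi(xy)a_{xy}.
\]
Because $a_x,a_y,a_{xy}$ are central, they can be collected together: the right-hand side equals $a_y^{-1}a_x^{-1}a_{xy}\cdot \phi(y)^{-1}\phi(x)^{-1}\phi(xy)$, which lies in $A^{-1}A^{-1}A\cdot D(\phi)$. Since $A$ and $D(\phi)$ are both finite, the set $D(\phi')\subseteq A^{-1}A^{-1}A\cdot D(\phi)$ is finite, so $\phi'$ is a quasi-homomorphism by definition. (Alternatively one could phrase this via $\tilde D$ and Lemma \ref{Lem: DualDef}, but the direct computation with $D$ is cleaner here.)

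Combining the two observations finishes the proof: $\phi'$ is a quasi-homomorphism, and $\{\phi(x)^{-1}\phi'(x):x\in G\}\subseteq A$ is finite, so $\phi'$ is equivalent to $\phi$. I do not expect any real obstacle; the only point requiring a moment of care is the bookkeeping of which factors are central — one must make sure to move \emph{all three} of $a_x$, $a_y$, $a_{xy}$ (not just one) past the $\phi$-terms, and to keep track of inverses, so that the leftover word is genuinely $\phi(y)^{-1}\phi(x)^{-1}\phi(xy)$ times a bounded central correction. Everything else is immediate from $A\subseteq Z(H)$ and the finiteness of $A$ and $D(\phi)$.
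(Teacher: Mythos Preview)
Your argument is correct and follows the same approach as the paper: the paper simply states that $D(\phi')\subset A^{-2}DA$ (equivalently your $A^{-1}A^{-1}A\cdot D(\phi)$, since everything is central) and concludes, while you have spelled out the centrality computation and the equivalence part in more detail.
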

\begin{proof}
    Let $D=D(\phi)$ be the defect set of $\phi$. It is easy to see that $D(\phi')\subset A^{-2}DA$ and thus $\phi'$ is also a quasi-homomorphism.
\end{proof}

Let $(G,H)$ be a group pair and $T$ be a right coset transversal of $H$ in $G$. It is clear that any element $g\in G$ can be written uniquely as $g=g_Hg_T$ where $g_H\in H$ and $g_T\in T$. The following simple lemma  will be used in later proofs.
\begin{lemma}\label{Lem: ProdDecomp}
    Let $(G,H)$ be a group pair and $T$ be a right coset transversal of $H$ in $G$. If $H$ is a normal subgroup of $G$, then the product of any two elements $g,g'\in G$ satisfies that $(gg')_H=g_H(g_Tg'_Hg_T^{-1})(g_Tg'_T)_H$ and $(gg')_T=(g_Tg'_T)_T$. Moreover, $(g_Tg'_T)_H\in \tilde D(s)$ where $s: G/H\to G$ is a section such that $T=s(G/H)$.
\end{lemma}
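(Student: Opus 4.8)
The plan is to compute $gg'$ directly using the decomposition $g = g_H g_T$, $g' = g'_H g'_T$ with $g_H, g'_H \in H$ and $g_T, g'_T \in T$, and then extract the unique $H$-part and $T$-part from the resulting expression. First I would write
\[
gg' = g_H g_T g'_H g'_T = g_H \,(g_T g'_H g_T^{-1})\, (g_T g'_T).
\]
Since $H \lhd G$, the middle factor $g_T g'_H g_T^{-1}$ lies in $H$, so $g_H (g_T g'_H g_T^{-1}) \in H$. Now $g_T g'_T$ need not lie in $T$, but it can be uniquely written as $g_T g'_T = (g_T g'_T)_H (g_T g'_T)_T$ with $(g_T g'_T)_H \in H$ and $(g_T g'_T)_T \in T$. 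Substituting this in gives
\[
gg' = \bigl[\, g_H\, (g_T g'_H g_T^{-1})\, (g_T g'_T)_H \,\bigr]\, (g_T g'_T)_T,
\]
where the bracketed factor is in $H$ (using normality again to see $(g_T g'_T)_H$ can be brought to the left, or simply noting $H$ is a subgroup so the product of $H$-elements is in $H$) and $(g_T g'_T)_T \in T$. By uniqueness of the $H$/$T$-decomposition, this forces $(gg')_H = g_H (g_T g'_H g_T^{-1}) (g_T g'_T)_H$ and $(gg')_T = (g_T g'_T)_T$, as claimed.

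For the final assertion, let $s : G/H \to G$ be the section with image $T$, so that for each coset $\bar{x} \in G/H$ we have $s(\bar x) \in T$ and $\pi(s(\bar x)) = \bar x$, where $\pi : G \to G/H$ is the quotient map. The elements $g_T, g'_T \in T$ are of the form $s(\bar a), s(\bar b)$ for $\bar a = \pi(g_T), \bar b = \pi(g'_T)$. Then $\pi(g_T g'_T) = \bar a \bar b$, so $(g_T g'_T)_T = s(\bar a \bar b)$ by uniqueness of the transversal representative. Therefore
\[
(g_T g'_T)_H = (g_T g'_T)\bigl((g_T g'_T)_T\bigr)^{-1} = s(\bar a)\, s(\bar b)\, s(\bar a \bar b)^{-1} \in \tilde D(s),
\]
which is exactly the dual defect set of $s$. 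This completes the proof.

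There is no serious obstacle here; the argument is a bookkeeping exercise in the uniqueness of coset decompositions. The only point requiring a little care is the bracketing step: one must make sure that after pulling out $(g_T g'_T)_T$ the remaining factor genuinely lies in $H$ — this uses both that $H$ is closed under products and inverses and (for the simplest phrasing) that $H$ is normal so that conjugates like $g_T g'_H g_T^{-1}$ stay in $H$. I would also be slightly careful to state at the outset that the decomposition $g = g_H g_T$ is indeed unique, which is immediate from $T$ being a right coset transversal.
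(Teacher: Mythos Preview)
Your proof is correct and follows essentially the same approach as the paper: both compute $gg' = g_H(g_T g'_H g_T^{-1})(g_T g'_T)_H(g_T g'_T)_T$, invoke normality of $H$ and uniqueness of the coset decomposition, and then identify $(g_T g'_T)_H = s(\bar a)s(\bar b)s(\bar a\bar b)^{-1}$ as an element of $\tilde D(s)$.
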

\begin{proof}
    It is straightforward to calculate that $$gg'=g_Hg_Tg'_Hg'_T=g_H(g_Tg'_Hg_T^{-1})g_Tg'_T=g_H(g_Tg'_Hg_T^{-1})(g_Tg'_T)_H(g_Tg'_T)_T.$$
    Since $H\lhd G$, $g_Tg'_Hg_T^{-1}\in H$. By the uniqueness of decomposition, one gets that $(gg')_H=g_H(g_Tg'_Hg_T^{-1})(g_Tg'_T)_H$ and $(gg')_T=(g_Tg'_T)_T$. Moreover, let $\pi: G\to G/H$ be the quotient map and $s: G/H\to G$ be a section such that $T=s(G/H)$. This means that for any $g=g_Hg_T\in G$, $s(\pi(g))=s(\pi(g_T))=g_T$. Then $s(\pi(gg'))=(gg')_T=(g_Tg'_T)_T$. Finally, it follows from $g_Tg'_T=(g_Tg'_T)_H(g_Tg'_T)_T$ that $$(g_Tg'_T)_H=g_Tg'_T(g_Tg'_T)_T^{-1}=s(\pi(g))s(\pi(g'))s(\pi(gg'))^{-1}\in \tilde D(s).$$
\end{proof}

\subsection{Elementary properties of quasi-isomorphisms}

Recall that a quasi-homomorphism $\phi: G\to H$ is a quasi-isomorphism if there exists a quasi-homomorphism $\phi': H\to G$, called the quasi-inverse of $\phi$ such that $\d(\phi'\circ \phi, Id_G)<\infty$ and $\d(\phi\circ \phi', Id_H)<\infty$. Moreover, a quasi-isomorphism is called strict if $\phi'=\phi^{-1}$.

\begin{lemma}\label{Lem: EquiRelation}
    Quasi-isomorphism and strict quasi-isomorphism are two equivalent relations on discrete groups.
\end{lemma}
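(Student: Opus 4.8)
The statement to prove is Lemma \ref{Lem: EquiRelation}: quasi-isomorphism and strict quasi-isomorphism are equivalence relations on the class of discrete groups. The plan is to verify the three defining properties — reflexivity, symmetry, and transitivity — for each of the two relations, relying on the elementary closure facts about quasi-homomorphisms collected in Lemma \ref{Lem: QHProperties} and the surrounding lemmas. Reflexivity is immediate in both cases: the identity map $Id_G : G \to G$ is a quasi-homomorphism (with defect set $\{1\}$), it is its own inverse, and $\d(Id_G \circ Id_G, Id_G) = 0 < \infty$, so $G$ is strictly quasi-isomorphic (hence quasi-isomorphic) to itself. Symmetry for quasi-isomorphism is built into the definition: if $\phi$ is a quasi-isomorphism with quasi-inverse $\phi'$, then $\phi'$ is a quasi-homomorphism admitting $\phi$ as a quasi-inverse, so $\phi'$ witnesses that $H$ is quasi-isomorphic to $G$. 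For strict quasi-isomorphism, symmetry requires checking that if $\phi$ is a bijective quasi-homomorphism with $\phi^{-1}$ also a quasi-homomorphism, then $\phi^{-1}$ is itself a strict quasi-isomorphism with strict quasi-inverse $\phi = (\phi^{-1})^{-1}$; the conditions $\d(\phi \circ \phi^{-1}, Id_H) = 0$ and $\d(\phi^{-1}\circ \phi, Id_G) = 0$ are trivially satisfied, so this is immediate.

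The substantive point is transitivity. Suppose $\phi : G \to H$ and $\psi : H \to K$ are quasi-isomorphisms, with quasi-inverses $\phi' : H \to G$ and $\psi' : K \to H$. I would first argue that the composition of two quasi-homomorphisms is a quasi-homomorphism: if $\alpha : A \to B$ has defect set $D(\alpha)$ and $\beta : B \to C$ has defect set $D(\beta)$, then for $x,y \in A$ one has $\beta(\alpha(xy)) \sim_{?} \beta(\alpha(x)\alpha(y))$ — using $\alpha(xy) \sim_{D(\alpha)} \alpha(x)\alpha(y)$ and the fact that $\beta$ moves a product only by finitely much (via Lemma \ref{Lem: DualDef} or a direct estimate together with Lemma \ref{Lem: QHProperties}(\ref{conjugate}) to absorb the finite set $D(\alpha)$) — so $D(\beta \circ \alpha)$ is finite. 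Granting this, $\psi \circ \phi : G \to K$ and $\phi' \circ \psi' : K \to G$ are quasi-homomorphisms. It then remains to check $\d\big((\phi'\circ\psi')\circ(\psi\circ\phi), Id_G\big) < \infty$ and the symmetric statement. Writing $(\phi'\circ\psi')\circ(\psi\circ\phi) = \phi' \circ (\psi' \circ \psi) \circ \phi$, I would use $\d(\psi'\circ\psi, Id_H) < \infty$ to see that $(\psi'\circ\psi)\circ\phi$ is a bounded perturbation of $\phi$, and then use that a quasi-homomorphism $\phi'$ sends a bounded perturbation of $\phi$ to a bounded perturbation of $\phi' \circ \phi$ — this uses properness of the metric on the target together with Lemma \ref{Lem: QHProperties}(\ref{conjugate}) to control the finitely many "correction" factors — and finally $\d(\phi'\circ\phi, Id_G) < \infty$. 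For strict quasi-isomorphism the transitivity argument is parallel but easier: if $\phi, \psi$ are bijective quasi-homomorphisms with $\phi^{-1}, \psi^{-1}$ quasi-homomorphisms, then $\psi\circ\phi$ is bijective with inverse $\phi^{-1}\circ\psi^{-1}$, and both $\psi\circ\phi$ and $\phi^{-1}\circ\psi^{-1}$ are quasi-homomorphisms by the composition fact, while the distance conditions hold on the nose ($= 0$) since everything composes to the genuine identity.

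The main obstacle I anticipate is not conceptual but bookkeeping: carefully showing that post-composing by a quasi-homomorphism is "uniformly continuous for bounded perturbations", i.e. that $\d(\theta, \theta') < \infty$ implies $\d(\gamma\circ\theta, \gamma\circ\theta') < \infty$ for a quasi-homomorphism $\gamma$. This is where one genuinely uses that $H$ is equipped with a \emph{proper} left-invariant metric (so that a bounded set is finite) and the conjugation-control estimate Lemma \ref{Lem: QHProperties}(\ref{conjugate}); pre-composition by any map preserves bounded perturbations trivially. I would isolate this as a preliminary sublemma — "composition of quasi-homomorphisms is a quasi-homomorphism, and composition respects the equivalence relation $\d(\cdot,\cdot) < \infty$ in each variable" — and then transitivity of both relations falls out formally. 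Everything else (reflexivity, symmetry) is a one-line check.
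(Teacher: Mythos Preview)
Your proposal is correct and follows essentially the same route as the paper, which likewise dispatches reflexivity and symmetry in one line and reduces transitivity to the fact that post-composing by a quasi-homomorphism preserves bounded perturbations (the paper writes this explicitly as $\phi_1'(\phi_1(g)\cdot a)\sim_{\phi_1'(A_2)D(\phi_1')}\phi_1'(\phi_1(g))$, which is exactly your sublemma specialized to the situation at hand). One small simplification: you do not actually need the conjugation estimate Lemma~\ref{Lem: QHProperties}(\ref{conjugate}) for either the composition step or the perturbation step---the defect relation $\phi'(h\cdot a)\sim_{D(\phi')}\phi'(h)\,\phi'(a)$ by itself already shows that a finite right-perturbation remains finite after applying $\phi'$, since $a$ ranges over a finite set.
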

\begin{proof}
    Both reflexivity and symmetry are easy to see. We only need to verify the transitivity.
    Let $\phi_1: G\to H$ and $\phi_2: H\to K$ be two quasi-isomorphisms between discrete groups. By definition, there exist two quasi-homomorphisms $\phi_1': H\to G$ and $\phi_2': K\to H$ and four finite subsets $A_1\subset G, B_1,A_2\subset H, B_2\subset K$ such that for any $g\in G, h\in H, k\in K$, $$\phi_1'(\phi_1(g))\sim_{A_1}g, \quad \phi_1(\phi_1'(h))\sim_{B_1}h, \quad \phi_2'(\phi_2(h))\sim_{A_2}h, \quad \phi_2(\phi_2'(k))\sim_{B_2}k.$$ 
    It follows that $$\phi_2'\circ \phi_2(\phi_1(g))\sim_{A_2}\phi_1(g), \quad \phi_1\circ \phi_1'(\phi_2'(k))\sim_{B_1}\phi_2'(k).$$ By composing $\phi_1'$ or $\phi_2$ on the left, we get 
    \begin{equation}\label{Equ: comp1}
        \phi_1'\circ\phi_2'\circ \phi_2(\phi_1(g))\sim_{\phi_1'(A_2)D(\phi_1')}\phi_1'(\phi_1(g))\sim_{A_1}g
    \end{equation}
    and 
    \begin{equation}\label{Equ: comp2}
        \phi_2\circ\phi_1\circ \phi_1'(\phi_2'(k))\sim_{\phi_2(B_1)D(\phi_2)}\phi_2(\phi_2'(k))\sim_{B_2}k.
    \end{equation}
    Denote $\phi=\phi_2\circ \phi_1: G\to K$ and $\phi'=\phi_1'\circ \phi_2': K\to G$ as two composite maps. They are both quasi-homomorphisms since a composition of quasi-homomorphisms is still a quasi-homomorphism. The above formulas (\ref{Equ: comp1}) and (\ref{Equ: comp2}) show that both $\{g^{-1}\phi'(\phi(g)): g\in G\}$ and $\{k^{-1}\phi(\phi'(k)): k\in K\}$ are finite. Therefore, $\phi, \phi'$ are quasi-isomorphisms, which shows that quasi-isomorphism is an equivalent relation on discrete groups. For the other relation, the same proof works.
\end{proof}

 A group $G$ is called \textit{almost commutative} if $\C(G,G)$ is finite. As a corollary of Lemma \ref{Lem: ComIsPre} and Lemma \ref{Lem: EquiRelation}, we have
\begin{lemma}\label{Lem: AC}
    Almost commutativity is a quasi-isomorphism invariant for discrete groups.
\end{lemma}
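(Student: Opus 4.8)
The plan is to first reduce to a single implication using the symmetry of the quasi-isomorphism relation (Lemma \ref{Lem: EquiRelation}): it suffices to show that if $G$ is almost commutative and $\phi\colon G\to G'$ is a quasi-isomorphism, then $G'$ is almost commutative. So fix such a $\phi$ together with a quasi-inverse $\phi'\colon G'\to G$. Since $\d(\phi\circ\phi',Id_{G'})<\infty$ and the metric on $G'$ is proper and left-invariant, there is a finite subset $B\subset G'$ with $\phi(\phi'(k))\sim_B k$ for every $k\in G'$ (this is exactly the observation already used in the proof of Lemma \ref{Lem: EquiRelation}, namely that $\{k^{-1}\phi(\phi'(k)):k\in G'\}$ is finite).

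Next I would transport the commutator set of $G'$ into $G$. Applying Lemma \ref{Lem: ComIsPre} to the quasi-homomorphism $\phi'$ gives a constant $C$ with $\phi'(\C(G',G'))\subset\mathcal N_C(\C(G,G))$. By hypothesis $\C(G,G)$ is finite, and since the metric on $G$ is proper, $\mathcal N_C(\C(G,G))$ is a finite union of radius-$C$ balls, hence itself a finite set. Therefore $\phi'(\C(G',G'))$ is finite, and so is its image $\phi(\phi'(\C(G',G')))\subset G'$.

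Finally I would pull finiteness back to $\C(G',G')$ itself using the quasi-inverse relation: for each $k\in\C(G',G')$ we have $k=\phi(\phi'(k))\cdot b^{-1}$ for some $b\in B$, whence $\C(G',G')\subset\phi(\phi'(\C(G',G')))\cdot B^{-1}$, which is finite. Thus $G'$ is almost commutative, and symmetry of quasi-isomorphism yields the reverse implication as well, so almost commutativity is a quasi-isomorphism invariant. There is no serious obstacle here; the only points requiring a little care are the two elementary facts that properness of the metric forces $C$-neighbourhoods of finite sets to be finite, and that the bounded distance $\d(\phi\circ\phi',Id_{G'})<\infty$ in the discrete group $G'$ is recorded by a genuine finite set $B$.
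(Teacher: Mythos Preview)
Your proof is correct and follows the same approach the paper intends: it is exactly the corollary of Lemma~\ref{Lem: ComIsPre} (applied to the quasi-inverse $\phi'$) together with the symmetry of the quasi-isomorphism relation, fleshed out in detail. The paper's own proof is just the one-line statement that the lemma follows from Lemma~\ref{Lem: ComIsPre} and Lemma~\ref{Lem: EquiRelation}, so you have simply unpacked that sentence.
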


In the field of Geometric Group Theory, people always care about quasi-isometry invariants for finitely generated groups. The following lemma implies that quasi-isometry invariants are also quasi-isomorphism invariants.

\begin{lemma}\label{Lem: QIToQI}
    For finitely generated groups equipped with word metrics, quasi-isomorphisms are quasi-isometries.
\end{lemma}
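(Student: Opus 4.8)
The plan is to verify directly the three defining ingredients of a quasi-isometry: an upper Lipschitz-type bound, a lower one, and coarse surjectivity. Fix finite generating sets $S$ of $G$ and $S'$ of $H$, write $d_G=d_S$, $d_H=d_{S'}$ (both proper and left-invariant, so the quasi-homomorphism framework of the paper applies), and let $\phi:G\to H$ be a quasi-isomorphism with quasi-inverse $\phi':H\to G$; put $D=D(\phi)$.

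First I would record the elementary but crucial sub-claim that \emph{every quasi-homomorphism between finitely generated groups is coarsely Lipschitz}. Given $g\in G$ with $|g|_S=n$, write $g=s_1\cdots s_n$ with $s_i\in S$; by Lemma \ref{Lem: QHProperties}(\ref{def}) we have $\phi(g)\sim_{D^{n-1}}\phi(s_1)\cdots\phi(s_n)$, so telescoping with the triangle inequality and left-invariance of $d_H$ gives $d_H(1,\phi(g))\le (2n+1)M$, where $M=\max\{d_H(1,\phi(1)),\ \max_{s\in S}d_H(1,\phi(s)),\ \max_{a\in D}d_H(1,a)\}$. Now for $x,y\in G$ set $g=x^{-1}y$, so $d_G(x,y)=|g|_S$; Lemma \ref{Lem: inverse} gives $\phi(g)=\phi(x)^{-1}\phi(y)\,b$ for some $b$ in the finite set $(D^2D^{-1})^{-2}$, hence by left-invariance
\[
d_H(\phi(x),\phi(y))=d_H\bigl(1,\phi(x)^{-1}\phi(y)\bigr)\le d_H(1,\phi(g))+M'\le \lambda\,d_G(x,y)+\lambda
\]
for a constant $M'$ bounding $d_H(1,\cdot)$ on that finite set and $\lambda$ absorbing the remaining constants. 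Applying the same argument to $\phi'$ yields a constant $\lambda'$ with $d_G(\phi'(u),\phi'(v))\le \lambda'd_H(u,v)+\lambda'$ for all $u,v\in H$.

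For the lower bound, since $\phi$ is a quasi-isomorphism we have $\d(\phi'\circ\phi,Id_G)=:K<\infty$, i.e. $d_G(\phi'(\phi(x)),x)\le K$ for every $x$. Then for $x,y\in G$,
\[
d_G(x,y)\le d_G\bigl(x,\phi'(\phi(x))\bigr)+d_G\bigl(\phi'(\phi(x)),\phi'(\phi(y))\bigr)+d_G\bigl(\phi'(\phi(y)),y\bigr)\le \lambda'\,d_H(\phi(x),\phi(y))+(\lambda'+2K),
\]
which rearranges to $\tfrac{1}{\lambda'}d_G(x,y)-\tfrac{\lambda'+2K}{\lambda'}\le d_H(\phi(x),\phi(y))$. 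Finally $\d(\phi\circ\phi',Id_H)=:L<\infty$ gives $d_H(u,\phi(\phi'(u)))\le L$ for every $u\in H$, so $\phi$ is $L$-coarsely surjective. Combining the three estimates, $\phi$ is a quasi-isometry.

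The only genuine content is the coarse-Lipschitz sub-claim, and even there the single subtle point is keeping the defect terms straight through Lemmas \ref{Lem: QHProperties} and \ref{Lem: inverse} and using left-invariance of the word metric to reduce every distance to a distance from the identity; once that is set up, the lower bound and coarse surjectivity follow formally from the two relations $\d(\phi'\circ\phi,Id_G)<\infty$ and $\d(\phi\circ\phi',Id_H)<\infty$. So I expect no real obstacle, only the need to track constants carefully.
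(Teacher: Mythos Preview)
Your proof is correct and follows essentially the same approach as the paper: establish a coarse Lipschitz bound by decomposing into generators via Lemma~\ref{Lem: QHProperties}(\ref{def}) and Lemma~\ref{Lem: inverse}, then obtain the lower bound by applying the same estimate to $\phi'$ together with $\d(\phi'\circ\phi,Id_G)<\infty$, and read off coarse surjectivity from $\d(\phi\circ\phi',Id_H)<\infty$. One tiny bookkeeping slip: the finite set produced by the full chain in Lemma~\ref{Lem: inverse} is $D\cdot(D^2D^{-1})^{-2}$ rather than just $(D^2D^{-1})^{-2}$, but this does not affect the argument.
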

\begin{proof}
    Let $G,G'$ be two finitely generated groups with finite generating sets $S,S'$, respectively. Let $\phi: G\to G'$ be a quasi-isomorphism and $\phi': G'\to G$ be its quasi-inverse. By definition, there exists a constant $C>0$ such that $\sup_{g\in G}d_S(\phi'(\phi(g)),g)\le C$ and $\sup_{g'\in G'}d_{S'}(\phi(\phi'(g')),g')\le C$. The second inequality shows that $G'\subseteq \mathcal N_C(\phi(G))$. Thus it remains to show that $\phi: G\to G'$ is a quasi-isometric embedding.

    Let $D=D(\phi)$ and $D'=D(\phi')$ be the corresponding defect sets of $\phi$ and $\phi'$, respectively. Denote $C_1=\max_{x\in D'}d_S(1,x)+\max_{y\in D}d_{S'}(1,y)$ and $C_2=\max_{s'\in S'}d_S(1,\phi'(s'))+\max_{s\in S}d_{S'}(1,\phi(s))$. For any $g\in G$ with $d_S(1,g)=n$, it follows from Lemma \ref{Lem: QHProperties} (\ref{def}) that $\phi(g)\in \phi(S)^nD^{n-1}$. Then by triangle inequality, one gets that $d_{S'}(1,\phi(g))\le (C_1+C_2)d_S(1,g)-C_1$. In general, for any $g_1,g_2\in G$, Lemma \ref{Lem: inverse} shows that there exists $a\in D^{-1}(D^2D^{-1})^2$ such that $\phi(g_1)^{-1}\phi(g_2)=\phi(g_1^{-1}g_2)a$. Therefore, 
    \begin{align*}
        d_{S'}(\phi(g_1),\phi(g_2))&=d_{S'}(1,\phi(g_1)^{-1}\phi(g_2))=d_{S'}(1,\phi(g_1^{-1}g_2)a)\\ &\le d_{S'}(1,\phi(g_1^{-1}g_2))+7C_1\le (C_1+C_2)d_S(1,g_1^{-1}g_2)+6C_1=(C_1+C_2)d_S(g_1,g_2)+6C_1.
    \end{align*}
    As for the other direction, one has
    $$d_S(g_1,g_2)\le d_S(\phi'(\phi(g_1)),\phi'(\phi(g_2)))+2C\le (C_1+C_2)d_{S'}(\phi(g_1),\phi(g_2))+6C_1+2C.$$
    In summary, $\phi$ is a $(C_1+C_2, 6C_1+2C)$-quasi-isometry.
\end{proof}

The following lemma shows that those quasi-isomorphisms close to identity maps can be characterized by two special kinds of subsets in groups.
\begin{lemma}\label{Lem: GoodQH}
    Let $\phi: G\to G$ be a map from a discrete group $G$ to itself. The following are equivalent:
    \begin{enumerate}
        \item\label{2.10.1} $\phi$ is a quasi-isomorphism with a quasi-inverse given by the identity map.
        \item\label{2.10.2} The set $A=\{\phi(g)^{-1}g: g\in G\}$ is finite and $\phi(G)^{-1}$ almost commutes with $A$.
    \end{enumerate}
\end{lemma}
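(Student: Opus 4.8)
The plan is to prove the two implications separately, translating the quasi-isomorphism hypothesis into bounded-set statements via the defect-set formalism of Section~\ref{sec: Preliminary}. Throughout, write $A=\{\phi(g)^{-1}g:g\in G\}$, so that $\phi(g)\sim_{A^{-1}}g$ and $g\sim_A\phi(g)$ for all $g$.

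\emph{From (\ref{2.10.1}) to (\ref{2.10.2}).} If $\phi$ is a quasi-isomorphism with quasi-inverse $\mathrm{Id}_G$, then $\d(\mathrm{Id}_G\circ\phi,\mathrm{Id}_G)=\d(\phi,\mathrm{Id}_G)<\infty$, which is exactly the statement that $A$ is finite (recall $\d(\phi,\phi')<\infty$ iff $\{\phi(x)^{-1}\phi'(x):x\in G\}$ is finite). It remains to show $\phi(G)^{-1}$ almost commutes with $A$, i.e.\ $\C(\phi(G)^{-1},A)$ is finite. Here I would use that $\phi$ is a quasi-homomorphism with some finite defect set $D=D(\phi)$, together with the relation $\phi(g)=ga_g^{-1}$ for $a_g=\phi(g)^{-1}g\in A$. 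Expanding a commutator $[\phi(g)^{-1},a_h]$ and repeatedly applying the substitution $\phi(x)\sim_{A^{-1}}x$, the defect relation $\phi(xy)\sim_D\phi(x)\phi(y)$, and the conjugation bound $h^{-1}Dh\subset D^2D^{-1}$ for $h\in\phi(G)$ from Lemma~\ref{Lem: QHProperties}~(\ref{conjugate}), one rewrites $[\phi(g)^{-1},a_h]$ as $\phi$ of a single group element times a word in $D^{\pm1}$ and $A^{\pm1}$ of bounded length, and then applies $\phi(\text{element})\sim_{A^{-1}}(\text{element})$ once more; since $G$ may not be almost commutative this is not yet enough, so instead I would exploit that $a_h$ has the special form $\phi(h)^{-1}h$ and that $\phi$ is \emph{almost} a homomorphism to land the whole expression inside a fixed finite set. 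Concretely: $\phi(g)^{-1}a_h\phi(g)=\phi(g)^{-1}\phi(h)^{-1}h\phi(g)$, and using Lemma~\ref{Lem: inverse} to combine $\phi(g)^{-1}\phi(h)^{-1}\sim\phi(hg)^{-1}$ and $h\phi(g)\sim\phi(hg)$ up to bounded error, this product lies within a fixed finite set of $1$; hence $\C(\phi(G)^{-1},A)$ is finite.

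\emph{From (\ref{2.10.2}) to (\ref{2.10.1}).} Assume $A$ is finite and $\C(\phi(G)^{-1},A)=:C_0$ is finite. First I would check $\phi$ is a quasi-homomorphism: write $\phi(xy)=xy\,a_{xy}^{-1}$, $\phi(x)\phi(y)=x a_x^{-1} y a_y^{-1}$; then $\phi(y)^{-1}\phi(x)^{-1}\phi(xy)=a_y y^{-1} a_x x^{-1}\cdot xy a_{xy}^{-1}=a_y y^{-1}a_x y\,a_{xy}^{-1}$. The middle factor $y^{-1}a_x y$ need not be bounded in general, but $y^{-1}a_x y=y^{-1}a_x y$ with $a_x\in A$ and... here is where the almost-commutativity of $\phi(G)^{-1}$ with $A$ enters: using $\phi(y)^{-1}=a_y y^{-1}$, i.e.\ $y^{-1}=a_y^{-1}\phi(y)^{-1}$, we get $y^{-1}a_x y=a_y^{-1}\phi(y)^{-1}a_x\phi(y)a_y$, and $\phi(y)^{-1}a_x\phi(y)\in a_x\cdot\C(\phi(G)^{-1},A)^{-1}\subseteq A\cdot C_0^{-1}$ is bounded; so $D(\phi)\subseteq A\cdot A\cdot C_0^{-1}\cdot A^{-1}$ is finite and $\phi$ is a quasi-homomorphism. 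Since $A$ finite gives $\d(\phi,\mathrm{Id}_G)<\infty$, both $\d(\mathrm{Id}_G\circ\phi,\mathrm{Id}_G)$ and $\d(\phi\circ\mathrm{Id}_G,\mathrm{Id}_G)$ are finite, so $\mathrm{Id}_G$ is a quasi-inverse of $\phi$ and $\phi$ is a strict quasi-isomorphism.

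\emph{Main obstacle.} The delicate point is controlling the conjugates $y^{-1}a_x y$ (equivalently $\phi(y)^{-1}a_x\phi(y)$): a priori these range over an infinite set, and the whole lemma is engineered precisely so that the hypothesis ``$\phi(G)^{-1}$ almost commutes with $A$'' is the \emph{exact} amount of extra information needed to tame them. I expect the bookkeeping to consist of carefully converting every occurrence of $\phi$ into an identity-term times an $A$-term, pushing all $A$-terms past $\phi(G)^{\pm1}$-terms at the cost of elements of $\C(\phi(G)^{-1},A)^{\pm1}$, and collecting the finitely many resulting finite sets; no single estimate is hard, but the order of substitutions must be chosen so that no uncontrolled conjugation survives.
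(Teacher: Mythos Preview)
Your proposal is correct and follows essentially the same strategy as the paper's proof: for $(\ref{2.10.2})\Rightarrow(\ref{2.10.1})$ you and the paper both expand $\phi(x)\phi(y)$ using $\phi(x)=xa_x^{-1}$ and push the stray $A$-term past $\phi(y)$ at the cost of an element of $\C(\phi(G)^{-1},A)$; for $(\ref{2.10.1})\Rightarrow(\ref{2.10.2})$ you compute $\phi(g)^{-1}a_h\phi(g)$ directly, while the paper computes $gh$ for $h\in\phi(G)$ in two ways, but both routes amount to the same cancellation.

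Two minor points worth cleaning up. First, your initial attempt via the conjugation bound $h^{-1}Dh\subset D^2D^{-1}$ from Lemma~\ref{Lem: QHProperties}~(\ref{conjugate}) is a dead end here (as you recognize) and should simply be dropped. Second, your citation of Lemma~\ref{Lem: inverse} for the identities $\phi(g)^{-1}\phi(h)^{-1}\sim\phi(hg)^{-1}$ and $h\phi(g)\sim\phi(hg)$ is imprecise: the first follows just by inverting the defect relation $\phi(hg)\sim_D\phi(h)\phi(g)$, and the second is a one-line consequence of $\phi(x)=xa_x^{-1}$, namely $h\phi(g)=hga_g^{-1}=\phi(hg)\,a_{hg}a_g^{-1}$. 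With these replacements your argument goes through cleanly and lands $\phi(g)^{-1}a_h\phi(g)\in D\cdot AA^{-1}$, hence $\C(\phi(G)^{-1},A)\subseteq D\cdot AA^{-1}\cdot A^{-1}$.
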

\begin{proof}
    $(\ref{2.10.1})\Rightarrow(\ref{2.10.2})$: By assumption, $\d(\phi, Id_G)<\infty$. This gives directly the finiteness of $A=\{\phi(g)^{-1}g: g\in G\}$. Note that for any $g\in G$, $g\sim_A\phi(g)$. Let $D=D(\phi)$ be the defect set of $\phi$. Hence, for any $g\in G, h\in \phi(G)$, $$gh\sim_A\phi(gh)\sim_D\phi(g)\phi(h)\sim_{A^{-1}}\phi(g)h.$$ By multiplying $h^{-1}\phi(g)^{-1}$ to the left, one gets that $\{h^{-1}\phi(g)^{-1}gh: g\in G, h\in \phi(G)\}\subseteq A^{-1}DA$. This implies that $\C(\phi(G)^{-1},A)$ is contained in $A^{-1}DAA^{-1}$ and thus is finite.

    $(\ref{2.10.2})\Rightarrow(\ref{2.10.1})$: The finiteness of $A$ implies $\d(\phi, Id_G)<\infty$. It remains to show that $\phi$ is a quasi-homomorphism. 
    
    Note that for any $g\in G$, $g=\phi(g)a$ for some $a\in A$. Then for any $g=\phi(g)a, g'=\phi(g')a'$, one has that
    $$\phi(gg')\sim_{A^{-1}}gg'=\phi(g)a\phi(g')a'=\phi(g)\phi(g')\cdot [\phi(g')^{-1},a]aa'\sim_{\C(\phi(G)^{-1},A)A^2}\phi(g)\phi(g').$$
    This shows that $\phi$ is a quasi-homomorphism.
\end{proof}

\section{Quasi-retractions}\label{sec: quasi-retraction}

\subsection{Elementary properties of quasi-retracts}\label{subsec: quasi-retraction}

\begin{defn}
    Let $(G,H)$ be a group pair. Classically a group homomorphism $r : G \to H$ is called a \textit{retraction} if it is left-inverse to the inclusion $\iota : H \to G$. We thus call a quasi-homomorphism $r : G \to H$ a \textit{quasi-retraction} if $\d(r\circ \iota,Id_H)<\infty$. If such a map exists, then $H$ is called a $quasi-retract$ of G. 
\end{defn}

At first, we give an equivalent definition of quasi-retraction.
\begin{lemma}\label{Lem: EquiDef}
    Let $(G,H)$ be a group pair and $\iota: H\to G$ be the inclusion map. Then $H$ is a quasi-retract of $G$ if and only if there exists a quasi-homomorphism $r: G\to H$ such that $r\circ \iota=Id_H$.
\end{lemma}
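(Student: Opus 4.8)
The backward direction is immediate: if $r\circ\iota=Id_H$ then $\d(r\circ\iota,Id_H)=0$, so $r$ is a quasi-retraction and $H$ a quasi-retract. The content is the forward direction, where the plan is to start from an arbitrary quasi-retraction $r\colon G\to H$ and repair it on $H$ without destroying the quasi-homomorphism property. The candidate is the obvious one: set $r'(g):=g$ for $g\in H$ and $r'(g):=r(g)$ for $g\notin H$, so that $r'\circ\iota=Id_H$ holds on the nose. The whole task is then to verify that $r'$ is still a quasi-homomorphism, and by Lemma \ref{Lem: DualDef} it suffices to show that $\tilde D(r')=\{r'(x)r'(y)r'(xy)^{-1}:x,y\in G\}$ is finite.

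Write $D=D(r)$, let $\Delta=\Delta_r$ be the defect subgroup, and for $h\in H$ put $a_h:=r(h)^{-1}h$, so that the hypothesis $\d(r\circ\iota,Id_H)<\infty$ says exactly that $A:=\{a_h:h\in H\}$ is finite, and $r(h)=ha_h^{-1}$. The key preliminary observation is that conjugation by elements of $H$ moves the two relevant finite sets only inside larger finite sets. For $D$: Lemma \ref{Lem: QHProperties} (\ref{centralizer}) places $r(G)$ in a bounded neighbourhood of $Z_H(\Delta)$, and since $r$ agrees with $Id_H$ up to bounded error on $H$, all of $H$ lies in a bounded neighbourhood of $Z_H(\Delta)$; as $d$ is proper this forces $[H:Z_H(\Delta)]<\infty$, so every element of $\Delta$ — in particular every element of $D$ — has finite $H$-conjugacy class, and $D':=\bigcup_{h\in H}hDh^{-1}$ is a finite, $H$-conjugation-invariant subset of $H$. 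For $A$: restricting the defect relation to arguments in $H$ gives $a_{h'}\big((h')^{-1}a_h h'\big)a_{hh'}^{-1}=r(h')^{-1}r(h)^{-1}r(hh')\in D$, so $(h')^{-1}a_h h'\in A^{-1}DA$ for all $h,h'\in H$; hence $A':=\bigcup_{h\in H}hAh^{-1}$ is also finite and $H$-conjugation-invariant.

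With $D'$ and $A'$ available, verifying that $\tilde D(r')$ is finite is a finite case analysis on which of $x,y,xy$ lie in $H$ — using that $H\le G$, so $x,y\in H$ forces $xy\in H$, and if exactly one of $x,y$ lies in $H$ then $xy\notin H$. If $x,y\in H$ the term is $1$; if neither $x$ nor $y$ lies in $H$ and $xy\notin H$ the term is literally an element of $\tilde D(r)$; in each of the three remaining cases one substitutes $r(xy)=r(x)r(y)d$ with $d\in D$ together with $r(h)=ha_h^{-1}$ for whichever of the arguments lie in $H$, and then rearranges the resulting word so that it is displayed as a product of one element of $D^{\pm 1}$ and one of $A^{\pm 1}$, each conjugated by an element of $H$ — the conjugators appearing being among $x$, $y$, $xy$ and $r(x)$, all of which lie in $H$. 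Conjugation-invariance of $D'$ and $A'$ then forces every such term into the fixed finite set $(D')^{-1}A'\cup (A')^{-1}(D')^{-1}$. Hence $\tilde D(r')$ is finite, $r'$ is a quasi-homomorphism, and $r'\circ\iota=Id_H$, as desired.

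The only genuinely substantive point is the preliminary step: a priori, $H$-conjugation can scatter a finite subset of $H$ over infinitely many elements — this is precisely why a bounded perturbation of a quasi-homomorphism need not be a quasi-homomorphism, cf. Lemma \ref{Lem: BddPer} — and the naive ``repair on $H$'' would collapse without controlling it. What rescues it is, for $D$, Lemma \ref{Lem: QHProperties} (\ref{centralizer}) together with $r|_H\approx Id_H$ and properness of $d$ (yielding $[H:Z_H(\Delta_r)]<\infty$, hence finite $H$-conjugacy classes inside $\Delta_r$), and, for $A$, the defect relation restricted to $H\times H$. The remaining computations are bookkeeping with defect sets of the same type as in Lemmas \ref{Lem: inverse} and \ref{Lem: ComIsPre}.
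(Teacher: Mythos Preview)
Your argument is correct and follows the same overall strategy as the paper: define the obvious repair $r'$ of $r$ on $H$, then verify by case analysis that $r'$ is still a quasi-homomorphism, the key being that $H$-conjugates of the error set $A=\{r(h)^{-1}h:h\in H\}$ remain inside the finite set $A^{-1}DA$. Your computation establishing this (via the defect relation on $H\times H$) is essentially the paper's Claim that $B=\{\phi(g)^{-1}a\phi(g):g\in G,\,a\in A\}\subset A^{-1}DA$.

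The one substantive difference is that you also invoke Lemma~\ref{Lem: QHProperties}(\ref{centralizer}) to obtain $[H:Z_H(\Delta_r)]<\infty$ and hence a finite $H$-conjugation-invariant enlargement $D'$ of $D$. The paper avoids this step entirely: by ordering the factorisation in each case so that the $A$-term is the one that gets conjugated (for instance, in the case $g\in H$, $g'\notin H$, writing $g\phi(g')=\phi(g)\phi(g')\cdot\phi(g')^{-1}a_g\phi(g')$), the $D$-contribution always appears unconjugated, and only the single finite set $B$ is needed. Your route through $\tilde D(r')$ and the centralizer lemma works, but it brings in a heavier tool than necessary; the paper's arrangement of the case analysis is a bit more economical. (A minor expository point: your list of conjugators ``$x$, $y$, $xy$ and $r(x)$'' should also include $r(y)$ and $r(x)r(y)$, but since $r$ takes values in $H$ this does not affect the conclusion.)
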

\begin{proof}
    ``$\Leftarrow$'' is obvious. We now prove ``$\Rightarrow$''. Let $\phi: G\to H$ be a quasi-retraction with the defect set $D=D(\phi)$. By definition, the set $A:=\{\phi(h)^{-1}h: h\in H\}$ is finite. Note that $h\sim_A\phi(h)$ for any $h\in H$.

    \begin{claim}
        The set $B:=\{\phi(g)^{-1}a\phi(g): g\in G, a\in A\}$ is finite in $H$.
    \end{claim}
    \begin{proof}[Proof of Claim]
        For any $g\in G, h\in H$, one has $h\phi(g)\in H$. Thus, $$h\phi(g)\sim_A\phi(h\phi(g))\sim_D\phi(h)\phi(\phi(g))\sim_{A^{-1}}\phi(h)\phi(g).$$
        By multiplying $\phi(g)^{-1}\phi(h)^{-1}$ to the left, one gets that $B$ is contained in $A^{-1}DA$ and thus is finite.
    \end{proof}
    
    Define a map $r: G\to H$ as follows:
    $$r(g)=\left\{
  \begin{array}{ll}
    g, & \hbox{$g\in H$;} \\
    \phi(g), & \hbox{$g\notin H$.}
  \end{array}
\right.$$
    By definition, $r|_H=Id_H$. It remains to show that $r$ is a quasi-homomorphism, i.e. there exists a finite subset $D'\subset H$ such that $r(gg')\sim_{D'}r(g)r(g')$ for any $g,g'\in G$.  

    \textbf{Case I: $g,g'\in H\Rightarrow gg'\in H$.} In this case, $r(gg')=gg'=r(g)r(g')$.

    \textbf{Case II: $g\in H, g'\notin H\Rightarrow gg'\notin H$.} In this case, $$r(g)r(g')=g\phi(g')=\phi(g)\phi(g')\cdot \phi(g')^{-1}\phi(g)^{-1}g\phi(g')\sim_B\phi(g)\phi(g')\sim_{D^{-1}}\phi(gg')=r(gg').$$ 

    \textbf{Case III: $g\notin H, g'\in H\Rightarrow gg'\notin H$.} In this case, $$r(g)r(g')=\phi(g)g'\sim_A\phi(g)\phi(g')\sim_{D^{-1}}\phi(gg')=r(gg').$$

    \textbf{Case IV: $g,g'\notin H$.} If $gg'\in H$, then $$r(gg')=gg'=\phi(g)\phi(g')\cdot \phi(g')^{-1}\phi(g)^{-1}g\phi(g')\cdot \phi(g')^{-1}g'\sim_{BA}\phi(g)\phi(g')=r(g)r(g').$$
    If $gg'\notin H$, then $$r(gg')=\phi(gg')\sim_D\phi(g)\phi(g')=r(g)r(g').$$

    In summary, we conclude that $r$ is a quasi-homomorphism and the conclusion follows.
\end{proof}

With the help of Lemma \ref{Lem: EquiDef}, we will frequently use the equivalent definition of quasi-retracts in what follows.

Recall that a quasi-homomorphism with values in  $\R$ equipped with the standard metric is
called a quasimorphism. A quasimorphism $\phi: G\to \R$ is called \textit{homogeneous} if $\phi(g^n)=n\phi(g)$ for all $g\in G, n\in \Z$. According to \cite[Lemma 2.21]{Cal09}, any quasimorphism $\phi: G\to \R$ has a \textit{homogenization} which is a homogeneous quasimorphism $\bar \phi: G\to \R$ defined by $\bar \phi(g)=\lim_{n\to\infty}\phi(g^n)/n$.

\begin{lemma}\label{Lem: UnbddQM}
    A group $G$ admits an unbounded quasimorphism if and only if it contains an infinite cyclic quasi-retract.
\end{lemma}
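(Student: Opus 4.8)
The plan is to prove the two implications separately; the content is entirely in the forward direction, via a rounding construction, while the reverse direction is immediate.

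For the implication ``infinite cyclic quasi-retract $\Rightarrow$ unbounded quasimorphism'': given such a quasi-retract $H\le G$ with quasi-retraction $r:G\to H$, I would fix an isomorphism $\psi:H\to\Z$ (an isometry for the word metrics) and note that $\psi\circ r:G\to\Z$ is again a quasi-homomorphism, i.e. a quasimorphism. Since $\d(r\circ\iota,Id_H)<\infty$, the integer $\psi(r(h))$ stays within a fixed distance of $\psi(h)$ for every $h\in H$; as $\psi(H)=\Z$ is unbounded, $\psi\circ r$ is an unbounded quasimorphism on $G$.

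For the converse, start from an unbounded quasimorphism $\phi:G\to\R$ and replace it by its homogenization $\bar\phi$, which is still unbounded because $\sup_{x\in G}|\phi(x)-\bar\phi(x)|<\infty$ by \cite[Lemma 2.21]{Cal09}. Pick $g\in G$ with $c:=\bar\phi(g)\neq 0$; homogeneity forces $\bar\phi(g^n)=nc\neq 0$ for all $n\neq 0$, so $g$ has infinite order and $H:=\langle g\rangle\cong\Z$. Now define $r:G\to H$ by $r(x)=g^{n(x)}$, where $n(x)\in\Z$ is a nearest integer to $\bar\phi(x)/c$, so that $|\bar\phi(x)-c\,n(x)|\le |c|/2$. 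On $H$ one has $\bar\phi(g^k)/c=k$, hence $r(g^k)=g^k$ and $r\circ\iota=Id_H$. To see $r$ is a quasi-homomorphism it suffices (since $H$ is discrete) to bound the exponents in its defect set: because $H$ is abelian, $r(y)^{-1}r(x)^{-1}r(xy)=g^{\,n(xy)-n(x)-n(y)}$, and writing $\delta:=\sup_{x,y}|\bar\phi(xy)-\bar\phi(x)-\bar\phi(y)|<\infty$ for the defect of $\bar\phi$,
$$|c\,(n(xy)-n(x)-n(y))|\le |c\,n(xy)-\bar\phi(xy)|+|\bar\phi(xy)-\bar\phi(x)-\bar\phi(y)|+|\bar\phi(x)-c\,n(x)|+|\bar\phi(y)-c\,n(y)|\le \tfrac{3}{2}|c|+\delta ,$$
so $|n(xy)-n(x)-n(y)|\le \tfrac{3}{2}+\delta/|c|$, a constant independent of $x,y$. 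Hence $D(r)$ is finite, $r$ is a quasi-retraction, and $H\cong\Z$ is the desired infinite cyclic quasi-retract of $G$.

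There is no serious obstacle here; the one point worth flagging is that one must pass to the homogenization $\bar\phi$ before extracting $g$, since homogeneity is exactly what guarantees both that $g$ has infinite order (so that $\langle g\rangle$ really is an infinite cyclic subgroup) and that $r\circ\iota=Id_H$ holds on the nose rather than merely up to bounded error.
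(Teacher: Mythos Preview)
Your proof is correct and follows essentially the same approach as the paper: homogenize the quasimorphism, pick an element $g$ on which it is nonzero, and define the quasi-retraction by rounding $\bar\phi(\cdot)/\bar\phi(g)$ to the nearest integer (the paper rescales $\bar\phi$ and uses the floor function instead, which is an inessential variation). Your write-up is somewhat more detailed than the paper's, in particular in explicitly checking that $g$ has infinite order and in bounding the defect exponents, but the underlying idea is identical.
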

\begin{proof}
    ``$\Leftarrow$'' is clear. Now we prove ``$\Rightarrow$''. Let $\phi: G\to \R$ be an unbounded quasimorphism on $G$. Denote by $\bar \phi$ the homogenization of $\phi$. Since $\d(\bar\phi,\phi)<\infty$ (cf. \cite[Lemma 2.21]{Cal09}), $\bar\phi \nequiv 0$. Up to rescaling $\bar \phi$, we can pick an element $h\in G$ such that $\bar \phi(h)=1$. Let $\lfloor \cdot \rfloor: \R\to \Z$ be the floor function which maps a real number $x$ to the maximal integer $\le x$. Then the map $r: G\to \langle h\rangle$ defined by mapping each $g\in G$ to $r(g):=h^{\lfloor\bar\phi(g)\rfloor}$ gives a quasi-retraction.
\end{proof}

The next lemma gives some basic but very useful properties of quasi-retractions.

\iffalse
\begin{lemma}\label{Lem: QRProperties1}
    Let $r: G\to H$ be a quasi-retraction with $D=D(r)$. Denote $\tilde \Delta_r$ as the normal closure of $\Delta_r$ in $G$, i.e., $\tilde \Delta_r=\langle \langle \Delta_r\rangle\rangle$. Then
    \begin{enumerate}
        \item\label{it1} for any $g\in G$, $r(g)^{-1}g\in r^{-1}(D)$.
        \item\label{it2} $r(\tilde \Delta_r)\subset \Delta_r$.
    \end{enumerate}
\end{lemma}
\begin{proof}
    (\ref{it1}) For any $g\in G$, $r(g)^{-1}\in H$. Then $r(r(g)^{-1}g)\sim_Dr(r(g)^{-1})r(g)=r(g)^{-1}r(g)=1$.

    (\ref{it2}) Since $r$ is a surjective map, Lemma \ref{Lem: SujQHImpEle} shows that $\Delta_r$ is a normal subgroup of $H$. For any $g\in G$, Lemma \ref{Lem: QHProperties} (\ref{inverse}) shows that $r(g^{-1})\sim_{D^{-2}}r(g)^{-1}$. Then for any $s\in \Delta_r$, one has that $$r(gsg^{-1})\sim_{D^2}r(g)r(s)r(g^{-1})\sim_{D^{-2}}r(g)sr(g)^{-1}\in \Delta_r.$$ Denote $S:=\{gsg^{-1}: g\in G, s\in \Delta_r\}$. The above formula shows that $r(S)\subset \Delta_r$. Since $\tilde \Delta_r$ is generated by $S$, any element $g\in \tilde \Delta_r$ can be written as $g=s_1\cdots s_n$ where $s_i\in S, 1\le i\le n$. Therefore, $$r(g)=r(s_1\cdots s_n)\sim_{D^{n-1}}r(s_1)\cdots r(s_n)\in r(S)^n\subset \Delta_r.$$
\end{proof}
\fi

\begin{lemma}\label{Lem: QRProperties}
    Let $r: G\to H$ be a quasi-retraction with $D=D(r)$. Then the set $A:=\{r(g)^{-1}g: g\in G\}$ satisfies $r(A)\subset D$. Moreover, suppose that $G$ is generated by a finite set $S$. Then
    \begin{enumerate}
        \item\label{it3} $H$ is generated by a finite set $r(S)\cup D$.
        \item\label{it4} $r$ is a Lipschitz map with respect to $d_S$ on $G$ and any proper word metric on $H$.
        \item\label{it5} $H$ is an undistorted subgroup of $G$.
        \item\label{it6} if $G$ is of type $F_n$ (or $FP_n$), then so is $H$.
        \item\label{it7} if $[G:H]<\infty$, then $r$ is a quasi-isomorphism with a quasi-inverse given by the inclusion map $H\to G$. 
    \end{enumerate}
\end{lemma}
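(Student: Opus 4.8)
The plan is to reduce everything to the ``$\sim_D$''-calculus of Lemmas~\ref{Lem: QHProperties} and~\ref{Lem: inverse}, after replacing $r$ by the equivalent quasi-retraction furnished by Lemma~\ref{Lem: EquiDef}; so throughout I assume $r\circ\iota=Id_H$, i.e. $r(h)=h$ for every $h\in H$. The preliminary assertion is then immediate: for $g\in G$ one has $r(g)^{-1}\in H$, hence $r(r(g)^{-1}g)\sim_D r(r(g)^{-1})r(g)=r(g)^{-1}r(g)=1$, which says exactly that $r(r(g)^{-1}g)\in D$, so $r(A)\subset D$.

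For~(\ref{it3}), take $h\in H\le G$ and write $h=s_1\cdots s_n$ with $s_i\in S\cup S^{-1}$; Lemma~\ref{Lem: QHProperties}(\ref{def}),(\ref{inverse}) gives $h=r(h)=r(s_1)\cdots r(s_n)\,d$ for some $d\in D^{n-1}$, where each $r(s_i)$ lies in the fixed finite set $r(S)\cup r(S)^{-1}D^{-2}$, so $H=\langle r(S)\cup D\rangle$ with $r(S)\cup D$ finite since $D=D(r)$ is finite. For~(\ref{it4}), by~(\ref{it3}) the group $H$ is finitely generated, hence any proper word metric on $H$ is bi-Lipschitz to $d_{r(S)\cup D}$ and it suffices to estimate the latter. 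By Lemma~\ref{Lem: inverse} there is a fixed finite set $E\subset H$ with $r(g_1)^{-1}r(g_2)=r(g_1^{-1}g_2)\,e$, $e\in E$; writing $g_1^{-1}g_2$ geodesically in $S$ and pushing it through $r$ as in~(\ref{it3}) gives $d_{r(S)\cup D}(1,r(g_1^{-1}g_2))\le C_1 d_S(g_1,g_2)$ for a constant $C_1=C_1(r,S)$, whence $d_{r(S)\cup D}(r(g_1),r(g_2))\le C_1 d_S(g_1,g_2)+C_2$; this is exactly the computation already performed in the proof of Lemma~\ref{Lem: QIToQI}, and since $r$ sends $1$ to $1$ it sharpens to a genuine Lipschitz bound.

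For~(\ref{it5}): one inequality is automatic, since each element of the finite generating set $r(S)\cup D$ of $H$ has bounded $d_S$-length, so $\iota$ is Lipschitz; the reverse inequality is~(\ref{it4}) restricted to $H$, using $r|_H=Id_H$: $d_{r(S)\cup D}(h_1,h_2)=d_{r(S)\cup D}(r(h_1),r(h_2))\le C_1 d_S(h_1,h_2)+C_2$. Hence $\iota$ is a quasi-isometric embedding. For~(\ref{it7}): $\iota$ is a homomorphism, hence a quasi-homomorphism with $\d(r\circ\iota,Id_H)=0$, so it remains to show that $A=\{r(g)^{-1}g:g\in G\}$ is finite in $G$; picking a finite right coset transversal $T$ of $H$ in $G$ and writing $g=h g_T$ with $h\in H$, $g_T\in T$, one computes $r(g)=r(h g_T)\sim_D r(h)r(g_T)=h\,r(g_T)$, so $r(g)^{-1}g\in D^{-1}r(T)^{-1}T$, a finite subset of $G$; thus $\d(\iota\circ r,Id_G)<\infty$ and $r$ is a quasi-isomorphism with quasi-inverse $\iota$.

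The only step that leaves this elementary calculus is~(\ref{it6}). Here the plan is to observe that, by~(\ref{it4}) and~(\ref{it5}), the triple $(G,H,r)$ exhibits $H$ as a quasi-retract of $G$ in the metric sense of Alonso~\cite{Alo94} --- $\iota$ is a quasi-isometric embedding, $r$ is coarsely Lipschitz, and $r\circ\iota$ is the identity --- and then to invoke the fact, established in that setting, that such coarse quasi-retracts inherit the finiteness properties $F_n$ and $FP_n$ from the ambient group. I expect~(\ref{it6}) to be the main obstacle: translating the purely algebraic hypothesis into the coarse/topological framework where the finiteness machinery lives is the real content, whereas~(\ref{it3})--(\ref{it5}) and~(\ref{it7}) are routine bookkeeping with the finite defect set $D$.
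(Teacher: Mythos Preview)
Your proof is correct and follows essentially the same route as the paper's: the same $\sim_D$ bookkeeping for the preliminary claim and items~(\ref{it3})--(\ref{it5}),(\ref{it7}), and the same appeal to Alonso's theorem for~(\ref{it6}). One small remark on~(\ref{it4}): your coarse Lipschitz estimate does indeed upgrade to a genuine Lipschitz bound, but the reason is that word metrics are integer-valued (so for $g_1\neq g_2$ the additive constant is absorbed into the multiplicative one), not that $r(1)=1$; the paper avoids this by computing directly with $r(g')=r(g)r(g^{-1}g')a$, $a\in D$, which gives the additive-constant-free bound $d_{S'}(r(g),r(g'))\le 2\,d_S(g,g')$ without any sharpening step.
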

\begin{proof}
    For any $g\in G$, $r(g)^{-1}\in H$. Then $r(r(g)^{-1}g)\sim_Dr(r(g)^{-1})r(g)=r(g)^{-1}r(g)=1$.

    (\ref{it3}) Let $S$ be a finite generating set of $G$. For any $h\in H\le G$, we denote $h=s_1\cdots s_n$ where  $s_i\in S, 1\le i\le n$. Since $r|_H=Id_H$, $$h=r(h)=r(s_1\cdots s_n)\sim_{D^{n-1}}r(s_1)\cdots r(s_n)\subset r(S)^{n}.$$ Since $h$ is arbitrary, the above equality implies that $r(S)\cup D\subset H$ generates $H$.

    (\ref{it4}) By Item (\ref{it3}), $S'=r(S)\cup D$ is a finite generating set of $H$. Fix arbitrary two elements $g,g'\in G$. Denote $n=d_S(g,g')$. Namely, $g^{-1}g'=s_1\cdots s_n$ where $s_i\in S, 1\le i\le n$. Then $$r(g^{-1}g')=r(s_1\cdots s_n)\sim_{D^{n-1}}r(s_1)\cdots r(s_n)$$ which implies that $d_{S'}(1, r(g^{-1}g'))\le 2n-1$. Moreover, there exists $a\in D$ such that $$d_{S'}(r(g), r(g'))=d_{S'}(r(g), r(g)r(g^{-1}g')a)=d_{S'}(1, r(g^{-1}g')a)\le 2n-1+1=2d_S(g,g').$$ Since $g,g'\in G$ are arbitrary, we obtain that $r$ is a Lipschitz map with respect to $d_S$ on $G$ and $d_{S'}$ on $H$. The conclusion then follows since any two proper word metrics on $H$ are bi-Lipschitz to each other.

    (\ref{it5}) Fix a finite generating set $S$ of $G$. By Item (\ref{it3}), $S'=r(S)\cup D$ is a finite generating set of $H$. Let $S''=S\cup S'$. Then $S''$ is also a finite generating set of $G$. Observe that $r(S'')=S'$. For the inclusion map $(H,d_{S'})\to (G,d_{S''})$, the same proof of Item (\ref{it4}) shows that $d_{S'}(h,h')\le 2d_{S''}(h,h')$ for any $h,h'\in H$. Since $d_{S''}(h,h')\le d_{S'}(h,h')$ is obvious, one gets that the inclusion map is  a quasi-isometric embedding. Hence, $H$ is undistorted in $G$.

    (\ref{it6}) This follows from the combination of Item (\ref{it4}) and \cite[Theorem 8]{Alo94}.

    (\ref{it7}) Let $\iota: H\to G$ be the inclusion map. By definition of quasi-retracts, we have $r\circ \iota=Id_H$. It remains to show that $\d(\iota\circ r, Id_G)=\sup_{g\in G}d_S(r(g),g)<\infty$.

    Fix a finite right coset transversal $T$ of $H$ in $G$. Then any $g\in G$ can be written uniquely as $g=g_Hg_T$ where $g_H\in H$ and $g_T\in T$. Thus, $r(g)=r(g_Hg_T)=r(g_H)r(g_T)a=g_Hr(g_T)a$ for some $a\in D$. By setting $M:=\max_{x\in T^{-1}r(T)D}d_S(1,x)$, one obtains that $$d_S(r(g),g)=d_S(g_Hr(g_T)a,g_Hg_T)=d_S(1,g_T^{-1}r(g_T)a)\le M.$$ Since $g$ is arbitrary, we complete the proof.
\end{proof}

Recall that a finitely generated group $G$ has property (QT) if $G$ acts isometrically on a finite product of quasi-trees equipped with $\ell^1$-metric such that the orbit map is a quasi-isometric embedding. This property generalizes the notion of finite asymptotic dimensions.

\begin{corollary}\label{Cor: HypGp}
    \begin{enumerate}
        \item Quasi-retracts of hyperbolic groups are still hyperbolic groups.
        \item Property (QT) is inherited by quasi-retracts.
    \end{enumerate}
\end{corollary}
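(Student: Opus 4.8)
The plan is to deduce both parts of Corollary \ref{Cor: HypGp} from the undistortedness of quasi-retracts, essentially Lemma \ref{Lem: QRProperties} (\ref{it5}), together with the fact that being hyperbolic (resp.\ having property (QT)) is a quasi-isometry invariant among finitely generated groups. First I would note that a quasi-retract $H$ of a finitely generated group $G$ is itself finitely generated by Lemma \ref{Lem: QRProperties} (\ref{it3}), so the statements make sense.

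For part (1), let $G$ be a hyperbolic group and $H$ a quasi-retract. By Lemma \ref{Lem: QRProperties} (\ref{it5}), $H$ is undistorted in $G$, i.e.\ the inclusion $(H,d_{S'})\hookrightarrow (G,d_{S''})$ is a quasi-isometric embedding. A quasi-isometrically embedded subgroup of a hyperbolic group is quasiconvex (this is standard: in a hyperbolic space, a quasigeodesically embedded subset is quasiconvex by the Morse lemma), and quasiconvex subgroups of hyperbolic groups are themselves hyperbolic. Hence $H$ is hyperbolic.

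For part (2), let $G$ have property (QT), witnessed by an isometric action of $G$ on a finite $\ell^1$-product of quasi-trees $Y=\prod_{i=1}^k Y_i$ whose orbit map $G\to Y$ is a quasi-isometric embedding. Restricting this action to the subgroup $H\le G$ gives an isometric action of $H$ on the same finite product of quasi-trees. The composite $H\hookrightarrow G\to Y$ is a quasi-isometric embedding of $H$ into $Y$: the inclusion $H\hookrightarrow G$ is a quasi-isometric embedding by Lemma \ref{Lem: QRProperties} (\ref{it5}) and the orbit map $G\to Y$ is one by hypothesis, and a composition of quasi-isometric embeddings is a quasi-isometric embedding. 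Therefore the restricted action of $H$ on $Y$ has a quasi-isometrically embedded orbit map, so $H$ has property (QT).

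I do not expect a serious obstacle here; the entire content is already packaged in Lemma \ref{Lem: QRProperties} (\ref{it5}). The only point requiring a little care is making the quasi-isometric embedding $H\hookrightarrow G$ precise with respect to compatible generating sets, but this is exactly what the proof of (\ref{it5}) supplies (it produces a common generating set $S''=S\cup S'$ of $G$ and shows $d_{S'}$ and $d_{S''}$ are quasi-isometric on $H$). For part (1) one could alternatively invoke the fact that hyperbolicity of finitely generated groups is a quasi-isometry invariant and argue that $H$, being undistorted, sits quasi-isometrically in a hyperbolic space, but the quasiconvex-subgroup route is cleaner since $H$ need not a priori be quasi-isometric to $G$.
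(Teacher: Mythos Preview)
Your proposal is correct and follows essentially the same approach as the paper: the paper's proof simply states that both items follow directly from Lemma~\ref{Lem: QRProperties} (\ref{it3}) and (\ref{it5}), and you have spelled out exactly how finite generation and undistortedness yield the conclusions. The added details (quasiconvexity via the Morse lemma for part~(1), restricting the action and composing quasi-isometric embeddings for part~(2)) are the natural unpacking of that one-line proof.
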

\begin{proof}
    Both items follow directly from Lemma \ref{Lem: QRProperties} (\ref{it3}) and (\ref{it5}).
\end{proof}

Since the coboundedness of group actions on hyperbolic spaces do not pass to undistorted subgroups, we raise the following question.
\begin{question}
    Is property (PH) or (PH') inherited by quasi-retracts?
\end{question}
By the way, the following Corollary \ref{Cor: NorQRHasPH'} shows that property (PH') is inherited by normal quasi-retracts.

\subsection{Quasi-retracts of hyperbolic groups}
In this subsection, we are going to give a characterization of quasi-retracts of torsion-free hyperbolic groups and prove Theorem \ref{IntroThm2} (\ref{ite2}).  For an algebraic characterization of retracts of $\F_n$, we refer the readers to \cite[Proposition 1]{Tur96}.

\begin{lemma}\label{Lem: QROfTorFreHypGp}
    Let $G$ be a torsion-free hyperbolic group and $H$ be a quasi-retract of $G$. Then $H$ is either a cyclic subgroup or a retract of $G$. 
\end{lemma}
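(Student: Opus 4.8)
The plan is to leverage the structural results already established about quasi-retractions, especially the fact that the defect subgroup is virtually abelian and normal (Lemma \ref{Lem: SujQHImpEle}) together with the special properties torsion-free hyperbolic groups enjoy. Let $r\colon G\to H$ be a quasi-retraction with defect set $D=D(r)$ and defect subgroup $\Delta_r=\langle D\rangle$. Since $r\circ\iota=\mathrm{Id}_H$ (Lemma \ref{Lem: EquiDef}), $r$ restricted to $H$ is the identity, so $r$ is surjective onto $H$; hence by Lemma \ref{Lem: SujQHImpEle} (applied to $r$ viewed as a surjection $G\to H$), $\Delta_r$ is a virtually abelian normal subgroup of $H$. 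Because $G$ is torsion-free hyperbolic and $H$ is a subgroup (even undistorted, by Lemma \ref{Lem: QRProperties}(\ref{it5}), though we mainly need that $H$ is itself torsion-free hyperbolic by Corollary \ref{Cor: HypGp}(1)), $\Delta_r$ is a virtually abelian subgroup of a torsion-free hyperbolic group, hence either trivial or infinite cyclic.

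First I would dispose of the case $\Delta_r=\{1\}$: then $D=\{1\}$, so $r$ is an honest homomorphism, i.e.\ $r$ is a genuine retraction and $H$ is a retract of $G$. So assume $\Delta_r\cong\Z$ is infinite cyclic and normal in $H$. In a torsion-free hyperbolic group, the centralizer of any nontrivial element is infinite cyclic, and an infinite cyclic normal subgroup must be central in $H$ — indeed $H$ acts on $\Delta_r\cong\Z$ by conjugation, the kernel of $H\to\mathrm{Aut}(\Z)=\Z/2$ is an index-$\le 2$ subgroup centralizing $\Delta_r$, and since $H$ is torsion-free an index-$2$ situation is still possible, but then $H$ itself would have an infinite cyclic finite-index... here I need to be careful: the right statement is that a torsion-free hyperbolic group containing an infinite central (or even infinite normal virtually cyclic) subgroup must itself be virtually cyclic, hence — being torsion-free — infinite cyclic. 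Concretely: if $\Delta_r$ is infinite and normal in the hyperbolic group $H$, then $H$ is elementary (two-ended), because a non-elementary hyperbolic group has no infinite normal virtually cyclic subgroup; a torsion-free two-ended group is infinite cyclic. Either way $H\cong\Z$ is cyclic, as claimed.

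So the remaining content is exactly: \emph{if the defect subgroup $\Delta_r$ is nontrivial, then $H$ is cyclic}. The key steps, in order, are (i) $r$ is surjective onto $H$, so $\Delta_r\lhd H$ is virtually abelian by Lemma \ref{Lem: SujQHImpEle}; (ii) $H$ is torsion-free hyperbolic by Corollary \ref{Cor: HypGp}(1); (iii) a virtually abelian subgroup of a torsion-free hyperbolic group is $\{1\}$ or $\Z$; (iv) if $\Delta_r=\{1\}$ then $r$ is a homomorphism and $H$ is a retract; (v) if $\Delta_r\cong\Z$ then $\Delta_r$ is an infinite normal subgroup of the hyperbolic group $H$, forcing $H$ to be two-ended, hence $H\cong\Z$. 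I expect step (v) to be the main obstacle only in the sense of citing the correct fact cleanly — the statement that a hyperbolic group with an infinite normal cyclic (or virtually cyclic) subgroup is elementary is standard (e.g.\ via the action on the Gromov boundary: such a subgroup fixes its two limit points, and normality forces $H$ to preserve this pair, so $H$ is elementary). I would phrase this using the boundary action, or alternatively cite it as a well-known property of hyperbolic groups, and then use torsion-freeness to upgrade ``two-ended'' to ``infinite cyclic''.
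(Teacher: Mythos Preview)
Your proposal is correct and follows essentially the same route as the paper: apply Lemma~\ref{Lem: SujQHImpEle} to get $\Delta_r\lhd H$ virtually abelian, use Corollary~\ref{Cor: HypGp} to know $H$ is (torsion-free) hyperbolic so $\Delta_r$ is trivial or $\Z$, and in the nontrivial case conclude $H$ is elementary hence cyclic. Your treatment of step~(v) is in fact more careful than the paper's, which dispatches it in a single clause.
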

\begin{proof}
    Let $r: G\to H$ be a quasi-retraction and $D(r)$ be the defect set. By Corollary \ref{Cor: HypGp}, $H$ is also a hyperbolic group. By Lemma \ref{Lem: SujQHImpEle}, the defect subgroup $\Delta_r=\langle D(r)\rangle$ is a virtually abelian normal subgroup of $H$. Since a hyperbolic group does not contain $\Z^2$, the defect subgroup $\Delta_r$ must be cyclic. If $\Delta_r=\{1\}$, then $r$ is a retraction. Otherwise $\Delta_r$ is generated by a hyperbolic element. Then it follows from $\Delta_r\lhd H$ that $H$ is also cyclic.
\end{proof}

\begin{remark}\label{Rmk: QROfFreeGp}
    If $G$ is a free group in Lemma \ref{Lem: QROfTorFreHypGp}, then every cyclic subgroup is also a quasi-retract of $G$. In fact, suppose that $H=\langle w\rangle$ for some $w\in G$. Up to conjugation, we can assume that $w$ is a cyclically reduced word in $G$. Let $h_w$ be the corresponding Brooks counting  quasimorphism, which is a quasimorphism on $G$ satisfying $h_w(w^n)=n$ for $n\in \Z$; see \cite[Subsection 2.3.3]{Cal09} for a reference. Then the map $r: G\to H$ defined by $r(g)=w^{h_w(g)}$ is a quasi-retraction.
\end{remark}

Recall that a quasi-retract is called \textit{nontrivial} if it is neither a finite subgroup nor the whole group. A group element $g\in G$ is called \textit{primitive} if $g$ can not be represented as a nonzero power of any $h\in G\setminus\{g, g^{-1}\}$. A group $G$ is called \textit{Hopfian} if any surjective homomorphism from $G$ to $G$ is an isomorphism. 
\begin{lemma}\label{Lem: RetractOfFreeGps}
    Let $G=\F_2$. Any nontrivial retract of $G$ is a cyclic subgroup generated by a primitive element.
\end{lemma}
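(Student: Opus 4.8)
Let $G=\F_2$ and let $H$ be a nontrivial retract, with retraction $r\colon G\to H$ (an honest homomorphism, $r|_H=\mathrm{Id}_H$). Since $H$ is a subgroup of a free group, $H$ is itself free; moreover $r$ is a surjection $G\to H$, so $\mathrm{rk}(H)\le\mathrm{rk}(G)=2$. If $\mathrm{rk}(H)=0$ then $H$ is trivial, hence a finite subgroup, contradicting nontriviality. So $\mathrm{rk}(H)\in\{1,2\}$, and I must rule out $\mathrm{rk}(H)=2$, and then show the generator of the rank-$1$ case is primitive.

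First I would dispose of $\mathrm{rk}(H)=2$. In this case $H$ is a rank-$2$ free subgroup of the rank-$2$ free group $G$, and the composition $\iota\circ r\colon G\to G$ is a surjective endomorphism of $\F_2$ whose image is $H$. But $\F_2$ is Hopfian (finitely generated free groups are residually finite, hence Hopfian — this is exactly the hypothesis flagged in the statement), so any surjective endomorphism is an automorphism; applying this to the surjection $G\twoheadrightarrow H$ composed with the inclusion, one gets that $\iota\circ r$ is an isomorphism, forcing $H=\iota(H)=G$. That contradicts nontriviality. Hence $\mathrm{rk}(H)=1$, i.e. $H=\langle w\rangle$ is infinite cyclic.

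It remains to show $w$ can be chosen primitive. Write $w=v^k$ where $v\in G$ is primitive and $k=\lvert k\rvert\ge 1$ is maximal (possible since $\F_2$ has no infinitely divisible elements: the exponent of divisibility is bounded by word length). Then $\langle w\rangle\le\langle v\rangle$, and I claim $k=1$. Suppose $k\ge 2$. The retraction gives $r(w)=w$, i.e. $r(v)^k=v^k$; since $\langle v\rangle$ is infinite cyclic and $r(v)\in H=\langle v^k\rangle\le\langle v\rangle$, write $r(v)=v^m$, so $v^{mk}=v^k$, whence $mk=k$ and $m=1$. Thus $r(v)=v$. But then $v\in r(G)=H=\langle v^k\rangle$, so $v=v^{kt}$ for some $t\in\Z$, giving $kt=1$ and $k=1$ — contradiction. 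Therefore $k=1$, $w$ is primitive, and $H=\langle w\rangle$ is cyclic generated by a primitive element, as claimed.

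The only genuinely substantive input is Hopficity of $\F_2$ used to collapse the rank-$2$ case; the primitivity refinement is an elementary computation inside the infinite cyclic group $\langle v\rangle$ using that $r$ fixes $H$ pointwise. One small point to be careful about: the lemma is stated for nontrivial retracts, so the rank-$0$ (finite) and $H=G$ cases are excluded by hypothesis rather than by the argument, and I should make sure the write-up invokes that hypothesis at the two places where a contradiction with nontriviality is derived.
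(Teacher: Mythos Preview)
Your approach is essentially identical to the paper's: bound $\rk(H)$ via surjectivity of $r$, invoke Hopficity of $\F_2$ to eliminate the rank-$2$ case, and then for $H=\langle w\rangle$ write $w=v^k$ and compute $r(v)$ inside $\langle v\rangle$ to force $k=1$. One wording issue to clean up in the rank-$2$ step: the map $\iota\circ r\colon G\to G$ is \emph{not} a priori surjective (its image is $H$), so Hopficity does not apply to it directly. Apply Hopficity instead to the surjection $r\colon G\twoheadrightarrow H$ after identifying $H\cong\F_2$; this makes $r$ injective, and then idempotence ($r|_H=Id_H$ gives $r\circ r=r$) forces $r=Id_G$, hence $H=G$. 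The paper argues exactly this way.
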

\begin{proof}
    Let $H\le G$ be a nontrivial retract of $G$. That is, there exists a homomorphism $r: G\to H$ such that $r|_H=Id$. Since $r$ is a surjective homomorphism, $\rk(H)\le \rk(G)=2$. Since $H$ is nontrivial, $\rk(H)\ge 1$.

    If $\rk(H)=2$, then $H\cong \F_2$ since any subgroup of a free group is also free. Another well-known fact is that any free group of finite rank is Hopfian \cite[Theorem 3.3]{Sel99}. This implies that $r$ is an isomorphism, which contradicts with that $H\neq G$. Therefore $\rk(H)=1$.

    Let $H=\langle g\rangle$. Suppose that $g$ is not primitive, i.e. $g=h^n$ for some $h\neq 1\in G, n\ge 2$. Denote $r(h)=h^k$ for some $k\in \Z$. Then $h^n=g=r(g)=r(h^n)=r(h)^n=h^{kn}$ which implies that $k=1$. However, $r(h)=h\notin H$ which is impossible. Therefore, $g$ is a primitive element.
\end{proof}

A subgroup $H\le G$ is called \textit{malnormal} if $H\cap gHg^{-1}=\{1\}$ for any $g\in G\setminus H$.
\begin{proposition}\label{Prop: HESNotQR}
    A subgroup of $\F_2$ is a nontrivial quasi-retract if and only if it is a cyclic group. Moreover, there exists a hyperbolically embedded subgroup of $\F_2$ which is not a quasi-retract.
\end{proposition}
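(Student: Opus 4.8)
The plan is to handle the stated equivalence and the ``moreover'' clause separately, the first being essentially a combination of results already available and the second requiring an explicit witness.

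For the equivalence ``nontrivial quasi-retract $\iff$ cyclic'': the implication ``$\Leftarrow$'' follows from Remark \ref{Rmk: QROfFreeGp}, since an infinite cyclic subgroup of $\F_2$ is a quasi-retract and is nontrivial ($\F_2$ being torsion-free and not cyclic, such a subgroup is neither finite nor the whole group). Conversely, suppose $H$ is a nontrivial quasi-retract of $\F_2$. As $\F_2$ is a torsion-free hyperbolic group, Lemma \ref{Lem: QROfTorFreHypGp} gives that $H$ is either cyclic or a retract of $\F_2$; in the second case $H$ is a nontrivial retract, hence cyclic by Lemma \ref{Lem: RetractOfFreeGps}. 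In all cases $H$ is (infinite) cyclic.

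For the ``moreover'' clause, the first part already shows every nontrivial quasi-retract of $\F_2$ is cyclic, so it suffices to exhibit a hyperbolically embedded subgroup of $\F_2$ that is infinite, proper and non-cyclic. I would take $H=\langle a^2b,\, b^2a\rangle\le F(a,b)=\F_2$. Its image in $\Z^2$ is the subgroup generated by $(2,1)$ and $(1,2)$, which has index $3$; hence $H\neq\F_2$, $H$ is not cyclic (its abelianization has rank $2$), and $H$ is evidently infinite. In a hyperbolic group a subgroup is hyperbolically embedded precisely when it is quasiconvex and almost malnormal (see \cite{DGO17}); every finitely generated subgroup of a free group is quasiconvex and $\F_2$ is torsion-free, so it remains only to check that $H$ is malnormal. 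I would verify this by a routine Stallings fibre-product computation: the core graph $\Gamma_H$ is an unfolded wedge (at a basepoint) of the two subdivided loops reading $a^2b$ and $b^2a$, and one checks that every connected component of the fibre product $\Gamma_H\times\Gamma_H$ over the rose other than the diagonal is a tree, which is equivalent to malnormality of $H$. Thus $H$ is a nontrivial, non-cyclic, hyperbolically embedded subgroup, so by the first part it is not a quasi-retract.

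I do not expect a genuine obstacle here. The first part is bookkeeping on top of Lemmas \ref{Lem: QROfTorFreHypGp} and \ref{Lem: RetractOfFreeGps} and Remark \ref{Rmk: QROfFreeGp}. In the ``moreover'' clause the only point needing real verification is the existence of a finitely generated malnormal subgroup of $\F_2$ of rank $\geq 2$ (the step ``quasiconvex $+$ almost malnormal $\Rightarrow$ hyperbolically embedded'' being standard); the explicit $\langle a^2b,\,b^2a\rangle$ works, and alternatively one may simply invoke the classical fact that free groups contain finitely generated malnormal subgroups of every rank. The mildly technical part is thus just the Stallings-graph check.
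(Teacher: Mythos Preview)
Your argument is correct and follows the paper's approach for the equivalence (combining Lemma \ref{Lem: QROfTorFreHypGp}, Lemma \ref{Lem: RetractOfFreeGps}, and Remark \ref{Rmk: QROfFreeGp}) and for the ``moreover'' clause (finitely generated malnormal subgroup $\Rightarrow$ quasiconvex and almost malnormal $\Rightarrow$ hyperbolically embedded). The only difference is that the paper cites \cite{BMR99} for the existence of a finitely generated malnormal subgroup of $\F_2$ of rank $>2$, while you supply the explicit rank-$2$ example $H=\langle a^2b,\,b^2a\rangle$ and verify malnormality via the Stallings fibre product; your example indeed works (the core graph is already folded with five vertices and six edges, and one checks that the off-diagonal components of the pullback are four paths of length three together with four isolated vertices), so this is a self-contained alternative to the citation rather than a different strategy.
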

\begin{proof}
    If $H$ is a nontrivial quasi-retract of $\F_2$, then Lemma \ref{Lem: QROfTorFreHypGp}, together with Lemma \ref{Lem: RetractOfFreeGps} show that $H$ is a cyclic group. The opposite direction is given by Remark \ref{Rmk: QROfFreeGp}. Now we prove the ``Moreover'' part.
    In \cite[Example 1]{BMR99}, Baumslag-Miasnikov-Remeslennikov constructed an example of a finitely generated malnormal subgroup $H$ of $\F_2$ which has rank greater than 2. Hence, it can not be a quasi-retract of $\F_2$. Since any finitely generated subgroup of a free group is quasi-convex, it follows from \cite{Bow12} that $\F_2$ is hyperbolic relative to $H$. As a result of \cite[Proposition 2.4]{DGO17}, $H$ is a hyperbolically embedded subgroup of $\F_2$. 
\end{proof}

%By combining Proposition \ref{Prop: HESNotQR} with Lemma \ref{Lem: QROfFreeGps}, we complete the proof of Theorem \ref{IntroThm2} (\ref{ite2}).

\subsection{First examples}\label{subsec: examples}
The goal of this subsection is to give some general examples and non-examples of quasi-retracts. The first part aims to explore when a finite-index subgroup becomes a quasi-retract. Note that a quasimorphism on a finite-index subgroup can not be extended to the whole group in general. 

\begin{proposition}\label{Prop: FinInd}
    Let $(G,H)$ be a group pair. Suppose $H$ is of finite index in $G$. Then $H$ is a quasi-retract of $G$ if and only if there exists a right coset transversal $T$ containing 1 of $H$ in $G$ such that $H$ almost commute with $T$.
\end{proposition}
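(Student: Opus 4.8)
The plan is to argue both directions via the standard "section $=$ coset transversal" dictionary together with Lemma~\ref{Lem: ProdDecomp} and Theorem~\ref{IntroThm: QSSES} (or rather its proof-level content in the finite-index setting).

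For the "if" direction, suppose $T\ni 1$ is a right coset transversal of $H$ in $G$ with $\mathcal C(H,T)$ finite. Since $[G:H]<\infty$, $T$ is finite. Define $r\colon G\to H$ by $r(g)=g_H$, i.e.\ send $g=g_Hg_T$ to its $H$-component. Clearly $r|_H=\mathrm{Id}_H$ (because $1\in T$). It remains to check $r$ is a quasi-homomorphism, and here Lemma~\ref{Lem: ProdDecomp} does exactly the work: viewing $T=s(G/H)$ for a section $s$, it gives $(gg')_H=g_H\,(g_Tg'_Hg_T^{-1})\,(g_Tg'_T)_H$. So $r(gg')=r(g)\cdot g_Tg'_Hg_T^{-1}\cdot (g_Tg'_T)_H$. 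Now $g_Tg'_Hg_T^{-1}=g'_H\cdot[g_T^{-1},g'^{-1}_H]^{-1}$-type manipulation puts it in $r(g')\cdot \mathcal C(T,H)^{\pm}$, and $(g_Tg'_T)_H$ ranges over the finite set $\tilde D(s)$ by Lemma~\ref{Lem: ProdDecomp}. Since $T$ is finite, $\mathcal C(H,T)$ is finite, and $\tilde D(s)$ is finite, we conclude $r(gg')\sim_{D'}r(g)r(g')$ for a fixed finite $D'\subset H$; thus $r$ is a quasi-retraction and $H$ is a quasi-retract.

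For the "only if" direction, suppose $H$ is a quasi-retract of $G$; by Lemma~\ref{Lem: QRProperties}~(\ref{it7}), the quasi-retraction $r\colon G\to H$ is in fact a quasi-isomorphism with quasi-inverse the inclusion. We want a transversal $T\ni1$ almost commuting with $H$. The natural candidate is $T=\{r(g)^{-1}g: g\in G\}=A$, which by the first assertion of Lemma~\ref{Lem: QRProperties} is finite and satisfies $r(A)\subset D$. One checks $A$ is a union of right cosets representatives: for $g\in G$, $r(g)^{-1}g$ lies in the coset $Hg$, and if $r(g)^{-1}g$ and $r(g')^{-1}g'$ lie in the same coset then $g^{-1}g'\in H$ forces them to differ by an element of $A^{-1}A\cap H$ that one massages to be trivial after replacing $A$ by a genuine transversal $T\subseteq A\cdot(\text{finite})$ hitting each coset once (this is the fiddly bookkeeping step). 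Since $1=r(1)^{-1}1\in A$, we can arrange $1\in T$. Finally, almost commutativity of $H$ with $T$: for $h\in H$ and $t=r(g)^{-1}g\in T$, the relation $r(ht)\sim_D r(h)r(t)=h\cdot r(t)$ combined with $ht\sim_{A^{-1}}r(ht)$ and $t\sim_{A^{-1}}r(t)$ (using $r|_H=\mathrm{Id}$ and finiteness of $A$) gives, after left-multiplying by $t^{-1}h^{-1}$, that $[h^{-1},t^{-1}]$-type elements $t^{-1}h^{-1}ht$ lie in a fixed finite subset of $H$; hence $\mathcal C(H,T)$ is finite.

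The main obstacle I expect is not the quasi-homomorphism estimates — those are routine given Lemma~\ref{Lem: ProdDecomp} and Lemma~\ref{Lem: QRProperties} — but the purely set-theoretic step of converting the finite set $A=\{r(g)^{-1}g\}$ into an honest right coset transversal $T$ (exactly one representative per coset) while preserving both $1\in T$ and the almost-commutativity with $H$. One must check that replacing a representative $a\in A$ by another representative $a'$ of the same coset changes $\mathcal C(H,T)$ only by a finite set, which holds because $a'a^{-1}\in H$ and $H$ is itself finitely generated over... — more simply, because $|T|\le[G:H]<\infty$ and a finite set of elements always almost commutes with any subgroup in the trivial sense that $\mathcal C(H,T)$ is finite as soon as $\mathcal C(H,\{t\})$ is finite for each of the finitely many $t$. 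So really the content is: each individual $t\in T$ almost commutes with $H$, i.e.\ $\{[h,t]:h\in H\}$ is finite, which is exactly what the estimate in the previous paragraph delivers. I would present the transversal construction cleanly first, then do the two estimates.
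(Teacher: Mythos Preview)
Your overall strategy matches the paper's: both directions go through the set $A=\{r(g)^{-1}g:g\in G\}$ and the observation (Lemma~\ref{Lem: QRProperties}~(\ref{it7})) that $r$ is a quasi-isomorphism with quasi-inverse the inclusion. The paper, however, packages \emph{both} directions as a one-line appeal to Lemma~\ref{Lem: GoodQH}: viewing $r$ as a map $G\to G$, that lemma says precisely that $r$ is a quasi-isomorphism with quasi-inverse $\mathrm{Id}_G$ if and only if $A$ is finite and $r(G)^{-1}=H$ almost commutes with $A$. You are essentially reproving Lemma~\ref{Lem: GoodQH} inline, which is fine but less clean.

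Your ``if'' direction via Lemma~\ref{Lem: ProdDecomp} is correct (this is the computation in Theorem~\ref{Thm: LQS} $(2)\Rightarrow(1)$). The transversal extraction in the ``only if'' direction is also easier than you fear: every coset $Hg$ contains $r(g)^{-1}g\in A$, so $A$ meets every coset and you simply choose $T\subseteq A$; then $\mathcal C(H,T)\subseteq \mathcal C(H,A)$, and no ``massaging'' of representatives is needed.

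The genuine slip is in your commutator estimate for the ``only if'' direction. The expression $t^{-1}h^{-1}ht$ you wrote is identically $1$, and starting from $r(ht)\sim_D h\,r(t)$ with $h\in H$ on the left gives no conjugation information: left-multiplying by $t^{-1}h^{-1}$ just recovers $r(t)^{-1}t\in A$. The correct computation (which is exactly the proof of Lemma~\ref{Lem: GoodQH} $(1)\Rightarrow(2)$) puts the general element first: for $g\in G$ and $h\in H$ one has $gh\sim_A r(gh)\sim_D r(g)h$, hence $h^{-1}(r(g)^{-1}g)h\in DA$, so $[h^{-1},t]\in DAA^{-1}$ for every $t\in A$. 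With this fix your argument goes through; alternatively, just cite Lemma~\ref{Lem: GoodQH} and be done in two lines, as the paper does.
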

\begin{proof}
    ``$\Rightarrow$'': let $r: G\to H$ be a quasi-retraction with $D=D(r)$. Since $[G: H]<\infty$, Lemma \ref{Lem: QRProperties} (\ref{it7}) shows that $r$ is a quasi-isomorphism with a quasi-inverse given by the inclusion map $H\to G$. As a result of Lemma \ref{Lem: GoodQH}, the set $A:=\{r(g)^{-1}g: g\in G\}$ is finite and $H$ almost commutes with $A$. Note that any $g\in G$ can be written as $g=r(g)\cdot r(g)^{-1}g$ and $1\in A$. Hence, $A$ contains a right coset transversal $T$ containing $1$ of $H$ in $G$. And the finiteness of $\C(H,T)$ follows from the finiteness of $\C(H,A)$.

    ``$\Leftarrow$'': suppose that there exists a right coset transversal $T$ containing 1 of $H$ in $G$ such that $H$ almost commute with $T$. For any $g\in G$, $g$ can be uniquely written as $g=g_Hg_T$ where $g_H\in H$ and $g_T\in T$. Define a map $r: G\to H, g=g_Hg_T\mapsto g_H$. Since $1\in T$, $r|_H=Id_H$. Since $T$ is finite, the set $A:=\{r(g)^{-1}g: g\in G\}\subseteq T$ is also finite. By Lemma \ref{Lem: GoodQH}, $r$ is a quasi-homomorphism (even a quasi-isomorphism) and the conclusion then follows.
\end{proof}

We also show that a quasi-retraction to a finite-index subgroup of a subgroup is a composition of two quasi-retractions. 

\begin{lemma}\label{Lem: FinInd}
    Let $(G,H)$ be a group pair. Let $K$ be a finite-index subgroup of $H$. Then $K$ is a quasi-retract of $G$ if and only if $H$ is a quasi-retract of $G$ and $K$ is a quasi-retract of $H$.
\end{lemma}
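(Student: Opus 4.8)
The plan is to treat the two implications separately, using throughout the equivalent description of quasi-retracts from Lemma \ref{Lem: EquiDef}, so that every quasi-retraction below may be assumed to restrict to the \emph{literal} identity on its target subgroup.

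For the implication ``$\Leftarrow$'', suppose $r_1\colon G\to H$ and $r_2\colon H\to K$ are quasi-homomorphisms with $r_1|_H=Id_H$ and $r_2|_K=Id_K$. Since a composition of quasi-homomorphisms is a quasi-homomorphism, $r_2\circ r_1\colon G\to K$ is a quasi-homomorphism, and for $k\in K\subseteq H$ one has $r_2(r_1(k))=r_2(k)=k$; hence $r_2\circ r_1$ restricts to $Id_K$ on $K$, and so $K$ is a quasi-retract of $G$. This direction is purely formal.

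For the implication ``$\Rightarrow$'', fix a quasi-retraction $r\colon G\to K$ with finite defect set $D=D(r)$ and $r|_K=Id_K$. That $K$ is a quasi-retract of $H$ is immediate: the restriction $r|_H\colon H\to K$ is a quasi-homomorphism (its defect set lies in $D$) and still equals $Id_K$ on $K$. The substantive point is to build a quasi-retraction $G\to H$. I would regard $r$ as a map $\bar r\colon G\to H$ by enlarging the codomain; in the discrete setting this is again a quasi-homomorphism, with the same finite defect set $D\subseteq K\subseteq H$. It then suffices to show that $\{h^{-1}\bar r(h):h\in H\}$ is finite, for then $\bar r$ is a quasi-retraction onto $H$. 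To see this, use $[H:K]<\infty$ to fix a finite right coset transversal $T$ of $K$ in $H$ and write each $h\in H$ uniquely as $h=h_Kh_T$ with $h_K\in K$ and $h_T\in T$; then $\bar r(h)\sim_D \bar r(h_K)\bar r(h_T)=h_K\,\bar r(h_T)$, so $h^{-1}\bar r(h)=h_T^{-1}\bar r(h_T)\,d$ for some $d\in D$, and this ranges over the finite set $T^{-1}\bar r(T)D$. This is essentially the computation that proves Lemma \ref{Lem: QRProperties}(\ref{it7}), now performed for the pair $(H,K)$, so that finite generation of $H$ is not needed.

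The only step that is not routine bookkeeping with defect sets is this last maneuver: upgrading a quasi-retraction onto the smaller subgroup $K$ to one onto $H$ by post-composing with the set-theoretic inclusion $K\hookrightarrow H$ and absorbing the bounded discrepancy into the finite transversal. Everything else follows formally from the definitions and from closure of quasi-homomorphisms under composition and restriction.
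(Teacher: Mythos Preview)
Your proof is correct and follows essentially the same approach as the paper: for ``$\Rightarrow$'' you view the quasi-retraction $r\colon G\to K$ as a map into $H$ and verify that $\{h^{-1}r(h):h\in H\}$ is finite via a finite transversal of $K$ in $H$. The paper reaches the same conclusion by citing Lemma~\ref{Lem: QRProperties}(\ref{it7}) and Lemma~\ref{Lem: GoodQH}, whereas you carry out the transversal computation directly; as you correctly observe, this avoids the finite-generation hypothesis that appears in the statement of Lemma~\ref{Lem: QRProperties}(\ref{it7}) but is not actually used in its proof.
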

\begin{proof}
    ``$\Leftarrow$'' is clear since a composition of two quasi-retractions is still a quasi-retraction. Now, we prove ``$\Rightarrow$''. Let $r: G\to K$ be a quasi-retraction with the defect set $D=D(r)$. Clearly $r|_H: H\to K$ is also a quasi-retraction. Since $[H:K]<\infty$,  Lemma \ref{Lem: QRProperties} (\ref{it7}) shows that $r|_H$ is a quasi-isomorphism with a quasi-inverse given by the inclusion map $K\to H$. By Lemma \ref{Lem: GoodQH}, the set $A:=\{r(h)^{-1}h: h\in H\}$ is finite. Now, we take $r: G\to K$ as a quasi-homomorphism $r: G\to H$. The finiteness of $A$ implies that $\d(r|_H, Id_H)<\infty$. By definition, $H$ is a quasi-retract of $G$.
\end{proof}

%Snopce-Tanushevski-Zalesskii proved in \cite{STZ22} that for a free group $\F_n$, there exists a finite-index subgroup $H$ and a retract $R$ of $\F_n$ such that $H\cap R$ is not a retract of $H$. This fact shows that (quasi-)retractions are not preserved by restricting to finite-index subgroups.

Before giving more general examples of quasi-retracts, we first recall some definitions. 

\begin{defn}\cite{NN50}\label{Def: AmalDirPro}
    A group $G$ is said to be the \textit{direct product} of its subgroups $H$ and $K$ with an \textit{amalgamated subgroup} $Z$, denoted by $H\times_Z K$ if the following conditions hold: (i) $G$ is generated by $H\cup K$, (ii) $H\cap K=Z$, (iii) $H$ commutes with $K$, i.e. $\C(H,K)=\{1\}$.
\end{defn}

By \cite[Lemma 2.9]{FK16}, a central extension of groups $1\to H\to G\to Q\to 1$ has a bounded Euler class if and only if the central extension is right quasi-split, i.e. there exists a section $s: Q\to G$ such that $s$ is a quasi-homomorphism.

\begin{defn}\cite[Definition 2.1]{TW25}
    A group $G$ has \textit{property (PH')} if there exist finitely many hyperbolic spaces $X_1,\ldots, X_n$ on which $G$ acts coboundedly and the induced diagonal action of $G$ on the product $\prod_{i=1}^nX_i$ with $\ell^1$-metric is proper. In this case, we also say $G$ has \textit{property (PH') from actions $G\curvearrowright X_1,\cdots,G\curvearrowright X_n$}. 
    
    A group $G$ has \textit{property (PH)} if $G$ virtually has property (PH'). 
\end{defn}

By definition, any finitely generated abelian group has property (PH).

\begin{lemma}\label{Lem: ExaOfQR}
    Let $(G,H)$ be a group pair. Then $H$ is a quasi-retract of $G$ whenever the pair satisfies one of the following:
    \begin{enumerate}
        \item\label{5.4.1} $H$ is a retract of $G$. Equivalently, $G$ has a semi-direct product structure of the form $G=K\rtimes H$. 
        \item\label{5.4.2} $G$ is an arbitrary group and $H$ is a finite subgroup. 
        \item\label{5.4.3} $G$ is an amalgamated direct product of the form $H\times_{Z}K$ and the central extension $1\to Z\to K\to K/Z\to 1$ has a bounded Euler class.
        %\item\label{5.4.4} $G$ is an amalgamated free product over a finite subgroup of the form $H\ast_{F}K$.
        \item\label{5.4.5} $G$ has property (PH) and $H\le Z(G)$. 
        \item\label{5.4.6} $G$ is a finitely generated abelian group and $H$ is an arbitrary subgroup.
        \item\label{5.4.7} $H\le Z(G)$ and the central extension $1\to H\to G\to G/H\to 1$ has a bounded Euler class.
    \end{enumerate}
\end{lemma}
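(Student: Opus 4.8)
The plan is to establish the six implications essentially independently, reducing \ref{5.4.6} and \ref{5.4.5} to \ref{5.4.7}, so that the substantive content is the explicit construction of a quasi-retraction in cases \ref{5.4.7} and \ref{5.4.3}. Cases \ref{5.4.1} and \ref{5.4.2} are immediate: a retraction is an honest homomorphism, hence a quasi-homomorphism with trivial defect set, and $G=K\rtimes H$ with $K=\ker r$ is the standard internal-semidirect-product reformulation; and since any map into a finite group automatically has finite defect set, the map $r:G\to H$ that is the identity on $H$ and constant $1$ elsewhere is a quasi-retraction.

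The heart of the matter is case \ref{5.4.7}. By \cite[Lemma 2.9]{FK16} the bounded Euler class hypothesis means the central extension $1\to H\xrightarrow{\iota}G\xrightarrow{\pi}G/H\to 1$ is right quasi-split, so it admits a quasi-homomorphism section $s:G/H\to G$, which I normalize by replacing $s$ with $s(1)^{-1}s(\cdot)$ (a bounded perturbation by a central element, cf.\ Lemma \ref{Lem: BddPer}) so that $s(1)=1$. I then define $r:G\to H$ by $r(g):=s(\pi(g))^{-1}g=\big(g^{-1}s(\pi(g))\big)^{-1}$; since $g^{-1}s(\pi(g))\in\ker\pi=H\le Z(G)$, each value $r(g)$ lies in $H$ and is central in $G$, and $r|_H=\mathrm{Id}_H$. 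Writing $c_g=g^{-1}s(\pi(g))$, the quasi-homomorphism relation for $s$ translates into $c_{g_1g_2}=c_{g_1}c_{g_2}d$ for some $d$ in the finite defect set $D(s)$; because the $c_g$ are central they commute out of this identity, and one reads off that $r(g_1)r(g_2)$ and $r(g_1g_2)$ differ by a fixed element of $D(s)$. Hence $D(r)$ is finite and $r$ is the desired quasi-retraction.

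Case \ref{5.4.6} reduces to \ref{5.4.7}: for $G$ finitely generated abelian, $H\le G=Z(G)$ and $G/H\cong\Z^m\oplus T$ with $T$ finite, so choosing fixed preimages of a basis of $\Z^m$ and of each element of $T$ and extending $\Z$-linearly over the free part gives a normalized section $s:G/H\to G$ whose defect set $\tilde D(s)$ is finite (the only defect coming from $T$); thus $s$ is a quasi-homomorphism by Lemma \ref{Lem: DualDef}, the Euler class is bounded, and \ref{5.4.7} applies. Case \ref{5.4.3} runs along the same lines: from the axioms of $H\times_Z K$ one gets $G=HK$ with $H,K\lhd G$ and $Z=H\cap K\le Z(G)$, hence $Z\le Z(K)$; the bounded Euler class of $1\to Z\to K\to K/Z\to 1$ yields a normalized quasi-homomorphism section $\sigma:K/Z\to K$, and with $\rho:G\to K/Z$ the homomorphism $hk\mapsto kZ$ (kernel $H$) I set $\tau=\sigma\circ\rho$ and $r(g):=\tau(g)^{-1}g$. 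For $g=hk$ one checks $r(g)=h\cdot\big(\sigma(kZ)^{-1}k\big)\in H$ with the second factor in the central subgroup $Z$; and the identities $k_1k_2=\sigma(k_1Z)\sigma(k_2Z)z_1z_2=\sigma(k_1k_2Z)z_{12}$ (with $z_1,z_2,z_{12}\in Z$) combined with the defect of $\sigma$, which necessarily lies in $Z$, force $r(g_1)r(g_2)$ and $r(g_1g_2)$ to differ by an element of the finite set $D(\sigma)\cap Z$, so $r$ is a quasi-retraction.

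Finally, for case \ref{5.4.5} I would argue that property (PH) of $G$ together with $H\le Z(G)$ forces the central extension $1\to H\to G\to G/H\to 1$ to have bounded Euler class — using the equivalent conditions in \cite[Lemma 5.6]{TW25}, and the fact that an infinite-order central element acting on a product of hyperbolic spaces with proper diagonal action must act loxodromically on some line-like factor, producing a quasimorphism that detects it — and then conclude by \ref{5.4.7}. I expect this to be the main obstacle, because property (PH) is only a virtual property: one must first pass to a finite-index subgroup $G_0$ with property (PH'), establish the conclusion there for the central subgroup $H\cap G_0\le Z(G_0)$, and then transfer it back to $H\le G$ via the finite-index and composition lemmas for quasi-retracts (Lemmas \ref{Lem: FinInd} and \ref{Lem: QRProperties}). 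Verifying that property (PH') genuinely yields a bounded Euler class for the central extension is the delicate step; the remaining five cases are routine once the two explicit quasi-sections above are in place.
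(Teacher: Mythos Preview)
Your argument is correct and, for items \ref{5.4.1}, \ref{5.4.2}, \ref{5.4.3}, and \ref{5.4.7}, essentially identical to the paper's. In particular, your map $r(g)=\tau(g)^{-1}g$ in case \ref{5.4.3} coincides with the paper's map $hkz\mapsto hz$ once one unwinds the unique decomposition $g=hkz$ with $h$ in a transversal of $Z$ in $H$, $k\in\sigma(K/Z)$, $z\in Z$: both yield $D(r)\subset D(\sigma)^{-1}$, and the key identity is exactly your observation that $r(g_1)\in H$ commutes with $\tau(g_2)\in K$. Your explicit treatment of \ref{5.4.7} is what \cite[Lemma 5.6]{TW25} proves, so you have simply unpacked the citation.

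The organizational difference is in items \ref{5.4.5} and \ref{5.4.6}. The paper dispatches both in one line: finitely generated abelian groups have property (PH), so \ref{5.4.6} follows from \ref{5.4.5}; and \ref{5.4.5} reduces to \ref{5.4.7} by a direct citation of \cite[Theorem 1.7]{TW25}, which already establishes that property (PH) together with $H\le Z(G)$ forces the central extension $1\to H\to G\to G/H\to 1$ to have bounded Euler class. Thus the step you flag as ``the main obstacle'' --- handling the virtual nature of (PH) and extracting a bounded cocycle from the hyperbolic actions --- is treated as a black box in the paper, and your attempt to reprove it via finite-index transfer and Lemmas \ref{Lem: FinInd}, \ref{Lem: QRProperties} is unnecessary here (though your sketch is in the right spirit). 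Your independent reduction of \ref{5.4.6} to \ref{5.4.7} by exhibiting an explicit quasi-section is a valid alternative that bypasses \ref{5.4.5} entirely; it is slightly more elementary than the paper's route but buys nothing extra.
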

\begin{proof}
    (\ref{5.4.1}) is clear since a homomorphism is obviously a quasi-homomorphism. (\ref{5.4.2}) is clear by mapping every element in $H$ to itself and every element outside $H$ to $1$. (\ref{5.4.7}) is given by \cite[Lemma 5.6]{TW25}. By \cite[Theorem 1.7]{TW25}, (\ref{5.4.5}) is transformed into (\ref{5.4.7}). (\ref{5.4.6}) follows directly from (\ref{5.4.5}). Finally, it remains to show (\ref{5.4.3}). Since the central extension $1\to Z\to K\to K/Z\to 1$ has a bounded Euler class, there exists a section $s: K/Z\to K$ such that $s$ is a quasi-homomorphism. By Lemma \ref{Lem: BddPer}, we can assume that this section is normalized, i.e. $s(1)=1$. Fix a right coset transversal $T$ containing $1$ of $Z$ in $H$. By \cite[Corollary 6]{KM82}, any element $g\in H\times_Z K$ can be written uniquely as $g=hkz$ where $h\in T, k\in s(K/Z), z\in Z$. Define a map $r: G\to H$ by sending $g=hkz\mapsto hz$. Clearly $r|_{H}=Id_H$. Since $H$ commutes with $K$, it is direct to compute that $D(r)\subseteq D(s)^{-1}$. Hence, $r$ is a quasi-homomorphism and then quasi-retraction.%It remains to show (\ref{5.4.4}). Fix a right coset transversal $T_H$ (resp. $T_K$) containing $1$ of $F$ in $H$ (resp. $K$). By the classical Bass-Serre theory, any element $g\in H\ast_FK$  can be written uniquely as $g=fh_1k_1\cdots h_rk_r$ where $f\in F, h_1\in T_H, h_2,\cdots, h_r\in T_H\setminus\{1\}, k_1,\cdots,k_{r-1}\in T_K\setminus\{1\}, k_r\in T_K$. Define a map $r: G\to H$ by sending $g=fh_1k_1\cdots h_rk_r\mapsto fh_1\cdots h_r$. Clearly $r|_{H}=Id_H$ and it is easy to see that $D(r)\subset F$. Since $F$ is a finite group, $r$ is a quasi-homomorphism and then quasi-retraction.
\end{proof}

We also collect some concrete non-examples of quasi-retractions. 

\begin{example}\label{Exa: Noquasi-retraction}
    \begin{enumerate}
        \item\label{DihedralGp} Let $G=\langle t,s\mid s^2=(st)^2=1\rangle$ be an infinite dihedral group and $H=\langle t\rangle$. Since $[G:H]=2$ and $H$ does not almost commute with any right coset transversal $T$ of $H$ in $G$, Proposition \ref{Prop: FinInd} implies that $H$ is not a quasi-retract of $G$. 
        \item\label{HeisenbergGp} Let $G=\langle a,b,c\mid [a,c]=[b,c]=1,[a,b]=c\rangle$ be the Heisenberg group and $H=\langle c\rangle=Z(G)$. Since $H$ is distorted in $G$, Lemma \ref{Lem: QRProperties} (\ref{it5}) implies that $H$ is not a quasi-retract of $G$. However, $G$ quasi-retracts to $\langle a\rangle$ since $G$ has a homomorphism to $\langle a\rangle$ by sending $a, b, c$ to $a, 1, 1$ respectively.
        \item\label{BSGp} Let $G=BS(1,n)=\langle a,t\mid tat^{-1}=a^n\rangle, n\neq 0,1$ be a Baumslag-Solitar group and $H=\langle a\rangle$. Since $H$ is distorted in $G$, Lemma \ref{Lem: QRProperties} (\ref{it5}) implies that $H$ is not a quasi-retract of $G$. However, $G$ quasi-retracts to $\langle t\rangle$ since $G$ has a homomorphism to $\langle t\rangle$ by sending $t,a$ to $t,1$ respectively.
    \end{enumerate}
\end{example}

\section{Quasi-split short exact sequences}\label{sec: QSSES}

Recall from the Introduction that,  a short exact sequence $1\to H\to G\to Q\to 1$ is left split if $H$ is a retract of $G$ and right split if it admits a section $s: Q\to G$ that is a homomorphism. Comparing with the classical notion, we say the short exact sequence is \textit{left quasi-split} if $H$ is a quasi-retract of $G$ and \textit{right quasi-split} if it admits a section $s: Q\to G$ that is a quasi-homomorphism. One simple observation is that whenever $H$ or $Q$ is finite, the above short exact sequence is always right quasi-split since the defect set of any section is a finite subset of $H$. 

Recall in the classical group theory, we have the following results.

\begin{theorem}[Classical version]\label{Lem: ClaVer2}
    Let $1\to H\xrightarrow{\iota} G\xrightarrow{\pi} Q\to 1$ be a short exact sequence of groups. The following are equivalent: 
    \begin{enumerate}
        \item The short exact sequence $1\to H\to G\to Q\to 1$ is left split. Or equivalently, $H$ is a retract of $G$.
        \item There exists a section $s: Q\to G$ such that $s$ is a homomorphism and $H$ commutes with $s(Q)$.
        \item There is an isomorphism $\phi: G\to H\times Q$ such that the diagram commutes: 
        $$\xymatrix{
  1  \ar[r]^{} & H \ar[d]_{Id} \ar[r]^{\iota} & G \ar[d]_{\phi} \ar[r]^{\pi} & Q \ar[d]_{Id} \ar[r]^{} & 1  \\
  1 \ar[r]^{} & H \ar[r]^{} & H\times Q \ar[r]^{} & Q \ar[r]^{} & 1   }$$
  where the bottom row is the short exact sequence for a direct product.
    \end{enumerate}
\end{theorem}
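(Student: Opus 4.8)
The plan is to prove the cycle of implications $(1)\Rightarrow(2)\Rightarrow(3)\Rightarrow(1)$, each by an explicit construction.

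\textbf{For $(1)\Rightarrow(2)$.} Given a retraction $r\colon G\to H$ (a homomorphism with $r\circ\iota=Id_H$), I take $N:=\ker r$ as the candidate image of the section. Since $r$ is a homomorphism, $N\lhd G$; and $r\circ\iota=Id_H$ forces $N\cap\iota(H)=\{1\}$. The key claim is that $\pi|_N\colon N\to Q$ is an isomorphism. Injectivity is immediate from $\ker\pi=\iota(H)$ together with $N\cap\iota(H)=\{1\}$. For surjectivity, given $g\in G$ the corrected element $g\cdot(\iota r(g))^{-1}$ lies in $N$ (apply $r$) and has the same $\pi$-image as $g$ (since $\iota r(g)\in\ker\pi$). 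Then $s:=(\pi|_N)^{-1}\colon Q\to G$ is a homomorphic section with $s(Q)=N$. Finally, $\iota(H)=\ker\pi$ and $N=\ker r$ are two normal subgroups of $G$ with trivial intersection, and for $h\in\iota(H)$, $k\in N$ the commutator $[h,k]$ can be written both as an element of $N$ and as an element of $\iota(H)$ using the normality of each, so $[h,k]\in\iota(H)\cap N=\{1\}$; hence $H$ commutes with $s(Q)$.

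\textbf{For $(2)\Rightarrow(3)$.} Given a homomorphic section $s$ with $\iota(H)$ commuting with $s(Q)$, define $\phi\colon G\to H\times Q$ by $\phi(g)=\bigl(\iota^{-1}(g\,s(\pi(g))^{-1}),\,\pi(g)\bigr)$, which is well-posed since $g\,s(\pi(g))^{-1}\in\ker\pi=\iota(H)$. The only computation that uses the hypothesis is that $g\mapsto g\,s(\pi(g))^{-1}$ is a homomorphism into $\iota(H)$: expanding the product $g_1g_2\,s(\pi(g_1g_2))^{-1}$ and using that $s$ and $\pi$ are homomorphisms leaves a factor of the form $s(\pi(g_1))\cdot(\text{element of }\iota(H))\cdot s(\pi(g_1))^{-1}$, which equals that element by the commuting hypothesis. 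Thus $\phi$ is a homomorphism; a two-sided inverse is $(h,q)\mapsto\iota(h)s(q)$; and the diagram commutes because $\phi(\iota(h))=(h,1)$ and $\pi_2\circ\phi=\pi$, where $\pi_2$ denotes the projection to $Q$. \textbf{For $(3)\Rightarrow(1)$:} letting $\pi_1\colon H\times Q\to H$ be the projection, set $r:=\pi_1\circ\phi$, a homomorphism; commutativity of the left-hand square says $\phi\circ\iota$ is the inclusion $h\mapsto(h,1)$, whence $r\circ\iota=Id_H$, so $r$ is a retraction and $H$ is a retract of $G$.

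The only place requiring any thought is $(1)\Rightarrow(2)$ — recognizing $\ker r$ as the correct subgroup, and checking that it meets $\iota(H)$ trivially and commutes with it. Everything else is routine bookkeeping with the universal property of the direct product, so I expect no genuine obstacle; the point of stating this classical version is to set up the exact analogue in the quasi-split setting, where the finiteness of defect sets will replace the exactness used here.
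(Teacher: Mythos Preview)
The paper does not actually prove this theorem: it is stated twice (as Theorem~\ref{IntroThm: SSES} in the introduction and as Theorem~\ref{Lem: ClaVer2} in Section~\ref{sec: QSSES}), both times without proof, introduced by ``Recall in the classical group theory, we have the following results.'' It serves only as the model for the quasi-analogue, Theorem~\ref{Thm: LQS}. Your proof is correct and is the standard one.

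It is still worth comparing your argument to the paper's proof of the quasi-version. Your $(2)\Rightarrow(3)$ and $(3)\Rightarrow(1)$ are exactly the classical specializations of the corresponding steps in Theorem~\ref{Thm: LQS}: the map $\phi(g)=(r(g),\pi(g))$ with inverse $(h,q)\mapsto \iota(h)s(q)$, and the retraction $r=\pi_1\circ\phi$. The interesting divergence is in $(1)\Rightarrow(2)$. You take $N=\ker r$, which is automatically normal, meets $\iota(H)$ trivially, and hence commutes with it via the ``two normal subgroups with trivial intersection'' trick. This is unavailable in the quasi-setting (a quasi-homomorphism has no kernel), so the paper instead goes through Lemma~\ref{Thm: IfQR}: starting from an arbitrary transversal, it corrects it twice by elements of $H$ to reach a transversal $\tilde T$ with $r(\tilde T)\subset D$ and $\mathcal C(H,\tilde T)$ finite, then verifies by hand that the associated section is a quasi-homomorphism. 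Your closing remark anticipates exactly this: the exactness you use (normality of $\ker r$) is what gets replaced by defect-set bookkeeping in the quasi-case.
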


Before giving the analogue of Theorem \ref{Lem: ClaVer2}, we need the following two lemmas.
Recall that for any two subsets $H, T$ of a group $G$, we denote $\C(H,T):=\{[h,t]=hth^{-1}t^{-1}: h\in H, t\in T\}$.

\begin{lemma}\label{Lem: SymCommutatorSet}
    Let $G$ be a group and $H\lhd G$ be a normal subgroup. For any subset $T\subset G$, $\C(H,T)=\C(T^{-1},H)=\C(H, T^{-1})^{-1}$. 
\end{lemma}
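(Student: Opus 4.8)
The statement splits into two equalities of very different character. The equality $\mathcal{C}(T^{-1},H)=\mathcal{C}(H,T^{-1})^{-1}$ is purely formal and requires no hypothesis on $H$: it follows from the identity $[a,b]^{-1}=[b,a]$ applied with $a=t^{-1}$, $b=h$, which gives $\C(H,T^{-1})^{-1}=\{[h,t^{-1}]^{-1}:h\in H,\ t\in T\}=\{[t^{-1},h]:h\in H,\ t\in T\}=\C(T^{-1},H)$. I would dispatch this in one line.

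The content is the equality $\mathcal{C}(H,T)=\mathcal{C}(T^{-1},H)$, and here normality enters. The plan is to exhibit an explicit bijection between the two descriptions via the single commutator identity
$$[h,t]=[\,t^{-1},\,tht^{-1}\,]\qquad(h\in H,\ t\in T),$$
which one checks by the direct expansion $t^{-1}(tht^{-1})t(tht^{-1})^{-1}=ht^{-1}\cdot t\cdot th^{-1}t^{-1}=hth^{-1}t^{-1}$. Since $H\lhd G$, the conjugate $tht^{-1}$ lies in $H$, so the right-hand side is a legitimate element of $\C(T^{-1},H)$; this gives $\C(H,T)\subseteq\C(T^{-1},H)$. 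For the reverse inclusion, given $[t^{-1},h']\in\C(T^{-1},H)$ with $h'\in H$, set $h:=t^{-1}h't\in H$ (again using normality); then $tht^{-1}=h'$, so the same identity yields $[h,t]=[t^{-1},h']$, proving $\C(T^{-1},H)\subseteq\C(H,T)$. Combining gives equality.

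I do not anticipate a genuine obstacle: the argument is a short computation once the right conjugating element is identified. The only point requiring a little care is the bookkeeping of which element is conjugated by which — making sure that in both directions the conjugate ($tht^{-1}$ in one direction, $t^{-1}h't$ in the other) genuinely lands back in $H$, which is exactly where the normality hypothesis $H\lhd G$ is used and is the reason an analogous statement would fail for arbitrary subgroups $H$. I would present the verification of the identity $[h,t]=[t^{-1},tht^{-1}]$ explicitly and then note that both inclusions are immediate consequences.
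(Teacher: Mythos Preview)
Your proposal is correct and follows essentially the same route as the paper. Both arguments dispatch $\C(T^{-1},H)=\C(H,T^{-1})^{-1}$ via $[a,b]^{-1}=[b,a]$, and both prove $\C(H,T)=\C(T^{-1},H)$ through the identity $[h,t]=[t^{-1},tht^{-1}]$ (the paper writes this as $[h,t]=[t^{-1},h^t]$ with $h^t:=tht^{-1}$); the only cosmetic difference is that the paper records the companion identity $[t^{-1},h]=[t^{-1}ht,\,t]$ separately for the reverse inclusion, whereas you obtain it by substituting $h=t^{-1}h't$ back into the first identity.
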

\begin{proof}
    Let $T\subset G$ be a subset. Since $[t^{-1},h]=t^{-1}hth^{-1}=(ht^{-1}h^{-1}t)^{-1}=[h,t^{-1}]^{-1}$ for all $h\in H, t\in T$, it is clear that $\C(T^{-1},H)=\C(H,T^{-1})^{-1}$. Thus we only need to verify that $\C(H,T)=\C(T^{-1},H)$. Since $H\lhd G$, $tht^{-1}, t^{-1}ht\in H$ for any $h\in H, t\in T$. Denote $h^t:=tht^{-1}$ and $h^{t^{-1}}=t^{-1}ht$. Then $(h^{-1})^t=th^{-1}t^{-1}=(h^t)^{-1}$ and $(h^{-1})^{t^{-1}}=t^{-1}h^{-1}t=(h^{t^{-1}})^{-1}$. Therefore, $$[h,t]=hth^{-1}t^{-1}=t^{-1}(tht^{-1})t(th^{-1}t^{-1})=t^{-1}h^tt(h^{-1})^t=[t^{-1},h^t]\in \C(T^{-1},H)$$ and similarly $$[t^{-1},h]=t^{-1}hth^{-1}=(t^{-1}ht)t(t^{-1}h^{-1}t)t^{-1}=h^{t^{-1}}t(h^{-1})^{t^{-1}}t^{-1}=[h^{t^{-1}},t]\in \C(H,T).$$
    Since $h\in H,t\in T$ are arbitrary, the above formulas show that $\C(H,T)\subset \C(T^{-1},H)\subset \C(H,T)$. Thus we get $\C(H,T)=\C(T^{-1},H)$ and the conclusion follows.
\end{proof}

The following lemma gives a useful necessary condition when a subgroup is a quasi-retract. 

\begin{lemma}\label{Thm: IfQR}
    Let $(G,H)$ be a group pair. If $r: G\to H$ is a quasi-retraction with $D=D(r)$, then for any normal subgroup $H_0\lhd G $ contained in $H$, there exists a right coset transversal $T$ containing $1$ of $H$ in $G$ such that $r(T)\subset D$ and $\C(H_0,T)$ is finite. 
\end{lemma}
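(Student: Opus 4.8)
The plan is to build the transversal $T$ directly from the set $A=\{r(g)^{-1}g:g\in G\}$ appearing in Lemma~\ref{Lem: QRProperties}, and then to control $\C(H_0,T)$ by feeding the commutators $[h_0,t]$ back through $r$. Invoking Lemma~\ref{Lem: EquiDef}, I will work with $r$ satisfying $r\circ\iota=\mathrm{Id}_H$; in particular $r$ is surjective onto $H$ and $r(1)=1$. First I would observe that for every $g\in G$ the element $r(g)^{-1}g$ lies both in $A$ and in the right coset $Hg$ (since $r(g)^{-1}\in H$), so $A$ meets every right $H$-coset, while $1\in A$ represents the coset $H$ itself. Choosing one element of $A$ from each right coset — taking $1$ for $H$ — produces a right coset transversal $T\subseteq A$ with $1\in T$, and then $r(T)\subseteq r(A)\subseteq D$ by Lemma~\ref{Lem: QRProperties}. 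This settles the first assertion, and it remains to bound $\C(H_0,T)$.

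For the second assertion I would fix $h_0\in H_0$ and $t\in T$ and note that $th_0^{-1}t^{-1}\in H_0$ because $H_0\lhd G$, hence $[h_0,t]=h_0(th_0^{-1}t^{-1})\in H_0\subseteq H$ and therefore $r([h_0,t])=[h_0,t]$. Expanding the four-fold product $h_0\,t\,h_0^{-1}\,t^{-1}$ with Lemma~\ref{Lem: QHProperties}(\ref{def}), and using $r|_H=\mathrm{Id}_H$ for the two $H$-entries, yields
\[
[h_0,t]\;\in\;\big(h_0\,r(t)\,h_0^{-1}\big)\cdot r(t^{-1})\cdot D^3 ,
\]
where $r(t)\in D$ since $t\in T$, and $r(t^{-1})\in D^{-3}$ by Lemma~\ref{Lem: QHProperties}(\ref{inverse}) (again using $r(t)\in D$). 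So the whole right-hand side is confined to a fixed finite product of copies of $D$ and $D^{-1}$, except for the conjugation term $h_0\,r(t)\,h_0^{-1}$.

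The crux — and the step I expect to be the main obstacle — is showing that these conjugates are only finitely many. The point is that, exactly as in the proof of Lemma~\ref{Lem: SujQHImpEle}, surjectivity of $r$ together with Lemma~\ref{Lem: QHProperties}(\ref{centralizer}) forces $Z_H(\Delta_r)$ to have finite index in $H$; since $D\subseteq\Delta_r$ we get $Z_H(\Delta_r)\le Z_H(d)$ for every $d\in D$, so each $d$ has a finite $H$-conjugacy class, and hence $E:=\{h\,d\,h^{-1}:h\in H,\ d\in D\}$ is finite. In particular $h_0\,r(t)\,h_0^{-1}\in E$ for all $h_0\in H_0$ and $t\in T$, so combining with the previous display, $\C(H_0,T)\subseteq E\cdot D^{-3}\cdot D^3$, which is finite. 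Without this finite-index centralizer property of the defect subgroup the naive bound on $[h_0,t]$ would leave an a priori unbounded conjugation term, so this is where the argument has content; everything else is bookkeeping with the $\sim$-notation of Lemma~\ref{Lem: QHProperties}.
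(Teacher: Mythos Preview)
Your proof is correct, but it takes a genuinely different route from the paper's. The paper avoids the centralizer argument entirely: starting from your transversal $T'\subseteq A$ with $r(T')\subset D$, it iterates the construction once more, setting $\tilde t:=r(t')^{-1}t'$ and working with the transversal $\tilde T$. The point is that for $\tilde t$ the commutator $[h^{-1},\tilde t]=h^{-1}r(t')^{-1}t'\,h\,t'^{-1}r(t')$ can be bounded by pure defect bookkeeping, since the extra $r(t')\in D$ on the right absorbs what would otherwise be the conjugation term; one gets $[h^{-1},\tilde t]\in D^{-2}D^2D$ with no appeal to Lemma~\ref{Lem: QHProperties}(\ref{centralizer}). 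Your approach instead stops after the first iteration and handles the residual conjugation term $h_0\,r(t)\,h_0^{-1}$ by invoking the structural fact (as in Lemma~\ref{Lem: SujQHImpEle}) that $[H:Z_H(\Delta_r)]<\infty$, which forces each $d\in D$ to have a finite $H$-conjugacy class. This is more conceptual and makes transparent \emph{why} the commutators are bounded, at the price of using a deeper input; the paper's double-iteration trick is more elementary but somewhat opaque.
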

\begin{proof}
    Since $r(1)=1=r(gg^{-1})\sim_Dr(g)r(g^{-1})$, one has that $r(g^{-1})\sim_{D^{-1}}r(g)^{-1}$ for any $g\in G$. Let $T$ be an arbitrary right coset transversal containing $1$ of $H$ in $G$. For each  $t\in T$, we denote $ t':=r(t)^{-1}t$ and $ T'=\{t': t\in T\}$. Clearly  $T'$ gives another right coset transversal containing $1$ of $H$ in $G$. And Lemma \ref{Lem: QRProperties} shows that $r(T')\subset D$.

    Since $H_0\lhd G$ and $r|_{H_0}=Id_{H_0}$, one has that for any $h\in H_0, t'\in T'$, $$t'ht'^{-1}=r(t'ht'^{-1})\sim_{D^2}r(t')r(h)r(t'^{-1})=r(t')hr(t'^{-1})\sim_{D^{-1}}r(t')hr(t')^{-1}\sim_{D^{-1}}r(t')h.$$  By multiplying $h^{-1}r(t')^{-1}$ to the left, the above formula transforms into $$h^{-1}r(t')^{-1}t'ht'^{-1}\sim_{D^{-2}D^2}1.$$ Denote $\tilde t:=r(t')^{-1}t'$. Therefore, we get 
    \begin{equation*}
        [h^{-1}, \tilde t]=h^{-1}\tilde t h \tilde t^{-1}=h^{-1}r(t')^{-1}t'ht'^{-1}r(t')\sim_{D}h^{-1}r(t')^{-1}t'ht'^{-1}\sim_{D^{-2}D^2}1.
    \end{equation*}
    Since $h\in H_0$ is arbitrary and $\tilde T=\{\tilde t: t\in T\}$ is still a right coset transversal containing $1$ of $H$ in $G$, the conclusion follows.
\end{proof}

\begin{remark}
    We remark that the inverse of Lemma \ref{Thm: IfQR} is not true. For example, let $G$ be a Heisenberg group and $H$ be its center. Then $\C(H_0,T)=\{1\}$ since $H_0\le H=Z(G)$. However, Example \ref{Exa: Noquasi-retraction} (\ref{HeisenbergGp}) shows that $H$ is not a quasi-retract of $G$.
\end{remark}

Recall that a quasi-homomorphism $\phi: G\to G'$ is quasi-isomorphism if there exists a quasi-homomorphism $\phi': G'\to G$, called the quasi-inverse of $\phi$, such that $\d(\phi'\circ \phi, Id_{G})<\infty$ and $\d(\phi\circ \phi', Id_{G'})<\infty$. Moreover, if $\phi'=\phi^{-1}$, then $\phi$ is a strict quasi-isomorphism.

\begin{theorem}\label{Thm: LQS}
    Let $1\to H\xrightarrow{\iota} G\xrightarrow{\pi} Q\to 1$ be a short exact sequence of groups.  The following are equivalent: 
    \begin{enumerate}
        \item\label{LQS} The short exact sequence $1\to H\to G\to Q\to 1$ is left quasi-split. Or equivalently, $H$ is a quasi-retract of $G$.
        \item\label{RQS} There exists a normalized section $s: Q\to G$ such that $s$ is a quasi-homomorphism and $H$ almost commutes with $s(Q)$: $\C(H,s(Q))$ is finite.
        \item\label{SQI} There is a strict quasi-isomorphism $\phi: G\to H\times Q$ such that the diagram commutes: 
        $$\xymatrix{
  1  \ar[r]^{} & H \ar[d]_{Id} \ar[r]^{\iota} & G \ar[d]_{\phi} \ar[r]^{\pi} & Q \ar[d]_{Id} \ar[r]^{} & 1  \\
  1 \ar[r]^{} & H \ar[r]^{} & H\times Q \ar[r]^{} & Q \ar[r]^{} & 1   }$$
  where the bottom row is the short exact sequence for a direct product.
    \end{enumerate}
\end{theorem}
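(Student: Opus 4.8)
The plan is to prove the cycle of implications $(\ref{LQS})\Rightarrow(\ref{RQS})\Rightarrow(\ref{SQI})\Rightarrow(\ref{LQS})$, so that the only substantial work lies in the first two steps. For $(\ref{SQI})\Rightarrow(\ref{LQS})$ I would simply let $\pi_1\colon H\times Q\to H$ be the first projection and set $r=\pi_1\circ\phi$; since $\phi$ is a quasi-homomorphism and $\pi_1$ a homomorphism, $r$ is a quasi-homomorphism, and the commuting left square gives $r\circ\iota=\pi_1\circ\phi\circ\iota=Id_H$. Thus $H$ is a quasi-retract of $G$, which is exactly statement $(\ref{LQS})$ (cf. Lemma \ref{Lem: EquiDef}).

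For $(\ref{LQS})\Rightarrow(\ref{RQS})$, Lemma \ref{Lem: EquiDef} lets me start from a quasi-homomorphism $r\colon G\to H$ with $D:=D(r)$ and $r\circ\iota=Id_H$. Since $H\lhd G$, I apply Lemma \ref{Thm: IfQR} with $H_0=H$ to obtain a right coset transversal $T$ of $H$ in $G$ containing $1$ with $r(T)\subset D$ and $\C(H,T)$ finite. Normality of $H$ turns $T$ into a normalized set-theoretic section $s\colon Q\to G$ with $s(Q)=T$, so $\C(H,s(Q))=\C(H,T)$ is finite; it remains to check that $s$ is a quasi-homomorphism, and by Lemma \ref{Lem: DualDef} it suffices to bound $\tilde D(s)=\{s(q)s(q')s(qq')^{-1}\}$. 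Every such element lies in $H$, so applying $r$ (which is the identity on $H$) and using Lemma \ref{Lem: QHProperties}(\ref{def}),(\ref{inverse}) together with $r(s(q)),r(s(q')),r(s(qq'))\in D$, I get that $s(q)s(q')s(qq')^{-1}=r\big(s(q)s(q')s(qq')^{-1}\big)$ lies in a fixed finite subset of $H$ depending only on $D$. Hence $s$ is a quasi-homomorphism, establishing $(\ref{RQS})$.

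For $(\ref{RQS})\Rightarrow(\ref{SQI})$ — the technical core — given a normalized quasi-homomorphic section $s\colon Q\to G$ with $\C(H,s(Q))$ finite, I define $\phi\colon G\to H\times Q$ by $\phi(g)=\big(g\,s(\pi(g))^{-1},\,\pi(g)\big)$. This is a bijection with explicit inverse $\phi^{-1}(h,q)=h\,s(q)$; it satisfies $\phi(\iota(h))=(h,1)$ because $s(1)=1$, and $\pi_2\circ\phi=\pi$, so the diagram commutes strictly and $\phi^{-1}\circ\phi=Id_G$, $\phi\circ\phi^{-1}=Id_{H\times Q}$. Writing $g=g_H\,g_T$ with $g_H\in H$ and $g_T=s(\pi(g))\in T:=s(Q)$, Lemma \ref{Lem: ProdDecomp} gives $(gg')_H=g_H(g_Tg'_Hg_T^{-1})(g_Tg'_T)_H$ with $(g_Tg'_T)_H\in\tilde D(s)$; a direct computation then shows $\big(\phi(g)\phi(g')\big)^{-1}\phi(gg')=\big(\,[\,(g'_H)^{-1},g_T\,]\,(g_Tg'_T)_H,\ 1\,\big)\in\big(\C(H,s(Q))\cdot\tilde D(s)\big)\times\{1\}$, a finite set, so $\phi$ is a quasi-homomorphism. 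For $\phi^{-1}$ I expand $\big(\phi^{-1}(h,q)\phi^{-1}(h',q')\big)^{-1}\phi^{-1}\big((h,q)(h',q')\big)=s(q')^{-1}h'^{-1}s(q)^{-1}h'\,s(qq')$ and push the error terms through: using $H\lhd G$ together with Lemma \ref{Lem: SymCommutatorSet} (so that $\C(H,s(Q)^{\pm1})$ and $\C(s(Q)^{\pm1},H)$ are all finite) and the identity $aca^{-1}=c\,[c^{-1},a]$ (which keeps conjugates of a fixed finite subset of $H$ by elements of $s(Q)^{\pm1}$ inside a finite set), I rewrite this element as a product of one element of $D(s)$ with boundedly many commutators drawn from those finite sets; hence it ranges over a finite set and $\phi^{-1}$ is a quasi-homomorphism as well. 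Thus $\phi$ is a strict quasi-isomorphism and $(\ref{SQI})$ holds.

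I expect the bookkeeping in $(\ref{RQS})\Rightarrow(\ref{SQI})$, and especially the verification that $\phi^{-1}$ is a quasi-homomorphism, to be the main obstacle: the formula for $\phi^{-1}$ interleaves elements of $H$ with elements of $s(Q)$, and controlling the resulting defect forces one to shuttle finitely many ``error'' elements of $H$ back and forth across elements of $s(Q)$, which is precisely where the symmetry of the mixed commutator sets from Lemma \ref{Lem: SymCommutatorSet} and the normality of $H$ are essential. By contrast, $(\ref{SQI})\Rightarrow(\ref{LQS})$ is purely formal, and $(\ref{LQS})\Rightarrow(\ref{RQS})$ is short once Lemmas \ref{Thm: IfQR} and \ref{Lem: DualDef} are in hand.
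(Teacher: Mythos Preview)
Your proposal is correct and follows essentially the same route as the paper: the same section $s$ with $s(Q)=T$ coming from Lemma~\ref{Thm: IfQR}, the same bijection $\phi(g)=(g_H,\pi(g))$ with inverse $(h,q)\mapsto h\,s(q)$, and the same commutator bookkeeping via Lemma~\ref{Lem: SymCommutatorSet} to bound the defect of $\phi^{-1}$. The only cosmetic differences are that you run the implications as a single cycle (the paper also records $(\ref{RQS})\Rightarrow(\ref{LQS})$ separately and reuses it in $(\ref{RQS})\Rightarrow(\ref{SQI})$) and that you check finiteness of $\tilde D(s)$ rather than $D(s)$, which is equivalent by Lemma~\ref{Lem: DualDef}.
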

\begin{proof}
    ``$(\ref{RQS})\Rightarrow(\ref{LQS})$'': Let $s: Q\to G$ be a normalized section such that $s$ is a quasi-homomorphism and $\C(H,s(Q))$ is finite. Denote $T=s(Q)$. Since $s$ is a normalized section, $T$ contains $1$ and any group element $g\in G$ can be uniquely written as $g=g_Hg_T$ where $g_H\in H$ and $g_T=s(\pi(g))\in T$. Define a map $r: G\to H, g=g_Hg_T\mapsto g_H$. Since $1\in T$, $r(h)=h$ for all $h\in H$. It suffices for us to show that $r$ is a quasi-homomorphism. 
    
    For any $g=g_Hg_T,g'=g'_Hg'_T\in G$, one has that $gg'=g_Hg_Tg'_Hg'_T=(gg')_H(gg')_T$. Then Lemma \ref{Lem: ProdDecomp} shows that 
    $$r(gg')=(gg')_H=g_H(g_Tg'_Hg_T^{-1})(g_Tg'_T)_H=g_Hg'_H\cdot [{g'_H}^{-1},g_T](g_Tg'_T)_H\sim_{\C(H,T)\tilde D(s)}g_Hg'_H=r(g)r(g').$$
    The above equality shows that the defect set $D(r)$ of $r$ is contained in $\C(H,T)\tilde D(s)$ and thus is finite.

    ``$(\ref{LQS})\Rightarrow(\ref{RQS})$'':  By definition, there exists a quasi-retraction $r$ from $G$ to $H$. Denote by $D=D(r)$ the finite defect set of $r$. According to Lemma \ref{Thm: IfQR}, there exists a right coset transversal $T$ containing 1 of $H$ in $G$ such that $r(T)\subset D$ and $\C(H,T)$ is finite. Let $s: Q\to G$ be the section such that $s(Q)=T$. 
    
    It remains to verify that $s$ is a quasi-homomorphism. Note that for any $g\in G$, $r(g^{-1})\sim_{D^{-1}}r(g)^{-1}$ and for any $x,y\in Q$,  $r(s(x)), r(s(y)), r(s(xy))\in r(T)\subset D$. Since $s$ is a section, $s(y)^{-1} s(x)^{-1} s(xy)\in H$. Therefore, 
    \begin{align*}
        s(y)^{-1} s(x)^{-1} s(xy)& =r( s(y)^{-1} s(x)^{-1} s(xy))\sim_{D^2}r( s(y)^{-1})r( s(x)^{-1})r(s(xy))\\ &\sim_{D}r( s(y)^{-1})r( s(x)^{-1})\sim_{D^{-1}}r( s(y)^{-1})r( s(x))^{-1}\sim_{D^{-1}}r(s(y)^{-1})\sim_{D^{-2}}1.
    \end{align*}
    Since $x,y\in Q$ are arbitrary, the above formula shows that the defect of $s$ is finite, which means that $s$ is a quasi-homomorphism. 

    ``$(\ref{RQS})\Rightarrow(\ref{SQI})$'': By assumption, there exists a normalized section $s: Q\to G$ such that $s$ is a quasi-homomorphism and $\C(H,s(Q))$ is finite. Denote $T=s(Q)$. Any group element $g$ can be uniquely written as $g=g_Hg_T$ where $g_H\in H$ and $g_T\in T$. According to the proof of ``$(\ref{RQS})\Rightarrow(\ref{LQS})$'', the map $r: G\to H, g=g_Hg_T\mapsto g_H$ is a quasi-retraction. Define two maps $\phi: G\to H\times Q$ and $\phi': H\times Q\to G$ as follows: for any $g\in G, (h,x)\in H\times Q$, $$\phi(g)=(r(g),\pi(g)), \quad \phi'(h,x)=h\cdot s(x).$$ Since $r$ is a quasi-retraction and $\pi$ is a homomorphism, it is clear that $\phi$ is a quasi-homomorphism. As for $\phi'$, we compute directly that 
    \begin{align*}
    &\phi'(h_2,x_2)^{-1}\phi'(h_1,x_1)^{-1}\phi'(h_1h_2,x_1x_2) =s(x_2)^{-1}h_2^{-1}s(x_1)^{-1}h_2s(x_1x_2)\\ &=s(x_2)^{-1}[h_2^{-1},s(x_1)^{-1}]s(x_2)s(x_2)^{-1}s(x_1)^{-1}s(x_1x_2)\sim_{D(s)}s(x_2)^{-1}[h_2^{-1},s(x_1)^{-1}]s(x_2)\\ &=[s(x_2)^{-1}, [h_2^{-1},s(x_1)^{-1}]][h_2^{-1},s(x_1)^{-1}]\sim_{\C(H,T^{-1})}[s(x_2)^{-1}, [h_2^{-1},s(x_1)^{-1}]]\sim_{\C(T^{-1},H)}1
    \end{align*}
    for any $(h_1,x_1),(h_2,x_2)\in H\times Q$. By Lemma  \ref{Lem: SymCommutatorSet}, $\C(H,T^{-1})$ and $\C(T^{-1},H)$ are both finite since $\C(H,T)$ is finite. Therefore, $\phi'$ is also a quasi-homomorphism.

    It remains to show that $\phi'=\phi^{-1}$. For any $g\in G$, $\phi'(\phi(g))=\phi'(r(g),\pi(g))=r(g)s(\pi(g))=g_Hg_T=g$. For any $(h,x)\in H\times Q$, $\phi(\phi'(h,x))=(r(hs(x)),\pi(hs(x)))=(h,x)$. Thus $\phi'=\phi^{-1}$, which completes the proof.

    ``$(\ref{SQI})\Rightarrow(\ref{LQS})$'': Let $\phi: G\to H\times Q$ be a strict quasi-isomorphism and $p: H\times Q\to H$ be the natural projection. Denote $r=p\circ \phi: G\to H$. Clearly $r$ is a quasi-homomorphism since $r$ is a composition of two quasi-homomorphisms. For any $h\in H$, it follows from the commutative diagram that $\phi(h)=(h,1)$. Thus $r(h)=p(h,1)=h$, which shows that $r$ is a quasi-retraction.
\end{proof}

\begin{remark}
    In the classical Theorem \ref{Lem: ClaVer2}, the direct product structure of $G$ guarantees that there exists a section $s: Q\to G$ which is a homomorphism. However, different to the classical version, the finiteness of $\C(H,s(Q))$ does not imply that $s$ is a quasi-homomorphism in Theorem \ref{Thm: LQS}. For example, let $G$ be the discrete Heisenberg group and $H=Z(G)$. Then $\C(H,s(G/H))=\{1\}$ for any normalized section $s: G/H\to G$. However, Example \ref{Exa: Noquasi-retraction} (\ref{HeisenbergGp}) shows that the central extension $1\to H\to G\to G/H\to 1$ is not left quasi-split. Thus as a result of Theorem \ref{Thm: LQS}, any normalized section $s: G/H\to G$ can not be a quasi-homomorphism. By the way, Example \ref{Exa: Noquasi-retraction} (\ref{DihedralGp}) shows that one can not drop the requirement that $\C(H,s(Q))$ is finite in Theorem \ref{Thm: LQS}, either.
\end{remark}

The following result is a complement to Lemma \ref{Lem: QROfTorFreHypGp} which initially characterizes quasi-retracts of torsion-free hyperbolic groups. 
\begin{proposition}\label{Prop: NorQROfHypGp}
    \begin{enumerate}
        \item\label{3.18.1} A normal quasi-retract of a hyperbolic group is either finite or of finite index. 
        \item\label{3.18.2} A normal quasi-retract of a non-elementary torsion-free hyperbolic group is trivial. 
    \end{enumerate}
\end{proposition}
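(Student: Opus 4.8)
The plan is to feed the ``almost-commuting section'' structure from Theorem~\ref{Thm: LQS} into two standard rigidity facts about hyperbolic groups, and to bootstrap (\ref{3.18.2}) from (\ref{3.18.1}). For part~(\ref{3.18.1}), write $Q=G/H$. Since $H$ is a normal quasi-retract, Theorem~\ref{Thm: LQS}~(\ref{SQI}) gives a strict quasi-isomorphism $\phi\colon G\to H\times Q$. As $G$ is finitely generated, $H$ is finitely generated by Lemma~\ref{Lem: QRProperties}~(\ref{it3}) and so is $Q$ (being a quotient of $G$); hence $H\times Q$ is finitely generated, and Lemma~\ref{Lem: QIToQI} upgrades $\phi$ to a quasi-isometry. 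Thus $H\times Q$ is quasi-isometric to the hyperbolic group $G$, so $H\times Q$ is itself a hyperbolic group (and, by Corollary~\ref{Cor: HypGp}, so is $H$). Now suppose $H$ is infinite but, for contradiction, $Q$ is also infinite. Then $H$ and $\{1\}\times Q$ are two infinite subgroups of the hyperbolic group $H\times Q$, so each contains an element of infinite order, hence a copy of $\Z$; therefore $\Z^2\le H\times Q$, which is impossible. Hence $Q$ is finite, i.e. $[G:H]<\infty$, giving the desired dichotomy.

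For part~(\ref{3.18.2}), let $G$ be non-elementary torsion-free hyperbolic and $H\lhd G$ a quasi-retract; I would assume $H\ne\{1\}$ and deduce $H=G$. Since $G$ is torsion-free, $H$ is infinite, so part~(\ref{3.18.1}) already gives $[G:H]<\infty$, i.e. $Q=G/H$ is finite. Apply Theorem~\ref{Thm: LQS}~(\ref{RQS}) to obtain a normalized quasi-homomorphic section $s\colon Q\to G$ with $\C(H,s(Q))$ finite; as $Q$ is finite, $s(Q)$ is a finite coset transversal containing $1$. Fix $t\in s(Q)$. The set $\{[h,t]:h\in H\}$ lies in the finite set $\C(H,s(Q))$, so the conjugation orbit $\{hth^{-1}:h\in H\}$ is finite, and orbit--stabilizer gives $[H:Z_H(t)]<\infty$, whence $[G:Z_G(t)]<\infty$. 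Thus $Z_G(t)$ is a finite-index subgroup of $G$, hence non-elementary. But in a torsion-free hyperbolic group the centralizer of any nontrivial element is virtually cyclic, so $t=1$. Since $t\in s(Q)$ was arbitrary and $s$ is injective with $s(1)=1$, this forces $Q=\{1\}$, i.e. $H=G$; combined with the case $H=\{1\}$, $H$ is trivial.

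The genuinely nontrivial ingredients are all external and standard: quasi-isometry invariance of hyperbolicity for finitely generated groups, together with the non-embedding of $\Z^2$ (and of infinite torsion subgroups) into hyperbolic groups, for part~(\ref{3.18.1}); and virtual cyclicity of centralizers of nontrivial elements in torsion-free hyperbolic groups, for part~(\ref{3.18.2}). I expect the actual obstacle to be bookkeeping rather than ideas: one must check that $s(Q)$ is a bona fide coset transversal (so that ``$Q$ finite'' really gives ``$s(Q)$ finite''), that the passage from finiteness of $\{[h,t]:h\in H\}$ to finiteness of $\{hth^{-1}:h\in H\}$ is correct, and, in part~(\ref{3.18.1}), that the fact ``every infinite subgroup of a hyperbolic group contains an element of infinite order'' is invoked correctly --- it relies on the uniform bound on the orders of finite subgroups of a hyperbolic group.
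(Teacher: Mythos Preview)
Your proof is correct. Part~(\ref{3.18.1}) is essentially identical to the paper's argument: both pass through Theorem~\ref{Thm: LQS}~(\ref{SQI}) and Lemma~\ref{Lem: QIToQI} to make $H\times Q$ hyperbolic, then invoke the absence of $\Z^2$ and of infinite torsion subgroups.

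For part~(\ref{3.18.2}) both proofs start from the almost-commuting section $s\colon Q\to G$ of Theorem~\ref{Thm: LQS}~(\ref{RQS}) and aim to force $s(Q)=\{1\}$, but the endgames differ. The paper picks a uniform exponent $k$ with $g^k\in H$ for all $g$, and for each hyperbolic $f$ deduces from the finiteness of $\C(\langle f^k\rangle, s(Q))$ that $s(Q)\subset E(f^k)$; it then concludes via the fact $\bigcap_f E(f^k)=\{1\}$ in a non-elementary torsion-free hyperbolic group. You instead fix $t\in s(Q)$, turn the finiteness of $\{[h,t]:h\in H\}$ into finiteness of the $H$-conjugacy class of $t$, get $[G:Z_G(t)]<\infty$, and contrast this with the virtual cyclicity of centralizers of nontrivial elements. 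Your route is a touch more direct and avoids the intersection-of-elementary-closures fact; the paper's route makes the role of hyperbolic elements more explicit. Both ultimately rest on the same rigidity of centralizers in torsion-free hyperbolic groups, so the difference is one of packaging rather than of substance.
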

\begin{proof}
    (\ref{3.18.1}) Let $G$ be a finitely generated hyperbolic group and $H\lhd G$ be a quasi-retract. By Lemma \ref{Lem: QRProperties} (\ref{it3}), $H$ is finitely generated. By Theorem \ref{Thm: LQS}, $G$ is strictly quasi-isomorphic to $H\times G/H$. By Lemma \ref{Lem: QIToQI}, $G$ is quasi-isometric to $H\times G/H$. Since word-hyperbolicity is a quasi-isometry invariant for finitely generated groups. we know that $H\times G/H$ is also hyperbolic. A well-known fact is that a hyperbolic group can contain neither an infinite torsion subgroup nor $\Z^2$. This forces either $H$ is finite or $G/H$ is finite. Then the conclusion follows.

    (\ref{3.18.2}) Let $G$ be a finitely generated non-elementary torsion-free hyperbolic group and $H\lhd G$ be a quasi-retract. By Item (\ref{3.18.1}), it suffices to show that $G/H=\{1\}$ if $[G:H]<\infty$. By Theorem \ref{Thm: LQS}, there exists a normalized section $s: G/H\to G$ such that $H$ almost commutes with $s(G/H)$. 
    
    For each hyperbolic element $f\in G$, we denote $E(f):=\{g\in G: \exists \ n\ge 1, \text{ s.t. } gf^ng^{-1}=f^n\}$. It is well-known that in a non-elementary torsion-free hyperbolic group $G$, $E(f)$ is infinite cyclic for each hyperbolic element $f\in G$ and 
    \begin{equation*}
        \bigcap_{f: \text{ hyperbolic in }G}E(f^k)=\{1\}
    \end{equation*}
    for each $k\ge 1$.

    Since $[G:H]<\infty$, there exists $k\ge 1$ such that $g^k\in H$ for all $g\in G$. Fix an arbitrary hyperbolic element $f\in G$. Then $f^k\in H$. It follows from the finiteness of $\C(H,s(G/H))$ that $\C(\langle f^k\rangle, s(G/H))$ is finite. This implies that $s(G/H)\subset E(f^k)$. Since $f\in G$ is arbitrary, one gets that $s(G/H)\subseteq \bigcap_{f: \text{ hyperbolic in }G}E(f^k)=\{1\}$. Then the conclusion follows.
\end{proof}

Let $(G,H)$ be a group pair. Recall from \cite[Definition 2.1]{DGO17} that $H$ is \textit{hyperbolically embedded} in $G$ if there exists a subset $X\subseteq G$ such that the following conditions hold:
\begin{itemize}
    \item[(i)] $G$ is generated by $X\cup H$.
    \item[(ii)] The Cayley graph $\G(G,X\cup H)$ is hyperbolic.
    \item[(iii)] $(H,\hat d)$ is a proper metric space where $\hat d$ is a relative metric defined on $\G(G,X\cup H)$.
\end{itemize}

\begin{proposition}\label{Prop: HypEmbSubpgQR}
    A finite-by-$\Z^m$ hyperbolically embedded subgroup is a quasi-retract.
\end{proposition}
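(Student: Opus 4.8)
The plan is to build a quasi-retraction $r\colon G\to H$ in three moves: extract from the virtually abelian structure of $H$ its $m$ ``coordinate'' homomorphisms to $\Z$; extend each of them from $H$ to a quasimorphism on $G$ using hyperbolic embeddedness; and reassemble these extensions into a map landing back in $H$. Write the finite-by-$\Z^m$ structure as a short exact sequence $1\to F\to H\xrightarrow{p}\Z^m\to 1$ with $F$ finite (we may assume $m\ge 1$, as $m=0$ means $H$ is finite and Lemma~\ref{Lem: ExaOfQR}(\ref{5.4.2}) applies). For $1\le i\le m$ let $q_i\colon\Z^m\to\Z$ be the $i$-th coordinate projection and set $\phi_i:=q_i\circ p\colon H\to\Z\subseteq\R$; each $\phi_i$ is a homomorphism, hence a homogeneous quasimorphism on $H$, and $(\phi_1,\dots,\phi_m)=p$.

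The heart of the argument is the extension step. Since $H$ is hyperbolically embedded in $G$, the extension theorem for quasimorphisms from hyperbolically embedded subgroups \cite{HO13} provides, for each $i$, a quasimorphism $\hat\phi_i\colon G\to\R$ with $\sup_{h\in H}|\hat\phi_i(h)-\phi_i(h)|<\infty$. Replacing $\hat\phi_i$ by its homogenization (cf.\ \cite[Lemma~2.21]{Cal09}) yields homogeneous quasimorphisms $\bar\phi_i\colon G\to\R$; their restrictions to $H$ are the homogenizations of maps at bounded distance from the already-homogeneous $\phi_i$, so $\bar\phi_i|_H=\phi_i$.

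Now reassemble. For each $i$ the composition $\lfloor\bar\phi_i\rfloor\colon G\to\Z$ with the floor function is again a quasimorphism, and it restricts to $\phi_i$ on $H$ because $\phi_i$ is $\Z$-valued there; hence $\eta:=(\lfloor\bar\phi_1\rfloor,\dots,\lfloor\bar\phi_m\rfloor)\colon G\to\Z^m$ is a quasi-homomorphism with $\eta|_H=p$. Fix a set-theoretic section $\sigma\colon\Z^m\to H$ of $p$ with $\sigma(0)=1$; since $\sigma(x)\sigma(y)\sigma(x+y)^{-1}\in\ker p=F$ for all $x,y\in\Z^m$, the set $\tilde D(\sigma)$ is finite, so $\sigma$ is a quasi-homomorphism by Lemma~\ref{Lem: DualDef}. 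Set $r:=\sigma\circ\eta\colon G\to H$; being a composition of quasi-homomorphisms, $r$ is a quasi-homomorphism.

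Finally, for $h\in H$ we have $r(h)=\sigma(p(h))$, and applying $p$ gives $p(h^{-1}r(h))=-p(h)+p(h)=0$, so $h^{-1}r(h)\in F$. Thus $\{r(h)^{-1}h:h\in H\}\subseteq F$ is finite, i.e.\ $r\circ\iota$ is equivalent to $\mathrm{Id}_H$, so $r$ is a quasi-retraction and $H$ is a quasi-retract of $G$. The only nontrivial ingredient is the extension step: hyperbolic embeddedness is precisely what lets the $m$ integral homomorphisms on $H$ be pushed forward to quasimorphisms on $G$ (generalizing the Brooks-quasimorphism construction of Remark~\ref{Rmk: QROfFreeGp}); everything downstream is elementary bookkeeping with the finite set $F$ and the bounded defects.
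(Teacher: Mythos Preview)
Your proof is correct and shares the same core strategy as the paper's: project $H$ onto its $\Z^m$ coordinates, extend each coordinate homomorphism to a quasimorphism on $G$ via the Hull--Osin extension theorem for hyperbolically embedded subgroups, and reassemble into a map $G\to H$.

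Where you diverge is in the reassembly. The paper chooses lifts $c_i\in H$ of the standard generators of $\Z^m$, defines $r$ piecewise (identity on $H$, and $g\mapsto c_1^{\rho_1(g)}\cdots c_m^{\rho_m(g)}$ off $H$), and then verifies the quasi-homomorphism property through a four-case analysis. You instead observe that \emph{any} set-theoretic section $\sigma\colon\Z^m\to H$ is automatically a quasi-homomorphism because $\tilde D(\sigma)\subseteq F$ is finite (Lemma~\ref{Lem: DualDef}), and then take $r=\sigma\circ\eta$ as a composition of quasi-homomorphisms---no case analysis needed. This is cleaner, and the price you pay is that $r|_H$ is only equivalent to $\mathrm{Id}_H$ rather than equal to it, which is harmless by the definition of quasi-retraction (the paper's Lemma~\ref{Lem: EquiDef} shows one can always upgrade to equality if desired). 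Your extra homogenization step is also a safe precaution, though the paper invokes the form of \cite[Theorem~1.4]{HO13} that already gives $\tilde\phi_i|_H=\phi_i$ exactly.
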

\begin{proof}
    Let $G$ be a group and $H\le G$ be a finite-by-$\Z^m$ hyperbolically embedded subgroup. By assumption, $H$ fits into the following short exact sequence of groups: 
    \begin{equation}\label{equ: finite-by-abelian}
        1\to F\to H \xrightarrow{\pi} \Z^m\to 1
    \end{equation}
    where $F\le H$ is a finite group. Since a finite subgroup is always a quasi-retract, the above short exact sequence (\ref{equ: finite-by-abelian}) is left quasi-split. By Theorem \ref{Thm: LQS}, there exists a normalized section $s: \Z^m\to H$ such that $s$ is a quasi-homomorphism and $\C(F,s(\Z^m))\subset F$ is finite. If $m=0$, then $H$ is a finite subgroup which is obviously a quasi-retract. Now we assume $m\ge 1$. Let $\bar c_1,\cdots,\bar c_m$ be a standard generating set of $\Z^m$ and denote $c_i=s(\bar c_i)$ for $1\le i\le m$. Then any $g\in H$ can be uniquely represented as $g=fc_1^{n_1}\cdots c_m^{n_m}$ for some $f\in F, n_1,\cdots,n_m\in \Z$. 
    
    Fix $i\in \{1,\cdots,m\}$. The projection map $\phi_i: H\to \Z, \quad g=fc_1^{n_1}\cdots c_m^{n_m}\mapsto n_i$ factors through $H\xrightarrow{\pi} \Z^m\to \Z$ and thus is a group homomorphism from $H$ to $\Z$. Since $H$ is hyperbolically embedded in $G$, the result of Hull-Osin (i.e. \cite[Theorem 1.4]{HO13}) shows that the homomorphism $\phi_i$ can be extended to a quasimorphism $\tilde \phi_i: G\to \R$ satisfying $\tilde \phi_i|_H=\phi_i$. Let $\lfloor \cdot \rfloor: \R\to \Z$ be the floor function which maps a real number $x$ to the maximal integer $\le x$. Clearly $\lfloor \cdot \rfloor$ is a quasimorphism and so is the composite map $\rho_i=\lfloor \cdot \rfloor\circ \tilde \phi_i$. 
    Note that $\rho_i|_H=\phi_i$. This implies that $h\sim_Fc_1^{\rho_1(h)}\cdots c_m^{\rho_m(h)}$ for any $h\in H$.
    
    Define a map $r: G\to H$ as follows:
    $$r(g)=\left\{
  \begin{array}{ll}
    g, & \hbox{$g\in H$;} \\
    c_1^{\rho_1(g)}\cdots c_m^{\rho_m(g)}, & \hbox{$g\notin H$.}
  \end{array}
\right.$$
    By definition, $r|_H=Id_H$. It suffices to show that there exists a finite subset $D\subset H$ such that $r(gg')\sim_{D}r(g)r(g')$ for any $g,g'\in G$. Let $D(\rho_i)$ be the defect set of $\rho_i$ for $1\le i\le m$. Hence, the set $A:=\{c_1^{d_1}\cdots c_m^{d_m}: d_1\in D(\rho_1),\cdots, d_m\in D(\rho_m)\}$ is finite in $H$. Also observe that $h_1\sim_Fh_2$ whenever $h_1,h_2\in H$ satisfies $\pi(h_1)=\pi(h_2)$.

    \textbf{Case I: $g,g'\in H\Rightarrow gg'\in H$.} In this case, $r(gg')=gg'=r(g)r(g')$.

    \textbf{Case II: $g\in H, g'\notin H\Rightarrow gg'\notin H$.} In this case, $$r(gg')=c_1^{\rho_1(gg')}\cdots c_m^{\rho_m(gg')}\sim_Ac_1^{\rho_1(g)+\rho_1(g')}\cdots c_m^{\rho_m(g)+\rho_m(g')}\sim_Fc_1^{\rho_1(g)}\cdots c_m^{\rho_m(g)}r(g')\sim_Fr(g)r(g').$$

    \textbf{Case III: $g\notin H, g'\in H\Rightarrow gg'\notin H$.} In this case, $$r(gg')=c_1^{\rho_1(gg')}\cdots c_m^{\rho_m(gg')}\sim_Ac_1^{\rho_1(g)+\rho_1(g')}\cdots c_m^{\rho_m(g)+\rho_m(g')}\sim_Fr(g)c_1^{\rho_1(g')}\cdots c_m^{\rho_m(g')}\sim_Fr(g)r(g').$$

    \textbf{Case IV: $g,g'\notin H$.} If $gg'\in H$, then $$r(gg')=gg'\sim_{F}c_1^{\rho_1(gg')}\cdots c_m^{\rho_m(gg')}\sim_A c_1^{\rho_1(g)+\rho_1(g')}\cdots c_m^{\rho_m(g)+\rho_m(g')}\sim_Fr(g)r(g').$$
    If $gg'\notin H$, then $$r(gg')=c_1^{\rho_1(gg')}\cdots c_m^{\rho_m(gg')}\sim_A c_1^{\rho_1(g)+\rho_1(g')}\cdots c_m^{\rho_m(g)+\rho_m(g')}\sim_Fr(g)r(g').$$

    In summary, we conclude that $r$ is a quasi-homomorphism and the conclusion follows.
\end{proof}

By combining Proposition \ref{Prop: HESNotQR} and Proposition \ref{Prop: HypEmbSubpgQR}, we complete the proof of Theorem \ref{IntroThm2}.

\section{Induced group actions on hyperbolic spaces}\label{sec: IndQA}

\subsection{Preliminaries of group actions on hyperbolic spaces}\label{subsec: HypSpace}

For an isometric action of a group $G$ on a hyperbolic space $X$, we denote by $\partial X$ the \textit{Gromov boundary} of $X$ and $\Lambda (G)$ the \textit{limit set} of $G$ in $\partial X$. 

By Gromov \cite{Gro87}, the isometries of a hyperbolic space $X$ can be subdivided into three classes. A nontrivial element $g\in \Isom(X)$ is called \textit{elliptic} if some $\langle g\rangle$-orbit is bounded. Otherwise, it is called \textit{hyperbolic} (resp. \textit{parabolic}) if it has exactly two fixed points (resp. one fixed point) in the Gromov boundary $\partial X$ of $X$.

\begin{theorem}\cite[Theorem 4.2]{ABO19}\label{Thm: Classification}
    Let $G$ be a group acting isometrically on a hyperbolic space $X$. Then exactly one of the following conditions holds. 
    \begin{enumerate}
        \item $|\Lambda(G)|=0$. Equivalently, $G$ has bounded orbits. In this case the action of $G$ is called {\rm \textbf{elliptic}}.
        \item $|\Lambda(G)|=1$. Equivalently, $G$ has unbounded orbits and contains no hyperbolic elements. In this case the action of $G$ is called {\rm parabolic} or {\rm \textbf{horocyclic}}. A parabolic action cannot be cobounded and the set of points of $\partial X$ fixed by $G$ coincides with $\Lambda(G)$.
        \item $|\Lambda(G)|=2$. Equivalently, $G$ contains a hyperbolic element and any two hyperbolic elements have the same fixed points in $\partial X$. In this case the action of $G$ is called {\rm \textbf{lineal}}.
        \item $|\Lambda(G)|=\infty$. Then $G$ always contains hyperbolic elements. In turn, this case breaks into two subcases.\begin{enumerate}
            \item $G$ fixes a point of $\partial X$. Equivalently, any two hyperbolic elements of $G$ have a common fixed point in the boundary. In this case the action of $G$ is called {\rm quasi-parabolic} or {\rm \textbf{focal}}. Orbits of quasi-parabolic actions are always quasi-convex.
            \item $G$ does not fix any point of $\partial X$. Equivalently, $G$ contains infinitely many independent hyperbolic elements. In this case the action of $G$ is said to be {\rm \textbf{of}\ \textbf{general}\ \textbf{type}}.
        \end{enumerate}
    \end{enumerate}
\end{theorem}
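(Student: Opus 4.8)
The plan is to run the entire classification off a single invariant, the cardinality $|\Lambda(G)|$ of the limit set $\Lambda(G)\subseteq\partial X$ of an orbit $Go$ (which is a $G$-invariant subset of $\partial X$ and does not depend on $o$, since $d(go,go')=d(o,o')$ is constant), using Gromov's trichotomy for individual isometries recalled above as the only external input. The first task is three structural dichotomies. (1) $\Lambda(G)=\varnothing$ if and only if $Go$ is bounded: one direction is trivial, and for the converse one uses the standard fact that an isometric action with unbounded orbits on a geodesic hyperbolic space has nonempty limit set (extract from an unbounded orbit a sequence whose pairwise Gromov products tend to infinity). (2) $G$ contains a hyperbolic isometry if and only if $|\Lambda(G)|\geq 2$: a hyperbolic $g$ contributes its two distinct fixed points $g^{\pm\infty}\in\partial X$ to $\Lambda(G)$, and conversely, given distinct $\xi,\eta\in\Lambda(G)$ one manufactures a hyperbolic element (see below). (3) If $G$ has two hyperbolic isometries with disjoint fixed-point pairs on $\partial X$, then large powers of these elements play ping-pong on boundary neighbourhoods of their four fixed points, generate a rank-two free subgroup whose cyclically reduced elements are hyperbolic with pairwise distinct fixed-point pairs, and hence $|\Lambda(G)|=\infty$. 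Together with the lineal-case analysis below, (1)--(3) show that $|\Lambda(G)|\in\{0,1,2,\infty\}$, so the headline cases are exhaustive; mutual exclusivity is immediate since they are distinguished by $|\Lambda(G)|$ and, in the last case, by whether $G$ fixes a point of $\partial X$.

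With the dichotomies in hand the low cases are quick. If $Go$ is bounded the action is \textbf{elliptic} and $|\Lambda(G)|=0$ by (1). If $Go$ is unbounded but $G$ has no hyperbolic element---equivalently, every element is elliptic or parabolic---I claim $|\Lambda(G)|=1$. Otherwise there are $\xi\neq\eta\in\Lambda(G)$; choosing $a_n,b_n\in G$ with $a_no\to\xi$ and $b_no\to\eta$ and combining them, for suitable indices the element $\gamma=a_nb_n^{-1}$ moves $o$ a long way in a direction that does not fold back toward $o$, and a Gromov-product estimate along $\{\gamma^k o\}$ gives $\lim_k d(o,\gamma^k o)/k>0$, so $\gamma$ is hyperbolic---contradiction. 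Thus $\Lambda(G)=\{\xi\}$; $G$-invariance of $\Lambda(G)$ forces $G\xi=\xi$, and a cobounded action with unbounded orbits fixing $\xi$ contradicts the behaviour of the associated Busemann quasicharacter, so the action is \textbf{horocyclic} and not cobounded.

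Now suppose $G$ contains a hyperbolic $g$ with fixed-point pair $F=\{g^{+\infty},g^{-\infty}\}$. If every hyperbolic element of $G$ has fixed-point pair exactly $F$, then $\Lambda(G)=F$: were there $\zeta\in\Lambda(G)\setminus F$, pick $h\in G$ pushing $o$ far toward $\zeta$; then $hgh^{-1}$ is hyperbolic with fixed-point pair $hF\neq F$, and $g,hgh^{-1}$ either already have disjoint pairs---forcing $|\Lambda(G)|=\infty$ by (3), against our hypothesis---or share exactly one fixed point, in which case a short commutator/conjugacy manipulation again produces a transverse pair, the same contradiction. Hence $|\Lambda(G)|=2$ and the action is \textbf{lineal}. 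In the complementary case $G$ has two hyperbolic elements with no common fixed point, so $|\Lambda(G)|=\infty$ by (3) and $G$ contains infinitely many pairwise independent hyperbolic isometries. The final fork is whether $G$ fixes a point of $\partial X$: if so, that point must be the endpoint common to all hyperbolic elements of $G$, orbits are quasiconvex, and the action---non-lineal with a global boundary fixed point---is \textbf{quasi-parabolic (focal)}; if not, the action is \textbf{of general type}.

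The two genuinely load-bearing steps, and where I expect the difficulty to concentrate, are: (a) producing an honest hyperbolic isometry out of two limit points---this underlies the converse of (2) and the $|\Lambda(G)|=1$ characterization, and needs the quantitative hyperbolicity input that a path which leaves $o$, returns near $o$ with small Gromov product, and leaves again contains a genuine quasigeodesic subarc, so that the chosen $\gamma$ has positive stable translation length; and (b) the ping-pong estimates behind (3) and the lineal/focal/general-type split, where, since $X$ need not be proper, one cannot invoke compactness of $X\cup\partial X$ and must argue throughout with sequences and Gromov products while keeping an exact count of the boundary fixed points created. Everything else is bookkeeping with Gromov's trichotomy and the $G$-invariance and basepoint-independence of $\Lambda(G)$.
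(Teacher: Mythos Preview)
The paper does not prove this theorem: it is quoted verbatim from \cite[Theorem~4.2]{ABO19} (and ultimately goes back to Gromov) and is stated purely as background, with no proof or proof sketch supplied. There is therefore no ``paper's own proof'' to compare your proposal against. Your outline is a reasonable summary of the standard argument one finds in the literature, but for the purposes of this paper the result is a black box and you were not expected to reprove it.
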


A \textit{quasi-line} is a  metric space quasi-isometric to $\R$ equipped with the standard metric. We list some well-known facts for future use.
\begin{fact}\label{Fact: HypGeo}
    \begin{enumerate}
        \item\label{fac1} Any finitely generated group admitting a focal or general type action on a hyperbolic space  contains a non-abelian free sub-semigroup and thus has exponential growth.
        \item\label{fac2} Group actions on quasi-lines are either elliptic or lineal.
        \item\label{fac3} If a group admits a cobounded lineal action on a hyperbolic space $X$, then $X$ is a quasi-line.
        \item\label{fac4} Let $G=A\times B$ for some groups $A$ and $B$. Suppose that $G$ acts coboundedly on a hyperbolic space $X$. Then the action of $A$ on $X$ is either elliptic or cobounded \cite[Lemma 4.20]{ABO19}.  
        \item\label{fac5} Let $G$ be a group acting isometrically on a hyperbolic space $X$. Let $o\in X$ be a basepoint. Then an element $f\in G$ is a hyperbolic element if and only if the stable length $\tau(f):=\lim_{n\to \infty}\frac{d(o,f^no)}{n}$ of $f$ is strictly bigger than 0. 
        \item\label{fac6} Let $G$ be a group acting isometrically on a hyperbolic space $X$. Let $f\in G$ be a hyperbolic element. Then the subgroup $E(f):=\{g\in G: \exists \ n\ge 1, \text{ s.t. } gf^ng^{-1}=f^n\}$ admits a  lineal action on $X$ with $\Lambda(E(f))=\Lambda\langle f\rangle$. 
    \end{enumerate}
\end{fact}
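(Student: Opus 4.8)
Although Fact~\ref{Fact: HypGeo} collects standard material (whence its name), I indicate how each of the six items is obtained. The plan is to treat them one at a time, leaning on the Gromov classification recalled in Theorem~\ref{Thm: Classification} together with known results from \cite{Gro87} and \cite{ABO19}; only item~(\ref{fac1}) requires a genuine dynamical argument, the rest being short deductions or direct citations.

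For~(\ref{fac1}): a general type action contains two independent hyperbolic isometries, and the usual ping-pong argument using their north--south dynamics on $\partial X$ shows that sufficiently high powers of them generate a non-abelian free subgroup, which a fortiori contains a non-abelian free sub-semigroup. For a focal action, $G$ fixes a boundary point $\xi$ and contains a hyperbolic element $f$; after replacing $f$ by $f^{-1}$ one may arrange that $f$ contracts $\partial X\setminus\{\xi\}$ towards its other fixed point, and then, choosing $h\in G$ moving that fixed point off itself, the pair $f^N$ and $hf^Nh^{-1}$ (for $N$ large) plays ping-pong on two disjoint regions and generates a non-abelian free sub-semigroup; this is the free-sub-semigroup statement for focal (``quasi-parabolic'') actions, for which I refer to \cite{ABO19} and the references therein. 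A finitely generated group containing a non-abelian free sub-semigroup has exponential growth, which gives the last clause.

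For~(\ref{fac2}) and~(\ref{fac3}): if $X$ is a quasi-line then $\partial X$ has exactly two points, so $|\Lambda(G)|\le 2$, and Theorem~\ref{Thm: Classification} leaves only the elliptic and lineal cases -- the horocyclic case is excluded because an isometry of a hyperbolic space that fixes both points of a two-point boundary and has an unbounded orbit is necessarily hyperbolic, so an unbounded action on a quasi-line cannot avoid hyperbolic elements; this is~(\ref{fac2}). For~(\ref{fac3}), a cobounded lineal action has limit set a pair $\{\xi_+,\xi_-\}$; join them by a bi-infinite geodesic $\gamma$. Since $G$ preserves this pair, each translate $g\gamma$ is a uniform quasi-geodesic with the same endpoints, hence lies within a uniform Hausdorff distance of $\gamma$; thus every $G$-orbit sits in a bounded neighbourhood of $\gamma$, and coboundedness then forces $X$ to lie within bounded Hausdorff distance of $\gamma$, so $X$ is a quasi-line. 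Both conclusions are also recorded in \cite{ABO19}.

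Item~(\ref{fac4}) is exactly \cite[Lemma~4.20]{ABO19}. For~(\ref{fac5}): if $\tau(f)>0$ then $\{f^no\}_{n\in\Z}$ is a quasi-geodesic line (the quasi-axis of a loxodromic isometry), so $f$ has two fixed points in $\partial X$ and is hyperbolic; conversely a hyperbolic isometry translates coarsely along its quasi-axis, hence $\tau(f)>0$. For~(\ref{fac6}): each $g\in E(f)$ satisfies $gf^ng^{-1}=f^n$ for some $n\ge 1$, so $g$ permutes the set of fixed points of $f^n$ in $\partial X$, which coincides with the pair $\{f^+,f^-\}$ of fixed points of $f$; hence $E(f)$ preserves $\{f^+,f^-\}$, and since it contains the hyperbolic element $f$, Theorem~\ref{Thm: Classification} forces the $E(f)$-action to be lineal with $\Lambda(E(f))=\{f^+,f^-\}=\Lambda\langle f\rangle$. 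The one step I expect to be the main obstacle is the focal case of~(\ref{fac1}): the ping-pong regions must be chosen with care relative to the contracting dynamics towards $\xi$, whereas every remaining assertion drops out of the classification once these dynamical inputs are in place.
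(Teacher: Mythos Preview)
Your sketches are correct, but note that the paper provides no proof whatsoever for this statement: it is labelled a \emph{Fact}, introduced with ``We list some well-known facts for future use,'' and simply records the six assertions with a citation to \cite[Lemma~4.20]{ABO19} for item~(\ref{fac4}) only. So there is no ``paper's approach'' to compare against; your proposal supplies considerably more than the paper does. The arguments you give for each item are the standard ones and are fine as justifications. One minor remark on~(\ref{fac6}): from $gf^ng^{-1}=f^n$ you deduce that $g$ permutes the two fixed points $\{f^+,f^-\}$, which is correct, but strictly speaking this only gives $\Lambda(E(f))\subset\{f^+,f^-\}$ together with $f\in E(f)$ giving the reverse inclusion; you have this, just be sure the write-up makes both directions explicit.
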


\paragraph{\textbf{Hyperbolic structures}}
%In this subsection, we recall some basic material about hyperbolic structures on groups and give an equivalent definition of property (PH). For more details about hyperbolic structures on groups, see \cite{ABO19}.
For a group $G$ with a generating set $S$, we denote by $\G(G,S)$ the Cayley graph of $G$ with respect to $S$. 
\begin{defn}\cite[Definition 1.1]{ABO19}
    Let $S$ and $T$ be two generating sets of a group $G$. We say that $S$ is \textit{dominated} by $T$, written $S \preceq T$, if the identity map on $G$ induces a Lipschitz map between metric spaces $\G(G,T)\to \G(G,S)$. It is clear that $\preceq$ is a preorder on the set of generating sets of $G$ and therefore it induces an equivalence relation in the standard way:
    $$S\text{ is equivalent to } T\Longleftrightarrow S\preceq T \text{ and } T\preceq S.$$
\end{defn}

We denote by $[S]$ the equivalence class of a generating set $S$ of $G$. The preorder $\preceq$ induces an order relation $\preccurlyeq$ on all equivalence classes of generating sets by the rule: 
\begin{equation}\label{Equ: Preorder}
    [S]\preccurlyeq [T] \Longleftrightarrow S\preceq T.
\end{equation}
\begin{defn}\cite[Definition 1.2]{ABO19}
    A \textit{hyperbolic structure} on $G$ is an equivalence class $[S]$ such that $\G(G,S)$ is hyperbolic. We denote the set of hyperbolic structures by $\H(G)$ and endow it with the order induced from $(\ref{Equ: Preorder})$.
\end{defn}

Since hyperbolicity of a geodesic metric space is a quasi-isometry invariant, the definition above is independent of the choice of a particular representative in the equivalence class $[S]$. 

Using the standard argument from the proof of the Svarc-Milnor lemma, it is easy to show that elements of $\H(G)$ are in one-to-one correspondence with equivalence classes of cobounded actions of $G$ on hyperbolic spaces considered up to a natural equivalence: two actions $G\curvearrowright X$ and $G\curvearrowright Y$ are \textit{equivalent} if there is a coarsely $G$-equivariant quasi-isometry $X\to Y$. This gives another equivalent definition of property (PH').

\begin{defn}[Second definition of property (PH')]\label{Def: PH}
    A (not necessarily finitely generated) group $G$ has property (PH') if there exist finitely many hyperbolic structures $[S_1],\cdots, [S_n]\in \H(G)$ such that the diagonal action of $G$ on $\prod_{i=1}^n\G(G,S_i)$ equipped with $\ell^1$-metric is proper. 
\end{defn}

\subsection{Inherited cobounded actions on hyperbolic spaces}\label{subsec: InhCobddAct}
In this subsection, we are going to show that cobounded group actions on hyperbolic spaces are inherited by normal quasi-retracts. At first, we need two simple lemmas. 
\begin{lemma}\label{Lem: C(H,T)^n}
    Let $G$ be a group and $H$ be a normal subgroup of $G$. Then for any subset $T\subset G$ and $n\ge 1$,  $\C(H,T^n)\subset \C(H,T)^n$.
\end{lemma}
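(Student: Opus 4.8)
The plan is to induct on $n$, the base case $n=1$ being trivial. For the inductive step, suppose $\C(H,T^{n-1})\subset \C(H,T)^{n-1}$ and take an arbitrary element of $\C(H,T^n)$, i.e. a commutator $[h,t]$ with $h\in H$ and $t\in T^n$. Write $t=t_1 t'$ where $t_1\in T$ and $t'\in T^{n-1}$. The first step is the standard commutator identity
\[
[h,t_1 t'] \;=\; [h,t_1]\cdot t_1[h,t']t_1^{-1},
\]
which one checks by direct expansion. So it suffices to rewrite the second factor $t_1[h,t']t_1^{-1}$ as a product of $n-1$ elements of $\C(H,T)$.

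The key observation is that since $H\lhd G$, conjugation by $t_1^{-1}$ preserves $H$; hence $h':=t_1^{-1}h t_1\in H$, and one computes $t_1[h,t']t_1^{-1}=[t_1 h' t_1^{-1}, t']$ conjugated appropriately — more cleanly, $t_1[h,t']t_1^{-1}=[h'',t'']$ for suitable $h''\in H$, $t''\in T^{n-1}$; in fact, because $H$ is normal, the conjugate of a commutator $[h,t']$ by $t_1$ is again of the form $[\tilde h, \tilde t']$ with $\tilde h = t_1 (\text{something in }H) t_1^{-1}\in H$ and $\tilde t'$ still a conjugate-adjusted word in $T^{n-1}$. The cleanest route is to invoke Lemma~\ref{Lem: SymCommutatorSet} and its proof technique: the normality of $H$ lets us move $t_1$-conjugations onto the $H$-coordinate at the cost of staying inside $\C(H,-)$ and inside powers of $T$ of the same length. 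Thus $t_1[h,t']t_1^{-1}\in\C(H,T^{n-1})$, which by the inductive hypothesis lies in $\C(H,T)^{n-1}$. Combining, $[h,t]\in\C(H,T)\cdot\C(H,T)^{n-1}=\C(H,T)^n$, completing the induction.

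The main obstacle is purely bookkeeping: one must verify that conjugating $[h,t']$ by an element of $T$ does not increase the $T$-length of the second argument and does not leave $\C(H,-)$, and this is exactly where the normality hypothesis $H\lhd G$ is used (via the identities $(h^{-1})^t=(h^t)^{-1}$ already exploited in the proof of Lemma~\ref{Lem: SymCommutatorSet}). No genuine difficulty arises beyond being careful that every conjugating factor gets absorbed into the $H$-slot rather than spawning extra commutators.
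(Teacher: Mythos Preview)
Your commutator identity $[h,t_1t']=[h,t_1]\cdot t_1[h,t']t_1^{-1}$ is correct, but the subsequent claim that $t_1[h,t']t_1^{-1}$ lies in $\C(H,T^{n-1})$ is a genuine gap. Conjugation by $t_1$ sends $[h,t']$ to $[t_1ht_1^{-1},\,t_1t't_1^{-1}]$; normality of $H$ handles the first slot, but $t_1t't_1^{-1}$ has no reason to lie in $T^{n-1}$, since $T$ is an arbitrary subset with no conjugation-invariance assumed. Lemma~\ref{Lem: SymCommutatorSet} only lets you swap the two slots of a commutator (at the cost of inverting the $T$-entry); it does not let you absorb an external conjugation by an element of $T$ into the $H$-slot while keeping the $T$-slot inside $T^{n-1}$. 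If you try instead to write $t_1[h,t']t_1^{-1}=[h,t']\cdot\bigl[[h,t']^{-1},t_1\bigr]$, you do land in $\C(H,T^{n-1})\cdot\C(H,T)$, but then the total product has $n+1$ factors rather than $n$, so the induction fails to close.

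The paper sidesteps this by using the \emph{other} standard identity: writing an element of $T^n$ as $tg$ with $t\in T$ and $g\in T^{n-1}$, one checks directly that
\[
[h,tg]=[h,g]\cdot[ghg^{-1},t],
\]
which places the conjugation on the $H$-coordinate (where normality absorbs it) rather than on the $T$-coordinate. This gives $[h,tg]\in\C(H,T^{n-1})\cdot\C(H,T)\subset\C(H,T)^n$ immediately. Your inductive scaffolding is fine; the fix is simply to swap to this identity.
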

\begin{proof}
    For any $h\in H, g\in T^{n-1}, t\in T$, observe that $$[h,tg]=htgh^{-1}g^{-1}t^{-1}=h(gh^{-1}g^{-1}\cdot ghg^{-1})tgh^{-1}g^{-1}t^{-1}=[h,g][ghg^{-1},t].$$ Then the conclusion follows from the above equality and mathematical inductions on $n$.
\end{proof}

\begin{lemma}\label{Lem: (ht)^n}
    Let $G$ be a group and $H$ be a normal subgroup of $G$. Let $T\subset G$ be a subset almost commuting with $H$. Then for any $h\in H, t\in T$ and $n\ge 1$,  $(ht)^n\sim_{\C(H,T)^n}h^nt^n$.
\end{lemma}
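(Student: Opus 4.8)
The plan is to expand $(ht)^n$ so that the copies of $t$ can be gathered into a single $t^n$. I would start from the elementary identity
$$(ht)^n = h^n\cdot\bigl(h^{-(n-1)}t h^{n-1}\bigr)\bigl(h^{-(n-2)}t h^{n-2}\bigr)\cdots\bigl(h^{-1}t h\bigr)\cdot t,$$
which follows by a one-line induction on $n$. The key first move is to rewrite each factor as $h^{-k}t h^{k} = t\cdot[t^{-1},h^{-k}]$; since $H$ is a subgroup we have $h^{-k}\in H$, so each $v_k:=[t^{-1},h^{-k}]$ lies in $\C(T^{-1},H)$. After this substitution, $(ht)^n$ becomes $h^n$ followed by the alternating word $t\,v_{n-1}\,t\,v_{n-2}\cdots t\,v_1\,t$ containing exactly $n$ copies of $t$.

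The second step is to push every $v_k$ to the right past the $k$ copies of $t$ that follow it; each such pass conjugates $v_k$ by $t^{-1}$, and the net effect is
$$(ht)^n = h^n t^n\cdot\prod_{k=1}^{n-1}\bigl(t^{-k}v_k t^{k}\bigr).$$
Now the crucial point: $t^{-k}$ commutes with $t^{-1}$, so conjugating $v_k=[t^{-1},h^{-k}]$ by $t^{-k}$ affects only the second coordinate, giving $t^{-k}v_k t^{k} = [\,t^{-1},\,t^{-k}h^{-k}t^{k}\,]$, and $t^{-k}h^{-k}t^{k}\in H$ because $H\lhd G$. Thus each of the $n-1$ factors again lies in $\C(T^{-1},H)$, and Lemma \ref{Lem: SymCommutatorSet} identifies $\C(T^{-1},H)=\C(H,T)$. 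Hence $(ht)^n = h^n t^n c$ with $c\in\C(H,T)^{n-1}\subseteq\C(H,T)^n$, i.e.\ $(ht)^n\sim_{\C(H,T)^n}h^n t^n$; for $n=1$ the product is empty and the statement reduces to the tautology $(ht)^1=h^1t^1$.

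The main obstacle is that $\C(H,T)$ is not closed under conjugation in general — $T$ is only a subset of $G$, not a normal subgroup — so one cannot freely rearrange commutators without leaving this set or inflating its "length". The argument works precisely because we choose the expansion of $(ht)^n$ in which every commutator that arises has the fixed element $t^{-1}$ in one slot and an element of $H$ in the other: then all conjugations performed while collecting the $t$'s are by powers of $t$, which commute with $t^{-1}$ and (by normality) preserve $H$, so the commutators stay inside $\C(T^{-1},H)=\C(H,T)$ and their number stays equal to $n-1$. By contrast, the symmetric expansion collecting the $h$'s to the left produces commutators of the form $[h^{-1},t^{k}]$, which Lemma \ref{Lem: C(H,T)^n} only bounds by $\C(H,T)^{k}$, yielding the much weaker estimate $\C(H,T)^{n(n-1)/2}$; so picking the right expansion is exactly what delivers the sharp exponent.
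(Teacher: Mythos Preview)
Your proof is correct and rests on the same core observation as the paper's: the quantity $t^{-n}h^{-n}(ht)^n$ can be written as a product of commutators of the form $[t^{-1},h']$ with $h'\in H$, so that Lemma~\ref{Lem: SymCommutatorSet} applies. The paper organizes this via auxiliary elements $a_{n,k}:=t^{-(n-k)}h^{-(n-1)}(th)^{n-k}\in H$ and a recursion $a_{n,k}=[t^{-1},a_{n,k+1}]\,a_{n,k+1}\,h$, arriving at exactly $n$ commutator factors $[t^{-1},a_{n,1}]\cdots[t^{-1},a_{n,n}]$. Your telescoping expansion is more explicit and in fact yields only $n-1$ factors, each equal to $[t^{-1},t^{-k}h^{-k}t^{k}]$; so your bound is slightly sharper, which is harmless here since by the paper's convention $\C(H,T)^{n-1}\subseteq\C(H,T)^n$. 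One cosmetic point: your displayed product $\prod_{k=1}^{n-1}(t^{-k}v_kt^{k})$ actually appears in descending order of $k$ after the pushing step, but since only membership in $\C(H,T)^{n-1}$ is used the ordering is immaterial.
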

\begin{proof}
    By Lemma \ref{Lem: SymCommutatorSet}, $\C(H,T)=\C(T^{-1},H)$. It suffices  to show that $a_n=t^{-n}h^{-n}(ht)^n\in \C(T^{-1},H)^n$.

    For each $1\le k\le n$, we denote $a_{n,k}:=t^{-(n-k)}h^{-(n-1)}(th)^{n-k}$. Since $H\lhd G$, it is not hard to see that $a_{n,k}\in H$ for each $1\le k\le n$. Observe that $$a_{n,k}=t^{-(n-k)}h^{-(n-1)}(th)^{n-k}=t^{-1}\cdot t^{-(n-k-1)}h^{-(n-1)}(th)^{n-k-1}\cdot t \cdot h=[t^{-1},a_{n,k+1}]a_{n,k+1}h$$ for each $1\le k\le n-1$ and $$a_{n,n-1}h^{n-2}=t^{-1}h^{-(n-1)}th^{n-1}=[t^{-1}, h^{-(n-1)}]=[t^{-1},a_{n,n}].$$ Therefore, 
    \begin{align*}
        a_n&=t^{-n}h^{-n}(ht)^n=t^{-n}h^{-(n-1)}(th)^{n-1}t\\ &=t^{-1}\cdot t^{-(n-1)}h^{-(n-1)}(th)^{n-1}\cdot t=[t^{-1},a_{n,1}]a_{n,1}\\ &=[t^{-1},a_{n,1}][t^{-1},a_{n,2}]a_{n,2}h\\ &=[t^{-1},a_{n,1}]\cdots [t^{-1},a_{n,n-1}]a_{n,n-1}h^{n-2}\\&=[t^{-1},a_{n,1}]\cdots [t^{-1},a_{n,n}]\in \C(T^{-1},H)^n
    \end{align*}
    and the conclusion follows.
\end{proof}

Let $G$ be a group and $H$ be a normal retract of $G$. Suppose $G$ admits a cobounded action on a hyperbolic space $X$. By Theorem \ref{Lem: ClaVer2}, $G$ is isomorphic to $H\times G/H$. Then by Fact \ref{Fact: HypGeo} (\ref{fac4}), the action of $H$ on $X$ is either elliptic or cobounded. This implies that cobounded actions on hyperbolic spaces are inherited by normal retracts. Next, we are going to show the same conclusion holds for normal quasi-retracts.

\begin{lemma}\label{Lem: SubgpCobdd}
    Let $(G,H)$ be a group pair and $T$ be a right coset transversal of $H$ in $G$ which almost commutes with $H$. Suppose $G$ admits a cobounded action on a hyperbolic space $X$. If there exists a hyperbolic element in $H\cup T$, then the action $H\curvearrowright X$ is either elliptic or cobounded.
\end{lemma}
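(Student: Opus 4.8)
The plan is to choose a hyperbolic element $f\in H\cup T$ and argue by a dichotomy: either $H$ itself contains a hyperbolic element, in which case I will show $H\curvearrowright X$ is cobounded, or it does not, in which case $f\in T$ and I will show $H\curvearrowright X$ is elliptic. The arithmetic input in both cases is a pigeonhole principle: since $\C(H,T)$ is finite, for every $h\in H$ and $t\in T$ the set $\{[h^{k},t]:k\in\Z\}\subseteq\C(H,T)$ is finite, so two of these commutators agree and therefore some nonzero power $h^{m}$ commutes with $t$, i.e. $t\in Z_G(h^{m})\subseteq E(h)$ in the notation of Fact \ref{Fact: HypGeo} (\ref{fac6}). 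I also record the elementary identities $[f,h]=[h,f]^{-1}$ and $[h,f^{-1}]=f^{-1}[h,f]^{-1}f$, which show that finiteness of $\C(H,\{f\})\subseteq\C(H,T)$ forces $\C(\{f\},H)$ and $\C(H,\{f^{-1}\})$ to be finite as well; hence $fHf^{-1}$ and $f^{-1}Hf$ each lie in $H$ up to a finite ``error set'', so $f\Lambda(H)=\Lambda(fHf^{-1})=\Lambda(H)$. More generally, for $g=ht\in G$ one has $tHt^{-1}\subseteq H\cdot\C(H,T)^{-1}$ and hence $gHg^{-1}\subseteq H\cdot(\text{finite set})$, so $g\Lambda(H)=\Lambda(H)$ for every $g\in G$.

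\emph{Case 1: $H$ contains a hyperbolic element $f$.} By Fact \ref{Fact: HypGeo} (\ref{fac6}), $E(f)$ acts on $X$ with $\Lambda(E(f))=\Lambda\langle f\rangle=\{f^{+},f^{-}\}$. By the pigeonhole observation applied with $h=f$, every $t\in T$ lies in $E(f)$, so $T\subseteq E(f)$. Every $g\in E(f)$ commutes with some power of $f$, hence permutes $\mathrm{Fix}(f)$ and carries any bi-infinite geodesic from $f^{-}$ to $f^{+}$ to one $2\delta$-close to it; since $\langle f\rangle x$ coarsely coincides with any such geodesic, this yields a uniform constant $C$ with $E(f)x\subseteq\mathcal{N}_{C}(\langle f\rangle x)$. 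In particular $Tx\subseteq\mathcal{N}_{C}(\langle f\rangle x)\subseteq\mathcal{N}_{C}(Hx)$, and since $G=HT$ as a set and $H$ acts isometrically, $Gx=H(Tx)\subseteq\mathcal{N}_{C}(Hx)$. As $G\curvearrowright X$ is cobounded, $Hx$ is coarsely dense, so $H\curvearrowright X$ is cobounded.

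\emph{Case 2: $H$ has no hyperbolic element.} Then $f\in T$ is hyperbolic and we must show $H\curvearrowright X$ is elliptic; assume for contradiction it is not. By Theorem \ref{Thm: Classification} the action $H\curvearrowright X$ is horocyclic with $\Lambda(H)=\{\xi\}$; by the remarks above $f\xi=\xi$, so $\xi\in\mathrm{Fix}(f)$, say $\xi=f^{+}$, and moreover $G$ fixes $f^{+}$, so $G\curvearrowright X$ is not of general type. If $\partial X$ has exactly two points, then $X$ is a quasi-line, and by Fact \ref{Fact: HypGeo} (\ref{fac2}) $H\curvearrowright X$ is elliptic or lineal; a lineal action has a hyperbolic element, which is impossible, so $H$ is elliptic, contradicting our assumption. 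Otherwise $G\curvearrowright X$ is focal with fixed point $f^{+}$, and the contradiction must come from the fact that conjugation by the hyperbolic $f$ inflates the horocyclic subgroup $H$: with $\beta$ the Busemann function at $f^{+}$, every $h\in H$ distorts $\beta$ by at most a uniform constant (a horocyclic group has trivial Busemann character), whereas $Hx$ is unbounded and $H$ does not fix $f^{-}$ (otherwise $H$ would fix two boundary points and be elliptic); this should force $d(x,[h,f]x)\to\infty$ along a sequence $h_{n}\in H$ with $d(x,h_{n}x)\to\infty$ and $h_{n}f^{-}\neq f^{-}$, contradicting the finiteness of $\C(H,\{f\})$.

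I expect the last step of Case 2 to be the main obstacle: one needs a clean estimate, valid in an arbitrary $\delta$-hyperbolic space, of the form ``if $f$ is loxodromic with $f^{+}\in\mathrm{Fix}(f)$ and $\phi$ is an isometry fixing $f^{+}$ but not $f^{-}$, then $d(x,[\phi,f]x)\ge d(x,\phi x)-C$ with $C$ depending only on $f$, $x$ and $\delta$.'' Some care is required because a horocyclic $H$ may be locally elliptic (all of its elements elliptic) and yet unbounded, so the estimate must be applied to elliptic $h$'s, not merely to parabolic ones; the Busemann-function bookkeeping at $f^{+}$ is precisely what is needed to make this work uniformly in $h$.
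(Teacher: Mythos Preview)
Your Case 1 is correct and in fact slightly slicker than the paper's corresponding argument (its Case II): you go straight from $T\subseteq E(f)$ to $Tx\subseteq\mathcal N_C(\langle f\rangle x)\subseteq\mathcal N_C(Hx)$, whereas the paper splits into an elliptic/lineal sub-case analysis on $\langle T\rangle$.

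Your Case 2, however, has a genuine gap and misses the simple symmetric argument. First, your justification of $f\xi=\xi$ does not work as written: from $fHf^{-1}\subseteq\C(H,T)^{-1}\cdot H$ you only get that the orbit $(fHf^{-1})x$ lies in the \emph{finite union} $\bigcup_{c\in\C(H,T)^{-1}}c(Hx)$, so $f\xi\in\{c\xi:c\in\C(H,T)^{-1}\}$; you cannot conclude $f\xi=\xi$ from this alone, and nothing in your setup forces the elements $c$ to fix $\xi$. Second, as you yourself flag, the focal sub-case is only sketched, and the estimate you are hoping for (controlling $d(x,[h,f]x)$ from below via Busemann functions for elliptic $h$) is not provided.

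All of this is unnecessary. The paper's Case I handles your Case 2 by running the \emph{same} idea as your Case 1 with the roles of $H$ and $T$ swapped. Concretely: for $f\in T$ and any $h\in H$ one has $h^{-1}fh=[h^{-1},f]\,f$ with $[h^{-1},f]\in\C(H,T)$, hence $d(hx,f(hx))=d(x,[h^{-1},f]\,fx)$ is bounded independently of $h$. Thus every point of $Hx$ is moved a uniformly bounded amount by the loxodromic $f$, so $Hx$ lies in a bounded neighborhood of the axis of $f$. This forces $\Lambda(H)\subseteq\{f^+,f^-\}$ and, by Fact \ref{Fact: HypGeo} (\ref{fac2}), the action $H\curvearrowright X$ is elliptic or lineal. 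Under your Case 2 assumption that $H$ has no hyperbolic element, it must be elliptic. No horocyclic case, no Busemann functions, no focal analysis.
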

\begin{proof}
    For each hyperbolic element $f\in G$, we denote $E(f):=\{g\in G: \exists \ n\ge 1, \text{ s.t. } gf^ng^{-1}=f^n\}$. By assumption, we can pick a hyperbolic element $f\in H\cup T$.
    
     \textbf{Case I: $f\in T$.} 
     
     Since $\C(H,T)$ is finite, one gets that $H\le E(f)$. Since the action of $E(f)$ on $X$ is lineal, the action $H\curvearrowright X$ is either elliptic or lineal. If it is elliptic, then we are done. Now suppose the action $H\curvearrowright X$ is lineal. Let $h\in H$ be a hyperbolic element. Then $\Lambda(E(h))=\Lambda(H)=\Lambda(E(f))$.  Similarly, since $\C(H,T)$ is finite, one gets that $T\subset E(h)$. Since $G=HT$, we conclude that $\Lambda (G)=\Lambda(H)$. This implies that the action of $G$ on $X$ is lineal. Hence, $X$ is a quasi-line and $H$ acts coboundedly on $X$.

    \textbf{Case II: $f\in H$.} 
    
    Since $\C(H,T)$ is finite, one gets that $T\subset E(f)$. In particular, $\langle T\rangle\le E(f)$. Since the action of $E(f)$ on $X$ is lineal, the action $\langle T\rangle\curvearrowright X$ is either elliptic or lineal. If it is elliptic, then the orbit of $G$ is within a finite Hausdorff distance to the orbit of $H$. Thus $H\curvearrowright X$ is cobounded since $G\curvearrowright X$ is cobounded. Now suppose the action $\langle T\rangle\curvearrowright X$ is lineal. Let $t\in \langle T\rangle$ be a hyperbolic element. Then $\Lambda(E(t))=\Lambda\langle T\rangle=\Lambda(E(f))$.   Since $\C(H,T)$ is finite, it follows from Lemma \ref{Lem: C(H,T)^n} that $\C(H,t)$ is finite. Similarly, one gets that $H\le  E(t)$. Since $G=HT=H\langle T\rangle$, we conclude that $\Lambda (G)=\Lambda(E(t))$. This implies that the action of $G$ on $X$ is lineal.  Hence, $X$ is a quasi-line and $H$ acts coboundedly on $X$.

    In summary, we complete the proof.
\end{proof}

\begin{lemma}\label{Lem: NorQRInhCobddAct}
    Let $G$ be a group and $H$ be a normal quasi-retract of $G$. Suppose $G$ admits a cobounded action on a hyperbolic space $X$. Then the action $H\curvearrowright X$ is either elliptic or cobounded.
\end{lemma}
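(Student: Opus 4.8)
The plan is to combine Theorem~\ref{Thm: LQS} with Lemma~\ref{Lem: SubgpCobdd} and a stable-length estimate, avoiding any delicate boundary analysis. First I would invoke Theorem~\ref{Thm: LQS}: since $H$ is a normal quasi-retract of $G$, the extension $1\to H\to G\to Q\to 1$ (with $Q=G/H$) is left quasi-split, so there is a normalized section $s\colon Q\to G$ that is a quasi-homomorphism and for which the right coset transversal $T:=s(Q)$ almost commutes with $H$, i.e.\ $\C(H,T)$ is finite. If some element of $H\cup T$ acts hyperbolically on $X$, then Lemma~\ref{Lem: SubgpCobdd} immediately gives that $H\curvearrowright X$ is elliptic or cobounded, and we are done. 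So the real content is the opposite case: \emph{no} element of $H\cup T$ acts hyperbolically on $X$.

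In that case I would show that $G$ itself contains no hyperbolic element. Fix a basepoint $o\in X$ and set $M_0:=\max_{\gamma\in\C(H,T)}d(o,\gamma o)<\infty$. Suppose $g\in G$ is hyperbolic and write $g=g_Hg_T$ with $g_H\in H$, $g_T\in T$. By Lemma~\ref{Lem: (ht)^n} we have $g^n=g_H^ng_T^nc_n$ with $c_n\in\C(H,T)^n$, and a product of at most $n$ elements of $\C(H,T)$ moves $o$ a distance at most $nM_0$; the triangle inequality (through $g_H^no$ and $g_H^ng_T^no$) then gives
$$d(o,g^no)\le d(o,g_H^no)+d(o,g_T^no)+nM_0.$$
Dividing by $n$ and letting $n\to\infty$ (all three stable lengths exist since the action is isometric) yields $\tau(g)\le\tau(g_H)+\tau(g_T)+M_0$. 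As $g_H$ and $g_T$ are non-hyperbolic, Fact~\ref{Fact: HypGeo}(\ref{fac5}) forces $\tau(g_H)=\tau(g_T)=0$, so $\tau(g)\le M_0$. The very same argument applies to $g^k$ for every $k\ge1$, since its components $(g^k)_H\in H$ and $(g^k)_T\in T$ are again non-hyperbolic; hence $\tau(g^k)\le M_0$. But $\tau(g^k)=k\,\tau(g)$ and $\tau(g)>0$, which is impossible for large $k$. Therefore $G$ contains no hyperbolic element.

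To finish, by Theorem~\ref{Thm: Classification} an isometric action with no hyperbolic element is either elliptic or horocyclic, and a horocyclic action is never cobounded; since $G\curvearrowright X$ is cobounded it must be elliptic, and then the restricted action $H\curvearrowright X$ has bounded orbits, i.e.\ it is elliptic. Combined with the first paragraph this proves the lemma.

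The step I expect to be the main obstacle is the case where $H$ has no hyperbolic element: the naive approach of mimicking the retract case via Fact~\ref{Fact: HypGeo}(\ref{fac4}) fails because $G$ is only quasi-isomorphic, not isomorphic, to $H\times Q$. One can instead observe that if $H\curvearrowright X$ were horocyclic it fixes a unique $\xi\in\partial X$, and normality of $H$ forces $G$ to fix $\xi$, so $G\curvearrowright X$ lands in the lineal or focal classes; the lineal subcase is then killed by Fact~\ref{Fact: HypGeo}(\ref{fac3}) and (\ref{fac2}), but the focal subcase appears to need a Busemann-quasicharacter argument. The stable-length computation above is precisely what lets us sidestep this case split entirely, and its validity rests on Lemma~\ref{Lem: (ht)^n} together with the homogeneity $\tau(g^k)=k\,\tau(g)$ of the stable length.
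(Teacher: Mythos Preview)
Your proof is correct and takes essentially the same approach as the paper's: both set up the almost-commuting transversal $T$ via Theorem~\ref{Thm: LQS}, derive the key inequality $\tau(g)\le\tau(g_H)+\tau(g_T)+M_0$ from Lemma~\ref{Lem: (ht)^n}, and then feed a hyperbolic element of $H\cup T$ into Lemma~\ref{Lem: SubgpCobdd}. The only difference is organizational: the paper case-splits on the type of $G\curvearrowright X$ and, in the focal/general-type case, directly picks a hyperbolic $f\in G$ with $\tau(f)>M_0$ (implicitly by passing to a power) so that the inequality forces $\tau(f_H)+\tau(f_T)>0$; you instead case-split on whether $H\cup T$ contains a hyperbolic element and, in the negative case, make the power-passing explicit via $\tau(g^k)=k\,\tau(g)\le M_0$ to reach a contradiction. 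These are contrapositive renderings of the same step.
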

\begin{proof}
    By Theorem \ref{Thm: LQS}, there exists a section $s: G/H\to G$ such that $s$ is a quasi-homomorphism and $\C(H,s(G/H))$ is finite. Denote $T=s(G/H)$ for simplicity. Note that any element $g\in G$ can be uniquely written as $g=g_Hg_T$ where $g_H\in H$ and $g_T\in T$.

    Since the action $G\curvearrowright X$ is cobounded, it belongs to one of the following types: elliptic, lineal, focal and of general type. If the action $G\curvearrowright X$ is elliptic, then $X$ is a bounded metric space and the conclusion holds obviously. If the action $G\curvearrowright X$ is lineal, then $X$ is a quasi-line and the conclusion follows from Fact \ref{Fact: HypGeo} (\ref{fac2}). Now suppose that the action $G\curvearrowright X$ is either focal or of general type.

    Fix a basepoint $o\in X$. Denote $M:=\max_{g\in \C(H,T)}d(o,go)$. Pick a hyperbolic element $f\in G$ such that its stable length satisfies $\tau(f)>M$. By the above arguments, $f=f_Hf_T$ where $f_H\in H$ and $f_T\in T$. By Lemma \ref{Lem: (ht)^n}, for any $n\ge 1$, there exist $a_1,\cdots, a_n\in \C(H,T)$ such that $f^n=(f_Hf_T)^n=f_H^nf_T^na_1\cdots a_n$. By the triangle inequality, $$d(o,f^no)=d(o,f_H^nf_T^na_1\cdots a_no)\le d(o,f_H^no)+d(o,f_T^no)+\sum_{i=1}^nd(o,a_io)\le d(o,f_H^no)+d(o,f_T^no)+nM.$$ By dividing by $n$ on both sides and letting $n\to \infty$, one gets that $0<\tau(f)-M\le \tau(f_H)+\tau(f_T)$. This implies that at least one of $f_H$ and $f_T$ is hyperbolic.  The remaining proof is completed by Lemma \ref{Lem: SubgpCobdd}.
\end{proof}
\begin{remark}
    The conclusion of Lemma \ref{Lem: NorQRInhCobddAct} does not hold for general quasi-retracts. For example, let $G=\F_2$ with a free basis $\{a,b\}$ and $H=\langle b\rangle$. Clearly $H$ is a retract of $G$. Define a homomorphism $\rho: G\to \sl_2(\R)$ by mapping 
    $$a=\left(
  \begin{array}{cc}
    \sqrt{2} & 0 \\
    0 & 1/\sqrt{2} \\
  \end{array}
\right) \text{ and } b=\left(
  \begin{array}{cc}
    1 & 1 \\
    0 & 1 \\
  \end{array}
\right).$$
Then we obtain an action of $G$ on $\mathbb H^2$. This action is focal and cobounded (cf. \cite[Proposition 4.5]{TW25}). However, the orbit of $H$ in $\mathbb H^2$ is not even quasi-convex.
\end{remark}

As a corollary, we have
\begin{corollary}\label{Cor: NorQRHasPH'}
    Property (PH') is inherited by normal quasi-retracts.
\end{corollary}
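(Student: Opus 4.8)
The plan is to reduce the statement to Lemma \ref{Lem: NorQRInhCobddAct} together with the elementary observation that properness of a diagonal $\ell^1$-action is unaffected by discarding factors on which the acting group has bounded orbits. Suppose $G$ has property (PH') witnessed by cobounded actions $G\curvearrowright X_1,\ldots,G\curvearrowright X_n$ on hyperbolic spaces, so that the diagonal action of $G$ on $X=\prod_{i=1}^n X_i$ with the $\ell^1$-metric is proper, and let $H\lhd G$ be a normal quasi-retract.

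First I would apply Lemma \ref{Lem: NorQRInhCobddAct} to each factor: for every $i$ the action $H\curvearrowright X_i$ is either elliptic or cobounded. Reindexing, we may assume $H$ acts coboundedly on $X_1,\ldots,X_k$ and with bounded orbits on $X_{k+1},\ldots,X_n$. Restricting the proper action $G\curvearrowright X$ to the subgroup $H$ shows that $H$ acts properly on $X$; in the degenerate case $k=0$ this forces $H$ to be finite (its orbit in $X$ is bounded), and a finite group has property (PH') (it acts coboundedly and properly on a point, which is a hyperbolic space), so we may assume $k\ge 1$.

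Next, fix a basepoint $o=(o_1,\ldots,o_n)\in X$ and let $o'=(o_1,\ldots,o_k)$ be its projection to $X'=\prod_{i=1}^k X_i$, again with the $\ell^1$-metric. Since $H$ acts with bounded orbits on $X_{k+1},\ldots,X_n$, there is a constant $M\ge 0$ with $d_{X'}(o',ho')\le d_X(o,ho)\le d_{X'}(o',ho')+M$ for all $h\in H$; the right-hand inequality gives $\{h\in H: d_{X'}(o',ho')\le R\}\subseteq\{h\in H: d_X(o,ho)\le R+M\}$, which is finite for every $R$ by properness of $H\curvearrowright X$. Hence $H\curvearrowright X'$ is proper. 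As each $X_i$ with $1\le i\le k$ is a hyperbolic space on which $H$ acts coboundedly, this exhibits property (PH') for $H$ (directly from the definition, or via the correspondence between cobounded actions on hyperbolic spaces and hyperbolic structures, Definition \ref{Def: PH}).

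I do not anticipate a real obstacle: the geometric substance is all in Lemma \ref{Lem: NorQRInhCobddAct}, and the remaining argument is bookkeeping with orbit maps. The only spots worth a careful sentence are the degenerate case $k=0$ and the (routine) fact that deleting bounded-orbit factors of an $\ell^1$-product leaves the action proper.
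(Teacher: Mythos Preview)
Your argument is correct and is precisely the intended one: the paper states this as an immediate corollary of Lemma \ref{Lem: NorQRInhCobddAct} with no proof given, and your write-up supplies exactly the routine bookkeeping (apply the lemma factorwise, drop the elliptic factors, observe properness persists) that the author leaves implicit. There is no alternative route here to compare with.
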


For an independent interest, we give a classification about the action of the defect subgroup with respect to a quasi-homomorphism on a hyperbolic space.

\begin{lemma}\label{Lem: GeoOfDelta}
    Let $\phi: G\to H$ be a surjective quasi-homomorphism with $D=D(\phi)$. Suppose $H$ admits a cobounded action on a hyperbolic space $X$. Then the action of the defect subgroup $\Delta_{\phi}$ on $X$ is either elliptic or lineal. Moreover, one of the following holds: 
    \begin{enumerate}
        \item if $\Delta_{\phi}\curvearrowright X$ is lineal, then $\Delta_{\phi}\curvearrowright X$ is cobounded and $X$ is a quasi-line.
        \item if $\Delta_{\phi}\curvearrowright X$ is elliptic, then the quotient space $X/\Delta_{\phi}$ endowed with the quotient metric is quasi-isometric to $X$ and thus is still a hyperbolic space.
    \end{enumerate} 
\end{lemma}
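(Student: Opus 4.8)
The plan is to first establish that $\Delta_\phi \curvearrowright X$ is either elliptic or lineal, and then treat the two cases separately, the lineal case being essentially immediate from the structure of $\Delta_\phi$ and the elliptic case requiring a coarse-geometric argument about quotients. By Lemma \ref{Lem: SujQHImpEle}, since $\phi$ is surjective, the defect subgroup $\Delta_\phi$ is a virtually abelian normal subgroup of $H$. A virtually abelian group cannot contain infinitely many independent hyperbolic elements, nor a non-abelian free sub-semigroup, so by Fact \ref{Fact: HypGeo} (\ref{fac1}) its action on $X$ cannot be focal or of general type; hence $\Delta_\phi \curvearrowright X$ is elliptic, lineal, or horocyclic. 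To rule out the horocyclic case, I would use that $\Delta_\phi$ is virtually $\Z^m$: a horocyclic action has unbounded orbits and no hyperbolic elements, but a finitely generated virtually abelian group with unbounded orbits on a hyperbolic space must contain a hyperbolic element (e.g. pass to a finite-index $\Z^m$ and use that an infinite-order central-type element in an unbounded-orbit lineal/horocyclic situation forces lineality; more directly, by Fact \ref{Fact: HypGeo} (\ref{fac5}) some generator has positive stable length once orbits are unbounded, since an abelian group acting with bounded-on-each-cyclic-factor orbits has bounded orbits). This gives the dichotomy.

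For part (1), suppose $\Delta_\phi \curvearrowright X$ is lineal. By Lemma \ref{Lem: QHProperties} (\ref{centralizer}) there is a constant $C$ with $\phi(G) = H \subseteq \mathcal N_C(Z_H(\Delta_\phi))$, so $Z_H(\Delta_\phi)$ has finite index in $H$; in particular $H$ (hence $G$ coarsely) is a bounded neighborhood of $Z_H(\Delta_\phi)$, which normalizes and commutes with $\Delta_\phi$. Writing $\Delta_\phi' = \Delta_\phi \cap Z_H(\Delta_\phi)$ as in the proof of Lemma \ref{Lem: SujQHImpEle}, $\Delta_\phi'$ is finite-index abelian in $\Delta_\phi$ and its lineal action on $X$ has a limit set fixed by all of $Z_H(\Delta_\phi)$, and since $H$ is a bounded neighborhood of $Z_H(\Delta_\phi)$, the action $H \curvearrowright X$ fixes $\Lambda(\Delta_\phi)$ as well, forcing $\Lambda(H) = \Lambda(\Delta_\phi)$. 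Thus $H \curvearrowright X$ is lineal; being cobounded, Fact \ref{Fact: HypGeo} (\ref{fac3}) shows $X$ is a quasi-line. Finally $\Delta_\phi$ has a finite-index subgroup acting as $\Z$ by translations on this quasi-line (from the centralizer structure), so $\Delta_\phi \curvearrowright X$ is cobounded.

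For part (2), suppose $\Delta_\phi \curvearrowright X$ is elliptic: every $\Delta_\phi$-orbit has diameter at most some $R$. Since $\Delta_\phi \lhd H$ and, by Lemma \ref{Lem: QHProperties} (\ref{conjugate}), conjugates of $D$ by elements of $\phi(G) = H$ stay in the fixed finite set $D^2D^{-1}$, the orbits $\Delta_\phi \cdot (hx)$ for $h \in H$ are uniformly bounded, so $\Delta_\phi$ acts on $X$ with uniformly bounded orbits that are permuted by $H$. The quotient $X/\Delta_\phi$ with the quotient (pseudo)metric then has the quotient map $X \to X/\Delta_\phi$ coarsely surjective by construction and coarsely distance-preserving: $d_{X/\Delta_\phi}(\bar x, \bar y) = \inf_{g \in \Delta_\phi} d(x, gy)$ lies within $2R$ of $d(x,y)$ because moving $y$ by $\Delta_\phi$ changes distances by at most $R$. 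Hence $X \to X/\Delta_\phi$ is a quasi-isometry, and since hyperbolicity of geodesic (or geodesic-like) spaces is a quasi-isometry invariant, $X/\Delta_\phi$ is hyperbolic. I expect the main obstacle to be the horocyclic exclusion in the dichotomy and, in part (2), checking carefully that the quotient pseudometric is genuinely comparable to the original metric on the level of a coarse-isometry — this needs the uniform bound on $\Delta_\phi$-orbits throughout $X$, which in turn relies on $\phi$ being surjective so that $H = \phi(G)$ and Lemma \ref{Lem: QHProperties} (\ref{conjugate}) applies to conjugation by all of $H$.
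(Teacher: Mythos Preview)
Your proposal has a genuine gap in the exclusion of the horocyclic case. The claim that ``a finitely generated virtually abelian group with unbounded orbits on a hyperbolic space must contain a hyperbolic element'' is false: $\Z$ acting by a parabolic isometry on $\mathbb{H}^2$, or $\Z^2$ as a cusp subgroup in $\mathbb{H}^3$, are horocyclic actions with unbounded orbits and no hyperbolic elements. Your justification conflates ``some generator has unbounded cyclic orbit'' with ``some generator has positive stable length''; parabolic elements have the former but not the latter, so Fact~\ref{Fact: HypGeo}~(\ref{fac5}) does not apply. The virtually abelian structure of $\Delta_\phi$ alone cannot rule out horocyclic actions.

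The paper's argument is essentially different and uses the surjectivity of $\phi$ and coboundedness of $H\curvearrowright X$ in a crucial way. If $\Delta_\phi$ acted horocyclically, there would be a parabolic element $f\in\Delta_\phi$; since $\Delta_\phi$ is generated by $D$, one has $f\in D^N\cup D^{-N}$ for some $N$. By Lemma~\ref{Lem: QHProperties}~(\ref{conjugate}) and surjectivity, all $H$-conjugates of $f$ lie in the fixed finite set $A=(D^2D^{-1})^N\cup(D^2D^{-1})^{-N}$, so their displacements of a basepoint $o$ are bounded by $M=\max_{g\in A}d(o,go)$. On the other hand, since $\Delta_\phi\lhd H$ is horocyclic and $H\curvearrowright X$ is cobounded, $H$ must act focally; taking a hyperbolic $h\in H$, the displacements $d(o,h^nfh^{-n}o)$ tend to infinity, a contradiction. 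You correctly anticipated that this step is the crux, and the missing idea is precisely this interplay between the conjugation control from Lemma~\ref{Lem: QHProperties}~(\ref{conjugate}) and the dynamics of conjugating a parabolic by a hyperbolic in a focal action.

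Your treatments of parts (1) and (2) are essentially correct, though for (1) the paper's route is shorter: normality of $\Delta_\phi$ in $H$ directly forces $\Lambda(H)=\Lambda(\Delta_\phi)$, so $H\curvearrowright X$ is lineal and cobounded, hence $X$ is a quasi-line; the detour through $Z_H(\Delta_\phi)$ is unnecessary. For (2) the paper simply cites an external lemma, while your direct argument via uniformly bounded $\Delta_\phi$-orbits (from normality and coboundedness) is fine and arguably more self-contained; note however that the uniform orbit bound follows already from $\Delta_\phi\lhd H$ and coboundedness, without needing Lemma~\ref{Lem: QHProperties}~(\ref{conjugate}).
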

\begin{proof}
    By Lemma \ref{Lem: SujQHImpEle}, $\Delta_{\phi}=\langle D\rangle$ is a virtually abelian normal subgroup of $H$. Since any finitely generated group admitting a focal or general type action on a hyperbolic space has exponential growth, the action of $\Delta_{\phi}$ on $X$ is either elliptic, or horocyclic, or lineal. 

    \begin{claim}
        The action $\Delta_{\phi}\curvearrowright X$ is not horocyclic.
    \end{claim}
    \begin{proof}[Proof of Claim]
        Suppose to the contrary that the action $\Delta_{\phi}\curvearrowright X$ is horocyclic. Then every element of $\Delta_{\phi}$ is either elliptic or parabolic on $X$. A well-known fact is that if a finitely generated virtually abelian group acts isometrically on a hyperbolic space with each element being elliptic, then the action is also elliptic. Hence, there exists at least one parabolic element, say $f$, in $\Delta_{\phi}$ on $X$. Since $\Delta_{\phi}$ is finitely generated by $D$, there exists $N\ge 1$ such that $f\in D^N\cup D^{-N}$. By Lemma \ref{Lem: QHProperties} (\ref{conjugate}), for any $h\in H$ and $n\ge 1$, $h^nfh^{-n}\in A:=(D^2D^{-1})^N\cup (D^2D^{-1})^{-N}$. Fix a basepoint $o\in X$ and denote $M:=\max_{g\in A}d(o,go)$.

        Since $\Delta_{\phi}\lhd H$ and $H\curvearrowright X$ is cobounded, the action of $H$ on $X$ must be focal (cf. \cite[Lemma 4.21]{ABO19}). Then there exists a hyperbolic element $h\in H$ such that $d(o,h^nfh^{-n}o)\to \infty$ as $n\to \infty$. This contradicts with $d(o,h^nfh^{-n}o)\le M$ for any $n\ge 1$. Therefore, the action $\Delta_{\phi}\curvearrowright X$ is not horocyclic.
    \end{proof}

    By the above Claim, the action of $\Delta_{\phi}$ on $X$ is either elliptic or lineal. 
    
    If $\Delta_{\phi}\curvearrowright X$ is lineal, then it follows from $\Delta_{\phi}\lhd H$  that the action $H\curvearrowright X$ is also lineal. Since $H\curvearrowright X$ is also cobounded, one gets that $X$ is a quasi-line and thus $\Delta_{\phi}\curvearrowright X$ is also cobounded.

    If $\Delta_{\phi}\curvearrowright X$ is elliptic, then the conclusion can be deduced from \cite[Lemma 4.10]{BFFG22}.
\end{proof}

\subsection{Induced quasi-actions}\label{subsec: IndQA}
\begin{defn}\cite[Definition 2.3]{Man06}\label{Def: QuasiAction}
    A \textit{$(\lambda, \epsilon)$-quasi-action} of a group $G$ on a metric space $X$ is a map $\rho: G \times X \to X$, denoted $\rho(g, x) \mapsto gx$, so that the following hold:
    \begin{itemize}
        \item[(i)] for each $g$, $\rho(g, -): X \to X$ is a $(\lambda, \epsilon)$-quasi-isometry;
        \item[(ii)] for each $x \in X$ and $g, h \in G$, we have $$d(g(hx),(gh)x) = d(\rho(g, \rho(h, x)), \rho(gh, x)) \le \epsilon.$$
    \end{itemize}
(Note that $\lambda$ and $\epsilon$ must be independent of $g$ and $h$.) We call a quasi-action
\textit{cobounded} if, for every $x \in X$, the map $\rho(-, x): G \to X$ is $\epsilon'$-coarsely onto for some $\epsilon'\ge 0$. 
\end{defn}

%When $X$ is a hyperbolic space on which $G$ quasi-acts, we say $g\in G$ quasi-acts \textit{elliptically} (resp. \textit{hyperbolically}) on $X$ if the orbit of $\langle g\rangle$ in $X$ is bounded (resp. a quasi-geodesic) 

\begin{defn}\cite[Definition 2.4]{Man06}
    Suppose that $\rho_X : G \times X \to X$ and $\rho_Y : G \times Y \to Y$ are quasi-actions. A map $\sigma : X \to Y$ is called \textit{coarsely equivariant} if there is some $\epsilon\ge 0$ so that $d(\sigma \circ \rho_X (g, x), \rho_Y (g, \sigma(x))) \le \epsilon$ for all $g \in G$ and $x \in X$. A coarsely equivariant quasi-isometry is called a \textit{quasi-conjugacy}. If there exists a quasi-conjugacy between $X$ and $Y$, then the two quasi-actions are called \textit{quasi-conjugate}.
\end{defn}

By definition, two equivalent cobounded actions are quasi-conjugate.

\begin{example}\label{Ex: QuasiAction}
    Let $\phi : G \to \R$ be a quasimorphism with $\|D(\phi)\|_{\infty}=\epsilon$. A $(1, \epsilon)$-quasi-action of $G$ on $\R$ is given by $\rho(g, x) = x + \phi(g)$.
\end{example}

Let $\phi: G\to H$ be a quasi-homomorphism with $D=D(\phi)$. Suppose $H$ admits a $(\lambda,\epsilon)$-quasi-action on a metric space $X$ and there exists a constant $M>0$ such that $\sup_{x\in X, a\in D}d(x,ax)\le M$. Define a map $\rho: G\times X\to X$ by $\rho(g,x):=\phi(g)x$. For any $g,h\in G$, there exists $a\in D$ such that $\phi(gh)=\phi(g)\phi(h)a$. Thus, for each $x\in X$ and $g,h\in G$, we have 
\begin{align*}
    d(\rho(g,\rho(h,x)),\rho(gh,x))&=d(\phi(g)(\phi(h)x), \phi(gh)x)\le d((\phi(g)\phi(h))x,(\phi(g)\phi(h)a)x)+\epsilon\\ &\le d((\phi(g)\phi(h))x,(\phi(g)\phi(h))(ax))+2\epsilon\le \lambda d(x,ax)+3\epsilon\le \lambda M+3\epsilon.
\end{align*}
The above inequality shows that $\rho$ is a $(\lambda,\lambda M+3\epsilon)$-quasi-action of $G$ on $X$. This quasi-action is referred to as \textit{an induced quasi-action by $\phi$}. Furthermore, if $\phi$ is a quasi-retraction, then the induced quasi-action of $G$ on $X$ is an extension of the original quasi-action of $H$ on $X$. 

\begin{lemma}\label{Lem: QHToQA}
    Let $\phi: G\to H$ be a coarsely surjective quasi-homomorphism. Suppose $H$ admits a cobounded quasi-action on a metric space $X$. Then the map $\rho: G\times X\to X$ defined by $\rho(g,x):=\phi(g)x$ is an induced quasi-action of $G$ on $X$ by $\phi$. 
    
    Moreover, suppose $H$ admits another cobounded quasi-action on a metric space $Y$. Then the quasi-action of $H$ on $X$ is quasi-conjugate to the quasi-action of $H$ on $Y$ if and only if the induced quasi-action of $G$ on $X$ by $\phi$ is quasi-conjugate to the induced quasi-action of $G$ on $Y$ by $\phi$.
\end{lemma}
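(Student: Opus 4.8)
The plan is to split the statement into part (A), the assertion that $\rho(g,x):=\phi(g)x$ is an induced quasi-action of $G$ on $X$ by $\phi$, and part (B), the ``moreover'' equivalence; part (B) in turn splits into a trivial implication and a harder one.

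For part (A), the computation in the paragraph preceding the lemma already shows that $\rho(g,x):=\phi(g)x$ is a $(\lambda,\lambda M+3\epsilon)$-quasi-action of $G$ as soon as one knows $M:=\sup_{x\in X,\ a\in D}d(x,ax)<\infty$, where $D=D(\phi)$. So the only new point is the finiteness of $M$. I would derive it from coarse surjectivity of $\phi$ together with properness of the metric on $H$: these give a finite subset $B\subset H$ (a metric ball around $1$) with $H=\phi(G)\cdot B$. By Lemma \ref{Lem: QHProperties} (\ref{conjugate}), $\phi(g)^{-1}D\phi(g)\subset D^2D^{-1}$ for all $g\in G$, so every $H$-conjugate of an element of $D$ lies in the fixed finite set $D'':=B^{-1}(D^2D^{-1})B$. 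Fix a basepoint $x_0\in X$. Given $x\in X$, coboundedness of the $H$-quasi-action lets me pick $h\in H$ with $d(x,hx_0)$ bounded; writing $h=\phi(g)b$ with $b\in B$ and applying the quasi-action axiom (ii) twice (so that $a(hx_0)$ is close to $(ah)x_0=(h\cdot h^{-1}ah)x_0$, which is close to $h\cdot(h^{-1}ah)x_0$) reduces $d(x,ax)$ to $\lambda\,d(x_0,(h^{-1}ah)x_0)$ up to an additive constant depending only on $\lambda,\epsilon$ and the coboundedness constant. Since $h^{-1}ah\in D''$ and $x_0$ is fixed, $d(x_0,(h^{-1}ah)x_0)\le\max_{t\in D''}d(x_0,tx_0)<\infty$, which bounds $M$. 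Thus $\rho$ is an induced quasi-action by $\phi$; along the way one also sees it is cobounded, since the $\phi(G)$-orbit of $x_0$ is coarsely dense in $X$.

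For part (B), the implication ``the $H$-quasi-actions on $X,Y$ are quasi-conjugate $\Rightarrow$ the induced $G$-quasi-actions are quasi-conjugate'' is immediate: a quasi-isometry $\sigma:X\to Y$ with $d(\sigma(hx),h\sigma(x))$ bounded uniformly over $h\in H$ in particular has $d(\sigma(\phi(g)x),\phi(g)\sigma(x))$ bounded uniformly over $g\in G$, i.e. the same $\sigma$ is a quasi-conjugacy for the induced $G$-quasi-actions. The converse is the substantive half. Given a quasi-conjugacy $\sigma:X\to Y$ for the induced $G$-quasi-actions, so $d(\sigma(\phi(g)x),\phi(g)\sigma(x))$ is uniformly bounded, I want the same $\sigma$ to intertwine the full $H$-quasi-actions. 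Using $H=\phi(G)\cdot B$ with $B$ finite, it suffices to bound $d(\sigma(tx),t\sigma(x))$ uniformly in $x$ for each of the finitely many $t\in B$; the general case $h=\phi(g)t$ then follows from one application of axiom (ii) on each side plus the fact that $\rho_Y(\phi(g),-)$ is a $(\lambda,\epsilon)$-quasi-isometry. To treat a fixed $t\in B$, I would use that the $\phi(G)$-orbit $\{\phi(g)x_0:g\in G\}$ is coarsely dense in $X$ (again because $H=\phi(G)B$ and $B$ is finite): push a given $x$ to a nearby orbit point $\phi(g)x_0$, rewrite $t\phi(g)\in H$ as $\phi(g')b'$ with $b'\in B$, and feed the hypothesis at both $g$ and $g'$ together with axiom (ii); this shows that $\sigma(tx)$ and $t\sigma(x)$ are each within a bounded distance of $\sigma(\phi(g')x_0)$, hence of one another.

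The step I expect to be the main obstacle is exactly this converse in (B): the intertwining hypothesis only lives on $\phi(G)$, which is not a subgroup, so the available relations cannot simply be composed. The mechanism that rescues the argument—and which is also what forces $M<\infty$ in (A)—is that coarse surjectivity together with properness upgrades ``$\phi(G)$ is coarsely dense in $H$'' to the much stronger statement ``$H=\phi(G)\cdot B$ for a genuinely finite set $B$''; all the error terms coming from the finitely many coset representatives $t\in B$, from feeding basepoints through quasi-action elements, and from Lemma \ref{Lem: QHProperties} (\ref{conjugate}), are then maxima over finite sets and hence finite.
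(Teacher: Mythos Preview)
Your proposal is correct and follows essentially the same route as the paper. In part (A) the paper argues exactly as you do: write $H=\phi(G)\cdot A$ for a finite $A$ using coarse surjectivity, fix a basepoint, use coboundedness to move any $x$ near an orbit point $h_xo$, and then invoke Lemma \ref{Lem: QHProperties} (\ref{conjugate}) so that $h_x^{-1}ah_x$ ranges over the finite set $A^{-1}D^2D^{-1}A$; the resulting bound is $\lambda M_1+(\lambda+1)M_2+4\epsilon$. For part (B), the paper's argument is considerably terser than yours---after the easy implication it simply writes ``And vice versa since $\phi$ is coarsely surjective''---so your detailed reduction to bounding $d(\sigma(tx),t\sigma(x))$ for the finitely many $t\in B$, and the basepoint trick with $t\phi(g)=\phi(g')b'$, spells out exactly what that sentence means; the mechanism is the same.
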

\begin{proof}
    Let $D=D(\phi)$ be the defect set of $\phi$. By the above arguments, it suffices to show that there exists a constant $M>0$ such that $\sup_{x\in X, a\in D}d(x,ax)\le M$.

    Since $\phi$ is coarsely surjective, there exists a constant $C>0$ such that $H=\mathcal N_C(\phi(G))$. Since $H$ is discrete, this is equivalent to saying that there exists a finite subset $A\subset H$ such that $H=\phi(G)\cdot A$. Fix a basepoint $o\in X$. Denote $M_1:=\max_{b\in A^{-1}D^2D^{-1}A}d(o,bo)$. Since the quasi-action of $H$ on $X$ is cobounded, there exists a constant $M_2>0$ such that for any $x\in X$, there exists $h_x\in H$ such that $d(x,h_xo)\le M_2$. Since $H=\phi(G)A$, there exists $g_x\in G$ and $a_x\in A$ such that $h_x=\phi(g_x)a_x$. Let $\lambda\ge 1, \epsilon\ge 0$ be two constants such that $H$ admits a $(\lambda,\epsilon)$-quasi-action on $X$. By the triangle inequality and Lemma \ref{Lem: QHProperties} (\ref{conjugate}), we have that 
    \begin{align*}
        d(x,ax)&\le d(x,h_xo)+d(h_xo,a(h_xo))+d(a(h_xo),ax)\le d(h_xo,(ah_x)o)+(\lambda+1) M_2+2\epsilon\\ &\le \lambda d(o,h_x^{-1}(ah_xo))+(\lambda+1) M_2+3\epsilon\le \lambda d(o,(h_x^{-1}ah_x)o)+(\lambda+1) M_2+4\epsilon\\ &= \lambda d(o,(a_x^{-1}\phi(g_x)^{-1}a\phi(g_x)a_x)o)+(\lambda+1) M_2+4\epsilon \le \lambda M_1+(\lambda+1) M_2+4\epsilon.
    \end{align*}
    By setting $M=\lambda M_1+(\lambda+1) M_2+4\epsilon$, one gets that $\rho$ is a $(\lambda,\lambda M+3\epsilon)$-quasi-action of $G$ on $X$.

    Moreover, suppose that there exists a coarsely $H$-equivariant quasi-isometry $\sigma: X\to Y$. By the above constructions, this map $\sigma$ gives also an induced coarsely $G$-equivariant quasi-isometry $\sigma: X\to Y$. And vice versa since $\phi$ is coarsely surjective.
\end{proof}

\begin{defn}\cite[Definition 1.2]{Man06}
    A group $G$ has \textit{property (QFA)} if for every quasi-action of $G$ on any tree $X$, there is some $x\in X$ so that the orbit $Gx$ has finite diameter.
\end{defn}

\begin{lemma}
    Let $G,H$ be two finitely generated groups and $\phi: G\to H$ be a coarsely surjective quasi-homomorphism. If $G$ has property (QFA), then so is $H$.
\end{lemma}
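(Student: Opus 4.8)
The plan is to argue by contradiction: starting from an arbitrary $(\lambda,\epsilon)$-quasi-action $\rho$ of $H$ on a tree $X$ that has no bounded orbit, I would manufacture a quasi-action of $G$ on a tree with no bounded orbit, contradicting property (QFA) for $G$. Fix a finite generating set $\{h_1,\ldots,h_k\}$ of $H$, a basepoint $o\in X$, and write $Ho=\rho(H,o)$. Recall first that since $\rho(1,-)$ is boundedly close to the identity, boundedness of one $\rho$-orbit is equivalent to boundedness of all of them, so our hypothesis is precisely that $Ho$ is unbounded.

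The first and main step is to replace $X$ by a \emph{cobounded core}. The key observation is that $Ho$ is $L$-coarsely connected with $L:=\max_i(\lambda\,d(o,\rho(h_i^{\pm 1},o))+2\epsilon)$: the two quasi-action axioms give $d(\rho(h,o),\rho(hh_i^{\pm 1},o))\le L$ for every $h\in H$, so reading off a word in the generators joins $\rho(h,o)$ to any $\rho(h',o)$ by an $L$-chain lying inside $Ho$. Since in a tree a connected subset containing two points contains the geodesic between them, the convex hull $Y:=\mathrm{hull}_X(Ho)$ is a subtree with $Y\subseteq\mathcal{N}_{L}(Ho)$; in particular $Y$ is unbounded and $Ho$ is coarsely dense in $Y$. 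As each $\rho(h,-)$ is a quasi-isometry of $X$ and $\rho(h,Ho)$ is $\epsilon$-Hausdorff-close to $Ho$, the standard fact that quasi-isometries of trees coarsely preserve convex hulls shows that $\rho$ coarsely preserves $Y$; post-composing with the $1$-Lipschitz nearest-point projection $\pi_Y\colon X\to Y$ then produces a genuine quasi-action $\rho'(h,y):=\pi_Y(\rho(h,y))$ of $H$ on the tree $Y$. One checks routinely (using that $\pi_Y$ moves points of a bounded neighbourhood of $Y$ only a bounded amount) that $\rho'$ satisfies both quasi-action axioms, that it is cobounded because the $\rho'$-orbit of a point of $Ho$ stays within bounded distance of $Ho$, which is coarsely dense in $Y$, and that it still has no bounded orbit.

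Now the quasi-homomorphism does its work. Applying Lemma~\ref{Lem: QHToQA} to the coarsely surjective quasi-homomorphism $\phi\colon G\to H$ and the cobounded quasi-action $\rho'$ of $H$ on $Y$, the map $\rho'_G(g,y):=\rho'(\phi(g),y)$ is a (cobounded) quasi-action of $G$ on the tree $Y$. It can have no bounded orbit: if it did, then every $\rho'_G$-orbit would be bounded, so for $o'\in Ho$ the set $\rho'_G(G,o')=\rho'(\phi(G),o')$ would be bounded; but writing $H=\phi(G)\cdot S$ with $S$ finite by coarse surjectivity and using the quasi-action axioms for $\rho'$, the set $\rho'(\phi(G),o')$ lies within bounded distance of $\rho'(H,o')$, which is unbounded---a contradiction. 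Hence $G$ admits a quasi-action on a tree with no bounded orbit, contradicting property (QFA) for $G$. Therefore $\rho$ had a bounded orbit after all, and $H$ has property (QFA).

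I expect the only genuine obstacle to be the cobounded-core construction of the second paragraph: isolating the coarse connectedness of the orbit (which is exactly what forces $Ho$ to be coarsely dense in its hull, and hence $\rho'$ to be cobounded, so that Lemma~\ref{Lem: QHToQA} applies) and verifying that nearest-point projection preserves the quasi-action axioms. This needs only elementary tree geometry---connected subsets of trees are geodesically convex, and quasi-isometries of trees coarsely preserve hulls---but it is where the care lies; everything afterwards is a short transfer argument using coarse surjectivity of $\phi$ together with Lemma~\ref{Lem: QHToQA}.
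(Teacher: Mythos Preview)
Your proof is correct and follows the same skeleton as the paper's: reduce to a cobounded quasi-action on a tree, invoke Lemma~\ref{Lem: QHToQA} to pull the quasi-action back to $G$ via $\phi$, and use coarse surjectivity to transfer (un)boundedness of orbits between $\phi(G)$ and $H$. The only real difference is that the paper dispatches your entire second paragraph in one sentence by citing \cite[Remark 3.2]{Man06}, which already records that any quasi-action of a finitely generated group on a tree may be assumed cobounded; your convex-hull-and-project construction is essentially a hands-on reproof of that remark. Both approaches work, but the citation saves all the tree-geometry verifications you flagged as the delicate part, and the paper's argument is then direct rather than by contradiction.
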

\begin{proof}
    Let $X$ be an arbitrary tree on which $H$ quasi-acts. By \cite[Remark 3.2]{Man06}, the quasi-action of $H$ on $X$ can be assumed to be cobounded. As a result of Lemma \ref{Lem: QHToQA}, $\phi$ induces a cobounded quasi-action of $G$ on $X$. Since $G$ has property (QFA), there is some $x\in X$ so that the orbit $Gx:=\phi(G)x$ has finite diameter. Since $\phi$ is coarsely surjective, the orbit $Hx$ has also finite diameter. Therefore, $H$ has property (QFA) since $X$ is arbitrary.
\end{proof}

\begin{corollary}\label{Cor: QFAIsQIInv}
    Property (QFA) is a quasi-isomorphism invariant for finitely generated groups.
\end{corollary}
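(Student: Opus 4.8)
The plan is to derive the corollary directly from the lemma just established, so the only point that needs checking is that a quasi-isomorphism between finitely generated groups, together with its quasi-inverse, is a pair of coarsely surjective quasi-homomorphisms.

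First I would unwind the definition: a quasi-isomorphism $\phi\colon G\to G'$ comes equipped with a quasi-inverse $\phi'\colon G'\to G$ satisfying $\d(\phi'\circ\phi,Id_G)<\infty$ and $\d(\phi\circ\phi',Id_{G'})<\infty$. The second inequality says that every element of $G'$ lies within bounded distance of $\phi(G)$, i.e. $G'=\mathcal N_C(\phi(G))$ with $C=\d(\phi\circ\phi',Id_{G'})$, so $\phi$ is coarsely surjective; by symmetry the first inequality shows $\phi'$ is coarsely surjective. (Alternatively one may invoke Lemma~\ref{Lem: QIToQI}, which already records that such a $\phi$ is a quasi-isometry of finitely generated groups and hence coarsely surjective.)

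Then I would apply the preceding lemma in both directions. If $G$ has property (QFA), feeding the coarsely surjective quasi-homomorphism $\phi\colon G\to G'$ into that lemma yields property (QFA) for $G'$; conversely, if $G'$ has property (QFA), feeding in $\phi'\colon G'\to G$ yields property (QFA) for $G$. Combining the two implications gives that $G$ has property (QFA) if and only if $G'$ does, which is precisely the assertion. I do not anticipate any genuine obstacle: the entire argument is formal once coarse surjectivity of $\phi$ and $\phi'$ is noted, and the finite generation of both groups — which is what allows the preceding lemma to be applied — is already part of the hypothesis.
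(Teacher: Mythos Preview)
Your proposal is correct and matches the paper's intended approach: the corollary is stated without proof, as an immediate consequence of the preceding lemma, and your argument spells out exactly the obvious deduction --- observing that both $\phi$ and its quasi-inverse $\phi'$ are coarsely surjective, then applying the lemma in each direction.
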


\begin{proposition}\label{Prop: QFAStability}
    Let $1\to H\to G\to Q\to 1$ be a short exact sequence of finitely generated groups which is left quasi-split. Then $G$ has property (QFA) if and only if  both $H$ and $Q$ have property (QFA).
\end{proposition}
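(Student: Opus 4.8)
The plan is to handle the two implications separately. For ``$G$ has (QFA) $\Rightarrow$ $H$ and $Q$ have (QFA)'' I would push property (QFA) down along the two coarsely surjective quasi-homomorphisms leaving $G$: the quotient map $\pi\colon G\to Q$, which is a surjective homomorphism of finitely generated groups, and a quasi-retraction $r\colon G\to H$ supplied by left quasi-splitting, which is surjective since $r\circ\iota=Id_H$ (Lemma~\ref{Lem: EquiDef}). Applying the lemma immediately preceding Corollary~\ref{Cor: QFAIsQIInv} (a coarsely surjective quasi-homomorphism between finitely generated groups carries (QFA) from the source to the target) to $\pi$ and to $r$ respectively gives (QFA) for $Q$ and for $H$. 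For $Q$ the full left quasi-splitting is not used, only that $Q$ is a finitely generated quotient of $G$; it is for $H$ that the hypothesis really enters.

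For ``$H$ and $Q$ have (QFA) $\Rightarrow$ $G$ has (QFA)'' I would first invoke Theorem~\ref{Thm: LQS}: being left quasi-split, $G$ is strictly quasi-isomorphic to $H\times Q$, and since $H\times Q$ is finitely generated and property (QFA) is a quasi-isomorphism invariant of finitely generated groups (Corollary~\ref{Cor: QFAIsQIInv}), it suffices to prove that $H\times Q$ has (QFA). So I would take a $(\lambda,\epsilon)$-quasi-action $\rho$ of $H\times Q$ on an arbitrary tree $T$, view $H=H\times\{1\}$ and $Q=\{1\}\times Q$ as commuting subgroups, and restrict $\rho$ to each; these restrictions are $(\lambda,\epsilon)$-quasi-actions of $H$ and of $Q$ on the \emph{same} tree $T$, so by (QFA) of the two factors each has a bounded orbit. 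The elementary estimate $d(\rho(g,x),x)\le(\lambda+1)\,d(x,y)+\epsilon+d(\rho(g,y),y)$ shows that if one orbit of a subgroup is bounded then all of its orbits are, so for any basepoint $p\in T$ the numbers $R_H:=\sup_{h\in H}d(\rho(h,p),p)$ and $R_Q:=\sup_{q\in Q}d(\rho(q,p),p)$ are finite, and moreover $\sup_{h\in H}d(\rho(h,x),x)\le(\lambda+1)\,d(x,p)+\epsilon+R_H$ for every $x$. Writing a general element of $H\times Q$ as $(h,1)(1,q)$ and using condition (ii) in the definition of a quasi-action,
\begin{align*}
d(\rho((h,1)(1,q),p),p)
&\le \epsilon + d(\rho((h,1),\rho((1,q),p)),\rho((1,q),p)) + d(\rho((1,q),p),p)\\
&\le \epsilon + ((\lambda+1)R_Q+\epsilon+R_H) + R_Q ,
\end{align*}
a bound independent of $h$ and $q$; hence $\rho(H\times Q,p)$ has finite diameter, the quasi-action is bounded, and $H\times Q$ has (QFA).

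Essentially all of the substantive content is absorbed into Theorem~\ref{Thm: LQS} and the two ``push-forward'' lemmas; the orbit estimate above is routine. What I expect to be the real pitfall, rather than a genuine obstacle, is the temptation to argue the reverse implication directly on $G$: a set-theoretic section $s\colon Q\to G$ only induces on $T$ a putative ``$Q$-quasi-action'' whose additive error at a point $x$ is controlled by the $H$-displacement $\sup_{h}d(\rho(h,x),x)$, which is finite but not uniformly bounded in $x$, so it is not a quasi-action in the required sense. Passing through the strict quasi-isomorphism $G\to H\times Q$, in which $H$ and $Q$ become honest commuting subgroups, is exactly what removes this difficulty, so Theorem~\ref{Thm: LQS} is the indispensable structural input.
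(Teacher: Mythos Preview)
Your argument is correct. The paper's own proof is much terser: it invokes Theorem~\ref{Thm: LQS} and Corollary~\ref{Cor: QFAIsQIInv} to reduce \emph{both} directions simultaneously to the statement ``$H\times Q$ has (QFA) $\Leftrightarrow$ both $H$ and $Q$ have (QFA)'', which it then asserts without proof. Your forward direction is slightly more direct than the paper's---you push (QFA) along $r$ and $\pi$ themselves rather than first passing to $H\times Q$ and then along the two projections---but both routes rest on the same push-forward lemma, so the difference is cosmetic. Your backward direction follows the paper's path exactly and, usefully, supplies the orbit estimate for direct products that the paper leaves implicit; that computation is sound as written. Your closing remark about why one should not try to run the backward direction directly on $G$ via a set-theoretic section is a nice observation, and it correctly identifies Theorem~\ref{Thm: LQS} as the structural ingredient that makes the argument go through.
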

\begin{proof}
    By Theorem \ref{Thm: LQS}, $G$ is strictly quasi-isomorphic to $H\times Q$. As a result of Corollary \ref{Cor: QFAIsQIInv}, ``$G$ has property (QFA)'' $\Leftrightarrow$  ``$H\times Q$ has property (QFA)'' $\Leftrightarrow$ ``both $H$ and $Q$ have property (QFA)''.
\end{proof}

To obtain the second item of Theorem \ref{IntroThm: PH&QTStability}, we need the following fact.

\begin{proposition}\cite[Proposition 4.4]{Man05}\label{Prop: QAToA}
    Any quasi-action on a geodesic metric space $X$ is quasi-conjugate to an action on some connected graph quasi-isometric to $X$. Conversely, any isometric action on a geodesic metric space quasi-isometric to $X$ is quasi-conjugate to some quasi-action on $X$.
\end{proposition}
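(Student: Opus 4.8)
The plan is to prove the two directions separately; the converse is a short transport argument, while the forward direction is carried out by the standard ``telescope'' construction (due to Manning). For the forward direction I would first reduce to the case in which $X$ is a connected graph: since $X$ is geodesic it is quasi-isometric to a graph $X_0$ (take a maximal $1$-separated subset of $X$ and join two such points by an edge exactly when their $X$-distance is at most $2$), and one uses the elementary fact that a quasi-isometry carries a quasi-action to a quasi-action with controlled constants. So it suffices to produce, from a $(\lambda,\epsilon)$-quasi-action $\rho$ of $G$ on $X_0$, an isometric action on a connected graph quasi-isometric to $X_0$ together with a quasi-conjugacy; after a bounded perturbation we may also assume each $\rho_g:=\rho(g,-)$ maps the vertex set $V$ of $X_0$ into $V$.

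Next I would build the graph. Let $\Gamma$ have vertex set $G\times V$, with edge relation the symmetric $G$-invariant relation generated by: $(g,x)\sim(g,y)$ whenever $x,y$ are adjacent in $X_0$; and $(g,x)\sim(gk,y)$, for $k\in G$, whenever $d(x,\rho_k(y))\le C$, where $C=C(\lambda,\epsilon)$ is fixed once and for all. Left multiplication in the first coordinate then gives an isometric $G$-action on $\Gamma$; $\Gamma$ is connected, since each ``sheet'' $\{g\}\times V$ carries a copy of $X_0$ and the edge $(g,\rho_k(x))\sim(gk,x)$ bridges consecutive sheets; and $\Gamma$ is quasi-isometric to $X_0$, hence to $X$. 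The candidate quasi-conjugacy is $f\colon\Gamma\to X_0$, $f(g,x)=\rho_g(x)$, and its coarse $G$-equivariance is precisely axiom (ii) of a quasi-action, since $f(m\cdot(g,x))=\rho_{mg}(x)$ is $\epsilon$-close to $\rho_m(\rho_g(x))=m\cdot f(g,x)$. Coarse surjectivity of $f$ is immediate from the sheet $g=1$, and the coarsely Lipschitz upper bound follows by checking that $f$ moves the two types of adjacent vertices a uniformly bounded distance, using that each $\rho_g$ is a $(\lambda,\epsilon)$-quasi-isometry together with axiom (ii).

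The step I expect to be the main obstacle is the matching lower bound making $f$ a quasi-isometric embedding. Given vertices $(g,x),(h,y)$ with $d(\rho_g x,\rho_h y)=D$, write $h=gk$; since $\rho_g$ is a $(\lambda,\epsilon)$-quasi-isometry, with a quasi-inverse whose constants depend only on $\lambda,\epsilon$ (namely $\rho_{g^{-1}}$, by axiom (ii)), the inequality $d(\rho_g x,\rho_g(\rho_k y))\le D+\epsilon$ forces $d(x,\rho_k y)\le \lambda D+O(1)$; a geodesic of $X_0$ from $x$ to $\rho_k y$ then lifts to a path of comparable length inside the sheet $\{g\}\times V$ from $(g,x)$ to $(g,\rho_k y)$, and one further edge $(g,\rho_k y)\sim(gk,y)$ reaches $(h,y)$. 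This yields $d_\Gamma((g,x),(h,y))\le \lambda D+O(1)$, so $f$ is a quasi-conjugacy; composing it with the quasi-isometry used in the reduction step shows the original action $G\curvearrowright X$ is quasi-conjugate to $G\curvearrowright\Gamma$, completing the forward direction.

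For the converse, let $G$ act isometrically on a geodesic space $X'$ that is quasi-isometric to $X$, fix a quasi-isometry $\psi\colon X'\to X$ with quasi-inverse $\bar\psi\colon X\to X'$, and set $\rho(g,x)=\psi\bigl(g\cdot\bar\psi(x)\bigr)$. Each $\rho_g=\psi\circ(g\cdot)\circ\bar\psi$ is a composition of two fixed quasi-isometries with an isometry, hence a quasi-isometry with constants independent of $g$; axiom (ii) holds because $\bar\psi\psi$ is uniformly close to $\mathrm{id}_{X'}$, $(g\cdot)$ is an isometry, and $\psi$ is coarsely Lipschitz; and $\psi$ itself is a coarsely $G$-equivariant quasi-isometry from $G\curvearrowright X'$ to $G\curvearrowright_\rho X$, i.e.\ a quasi-conjugacy. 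This establishes both assertions.
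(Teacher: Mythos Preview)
The paper does not give its own proof of this proposition; it is quoted verbatim from Manning's paper \cite{Man05} and used as a black box. Your proposal correctly reconstructs the standard argument attributed to Manning (the ``telescope'' graph $\Gamma$ with vertex set $G\times V$ and the quasi-conjugacy $f(g,x)=\rho_g(x)$), and the converse transport argument via $\rho(g,x)=\psi(g\cdot\bar\psi(x))$ is the expected one; so your approach matches the cited source rather than anything the present paper adds.
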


\begin{lemma}\label{Lem: PH'IsQIInv}
    Property (PH') is a quasi-isomorphism invariant for discrete groups.
\end{lemma}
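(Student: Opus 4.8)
The plan is to show that property (PH') is preserved under quasi-isomorphism by combining the quasi-action machinery of Lemma \ref{Lem: QHToQA} with Manning's dictionary (Proposition \ref{Prop: QAToA}) that passes between quasi-actions and honest actions. Let $\psi: G\to G'$ be a quasi-isomorphism with quasi-inverse $\psi'$; by symmetry of the equivalence relation (Lemma \ref{Lem: EquiRelation}) it suffices to prove one implication, say that if $G$ has property (PH') then so does $G'$. Note first that a quasi-isomorphism is coarsely surjective: since $\d(\psi\circ\psi', Id_{G'})<\infty$ and $G'$ is discrete, $G'=\psi(G)\cdot A$ for a finite set $A$, so $\psi$ is coarsely onto in the sense required by Lemma \ref{Lem: QHToQA}.

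First I would unpack property (PH') for $G$ via Definition \ref{Def: PH}: there are hyperbolic structures $[S_1],\dots,[S_n]\in\H(G)$, i.e. cobounded isometric actions $G\curvearrowright X_i$ on hyperbolic spaces $X_i=\G(G,S_i)$, such that the diagonal action of $G$ on $\prod_{i=1}^n X_i$ with the $\ell^1$-metric is proper. Applying Lemma \ref{Lem: QHToQA} to the coarsely surjective quasi-homomorphism $\psi': G'\to G$ (the quasi-inverse), each cobounded action $G\curvearrowright X_i$ induces a cobounded quasi-action $G'\curvearrowright X_i$ via $\rho_i(g',x):=\psi'(g')x$. By Proposition \ref{Prop: QAToA} each induced quasi-action $G'\curvearrowright X_i$ is quasi-conjugate to an honest isometric action $G'\curvearrowright Y_i$ on a connected graph $Y_i$ quasi-isometric to $X_i$; in particular $Y_i$ is hyperbolic (hyperbolicity being a quasi-isometry invariant for geodesic spaces), the action is cobounded, and so $[Y_i]$ determines a hyperbolic structure on $G'$.

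It remains to check that the diagonal action $G'\curvearrowright \prod_i Y_i$ with the $\ell^1$-metric is proper. Fix basepoints and track orbit maps: the orbit map $G'\to \prod_i Y_i$ is, up to bounded error and the quasi-conjugacies $\sigma_i: X_i\to Y_i$, the composition $g'\mapsto \psi'(g')\mapsto (\psi'(g')x_i)_i$, i.e. it factors (coarsely) through the orbit map $G\to\prod_i X_i$ precomposed with $\psi'$. Since $\psi': G'\to G$ is a quasi-isometry (Lemma \ref{Lem: QIToQI}, applicable since all groups here are finitely generated --- note property (PH') in the finitely generated setting forces $G$, hence $G'$, finitely generated), and the orbit map $G\to\prod_i X_i$ is proper (i.e. preimages of bounded sets are bounded, which is exactly properness of the action for a cobounded action), and each $\sigma_i$ is a quasi-isometry, the composite orbit map $G'\to\prod_i Y_i$ also has the property that preimages of bounded sets are bounded. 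Hence the diagonal action is proper, and $G'$ has property (PH').

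The main obstacle, and the step requiring the most care, is the bookkeeping in the last paragraph: one must verify that properness of the diagonal action of $G$ (equivalently, the orbit map $G\to\prod X_i$ being a proper map for the $\ell^1$-metric) genuinely transfers along the chain ``precompose with the quasi-isometry $\psi'$, then postcompose with the quasi-isometries $\sigma_i$, up to bounded error in each coordinate.'' Each of these operations individually preserves the property ``preimage of a bounded set is bounded'' for a map between a finitely generated group (with word metric) and an $\ell^1$-product of metric spaces, but assembling them --- and confirming that the coarsely $G'$-equivariant quasi-isometry produced by Lemma \ref{Lem: QHToQA} together with Proposition \ref{Prop: QAToA} really does intertwine the orbit maps up to uniformly bounded error in each factor --- is where the argument has to be spelled out carefully. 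Everything else is a formal consequence of results already established in the excerpt.
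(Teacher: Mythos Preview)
Your approach is the same as the paper's: induce quasi-actions via Lemma~\ref{Lem: QHToQA}, straighten them to honest actions via Proposition~\ref{Prop: QAToA}, and check that properness survives. The first two steps are fine.

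The gap is in the properness step. The lemma is stated for arbitrary \emph{discrete} groups, yet you invoke Lemma~\ref{Lem: QIToQI}, which requires finite generation; your parenthetical claim that ``property (PH') \dots\ forces $G$, hence $G'$, finitely generated'' is neither proved nor used anywhere in the paper, and Definition~\ref{Def: PH} explicitly allows infinite generating sets. So as written, the argument does not cover the stated generality.

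The fix is both simpler and more robust than going through quasi-isometry. A quasi-isomorphism $\psi':G'\to G$ automatically has uniformly finite fibers: since $\d(\psi\circ\psi',Id_{G'})<\infty$, there is a finite set $A\subset G'$ with $g'\in\psi(\psi'(g'))\cdot A$ for every $g'$, so each fiber $(\psi')^{-1}(g)$ has at most $|A|$ elements. Now unwind your own description of the orbit map. If $\sum_i d_{Y_i}(o_i,g'\cdot o_i)\le M$, then via the coarse equivariance and the quasi-isometries $\sigma_i$ one gets $\sum_i d_{X_i}(x_i,\psi'(g')\cdot x_i)\le M'$ for some $M'=M'(M)$. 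Properness of $G\curvearrowright\prod X_i$ says the set of such $\psi'(g')$ is finite in $G$; finite fibers then gives that the set of such $g'$ is finite in $G'$. No word metric on $G$ or $G'$ is needed. This is exactly what the paper means by the one-line ``since $\phi$ is a quasi-isomorphism, the (PH') property of $G'$ implies that the diagonal action of $G$ on $\prod X_i'$ is proper.''
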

\begin{proof}
    Let $G,G'$ be two discrete quasi-isomorphic groups. Suppose that $G'$ has property (PH') from actions $G'\curvearrowright X_1,\cdots, G'\curvearrowright X_n$. Let $\phi: G\to G'$ be a quasi-isomorphism. By Lemma \ref{Lem: QHToQA}, $\phi$ induces a cobounded quasi-action of $G$ on each $X_i$. As a result of Proposition \ref{Prop: QAToA}, for each $1\le i\le n$, the induced quasi-action of $G$ on $X_i$ is quasi-conjugate to an action of $G$ on a hyperbolic space $X_i'$. Via these quasi-conjugacy, we know that each action $G\curvearrowright X_i'$ is also cobounded. Moreover, since $\phi$ is a quasi-isomorphism, the (PH') property of $G'$ implies that the diagonal action of $G$ on the $\ell^1$-product space $\prod_{i=1}^nX_i'$ is proper. This gives the (PH') property of $G$.
\end{proof}

\begin{lemma}\cite[Lemma 7.1]{TW25}\label{Lem: DirPro}
    Let $G$ be a group splitting as a direct product $G=H\times K$. Then 
    \begin{enumerate}
        \item\label{6.4.1} $G$ has property (PH') if and only if both of $H,K$ have property (PH').
        \item\label{6.4.2} $G$ has property (PH) if and only if both of $H,K$ have property (PH).
        \item\label{6.4.3} assume $G$ is finitely generated, $G$ has property (QT') or (QT) if and only if both of $H,K$ have property (QT') or (QT), respectively.
    \end{enumerate}    
\end{lemma}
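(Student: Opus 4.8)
The plan is to reduce all three equivalences to two elementary moves — pushing ``product data'' forward through the projections $\pi_H\colon G\to H$ and $\pi_K\colon G\to K$, and splitting a $G$-action apart using $G=H\times K$ — with Fact~\ref{Fact: HypGeo}~(\ref{fac4}) as the only nontrivial geometric input. For part (1), the direction ``$H$ and $K$ have (PH') $\Rightarrow$ $G$ has (PH')'' runs as follows: if $H$ acts coboundedly on hyperbolic spaces $X_1,\dots,X_m$ with proper $\ell^1$-diagonal action and $K$ acts coboundedly on $Y_1,\dots,Y_\ell$ likewise, let $G$ act on each $X_i$ through $\pi_H$ and on each $Y_j$ through $\pi_K$; these actions are cobounded, and on the $\ell^1$-product $\prod X_i\times\prod Y_j$ the displacement of $(h,k)$ is the sum of the displacement of $h$ on $\prod X_i$ and of $k$ on $\prod Y_j$. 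Since $(h_n,k_n)\to\infty$ in the discrete group $G$ forces $h_n\to\infty$ in $H$ or $k_n\to\infty$ in $K$, this sum tends to infinity, so the diagonal action is proper. Conversely, suppose $G$ has (PH') via cobounded actions on hyperbolic spaces $Z_1,\dots,Z_n$ with proper diagonal action; after discarding bounded factors, Fact~\ref{Fact: HypGeo}~(\ref{fac4}) gives that each of $H\curvearrowright Z_i$ and $K\curvearrowright Z_i$ is elliptic or cobounded, and a one-line orbit estimate rules out both being elliptic (otherwise $Go$ would be bounded). Put $I_H=\{i:H\curvearrowright Z_i\text{ is cobounded}\}$; for $i\notin I_H$ the $H$-action is elliptic, hence boundedly displacing, so properness of the $G$-diagonal action forces the $H$-diagonal action on $\prod_{i\in I_H}Z_i$ to be proper, and $H$ has (PH'). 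Symmetrically $K$ does (and if $I_H=\varnothing$ then $H$ is finite, hence trivially (PH')).

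For part (2), ``$H,K$ have (PH) $\Rightarrow$ $G$ has (PH)'' is immediate: finite-index (PH') subgroups $H_0\le H$ and $K_0\le K$ give the finite-index subgroup $H_0\times K_0\le G$, which has (PH') by part (1). For the reverse, let $G_0\le G$ be finite-index with (PH'). The key observation I would use is that (PH') passes to finite-index \emph{normal} subgroups: if $L\lhd M$ has finite index and $M\curvearrowright X$ is a cobounded isometric action, writing $M=LF$ with $F$ finite gives $Mo\subseteq\mathcal{N}_R(Lo)$ with $R=\max_{f\in F}d(o,fo)$, so $L\curvearrowright X$ is again cobounded, while properness of a diagonal action is inherited by every subgroup. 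Replacing $G_0$ by its normal core in $G$, we get a finite-index normal subgroup $N\lhd G$ with (PH'); then $(N\cap H)\times(N\cap K)$ is a finite-index normal subgroup of $G$ (using $[H:N\cap H]\le[G:N]$ and similarly for $K$), hence has (PH') by the same observation, and part (1) yields that $N\cap H$ has (PH'). As $N\cap H$ has finite index in $H$, we conclude that $H$ has (PH), and symmetrically $K$ does.

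For part (3) I would use that quasi-trees are hyperbolic, that the word metric on $G=H\times K$ is bi-Lipschitz to the $\ell^1$-sum of the factor word metrics, and that $H$ (likewise $K$) is undistorted in $G$ because it is a retract — Lemma~\ref{Lem: QRProperties}~(\ref{it5}). For ``$H,K$ have (QT) $\Rightarrow$ $G$ has (QT)'': if $H$ acts on a finite $\ell^1$-product of quasi-trees with the orbit map a quasi-isometric embedding, and $K$ similarly, let $G$ act on the first product through $\pi_H$ and on the second through $\pi_K$; the orbit map of $G$ into the combined $\ell^1$-product is, up to bi-Lipschitz identifications, the pair of the two orbit maps, hence a quasi-isometric embedding, and if both original actions are diagonal so is the combined one — giving (QT') as well. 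Conversely, restricting a (diagonal) product-of-quasi-trees action of $G$ to the undistorted subgroup $H$ keeps the orbit map a quasi-isometric embedding, so $H$ has (QT) (resp.\ (QT')), and symmetrically for $K$.

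The step I expect to be the main obstacle is the forward direction of part (2): a finite-index subgroup of $G$ need not respect the product decomposition, and the argument only goes through after observing that coboundedness of a hyperbolic action descends to finite-index \emph{normal} subgroups (so one may pass first to the normal core and then to $(N\cap H)\times(N\cap K)$, which does split) together with the trivial fact that $\ell^1$-properness is inherited by subgroups. Everything else is routine bookkeeping with $\ell^1$-metrics and orbit maps, with Fact~\ref{Fact: HypGeo}~(\ref{fac4}) carrying the one geometric point.
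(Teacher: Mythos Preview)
The paper does not give its own proof of this lemma; it simply cites \cite[Lemma~7.1]{TW25}. So there is nothing to compare against, and your proposal is in effect supplying a proof the present paper omits.

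Your argument is correct and complete. Part~(1) is handled cleanly: the backward direction is the obvious product construction, and the forward direction uses Fact~\ref{Fact: HypGeo}~(\ref{fac4}) exactly as intended to split the cobounded factors between $H$ and $K$, with properness inherited by the subgroup $H\le G$ after absorbing the bounded displacement on elliptic factors. Part~(2) is the delicate one, and you identify the right fix: coboundedness of an isometric action descends to finite-index \emph{normal} subgroups (via $M=LF$ with $F$ finite), while properness of a diagonal action descends to all subgroups; passing to the normal core $N$ and then to $(N\cap H)\times(N\cap K)$, which is normal of finite index in $N$ and genuinely splits as a product, lets you invoke part~(1). Part~(3) is routine once you note that $H$ and $K$ are undistorted retracts (Lemma~\ref{Lem: QRProperties}~(\ref{it5})) and that the word metric on $H\times K$ is bi-Lipschitz to the $\ell^1$-sum of the factor metrics.

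One cosmetic remark: in part~(1) forward, the sentence ruling out both $H$ and $K$ being elliptic on a given $Z_i$ is not actually needed for your argument (you handle $I_H=\varnothing$ separately), so you could drop it.
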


\begin{proposition}\label{Prop: PH'Stability}
    Let $1\to H\to G\to Q\to 1$ be a short exact sequence of discrete groups which is left quasi-split. Then $G$ has property (PH') if and only if  both $H$ and $Q$ have property (PH').
\end{proposition}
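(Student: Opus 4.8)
The plan is to reduce the statement to the case of a direct product, exactly as in the proof of Proposition~\ref{Prop: QFAStability}, and then appeal to quasi-isomorphism invariance of property (PH').

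First I would use the hypothesis: since the short exact sequence $1\to H\to G\to Q\to 1$ is left quasi-split, Theorem~\ref{Thm: LQS} supplies a strict quasi-isomorphism $\phi\colon G\to H\times Q$. By Lemma~\ref{Lem: PH'IsQIInv}, property (PH') is a quasi-isomorphism invariant for discrete groups, so $G$ has property (PH') if and only if $H\times Q$ does. Next I would invoke Lemma~\ref{Lem: DirPro}~(\ref{6.4.1}), which says that the direct product $H\times Q$ has property (PH') if and only if both factors $H$ and $Q$ have property (PH'). Chaining these two equivalences gives the proposition.

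The point is that all the genuine work has already been isolated in the preparatory results, so there is no real obstacle left at this stage. The delicate ingredient is Lemma~\ref{Lem: PH'IsQIInv}: its proof pushes a cobounded action of one group through an induced quasi-action via Lemma~\ref{Lem: QHToQA}, then straightens that quasi-action back to an isometric action on a hyperbolic space using Manning's Proposition~\ref{Prop: QAToA}, while checking that coboundedness of each factor action and properness of the diagonal action on the $\ell^1$-product are preserved throughout. Once that invariance and the direct-product splitting of Lemma~\ref{Lem: DirPro} are available, the present proposition is a two-line consequence.
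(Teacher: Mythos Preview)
Your proposal is correct and follows essentially the same route as the paper: invoke Theorem~\ref{Thm: LQS} to get a strict quasi-isomorphism $G\to H\times Q$, apply Lemma~\ref{Lem: PH'IsQIInv} for quasi-isomorphism invariance of (PH'), and finish with Lemma~\ref{Lem: DirPro}~(\ref{6.4.1}). The paper's proof is just these three sentences, so your reading of the structure is exactly right.
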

\begin{proof}
    By Theorem \ref{Thm: LQS}, $G$ is strictly quasi-isomorphic to $H\times Q$. As a result of Lemma \ref{Lem: PH'IsQIInv}, $G$ has property (PH') if and only if  $H\times Q$ has property (PH'). Then the conclusion follows from Lemma \ref{Lem: DirPro} (\ref{6.4.1}).
\end{proof}

%Recall that a finitely generated group $G$ has property (QT) if $G$ acts isometrically on a finite product of quasi-trees equipped with $\ell^1$-metric such that the orbit map is a quasi-isometric embedding. Following \cite{Tao24}, a finitely generated group $G$ has \textit{property (QT')} if it has property (QT) and the action on a finite product of quasi-trees is induced by a diagonal action. By a result of Button \cite{But22}, a finitely generated group with property (QT) virtually has property (QT'). 

As analogues of Lemma \ref{Lem: PH'IsQIInv} and Proposition \ref{Prop: PH'Stability}, we have

\begin{lemma}\label{Lem: QT'IsQIInv}
    Property (QT') is a quasi-isomorphism invariant for finitely generated groups.
\end{lemma}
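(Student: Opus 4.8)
The plan is to mirror the proof of Lemma~\ref{Lem: PH'IsQIInv} almost verbatim, replacing ``hyperbolic space'' by ``quasi-tree'' and ``the diagonal action is proper'' by ``the orbit map of the diagonal action is a quasi-isometric embedding''. The one genuinely new ingredient is a preliminary reduction: in property (PH') the factor actions are cobounded by definition, whereas property (QT') only asks that the orbit map into the $\ell^1$-product be a quasi-isometric embedding, so the individual factor actions need not be cobounded and Lemma~\ref{Lem: QHToQA} cannot be applied directly.

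So first I would reduce to the case where $G'$ acts coboundedly on each quasi-tree factor. Let $G,G'$ be finitely generated and quasi-isomorphic, and suppose $G'$ has property (QT') witnessed by quasi-trees $X_1,\dots,X_n$, a diagonal action of $G'$ on the $\ell^1$-product $\prod_{i=1}^n X_i$, and a basepoint $\bar o=(o_1,\dots,o_n)$ whose orbit map $G'\to\prod_i X_i$ is a quasi-isometric embedding. For each $i$ the orbit $G'o_i\subseteq X_i$ is a quasi-isometrically embedded copy of $(G',d_S)$; since a Cayley graph is geodesic, any two points of $G'o_i$ are joined \emph{inside} $G'o_i$ by a uniform quasigeodesic of $X_i$, so by stability of quasigeodesics in hyperbolic spaces every geodesic of $X_i$ between two points of $G'o_i$ lies in a bounded neighbourhood of $G'o_i$. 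Hence $G'o_i$ is coarsely dense in its quasi-convex hull $Q_i$, on which $G'$ acts coboundedly, and $Q_i$, being a quasi-convex subspace of a quasi-tree, is itself (after passing to a connected graph model) a quasi-tree. Replacing each $X_i$ by $Q_i$ leaves the diagonal orbit map a quasi-isometric embedding, so I may assume $G'\curvearrowright X_i$ is cobounded for every $i$.

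Next, let $\phi\colon G\to G'$ be a quasi-isomorphism; by Lemma~\ref{Lem: QIToQI} it is a quasi-isometry, in particular coarsely surjective. By Lemma~\ref{Lem: QHToQA}, $\phi$ induces a cobounded quasi-action of $G$ on each $X_i$ via $(g,x)\mapsto\phi(g)x$, and by Proposition~\ref{Prop: QAToA} this quasi-action is quasi-conjugate, via a coarsely $G$-equivariant quasi-isometry $\sigma_i$, to an isometric and still cobounded action of $G$ on a connected graph $X_i'$ quasi-isometric to $X_i$, hence to a tree; thus each $X_i'$ is a quasi-tree. Letting $G$ act diagonally on $\prod_{i=1}^n X_i'$ with the $\ell^1$-metric and setting $\bar o'=(\sigma_1(o_1),\dots,\sigma_n(o_n))$, coarse equivariance of the $\sigma_i$ gives $g\cdot\sigma_i(o_i)\approx\sigma_i(\phi(g)o_i)$ up to a uniform error, so for all $g,h\in G$
\[
d_{\prod_i X_i'}(g\bar o',h\bar o')\;\asymp\;\sum_i d_{X_i}(\phi(g)o_i,\phi(h)o_i)\;=\;d_{\prod_i X_i}(\phi(g)\bar o,\phi(h)\bar o)\;\asymp\;d_{G'}(\phi(g),\phi(h))\;\asymp\;d_G(g,h),
\]
using in order that the $\sigma_i$ are quasi-isometries, that the orbit map of $G'$ is a quasi-isometric embedding, and Lemma~\ref{Lem: QIToQI}. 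Hence the orbit map of the diagonal action of $G$ on the finite product of quasi-trees $\prod_i X_i'$ is a quasi-isometric embedding, i.e.\ $G$ has property (QT'); since quasi-isomorphism is a symmetric relation (Lemma~\ref{Lem: EquiRelation}), this proves invariance.

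I expect the main obstacle to be Step~1 rather than Step~2--3: one must carefully pass from the given (QT')-data of $G'$, where the factor actions need not be cobounded, to cobounded factor actions so that Lemma~\ref{Lem: QHToQA} applies, all while keeping the diagonal orbit map a quasi-isometric embedding and not losing the quasi-tree property (which has to be checked to survive both the quasi-convex-hull reduction and the graph model produced by Proposition~\ref{Prop: QAToA}). This is more bookkeeping than substance; once it is in place, the remainder is literally the proof of Lemma~\ref{Lem: PH'IsQIInv} with ``proper'' replaced by ``quasi-isometric embedding''.
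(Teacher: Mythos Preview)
Your overall strategy matches the paper's: reduce to cobounded factor actions, then mimic the proof of Lemma~\ref{Lem: PH'IsQIInv}. Steps~2--3 are fine. The paper handles Step~1 in one line by citing \cite[Remark~3.2]{Man06}, which says exactly that an isometric action of a finitely generated group on a quasi-tree can be replaced by a cobounded one.

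Your own justification of Step~1, however, contains a genuine error. You assert that ``for each $i$ the orbit $G'o_i\subseteq X_i$ is a quasi-isometrically embedded copy of $(G',d_S)$''. This is false: the orbit map into the \emph{product} is a quasi-isometric embedding, but the projections to individual factors are merely Lipschitz. (Think of $G'=\Z^2$ acting on $\R\times\R$ via the two coordinate characters: each factor orbit map has an infinite kernel.) Consequently the image of a Cayley-graph geodesic in $X_i$ is only a Lipschitz path, not a uniform quasigeodesic, so you cannot invoke stability of quasigeodesics to conclude quasi-convexity of the orbit.

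The conclusion you want is nonetheless true, but for a reason special to quasi-trees: Manning's bottleneck characterisation says that in a quasi-tree \emph{every} path between two points (quasigeodesic or not) passes uniformly close to each point of the geodesic between them. Since $G'$ is finitely generated, the orbit $G'o_i$ is coarsely connected, so this bottleneck property forces the geodesic $[go_i,ho_i]$ to lie in a bounded neighbourhood of $G'o_i$, and you can then pass to a cobounded quasi-tree as you intended. Replacing your Morse-lemma appeal by this bottleneck argument (or simply citing \cite[Remark~3.2]{Man06} as the paper does) closes the gap.
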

\begin{proof}
    By \cite[Remark 3.2]{Man06}, any isometric group action of a finitely generated group on a quasi-tree can be assumed to be cobounded. The remaining proof is similar to the proof of Lemma \ref{Lem: PH'IsQIInv}.
\end{proof}

In \cite{But22}, Button showed that property (QT) is not a quasi-isometry invariant. Since quasi-isomorphisms are stronger equivalence relations than quasi-isometries for finitely generated groups, we can ask the following question.
\begin{question}
    Is property (QT) a quasi-isomorphism invariant for finitely generated groups?
\end{question}

By combining Lemmas \ref{Lem: AC}, \ref{Lem: PH'IsQIInv}, \ref{Lem: QT'IsQIInv} and Corollary \ref{Cor: QFAIsQIInv}, we obtain Proposition \ref{IntroProp: QIInv}.

\begin{proposition}\label{Prop: QT'Stability}
    Let $1\to H\to G\to Q\to 1$ be a short exact sequence of finitely generated groups which is left quasi-split. Then $G$ has property (QT') if and only if  both $H$ and $Q$ have property (QT').
\end{proposition}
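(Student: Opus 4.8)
The plan is to follow the same template as the proof of Proposition \ref{Prop: PH'Stability}, simply replacing property (PH') by property (QT') throughout. First I would invoke Theorem \ref{Thm: LQS}: since the short exact sequence $1\to H\to G\to Q\to 1$ is left quasi-split, $H$ is a quasi-retract of $G$ and there is a strict quasi-isomorphism $\phi: G\to H\times Q$ fitting into the commutative diagram of that theorem. Before applying the invariance result I need all the relevant groups to be finitely generated: $G$ is finitely generated by hypothesis, $Q\cong G/H$ is a quotient of $G$ hence finitely generated, and $H$ is finitely generated because it is a quasi-retract of the finitely generated group $G$ (Lemma \ref{Lem: QRProperties}\,(\ref{it3})); consequently $H\times Q$ is finitely generated as well.

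Next I would apply Lemma \ref{Lem: QT'IsQIInv}, which states that property (QT') is a quasi-isomorphism invariant among finitely generated groups. Applied to the strict quasi-isomorphism $\phi: G\to H\times Q$, this gives that $G$ has property (QT') if and only if $H\times Q$ has property (QT'). Finally I would apply Lemma \ref{Lem: DirPro}\,(\ref{6.4.3}) to the direct product $H\times Q$ (which is finitely generated by the previous paragraph): it has property (QT') if and only if both of its factors $H$ and $Q$ have property (QT'). Chaining these two equivalences yields the claim.

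This is essentially a bookkeeping argument assembled from results already proved in the excerpt, so I do not expect any genuine obstacle. The one point requiring a moment of care is the finite generation of $H$, which is needed so that Lemma \ref{Lem: QT'IsQIInv} legitimately applies to the pair $(G,\,H\times Q)$ and so that Lemma \ref{Lem: DirPro}\,(\ref{6.4.3}) applies to the direct product; this is precisely where the quasi-retract hypothesis feeds in, via Lemma \ref{Lem: QRProperties}. Beyond that, the proof is a direct transcription of the argument for Proposition \ref{Prop: PH'Stability} (and of Proposition \ref{Prop: QFAStability}), so it can be stated in two short sentences once the setup is in place.
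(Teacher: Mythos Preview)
Your proposal is correct and follows essentially the same approach as the paper: invoke Theorem \ref{Thm: LQS} to obtain a strict quasi-isomorphism $G\to H\times Q$, apply Lemma \ref{Lem: QT'IsQIInv}, and conclude with Lemma \ref{Lem: DirPro}\,(\ref{6.4.3}). Your additional remark on finite generation of $H$ via Lemma \ref{Lem: QRProperties}\,(\ref{it3}) is a nice point of care (the paper's hypothesis already assumes all three groups are finitely generated, so this is not strictly needed, but it does no harm).
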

\begin{proof}
    By Theorem \ref{Thm: LQS}, $G$ is strictly quasi-isomorphic to $H\times Q$. As a result of Lemma \ref{Lem: QT'IsQIInv}, $G$ has property (QT') if and only if  $H\times Q$ has property (QT'). Then the conclusion follows from Lemma \ref{Lem: DirPro} (\ref{6.4.3}).
\end{proof}

To obtain the third item of Theorem \ref{IntroThm: PH&QTStability}, we need the following intermediate result.

In \cite{NW22}, Nguyen-Wang gave a geometric characterization for FZ groups via a notion of curvature for finitely generated groups. Here, we collect and give various equivalent algebraic characterizations for FZ groups, one of which is via quasi-retracts.

\begin{proposition}\label{Prop: ElementaryGp}
    Let $G$ be a finitely generated group. The following are equivalent:
    \begin{enumerate}
        \item\label{i1} $G$ is FZ, i.e. $[G:Z(G)]<\infty$.
        \item\label{i2} $[G,G]$ is finite.
        \item\label{i3} $\C(G,G)$ is finite.
        \item\label{i4} $G$ is FC, i.e. each $g\in G$ has finitely many conjugates.
        \item\label{i5} $G$ is finite-by-$\Z^m$.
        \item\label{i6} $G$ is finite-by-abelian.
        \item\label{i7} $G$ is virtually solvable and each of its subgroups is a quasi-retract.
        \item\label{i8} $G$ has property (PH') from orientable lineal structures, i.e. there exist finitely many quasi-lines $L_1,\cdots,L_n$ on which $G$ acts orientable lineally such that the diagonal action of $G$ on $\prod_{i=1}^nL_i$ equipped with $\ell^1$-metric is proper.
    \end{enumerate}
\end{proposition}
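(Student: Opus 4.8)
The plan is to close the equivalences in two layers: first prove that (1)--(6) are equivalent by classical means, then attach (7) and (8) to that block by proving $(1)\Rightarrow(7)\Rightarrow(2)$ and $(1)\Rightarrow(8)\Rightarrow(3)$. For the classical core, $(1)\Rightarrow(2)$ is Schur's theorem; $(2)\Rightarrow(3)$ is immediate from $\C(G,G)\subseteq[G,G]$; and $(3)\Rightarrow(4)$ holds because each conjugate $xgx^{-1}=[x,g]g$ lies in the coset $\C(G,G)\cdot g$, so $g$ has only finitely many conjugates. For $(4)\Rightarrow(1)$, fix a finite generating set $S$: each $Z_G(s)$ is the stabiliser of the finite conjugacy class of $s$, hence of finite index, and $Z(G)=\bigcap_{s\in S}Z_G(s)$ is a finite intersection of finite-index subgroups. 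For $(1)\Rightarrow(5)$, Schur gives $[G,G]$ finite, $G/[G,G]\cong\Z^m\oplus T$ with $T$ finite, and the preimage $K$ of $T$ is a finite normal subgroup with $G/K\cong\Z^m$; $(5)\Rightarrow(6)$ is trivial, and $(6)\Rightarrow(2)$ holds since a finite-by-abelian group has $[G,G]$ contained in the given finite normal subgroup. Thus $(1)\Leftrightarrow\cdots\Leftrightarrow(6)$.

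To attach (8): for $(1)\Rightarrow(8)$, use $(5)$ to obtain $\pi\colon G\twoheadrightarrow\Z^m$ with finite kernel $K$, and let $G$ act on $\R$ through each of the $m$ coordinate-translation actions of $\Z^m$; these are cobounded, orientable and lineal, and the diagonal action on $\R^m$ with the $\ell^1$ metric has orbit map $g\mapsto\pi(g)$, so the preimage of any bounded set is a finite union of cosets of $K$, hence finite --- the action is proper. For $(8)\Rightarrow(3)$, each $L_i$ is a quasi-line (Fact \ref{Fact: HypGeo}), and a cobounded orientable lineal action determines a signed stable translation length $\tau_i\colon G\to\R$ which is a homogeneous quasimorphism (part of the theory of lineal hyperbolic structures, \cite{ABO19}) with $|\tau_i(g)|\le d(o_i,go_i)$. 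Properness of the diagonal action then forces $\{g: \sum_i|\tau_i(g)|\le M\}$ to be finite for every $M$; since a homogeneous quasimorphism is bounded by its defect on every commutator, $\C(G,G)\subseteq\{g:|\tau_i(g)|\le D(\tau_i)\text{ for all }i\}$ is finite, which is $(3)$.

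To attach (7): for $(1)\Rightarrow(7)$, by $(5)$ the group $G$ is finite-by-$\Z^m$, hence virtually abelian (so virtually solvable) and has property (PH); for any subgroup $H\le G$, the subgroup $H\cap Z(G)$ is central in $G$ and of finite index in $H$ because $[G:Z(G)]<\infty$, so it is a quasi-retract of $G$ by Lemma \ref{Lem: ExaOfQR}(\ref{5.4.5}), and therefore $H$ is a quasi-retract of $G$ by Lemma \ref{Lem: FinInd}. For $(7)\Rightarrow(2)$: since $G$ is finitely generated and each of its subgroups is a quasi-retract, Lemma \ref{Lem: QRProperties}(\ref{it3}) shows every subgroup of $G$ is finitely generated; in particular $[G,G]$ is finitely generated, and $G$ is amenable (being virtually solvable). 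Suppose $[G,G]$ is infinite; a finitely generated virtually solvable torsion group is finite, so $[G,G]$ contains an element $g$ of infinite order, and $\langle g\rangle$ is an infinite cyclic quasi-retract of $G$. By Lemma \ref{Lem: UnbddQM} (and its proof) $G$ then admits an unbounded quasimorphism $\phi$ with $\phi(g^n)=n+O(1)$, so its homogenisation $\bar\phi$ has $\bar\phi(g)=1\neq0$; but amenability of $G$ forces $\bar\phi$ to be a homomorphism $G\to\R$, which kills $[G,G]\ni g$ --- a contradiction. Hence $[G,G]$ is finite, i.e. $(2)$. Combining the three layers gives the equivalence of (1)--(8).

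The main obstacle is $(7)\Rightarrow(2)$: the hypothesis "each subgroup is a quasi-retract" must be exploited twice --- once through Lemma \ref{Lem: QRProperties}(\ref{it3}) to force every subgroup, in particular $[G,G]$, to be finitely generated, and once on the infinite cyclic subgroup generated by an infinite-order element of $[G,G]$ --- and one then has to marshal two standard structural facts (a finitely generated virtually solvable torsion group is finite; a homogeneous quasimorphism on an amenable group is a homomorphism). A secondary point needing care is the input in $(8)\Rightarrow(3)$, namely that the signed stable translation length of a cobounded orientable lineal action on a quasi-line is a homogeneous quasimorphism bounding $d(o,go)$ from below; this is to be extracted from the classification of lineal hyperbolic structures in \cite{ABO19}.
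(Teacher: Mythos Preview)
Your proof is correct overall, but it departs from the paper in the two nonclassical implications, and there is one inequality stated in the wrong direction.

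For $(7)\Rightarrow(2)$ you locate an infinite-order element $g\in[G,G]$, use the quasi-retraction onto $\langle g\rangle$ to produce a homogeneous quasimorphism with $\bar\phi(g)=1$, and invoke amenability of virtually solvable groups to force $\bar\phi$ to be a homomorphism, contradicting $g\in[G,G]$. This works (and your use of Lemma~\ref{Lem: QRProperties}(\ref{it3}) to get $[G,G]$ finitely generated, together with the elementary fact that f.g.\ solvable torsion groups are finite, is exactly what is needed). The paper instead proves $(7)\Rightarrow(3)$ structurally: it takes a finite-index normal solvable $A\lhd G$, applies Theorem~\ref{Thm: LQS} to the quasi-retractions $A_{i-1}\to A_i=[A_{i-1},A_{i-1}]$ along the derived series, and concludes that $G$ is strictly quasi-isomorphic to $G/A\times\prod_i A_{i-1}/A_i$, which is almost commutative, whence $\C(G,G)$ is finite by Lemma~\ref{Lem: AC}. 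Your route is shorter and avoids Theorem~\ref{Thm: LQS}, at the cost of importing the bounded-cohomology fact that homogeneous quasimorphisms on amenable groups are homomorphisms; the paper's route stays entirely inside its own machinery and illustrates the quasi-split theorem.

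For $(8)\Rightarrow(3)$ your idea is right, but the inequality you record, $|\tau_i(g)|\le d(o_i,go_i)$, is the wrong one for the argument: from properness you know $\{g:\sum_i d(o_i,go_i)\le M\}$ is finite, and to conclude that $\{g:\sum_i|\tau_i(g)|\le M\}$ is finite you need the \emph{reverse} coarse inequality $d(o_i,go_i)\le |\tau_i(g)|+C_i$. This does hold for cobounded orientable lineal actions (via the Busemann quasimorphism, which coarsely equals displacement), and that is what \cite{ABO19} actually gives you; just flip the stated direction. The paper, by contrast, does not argue $(8)\Rightarrow(3)$ at all but cites \cite[Lemma~3.1]{TW25} for $(8)\Rightarrow(1)$ directly.
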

\begin{proof}
    $(\ref{i1})\Leftrightarrow (\ref{i2})$ is given by \cite[Theorem 5.3, Corollary 5.41]{Neu51}. $(\ref{i2})\Rightarrow (\ref{i3})\Rightarrow (\ref{i4})\Rightarrow (\ref{i5})\Rightarrow (\ref{i6})\Rightarrow (\ref{i2})$ are all obvious except that $(\ref{i4})\Rightarrow (\ref{i5})$ is given by \cite[Theorem 5.1]{Neu51}. $(\ref{i5})\Rightarrow (\ref{i8})$ is obvious and $(\ref{i8})\Rightarrow (\ref{i1})$ follows from \cite[Lemma 3.1]{TW25}. It remains to show that $(\ref{i1})\Rightarrow (\ref{i7})$ and $(\ref{i7})\Rightarrow (\ref{i3})$.

    $(\ref{i1})\Rightarrow (\ref{i7})$: Let $Z=Z(G)$ and $H$ be an arbitrary subgroup of $G$. By assumption, $[G:Z]<\infty$. Then $Z$ is a finitely generated abelian group. By Lemma \ref{Lem: ExaOfQR} (\ref{5.4.6}), $H\cap Z$ is a quasi-retract of $Z$. Since $Z$ commutes with $G$, Theorem \ref{Thm: LQS} shows that $Z$ is a quasi-retract of $G$. Thus, as a composition, $H\cap Z$ is a normal quasi-retract of $G$. Note that $H/H\cap Z\cong HZ/Z$. This implies that $[H:H\cap Z]<\infty$. By Lemma \ref{Lem: FinInd}, one gets that $H$ is a quasi-retract of $G$.
    
    $(\ref{i7})\Rightarrow (\ref{i3})$: Since $G$ is virtually solvable, we can choose a finite-index, normal and solvable subgroup $A$ of $G$.  By assumption, $A$ is a quasi-retract of $G$. As a result of Theorem \ref{Thm: LQS}, $G$ is strictly quasi-isomorphic to $A\times G/A$.   

    Denote $A_0=A$ and $A_i=[A_{i-1},A_{i-1}]$ for $i\ge 1$. Since $A$ is solvable, there exists $n\in \N$ such that $A_n=1$. Note that $A_i\lhd A_{i-1}\le G$. By composing the embedding map $A_{i-1}\to G$ with the quasi-retraction $G\to A_i$, one gets that each $A_i$ is a quasi-retract of $A_{i-1}$ for $1\le i\le n$. As a result of Theorem \ref{Thm: LQS}, $A_{i-1}$ is strictly quasi-isomorphic to $A_i\times A_{i-1}/A_i$. Since strict quasi-isomorphism is an equivalent relation on discrete groups (cf. Lemma \ref{Lem: EquiRelation}), one gets that $G$ is strictly quasi-isomorphic to $G/A\times \prod_{i=1}^nA_{i-1}/A_i$ which is almost commutative. By Lemma \ref{Lem: AC}, $\C(G,G)$ is also finite.
\end{proof}
\begin{remark}
    Together with \cite[Lemma 3.1]{TW25}, Proposition \ref{Prop: ElementaryGp} (\ref{i8}) also gives the (QT') property of finitely generated FZ groups.
\end{remark}

As an immediate corollary of Lemma \ref{Lem: AC} and Proposition \ref{Prop: ElementaryGp}, we have

\begin{corollary}
    Being FZ is a quasi-isomorphism invariant for finitely generated groups.
\end{corollary}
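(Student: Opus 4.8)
The plan is to reduce the statement to the two facts that have just been established: that almost commutativity is a quasi-isomorphism invariant of discrete groups (Lemma~\ref{Lem: AC}), and that for a finitely generated group being FZ is equivalent to almost commutativity, which is precisely the equivalence $(\ref{i1})\Leftrightarrow(\ref{i3})$ of Proposition~\ref{Prop: ElementaryGp}. So no new geometric input is needed; the work has been front-loaded into those two results.

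Concretely, I would fix finitely generated groups $G$ and $G'$ with a quasi-isomorphism $\phi\colon G\to G'$, and suppose $G$ is FZ. Applying Proposition~\ref{Prop: ElementaryGp} to $G$ via $(\ref{i1})\Rightarrow(\ref{i3})$ shows that $\C(G,G)$ is finite, i.e. $G$ is almost commutative. Then I would invoke Lemma~\ref{Lem: AC}: almost commutativity is preserved along quasi-isomorphisms, so $\C(G',G')$ is finite as well. Finally, since $G'$ is finitely generated, Proposition~\ref{Prop: ElementaryGp} applied in the direction $(\ref{i3})\Rightarrow(\ref{i1})$ to $G'$ yields that $G'$ is FZ. Because quasi-isomorphism is an equivalence relation on discrete groups (Lemma~\ref{Lem: EquiRelation}), in particular symmetric, this single implication suffices to conclude that $G$ is FZ if and only if $G'$ is, which is the assertion.

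As for the main obstacle: there genuinely is not one here, since the corollary is an immediate consequence of the cited results. The only point requiring a moment's attention is that the characterization of FZ groups in Proposition~\ref{Prop: ElementaryGp} is stated for finitely generated groups, so one must use the hypothesis that \emph{both} $G$ and $G'$ are finitely generated — which is part of the statement — rather than attempting to transport finite generation across $\phi$ (although, by Lemma~\ref{Lem: QIToQI}, a quasi-isomorphism between finitely generated groups is in fact a quasi-isometry, so this causes no friction in the intended setting).
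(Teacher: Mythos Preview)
Your proposal is correct and follows exactly the paper's approach: the corollary is stated as an immediate consequence of Lemma~\ref{Lem: AC} and Proposition~\ref{Prop: ElementaryGp}, which is precisely the reduction you spell out.
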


\begin{proposition}\label{Prop: PH&QTStability}
    Let $1\to H\to G\xrightarrow{\pi} Q\to 1$ be a short exact sequence of finitely generated groups which is left quasi-split. Suppose that $H$ is FZ. Then $G$ has property (PH) (resp. (QT)) if and only if $Q$ has property (PH) (resp. (QT)).
\end{proposition}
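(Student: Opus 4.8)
The idea is to reduce everything, via Theorem~\ref{Thm: LQS}, to the quasi-isomorphism invariance of properties (PH') and (QT'), handling the ``virtually'' in (PH)/(QT) by passing to suitable finite-index subgroups. Write $P$ for (PH) (resp.\ (QT)) and $P'$ for (PH') (resp.\ (QT')). Throughout I fix a quasi-retraction $r\colon G\to H$ with $r|_H=\mathrm{Id}_H$, using Lemma~\ref{Lem: EquiDef}. The key preliminary observation is that $H$, being FZ, satisfies property $P'$: for (PH') this is immediate from Proposition~\ref{Prop: ElementaryGp}~(\ref{i5})$\Rightarrow$(\ref{i8}) (or just the easy direction (\ref{i5})$\Rightarrow$(\ref{i8})), and for (QT') this is the Remark following Proposition~\ref{Prop: ElementaryGp}. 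Moreover, again because $H$ is FZ, Proposition~\ref{Prop: ElementaryGp}~(\ref{i1})$\Leftrightarrow$(\ref{i7}) tells us that \emph{every} subgroup of $H$ is a quasi-retract of $H$; I will use this for finite-index subgroups of $H$.

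\textbf{($\Leftarrow$)} Suppose $Q$ has property $P$, and pick a finite-index subgroup $Q_0\le Q$ with property $P'$. Set $G_0:=\pi^{-1}(Q_0)$, so $[G:G_0]=[Q:Q_0]<\infty$ and $1\to H\to G_0\xrightarrow{\pi|_{G_0}}Q_0\to 1$ is exact (note $H\subseteq G_0$). Since $H\le G_0$, the restriction $r|_{G_0}\colon G_0\to H$ is still a quasi-retraction, so this sequence is left quasi-split and, by Theorem~\ref{Thm: LQS}, $G_0$ is strictly quasi-isomorphic to $H\times Q_0$. As $H$ has property $P'$ and $Q_0$ has property $P'$, Lemma~\ref{Lem: DirPro}~(\ref{6.4.1}) (resp.\ (\ref{6.4.3})) gives that $H\times Q_0$ has property $P'$; by Lemma~\ref{Lem: PH'IsQIInv} (resp.\ Lemma~\ref{Lem: QT'IsQIInv}) so does $G_0$, whence $G$ has property $P$.

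\textbf{($\Rightarrow$)} Suppose $G$ has property $P$, and pick a finite-index subgroup $G_0\le G$ with property $P'$. Put $H_0:=H\cap G_0$ and $Q_0:=\pi(G_0)$; these are finite index in $H$ and $Q$ respectively, and $1\to H_0\to G_0\xrightarrow{\pi|_{G_0}}Q_0\to 1$ is exact. By the observation above, $H_0$ is a quasi-retract of $H$, so there is a quasi-retraction $q\colon H\to H_0$ with $q|_{H_0}=\mathrm{Id}_{H_0}$; then $q\circ r|_{G_0}\colon G_0\to H_0$ is a composition of quasi-homomorphisms restricting to the identity on $H_0$, i.e.\ a quasi-retraction. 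Hence the sequence is left quasi-split and, by Theorem~\ref{Thm: LQS}, $G_0$ is strictly quasi-isomorphic to $H_0\times Q_0$. Since $G_0$ has property $P'$, quasi-isomorphism invariance (Lemma~\ref{Lem: PH'IsQIInv}, resp.\ Lemma~\ref{Lem: QT'IsQIInv}) gives that $H_0\times Q_0$ has property $P'$, and then Lemma~\ref{Lem: DirPro}~(\ref{6.4.1}) (resp.\ (\ref{6.4.3})) forces $Q_0$ to have property $P'$, so $Q$ has property $P$.

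\textbf{Main obstacle.} The only real subtlety is the interaction of the ``virtually'' in (PH)/(QT) with the left-quasi-split hypothesis: I must check that the quasi-split structure descends to the relevant finite-index subgroups in \emph{both} directions — pulling back a finite-index subgroup of $Q$ keeps all of $H$ as a quasi-retract (easy), but when we intersect a finite-index subgroup $G_0\le G$ with $H$ we are forced to work with $H_0=H\cap G_0$ rather than $H$, and it is precisely the FZ hypothesis (through Proposition~\ref{Prop: ElementaryGp}) that guarantees $H_0$ is again a quasi-retract, first of $H$ and then, by composition, of $G_0$. Everything else is a bookkeeping assembly of results already in the paper.
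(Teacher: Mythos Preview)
Your proof is correct and follows essentially the same strategy as the paper: pass to finite-index subgroups to turn (PH)/(QT) into (PH')/(QT'), verify that the resulting short exact sequence is still left quasi-split, and then invoke the strict quasi-isomorphism $G_0\simeq H_0\times Q_0$ together with quasi-isomorphism invariance and the direct product lemma. The only cosmetic differences are that the paper packages the last step as a direct citation of Propositions~\ref{Prop: PH'Stability}/\ref{Prop: QT'Stability} rather than unpacking Theorem~\ref{Thm: LQS} plus Lemmas~\ref{Lem: PH'IsQIInv}/\ref{Lem: QT'IsQIInv}/\ref{Lem: DirPro}, and it deduces that $H_0$ is a quasi-retract of $H$ via $\C(H,H)$ finite and Proposition~\ref{Prop: FinInd} rather than via item~(\ref{i7}) of Proposition~\ref{Prop: ElementaryGp}.
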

\begin{proof}
    At first, we show that ``$G$ has property (PH) $\Leftrightarrow$ $Q$ has property (PH)''.
    
    ``$\Rightarrow$'': Let $G'$ be a finite-index subgroup of $G$ such that $G'$ has property (PH'). Denote $H'=H\cap G'$ and $Q'=\pi(G')$. By the isomorphism theorem of groups, $G'/(H\cap G')\cong HG'/H=\pi(G')$. Thus, we obtain a new short exact sequence of groups as follows:
    \begin{equation}\label{Equ: FinIndSES}
        1\to H'\to G'\xrightarrow{\pi|_{G'}} Q'\to 1.
    \end{equation}
    $$$$
    
    \begin{claim}
        The short exact sequence (\ref{Equ: FinIndSES}) is left quasi-split.
    \end{claim}
    \begin{proof}[Proof of Claim]
        Since $H$ is FZ, Proposition \ref{Prop: ElementaryGp} shows that $\C(H,H)$ is finite. Note that $[H:H']\le [G:G']<\infty$. As a result of Proposition \ref{Prop: FinInd}, $H'$ is a quasi-retract of $H$. Let $r_1: G\to H$ and $r_2: H\to H'$ be two quasi-retractions. Then the composite map $r=r_2\circ r_1|_{G'}: G'\to H'$ is a quasi-homomorphism and left-inverse to the inclusion map $H'\to G'$. This shows that $H'$ is a quasi-retract of $G'$ and the conclusion follows.
    \end{proof}
    
    By the above Claim, together with Proposition \ref{Prop: PH'Stability}, one gets that $Q'$ has property (PH'). Since $[Q:Q']\le [G:G']<\infty$, $Q$ has property (PH).

    ``$\Leftarrow$'': Let $Q'$ be a finite-index subgroup of $Q$ such that $Q'$ has property (PH'). Denote $G'=\pi^{-1}(Q')$. Thus, we obtain a new short exact sequence of groups as follows:
    $$1\to H\to G'\xrightarrow{\pi|_{G'}} Q'\to 1.$$
    By Proposition \ref{Prop: ElementaryGp}, $H$ has property (PH'). As a result of Proposition \ref{Prop: PH'Stability}, $G'$ has property (PH'). Since $[G:G']=[Q:Q']<\infty$, $G$ has property (PH).

    By substituting Proposition \ref{Prop: QT'Stability} for Proposition \ref{Prop: PH'Stability}, the same arguments show that $G$ has property (QT) if and only if $Q$ has property (QT). 
\end{proof}

By combining Propositions \ref{Prop: QFAStability}, \ref{Prop: PH'Stability}, \ref{Prop: QT'Stability} and \ref{Prop: PH&QTStability}, we obtain Theorem \ref{IntroThm: PH&QTStability}. Note that Proposition \ref{Prop: PH&QTStability} requires an additional assumption on $H$. If the answer to the following question is yes, then we can remove this assumption.

\begin{question}
    Let $G$ be a group and $G'\le G$ be a finite-index subgroup. Suppose that $H$ is a normal quasi-retract of $G$. Is $H\cap G'$ necessarily a normal quasi-retract of $G'$?
\end{question}

\begin{remark}
    If one drops the word ``normal'' in the above question, then the answer is negative. In \cite[Theorem A]{STZ22}, Snopce-Tanushevski-Zalesskii proved that for a free group $\F_n$, there exists a finite-index subgroup $H$ and a retract $R$ of $\F_n$ such that $H\cap R$ is not a retract of $H$. 
\end{remark}

In the end of this subsection, we prove Proposition \ref{IntroProp: HypStr}. The proof will be divided into the next two propositions.

\begin{proposition}\label{Prop: EmbHypStr}
    Let $\phi: G\to H$ be a coarsely surjective quasi-homomorphism. Then $\H(H)$ embeds in $\H(G)$ as a sub-poset.
\end{proposition}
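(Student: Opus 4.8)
The plan is to realise $\H(G)$ concretely and to transport $\H(H)$ into it by taking $\phi$-preimages of metric balls. Fix a representative $S$ of a class $[S]\in\H(H)$, so $\G(H,S)$ is hyperbolic, and for $h\in H$ write $|h|_S:=d_S(1_H,h)$ and $\bar B_S(R):=\{h\in H:|h|_S\le R\}$. Let $d$ be the given proper left-invariant metric on $H$. Since $\phi$ is coarsely surjective, $H=\mathcal N_{R_0}(\phi(G))$ for some $R_0$, and because $d$ is proper the ``correction set'' $A:=\{x\in H:|x|_d\le R_0\}$ is \emph{finite}; consequently every $h\in H$ can be written $h=\phi(g)a$ with $g\in G$, $a\in A$, and moreover $\phi(G)$ is coarsely dense in $\G(H,S)$ as well (a $d$-bounded set has bounded $d_S$-diameter, as $A$ is finite). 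For $R\ge 1$ put $T_R:=\phi^{-1}(\bar B_S(R))\subseteq G$. I will show that for all $R$ large enough $T_R$ generates $G$, $\G(G,T_R)$ is hyperbolic, and $[T_R]$ is independent of $R$ (large) and of the representative $S$; this defines a map $\Phi\colon\H(H)\to\H(G)$, $\Phi([S]):=[T_R]$.

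The technical heart is the two-sided comparison
$d_{T_R}(g,g')\asymp d_S(\phi(g),\phi(g'))$
valid up to a uniform multiplicative and (harmless, since word metrics of distinct points are $\ge 1$) additive constant. For the upper bound I use a Švarc--Milnor-type geodesic-chasing argument: an $S$-geodesic from $\phi(g)$ to $\phi(g')$ passes within a fixed distance of orbit points $\phi(g_0),\dots,\phi(g_n)$ with consecutive $S$-distances bounded; since $\phi(g_j^{-1}g_{j+1})$ and $\phi(g_j)^{-1}\phi(g_{j+1})$ differ by an element of a fixed finite subset of $H$ (Lemma \ref{Lem: inverse}), each $g_j^{-1}g_{j+1}$ lies in $T_R$ once $R$ exceeds that fixed bound, and telescoping gives $|g^{-1}g'|_{T_R}\lesssim|\phi(g)^{-1}\phi(g')|_S+O(1)$; this also shows $T_R$ generates $G$. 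For the lower bound, a $T_R$-word $t_1\cdots t_k$ for $g^{-1}g'$ yields $\phi(g^{-1}g')\sim_{D(\phi)^{k-1}}\phi(t_1)\cdots\phi(t_k)$ (Lemma \ref{Lem: QHProperties}(\ref{def})), hence $|\phi(g^{-1}g')|_S\le k(R+\delta)$ with $\delta$ bounding the $S$-lengths of $D(\phi)$, i.e.\ $d_{T_R}(g,g')\gtrsim d_S(\phi(g),\phi(g'))-O(1)$. Thus the orbit map $g\mapsto\phi(g)$ is a quasi-isometric embedding of $\G(G,T_R)$ into the hyperbolic space $\G(H,S)$ which is coarsely onto, so $\G(G,T_R)$ is hyperbolic; and comparing these estimates for two thresholds, or for bi-Lipschitz-equivalent choices of $S$, shows $[T_R]$ is well defined, so $\Phi$ is well defined.

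Finally, I check $\Phi$ is an order embedding, using that for generating sets $[A_1]\preccurlyeq[A_2]$ unwinds to $\sup_{a\in A_2}|a|_{A_1}<\infty$, equivalently $d_{A_1}\le L\,d_{A_2}$. If $[S_1]\preccurlyeq[S_2]$, i.e.\ $d_{S_1}\le L\,d_{S_2}$, then $d_{T_{R_1}}(g,g')\asymp d_{S_1}(\phi(g),\phi(g'))\le L\,d_{S_2}(\phi(g),\phi(g'))\asymp L\,d_{T_{R_2}}(g,g')$, so $\Phi([S_1])\preccurlyeq\Phi([S_2])$. Conversely, if $\Phi([S_1])\preccurlyeq\Phi([S_2])$, set $N:=\sup_{g\in T_{R_2}}|g|_{T_{R_1}}<\infty$; given $s\in S_2$ choose $g\in G$ with $\phi(g)=sa^{-1}$, $a\in A$, so $|\phi(g)|_{S_2}\le 1+\max_{a\in A}|a|_{S_2}$, and \emph{choosing $R_2$ at least this large from the start} we get $g\in T_{R_2}$, whence $|\phi(g)|_{S_1}\lesssim|g|_{T_{R_1}}\le N$ and $|s|_{S_1}\le|\phi(g)|_{S_1}+\max_{a\in A}|a|_{S_1}$ is bounded independently of $s$; thus $[S_1]\preccurlyeq[S_2]$. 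Taking $S_1=S_2$ in these two displays also gives injectivity of $\Phi$ (alternatively this follows from the ``moreover'' part of Lemma \ref{IntroLem: QHToQA}). The main obstacle is exactly this constant-bookkeeping: the threshold $R$ defining $T_R$ must be taken large enough simultaneously for the geodesic-chasing bound and so that coarsely-surjective $\phi$-preimages of unit $S$-balls still land in $T_R$ — it is the finiteness of $A$, i.e.\ properness of the metric on $H$, that makes such a uniform choice possible.
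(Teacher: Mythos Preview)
Your proof is correct and takes a genuinely different route from the paper's. The paper constructs the map $\chi\colon\H(H)\to\H(G)$ by pushing the cobounded $H$-action on $\G(H,T)$ through $\phi$ to obtain a cobounded quasi-action of $G$ (Lemma~\ref{Lem: QHToQA}), then invokes Manning's straightening result (Proposition~\ref{Prop: QAToA}) to replace the quasi-action by an honest action on a quasi-isometric graph, and finally appeals to the Svarc--Milnor lemma to read off a generating set of $G$. Well-definedness and injectivity are then outsourced to the ``Moreover'' clause of Lemma~\ref{Lem: QHToQA}. By contrast, you bypass the quasi-action formalism entirely: you name the generating set explicitly as $T_R=\phi^{-1}(\bar B_S(R))$ and establish the two-sided estimate $d_{T_R}(g,g')\asymp d_S(\phi(g),\phi(g'))$ by a direct geodesic-chasing argument, so that $\phi$ itself becomes the quasi-isometry $\G(G,T_R)\to\G(H,S)$. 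In effect you reprove, in this specific situation, the content of Proposition~\ref{Prop: QAToA} plus Svarc--Milnor by hand.

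Each approach buys something. The paper's proof is modular and short, reusing machinery already set up in \S\ref{subsec: IndQA}; yours is self-contained, gives a concrete description of the image generating set, and avoids the black box of straightening quasi-actions. More substantively, your argument actually proves more: the ``conversely'' paragraph shows that $\Phi$ is an \emph{order-embedding} (i.e.\ $[S_1]\preccurlyeq[S_2]\iff\Phi([S_1])\preccurlyeq\Phi([S_2])$), whereas the paper's proof establishes only that $\chi$ is an order-preserving injection. This is precisely what ``embeds as a sub-poset'' ought to mean, so your proof is in that sense the more complete of the two.
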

\begin{proof}
    For each $[T]\in \H(H)$, $\G(H,T)$ is a hyperbolic graph on which $H$ acts coboundedly. By Lemma \ref{Lem: QHToQA},  $\phi$ induces a cobounded quasi-action of $G$ on $\G(H,T)$. As a result of Proposition \ref{Prop: QAToA}, the induced cobounded quasi-action of $G$ on $\G(H,T)$ is quasi-conjugate to a cobounded action of $G$ on a hyperbolic graph $X_{T}$. By the classical Milnor-Svarc Lemma, there exists a generating set $S$ of $G$ such that $G\curvearrowright \G(G,S)$ is quasi-conjugate to $G\curvearrowright X_{T}$. Then we can define a map $\chi: \H(H)\to \H(G)$ by $\chi([T])=[S]$. It follows from the ``Moreover'' part of Lemma \ref{Lem: QHToQA} that $\chi$ is well-defined and injective.

    It remains to show that $\chi$ is order-preserving. Let $[T]\preccurlyeq [T']$ be two hyperbolic structures in $\H(H)$. By definition, the identity map induces a Lipschitz map $\sigma: \G(H,T')\to \G(H,T)$. According to Lemma \ref{Lem: QHToQA}, $\sigma$ is also an induced coarsely $G$-equivariant Lipschitz map from $\G(H,T')$ to $\G(H,T)$. After applying Proposition \ref{Prop: QAToA} and the classical Milnor-Svarc Lemma, we get that there exists a coarsely $G$-equivariant coarsely Lipschitz map from $\G(G,S')$ to $\G(G,S)$ where $S\in \chi([T])$ and $S'\in \chi([T'])$. This shows that $\chi([T])\preccurlyeq \chi([T'])$.

    In summary, $\chi$ is an order-preserving injection from $\H(H)$ to $\H(G)$.
\end{proof}

\begin{proposition}\label{Prop: IsoHypStr}
    Let $G,G'$ be two discrete quasi-isomorphic groups. Then $\H(G)\cong \H(G')$.
\end{proposition}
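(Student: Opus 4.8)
The plan is to upgrade the poset embedding of Proposition \ref{Prop: EmbHypStr} to an isomorphism when the quasi-homomorphism is a quasi-isomorphism. Fix a quasi-inverse $\phi'\colon G'\to G$ of $\phi\colon G\to G'$. From $\d(\phi\circ\phi',Id_{G'})<\infty$ and $\d(\phi'\circ\phi,Id_G)<\infty$ one sees that $\phi$ and $\phi'$ are both coarsely surjective, so Proposition \ref{Prop: EmbHypStr} supplies order-preserving injections $\chi=\chi_\phi\colon\H(G')\hookrightarrow\H(G)$ and $\chi'=\chi_{\phi'}\colon\H(G)\hookrightarrow\H(G')$. I would then prove $\chi'\circ\chi=Id_{\H(G')}$ and $\chi\circ\chi'=Id_{\H(G)}$; since $\chi$ and $\chi'$ would then be mutually inverse order-preserving bijections, this gives $\H(G)\cong\H(G')$.

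To compute $\chi'\circ\chi$, recall that for $[T]\in\H(G')$, with $\alpha_T\colon G'\curvearrowright\G(G',T)$ the corresponding Cayley action, $\chi([T])$ is (by the construction in Proposition \ref{Prop: EmbHypStr}, using Lemma \ref{Lem: QHToQA} and Proposition \ref{Prop: QAToA}) the hyperbolic structure on $G$ represented by the induced cobounded quasi-action $\phi_*\alpha_T$, $(g,x)\mapsto\phi(g)x$. Iterating the construction, $\chi'(\chi([T]))$ is represented by $\phi'_*(\phi_*\alpha_T)$, which sends $(g',x)$ to $\phi(\phi'(g'))x$; writing $\psi:=\phi\circ\phi'\colon G'\to G'$ (coarsely surjective, since $\d(\psi,Id_{G'})<\infty$), this is exactly the induced quasi-action $\psi_*\alpha_T$. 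The ``Moreover'' clause of Lemma \ref{Lem: QHToQA} shows that $\phi'_*$ carries quasi-conjugate quasi-actions to quasi-conjugate quasi-actions, so the quasi-conjugacies built into the definitions of $\chi$ and $\chi'$ propagate, and it suffices to prove the claim that $\psi_*\alpha_T$ is quasi-conjugate to $\alpha_T$. Granting this, $\chi'(\chi([T]))$ and $[T]$ are represented by quasi-conjugate (hence equivalent) cobounded actions on hyperbolic spaces and therefore coincide; the same argument with $\phi$ and $\phi'$ interchanged — so that $\psi$ is replaced by $\phi'\circ\phi\colon G\to G$ — gives $\chi\circ\chi'=Id_{\H(G)}$.

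It remains to establish the claim, and I expect this to be the crux of the proof. The map $\psi$ is a quasi-isomorphism with quasi-inverse $Id_{G'}$, so Lemma \ref{Lem: GoodQH} applies and yields that the set $A=\{\psi(g')^{-1}g'\colon g'\in G'\}$ is finite and $\C(\psi(G')^{-1},A)$ is finite. I would take the identity map of $X:=\G(G',T)$ as the candidate quasi-conjugacy; being an isometry, only coarse $G'$-equivariance needs checking, and writing $\psi(g')=g'b_{g'}^{-1}$ with $b_{g'}\in A$ gives $d(\alpha_T(g',x),\psi_*\alpha_T(g',x))=d(x,b_{g'}^{-1}x)$. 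Hence everything reduces to the uniform displacement bound $\sup\{d(x,bx)\colon b\in A\cup A^{-1},\ x\in X\}<\infty$. To obtain it, fix a basepoint $o\in X$; coboundedness of $\alpha_T$ gives $M$ with $d(x,ho)\le M$ for some $h\in G'$, whence $d(x,bx)\le 2M+d(o,h^{-1}bh\cdot o)$, so it is enough to see that $\{h^{-1}bh\colon h\in G',\ b\in A\cup A^{-1}\}$ is a finite subset of $G'$. For $b\in A$ this follows from a short commutator manipulation: with $h=\psi(h)b_h$, $b_h\in A$, one has $\psi(h)^{-1}b\psi(h)=b[b^{-1},\psi(h)^{-1}]$ and $[b^{-1},\psi(h)^{-1}]=b^{-1}[\psi(h)^{-1},b]b\in A^{-1}\,\C(\psi(G')^{-1},A)\,A$, so $h^{-1}bh=b_h^{-1}\,\psi(h)^{-1}b\psi(h)\,b_h$ lies in a fixed finite set; the case $b\in A^{-1}$ follows by taking inverses. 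This yields the displacement bound, hence the claim, and completes the proof.
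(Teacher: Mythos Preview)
Your proof is correct and follows the same overall strategy as the paper: build $\chi$ and $\chi'$ from Proposition \ref{Prop: EmbHypStr} and show they are mutually inverse order-preserving maps. The paper's own proof is terser---it asserts that $\chi\circ\chi'([S])=[S]$ ``follows from the above definitions'' without justification---whereas you supply the missing argument, reducing to the claim that the identity on $\G(G',T)$ quasi-conjugates $\alpha_T$ to $(\phi\circ\phi')_*\alpha_T$ and then proving it via Lemma \ref{Lem: GoodQH} and a conjugation-finiteness computation. Your use of Lemma \ref{Lem: GoodQH} to control conjugates of the finite set $A=\{\psi(g')^{-1}g'\}$ is exactly the right mechanism here (and parallels the proof of Lemma \ref{Lem: QHToQA}, which controls conjugates of $D(\phi)$ in the same way). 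So your proposal is not a different approach but a more complete execution of the paper's.
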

\begin{proof}
    For each $[S']\in \H(G')$, $\G(G',S')$ is a hyperbolic graph on which $G'$ acts coboundedly. Let $\phi: G\to G'$ be a quasi-isomorphism. By Lemma \ref{Lem: QHToQA},  $\phi$ induces a cobounded quasi-action of $G$ on $\G(G',S')$. As a result of Proposition \ref{Prop: QAToA}, the induced cobounded quasi-action of $G$ on $\G(G',S')$ is quasi-conjugate to a cobounded action of $G$ on a hyperbolic graph $X_{S'}$. By the classical Milnor-Svarc Lemma, there exists a generating set $S$ of $G$ such that $G\curvearrowright \G(G,S)$ is quasi-conjugate to $G\curvearrowright X_{S'}$. Then we can define a map $\chi: \H(G')\to \H(G)$ by $\chi([S'])=[S]$. It follows from the ``Moreover'' part of Lemma \ref{Lem: QHToQA} that $\chi$ is well-defined and injective.
    
    Since $\phi: G\to G'$ is a quasi-isomorphism, there exists a quasi-inverse $\phi': G'\to G$ such that $\d(\phi'\circ \phi, Id_{G})<\infty$ and $\d(\phi\circ \phi', Id_{G'})<\infty$. Similarly, we can define a map $\chi': \H(G)\to \H(G')$. It follows from the above definitions that $\chi\circ \chi'([S])=[S]$ for any $[S]\in \H(G)$. This shows that $\chi$ is surjective.

    The property that $\chi$ is order-preserving follows from the same arguments in the proof of Proposition \ref{Prop: EmbHypStr}. In summary, $\chi$ is an order-preserving bijection from $\H(G')$ to $\H(G)$.
\end{proof}

By combining Proposition \ref{Prop: EmbHypStr} and Proposition \ref{Prop: IsoHypStr}, we complete the proof of Proposition \ref{IntroProp: HypStr}.

%\printbibliography
\bibliographystyle{amsplain}   
\bibliography{Reference}
\end{document}